\newtheorem{thm}{Theorem}[section]
\newtheorem{cor}[thm]{Corollary}
\newtheorem{lem}[thm]{Lemma}
\newtheorem{prop}[thm]{Proposition}
\theoremstyle{definition}
\newtheorem{defn}[thm]{Definition}
\newtheorem{problem}[thm]{Problem}
\newtheorem{example}[thm]{Example}
\theoremstyle{remark}
\newtheorem{rem}[thm]{Remark}
\numberwithin{equation}{subsection}
\numberwithin{figure}{section}
\newcommand{\C}{{\mathbb C}}
\newcommand{\D}{{\mathbb D}}
\newcommand{\T}{{\mathbb T}}
\newcommand{\R}{{\mathbb R}}
\newcommand{\s}{{\mathbb S}}
\newcommand{\N}{{\mathbb N}}
\newcommand{\G}{\mathcal{G}}
\newcommand{\A}{\mathbb{A}}
\newcommand{\HS}{\mathbb{H}}
\newcommand{\calS}{\mathcal{S}}
\newcommand{\calH}{{\mathcal H}}
\newcommand{\calD}{{\mathcal D}}
\newcommand{\calP}{{\mathcal P}}
\newcommand{\calT}{{\mathcal T}}
\newcommand{\calI}{\mathcal{I}}
\newcommand{\calA}{{\mathcal A}}
\newcommand{\calN}{{\mathcal N}}
\newcommand{\calF}{{\mathcal F}}
\newcommand{\calK}{{\mathcal K}}
\newcommand{\calG}{{\mathcal G}}
\newcommand{\calM}{{\mathcal M}}
\newcommand{\calE}{{\mathcal E}}
\newcommand{\calZ}{{\mathcal Z}}
\newcommand{\dir}{\mathscr{D}}
\newcommand{\diffH}{\mathrm{d}_{\mathrm{H}}}
\newcommand{\diff}{{\mathrm d}}
\newcommand{\diffs}{\mathrm{ds}}
\newcommand{\diffA}{\mathrm{dA}}
\newcommand{\imag}{{\mathrm i}}
\newcommand{\Ordo}{\mathrm{O}}
\newcommand{\ordo}{\mathrm{o}}
\newcommand{\e}{\mathrm e}
\renewcommand{\Re}{\operatorname{Re}}
\renewcommand{\Im}{\operatorname{Im}}
\renewcommand{\s}{\mathrm{s}}
\renewcommand{\c}{\mathrm{c}}
\let\@wraptoccontribs\wraptoccontribs
\begin{document}
\title[The forbidden region for random zeros]
{The forbidden region for random zeros: appearance of quadrature domains}

\author{Alon Nishry}
\author{Aron Wennman}

\subjclass[2010]{30B20, 35R35, 31A35, 60F10, 30C70}

\begin{abstract}
Our main discovery is a surprising interplay between 
quadrature domains on the one hand, and the
zero process of the Gaussian Entire Function (GEF) on the other. 
Specifically, consider the GEF conditioned on the rare \emph{hole event} that 
there are no zeros in a given large Jordan domain.  
We show that in the natural scaling limit, a quadrature domain 
enclosing the hole emerges as a 
\emph{forbidden region}, where the zero density vanishes.
Moreover, we give a description of those holes for which 
the forbidden region is a disk.

The connecting link between random zeros and potential theory 
is supplied by a constrained extremal problem for 
the Zeitouni-Zelditch functional. To solve this problem, we recast it
in terms of a seemingly novel obstacle problem,
where the solution is forced to be harmonic inside the hole.
\end{abstract}
\maketitle
\tableofcontents

\section{Introduction and main results}
\label{s:intro}

\subsection{Random zeros and forbidden regions}
\label{ss:problem-formul}
We are interested in the asymptotic conditional intensity of a stationary point process in the
plane, conditional on the rare \emph{hole event} that there are no points in a large region $\G$.
To give some context, we mention briefly
two examples where the behavior on the hole 
event is well-understood.
For the homogeneous Poisson process, the spatial independence property
shows that the effect of the hole is not felt outside $\G$.
For the Ginibre ensemble, a planar Coulomb gas at critical temperature, 
the situation is more interesting: 
the particles accumulate 
near the boundary of $\G$ in such a way that
the electrostatic potential generated by the points is asymptotically unchanged
outside the hole (a procedure known as \emph{balayage}).  
Hence, there are no macroscopic effects outside $\overline{\G}$, 
see Figure \ref{fig:square-hole}(A). For details we refer to
the works by Adhikari and Reddy \cite{Adhikari}, Armstrong, Serfaty and Zeitouni \cite{ASZ}, 
and Jancovici, Lebowitz and Manificat \cite{JLM}.

Our focus is on the zero process of the
Gaussian Entire Function (GEF), introduced 
by Bogomolny, Bohigas and Labeuf \cite{BBL},
Kostlan \cite{Kostlan}, and Shub and Smale \cite{ShubSmale} 
in the 1990s. For convenience, we consider a fixed domain $\G$
and a zero process with increasing intensity.
The GEF is given
by the random Taylor series
\begin{equation}\label{eq:def-GEF}
F_L(z)=\sum_{n=0}^\infty \xi_n\frac{(Lz)^n}{\sqrt{n!}},\qquad z\in\C,
\end{equation}
where $\xi_n\sim\calN_\C(0,1)$ are independent standard 
complex Gaussian random variables, and
its zero set forms an invariant point process with intensity $\pi^{-1}L^2$
with respect to area measure.
In fact, the GEF is the only Gaussian analytic
function with this property, see \cite[\S~2.5]{HKPV}.
Since its introduction, the GEF has been widely studied, with
contributions including \cite{ForresterHonner, GP, Hannay, NazarovSodinNormality, 
NSV0, NSV, SodinTsirelson1} to mention a few. 
For background on the GEF zeros and related models
we refer to the monograph \cite{HKPV} by Hough, Krishnapur, Virag and Peres, 
and the ICM notes \cite{NSICM} by Sodin and Nazarov.

For a bounded plane region $\G$, we denote the 
{\em hole event for random zeros} by
\begin{equation}
\calH_L(\G)=\big\{F_L\text{ is zero-free in the region }\G\big\}.
\end{equation}
We let $\mu_{L}^\C$ be the random \emph{empirical measure} obtained by 
placing a unit point charge at each zero of $F_L$, and denote by
$\mu_{L,\G}^\C$ the measure $\mu_L^\C$ conditioned on $\calH_L(\G)$.

We will show in the course of the proof of 
Theorem~\ref{thm:main-GEF} (c.f. Proposition~\ref{prop:struct}) 
that for a rather general hole $\G$,
the rescaled measure $L^{-2}\mu_{L,\G}^\C$ converges 
to a limiting measure $\mu_\G^\C$, 
which splits into
a singular part supported on $\partial\G$, and a continuous
part supported on the complement of a larger region $\Omega=\Omega(\G)$ 
containing $\G$.
There is always a macroscopic gap $\Omega\setminus\overline{\G}$ 
between the two components
of the limiting measure, where the limiting zero density vanishes.  
We refer to $\Omega$ as the 
\emph{forbidden region}\footnote{This terminology appears in other contexts,
e.g.\ in quantum mechanics and semiclassical analysis \cite{Forbidden} 
and for random polynomials and partial Bergman kernels \cite{ShiffmanZelditch, Partial}. 
These notions are related with ours in that a limiting 
density of states vanishes, 
but for entirely different reasons.}. 
This phenomenon was suspected to occur for circular holes
by Nazarov and Sodin, and it was recently proved by Ghosh and the first-named
author in \cite{GhoshNishry1}.
We find the following problems natural.

\begin{problem}\label{prob:shape}
\emph{Determine the possible shapes of forbidden regions.}
\end{problem}

\begin{problem}\label{prob:inverse}
\emph{Given a forbidden region, determine which holes give rise to it.}
\end{problem}

Regarding the first problem, we will show that if 
the hole is a smooth Jordan domain, the 
forbidden region takes the shape of a quadrature domain. Under mild conditions on $\G$, 
we solve the inverse problem
(Problem~\ref{prob:inverse}) when the forbidden region is a disk.
We start with the inverse problem.

\subsection{The inverse problem and disk-like domains}\label{ss:disk-like}
The connection between the hole $\G$ and the associated
forbidden region $\Omega(\G)$ appears through a delicate variational 
problem (see \S~\ref{ss:extremal-intro} below), 
and one might initially suspect that little
could be established, besides regularity properties
of the measure $\mu_\G^\C$ and (free) boundary $\partial\Omega$. 
However, in one important special case we have a 
complete solution to the inverse problem.

\begin{figure}[t!]
	\vspace{12pt}
	\begin{subfigure}[t]{.42\textwidth}
		\centering
		\includegraphics[width=1\linewidth]{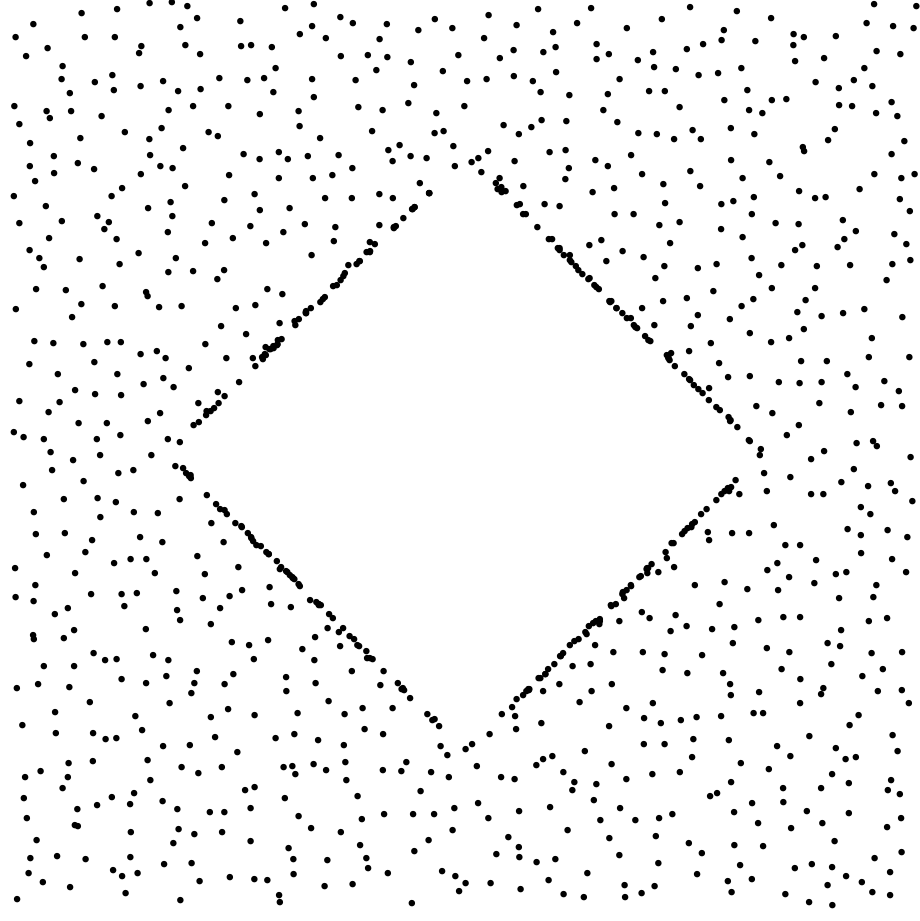}
		\subcaption{Ginibre ensemble}
		\label{fig:Ginibre_hole}
	\end{subfigure}
	\hspace{25pt}
	\begin{subfigure}[t]{.42\textwidth}
		\includegraphics[width=\linewidth]{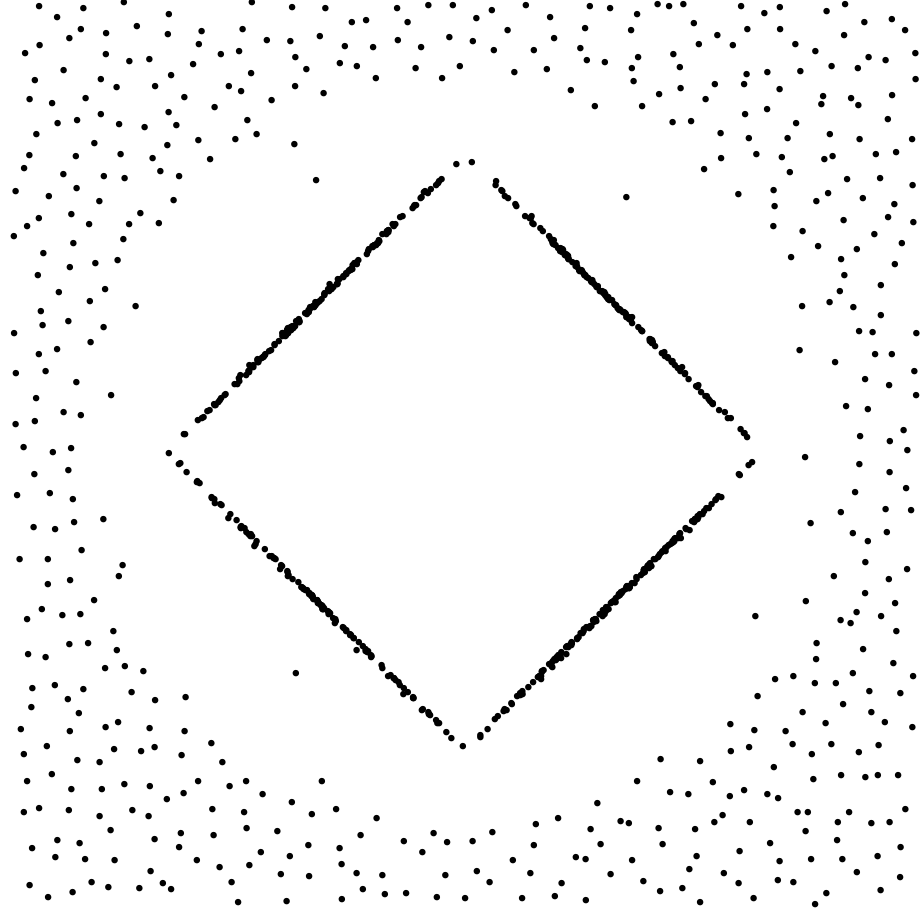}
		\subcaption{GEF zeros}
	\end{subfigure}
	\caption{Illustration of the square hole event.} 
	\label{fig:square-hole}
\end{figure}

\begin{defn}\label{def:almost-circ}
A Jordan domain $\G$ is said to be \emph{disk-like}
with center $z_0$ and radius $r$, if the Riemann mapping
$\varphi\colon \G\to\D$, which maps $z_0$ to the origin with 
$\varphi'(z_0)=:r^{-1}$, satisfies the bound
\begin{equation}\label{eq:almost-circ}
|\varphi(z)|\ge \tfrac{|z-z_0|}{r}
\exp\big(-\tfrac{|z-z_0|^2}{2\e r^2}\big),\qquad z\in\G.
\end{equation}
\end{defn}
One can verify\footnote{For a graphical illustration for regular $n$-gons with 
$n\ge 4$, see \cite{mathematica-Neumann}. The equilateral triangle is not disk-like.} 
that the square, the regular pentagon, 
ellipses up to a critical eccentricity as well as 
a wide class of more general perturbations of the disk are all disk-like.
That $\G$ is disk-like implies that $z_0$ is a local conformal center with (inner) conformal 
radius $r$ (see \cite[\S~6.3]{PolyaSzego}
and \S~\ref{s:a-circ} below). 
In addition, disk-likeness implies that
$\calG\subset\D(z_0,\sqrt{\e}\,r)$.

\begin{thm}\label{thm:a-circ}
Let $\G$ be a Jordan domain with piecewise smooth boundary 
without cusps. 
Then the forbidden region $\Omega$ is
the disk $\D(z_0,\sqrt{\e}r)$ if and only if $\G$ is disk-like with center $z_0$
and radius $r$.
\end{thm}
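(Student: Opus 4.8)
The plan is to pass through the variational/obstacle-problem characterization of the forbidden region $\Omega$ that will be developed later in the paper, and to show that ``$\Omega$ is a disk'' is equivalent to an explicit inequality on the conformal map of $\G$, which then coincides with the disk-likeness bound \eqref{eq:almost-circ}. The starting point is the description of $\mu_\G^\C$ from Proposition~\ref{prop:struct}: the limiting potential $U$ associated to $\mu_{L,\G}^\C$ solves an obstacle problem in which $U$ is forced to be harmonic inside $\G$, the non-coincidence set is exactly $\Omega\setminus\G$, and on $\partial\Omega$ the solution meets the equilibrium/Zeitouni--Zelditch profile with matching gradient (the standard $C^{1,1}$-regularity of obstacle problems). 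The key structural fact I would exploit is that, by the Schwarz-function characterization of quadrature domains, $\Omega$ is a \emph{one-point quadrature domain} (a disk) precisely when the harmonic measure / mother-body of $\Omega$ relative to the inhomogeneity is a single point mass, and that forces $\Omega=\D(z_0,\rho)$ for some $\rho$ and center $z_0$.

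First I would compute what the obstacle problem gives when $\Omega$ is assumed to be the disk $\D(z_0,\rho)$. Inside $\G$ the solution $U$ is harmonic; on the annulus $\Omega\setminus\overline\G$ it solves a Poisson equation with the GEF inhomogeneity (the $|z|^2$-type density coming from the intensity $\pi^{-1}L^2$), so up to the additive quadratic term $U$ is harmonic there too; and across $\partial\Omega$ it glues $C^{1,1}$ to the exterior equilibrium solution, which is radial about $z_0$. Tracking the constants, the radial exterior solution about $z_0$ with the correct mass forces the relation $\rho=\sqrt{\e}\,r$ where $r$ is determined by the normalization at $z_0$ — this is where the factor $\sqrt{\e}$ in the statement enters, and it should drop out of comparing the total mass of $\mu_\G^\C$ (which is pinned by the intensity) with the mass $\rho^2$ a disk of radius $\rho$ would carry. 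Next, inside $\G$, harmonicity of $U$ together with the boundary data inherited from the annular region translates, via the conformal map $\varphi\colon\G\to\D$ normalized by $\varphi(z_0)=0$, $\varphi'(z_0)=r^{-1}$, into the statement that a certain subharmonic function built from $\log|\varphi|$ and $|z-z_0|^2$ has a sign — and this sign condition is exactly \eqref{eq:almost-circ}. Concretely, the obstacle constraint $U\ge\psi$ (solution lies above the obstacle) on $\G$ becomes $\log|\varphi(z)|\ge \log\tfrac{|z-z_0|}{r}-\tfrac{|z-z_0|^2}{2\e r^2}$, i.e.\ \eqref{eq:almost-circ}.

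For the converse, assume $\G$ is disk-like with center $z_0$ and radius $r$. I would build a candidate pair $(\Omega^\ast,U^\ast)$ by hand: take $\Omega^\ast=\D(z_0,\sqrt{\e}\,r)$, define $U^\ast$ to be the radial exterior equilibrium solution outside $\Omega^\ast$, the explicit annular Poisson solution on $\Omega^\ast\setminus\overline\G$, and inside $\G$ the harmonic extension of its boundary values on $\partial\G$; then verify that $U^\ast$ is $C^{1,1}$ across $\partial\Omega^\ast$ (automatic from the radial matching) and that the obstacle inequality $U^\ast\ge\psi$ holds throughout, the only nontrivial place being inside $\G$, where it is precisely \eqref{eq:almost-circ}. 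By uniqueness of the solution to the obstacle problem (which I take from the later sections), $(\Omega^\ast,U^\ast)$ is \emph{the} solution, hence $\Omega=\Omega^\ast$ is the asserted disk. The piecewise-smooth, cusp-free hypothesis on $\partial\G$ is used to guarantee the regularity of $\varphi$ up to the boundary and of the harmonic extension, so that the boundary-matching computations and the $C^{1,1}$-gluing are legitimate.

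The main obstacle I anticipate is the bookkeeping in the gluing across $\partial\G$: one must check that the inhomogeneous (Poisson) solution on the annulus and the harmonic interior extension have matching normal derivatives on $\partial\G$ — in the obstacle-problem formulation the inhomogeneity is ``switched off'' inside $\G$, so there is a jump in the Laplacian but not in $U$ or $\nabla U$, and one has to confirm that the variational formulation genuinely produces this and not a spurious single-layer on $\partial\G$. Getting the constant $\sqrt{\e}$ exactly right, and in particular seeing why the disk-likeness exponent carries $\tfrac{1}{2\e r^2}$ rather than some other constant, is tied to the same computation and is the quantitative heart of the argument; everything else is a comparison-principle/uniqueness wrapper around it.
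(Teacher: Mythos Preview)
Your overall strategy---build an explicit candidate when $\G$ is disk-like, verify it solves the extremal problem, and for the converse translate the obstacle inequality into the conformal-map bound \eqref{eq:almost-circ}---is the same as the paper's. But there is a genuine structural misunderstanding that would make your candidate construction fail.

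You write that you want to confirm the variational formulation ``produces this and not a spurious single-layer on $\partial\G$''. In fact the single layer on $\partial\G$ is not spurious: it is the essential singular component $\mu_0^\s$ of the extremal measure, equal to $\tfrac{\e r^2}{\alpha}\,\omega_{z_0,\G}$ (harmonic measure from $z_0$). The potential $U^{\mu_0}$ therefore has a jump in normal derivative across $\partial\G$, and your candidate $U^\ast$---built as ``radial exterior + annular piece + harmonic extension inside $\G$''---cannot be correct without that layer. Relatedly, on the annulus $\Omega\setminus\overline{\G}$ the potential $U^{\mu_0}$ is \emph{harmonic} (the measure vanishes in the forbidden region), not a Poisson solution; the quadratic term lives in the obstacle $\psi_\alpha$, not in $U$. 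The paper writes the candidate directly as a measure,
\[
\diff\mu_0=\frac{\e r^2}{\alpha}\,\diff\omega_{z_0,\G}+\frac{1}{\pi\alpha}\chi_{\D(0,\sqrt{\alpha})\setminus\D(z_0,\sqrt{\e}r)}\,\diffA,
\]
computes the relative potential $U^{\mu_0}(z)-\tfrac{|z|^2}{2\alpha}$ using the balayage identity $U^{\omega_{z_0,\G}}(z)=\log\tfrac{|z-z_0|}{|\varphi(z)|}$ on $\G$, and observes that the obstacle inequality on $\G$ is literally \eqref{eq:almost-circ}. Extremality is then verified not by obstacle-problem uniqueness but by a direct sufficient condition (Lemma~\ref{lem:verif}) coming from the variational inequalities of Proposition~\ref{prop:var-ineq}; this bypasses the issue that the thin obstacle $g$ is only implicitly determined.

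For the direction ``$\Omega$ a disk $\Rightarrow$ $\G$ disk-like'', you gesture at the one-point quadrature domain characterization, but the nontrivial input you need is that $\mu_0^\s$ is a priori the balayage to $\partial\G$ of some measure $\nu$ with regular support in $\overline\G$---this is Theorem~\ref{thm:s-qdom}, not Proposition~\ref{prop:struct}. Once that is in hand, equality of $U^{\mu_0^\s}$ with the potential of $\tfrac{1}{\pi\alpha}\chi_{\D(z_0,\sqrt{\e}r)}$ on the exterior annulus propagates by harmonic continuation into $\G\setminus\mathrm{supp}(\nu)$ and forces $\nu$ to be a single point mass at $z_0$; then the obstacle inequality on $\G$ reads off as disk-likeness.
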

It follows from the proof (see \S~\ref{s:a-circ}) that the 
leading order asymptotics for the probability of the hole event $\calH_L(\G)$
for disk-like $\G$ depends only on the radius $r$.
It is plausible that this result should hold even under less restrictive regularity 
conditions on the boundary $\partial\G$. 
For an illustration see Figure \ref{fig:square-hole}(B).

\subsection{Appearance of quadrature domains}
A priori, it is not clear if there are any restrictions
on the shape of the forbidden region $\Omega$. 
However, the stability of the circular case
suggests a strong rigidity, and this led us to consider the following notion,
which is classical in potential theory.
We let $\diffA$ denote area measure.

\begin{defn}\label{defn:q-dom}
A domain $\Omega=\Omega_\nu$ is said to be a (subharmonic) 
quadrature domain with respect to
a finite measure $\nu$,
if $\Omega$ contains $\mathrm{supp}(\nu)$ and if for all integrable
subharmonic functions $u$ on $\Omega$ it holds that
\begin{equation}\label{eq:q-dom-ineq}
\int u(z)\diff \nu(z)\le \frac{1}{\pi}\int_{\Omega}u(z)\diffA(z).
\end{equation} 
\end{defn}
For background on quadrature domains, we refer to the surveys 
\cite{GustafssonPutinar, GustafssonShapiro} by Gustafsson and Putinar, and Gustafsson and 
Shapiro, respectively.
The most notable example of a quadrature domain is a disk. 
More generally, \emph{classical} quadrature domains correspond to finitely
supported positive measures $\nu$, and these may be thought of as 
potential theoretic \emph{sums} of disks, see \cite{GustafssonPutinar}.
Classical quadrature domains
are rather outstanding domains with algebraic boundaries 
\cite[\S~5.1]{Shapiro},
which appear in several areas (see \S~\ref{ss:related} below). 
Going back to Problem~\ref{prob:shape}, 
they appear as forbidden regions, seemingly out of nowhere.

\begin{thm}\label{thm:main-GEF}
Let $\G$ be a Jordan domain with $C^2$-smooth boundary.
The rescaled conditional empirical measures
$L^{-2}\mu_{L,\G}^\C$ converge vaguely in distribution as $L\to\infty$ to the measure
\[
\diff\mu_\G^\C=\sum_{\lambda\in\Lambda}
\rho_\lambda\diff\omega_{\G,\lambda} 
+ \tfrac{1}{\pi}\chi_{\C\setminus\Omega_\nu}\diffA,
\]
where $\Lambda\subset\G$ is a finite set, $\rho_\lambda$ are strictly positive weights,
$\omega_{\G,\lambda}$ are the harmonic 
measures supported on $\partial\G$ relative to the
points $\lambda$, and $\Omega_\nu$
is the (unique) quadrature domain with respect to
$\nu=\sum_{\lambda}\rho_\lambda\delta_\lambda$.
\end{thm}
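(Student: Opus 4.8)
The plan is to split the argument into a probabilistic reduction to a constrained extremal problem for the Zeitouni--Zelditch functional $\calI$, followed by the solution of that problem through the obstacle problem announced in the abstract. For the reduction I would start from the large deviation principle for the normalised empirical measures $L^{-2}\mu_L^\C$ at speed $L^4$, with rate functional $\calI$ as set up in \S\ref{ss:extremal-intro} (the circular case being treated in \cite{GhoshNishry1}). The hole event forces $L^{-2}\mu_L^\C$ to place no mass on $\G$; after the routine $\delta$-thickening of $\G$ needed to pass between the event $\{F_L\ne0\text{ on }\G\}$ and sublevel conditions on the empirical measure, a Gibbs-conditioning argument shows that, conditionally on $\calH_L(\G)$, the measures $L^{-2}\mu_{L,\G}^\C$ concentrate on the minimisers of $\calI$ subject to the constraint $\mu|_\G=0$. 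Convexity and lower semicontinuity give existence of a minimiser $\mu_\G^\C$, strict convexity of $\calI$ on the relevant affine subspace gives uniqueness, and, the limit being deterministic, the exponential suppression of deviations upgrades this to vague convergence in distribution. It thus remains to solve the constrained extremal problem and read off the structure of $\mu_\G^\C$; this is the content of Proposition~\ref{prop:struct} together with the identification of the forbidden region.

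Next I would analyse the minimiser via its Euler--Lagrange inequalities. The active constraints are $\mu\ge0$ and $\mu|_\G=0$; complementary slackness yields a constant $c$ and an effective potential $\Phi_\mu$ with $\Phi_\mu=c$ on $\mathrm{supp}(\mu_\G^\C)$ and $\Phi_\mu\ge c$ elsewhere, while the Lagrange multiplier $\nu$ enforcing $\mu|_\G=0$ --- which is unconstrained in sign --- translates into the requirement that the solution of an associated obstacle problem be \emph{harmonic inside $\G$} (with isolated logarithmic poles at $\mathrm{supp}(\nu)$). Unwinding this produces a bounded region $\Omega\supset\overline\G$, the forbidden region, together with a potential $u$ --- a suitably signed and normalised version of $U^{\mu_\G^\C}-U^{\pi^{-1}\diffA}$, with the convention $U^\rho(z)=\int\log|z-w|\,\diff\rho(w)$ so that $\Delta U^\rho=2\pi\rho$ --- which is $C^{1,1}$, vanishes on $\C\setminus\Omega$, hence has $u=|\nabla u|=0$ on $\partial\Omega$, is nonnegative, and satisfies $\tfrac1{2\pi}\Delta u=\tfrac1\pi\chi_\Omega\,\diffA-\sigma$, where $\sigma$ is the part of $\mu_\G^\C$ supported on $\partial\G$. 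Regularity of $u$ and (real-)analyticity of $\partial\Omega$ off a negligible singular set then show that $\mu_\G^\C$ has no singular part beyond $\sigma$ and that its absolutely continuous part equals $\tfrac1\pi\chi_{\C\setminus\Omega}\,\diffA$.

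It remains to identify $\sigma$ and $\Omega$. Since $\mu_\G^\C$ charges the open set $\G$ not at all, $\mathrm{supp}(\sigma)\subset\partial\G$ and $U^\sigma$ is harmonic in $\G$; together with the harmonicity condition this forces $\sigma$ to be the classical balayage onto $\partial\G$ of the multiplier $\nu$, which one shows is a \emph{positive} measure supported at \emph{finitely many} interior points $\Lambda\subset\G$, i.e.\ $\nu=\sum_{\lambda\in\Lambda}\rho_\lambda\delta_\lambda$ with $\rho_\lambda>0$ and $\sigma=\sum_{\lambda\in\Lambda}\rho_\lambda\,\omega_{\G,\lambda}$. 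Granting this, the quadrature property of $\Omega$ is a Green's-identity computation: for $v$ subharmonic and integrable on $\Omega$, using $u\ge0$, $u=|\nabla u|=0$ on $\partial\Omega$, and $\tfrac1{2\pi}\Delta u=\tfrac1\pi\chi_\Omega\,\diffA-\sigma$,
\[
\frac1\pi\int_\Omega v\,\diffA-\int v\,\diff\sigma=\frac1{2\pi}\int_\Omega v\,\Delta u=\frac1{2\pi}\int_\Omega u\,\Delta v\ge0,
\]
while the sub-mean-value inequality through the balayage gives $\int v\,\diff\nu=\sum_\lambda\rho_\lambda v(\lambda)\le\sum_\lambda\rho_\lambda\int v\,\diff\omega_{\G,\lambda}=\int v\,\diff\sigma$; hence $\int v\,\diff\nu\le\tfrac1\pi\int_\Omega v\,\diffA$, so $\Omega$ is a subharmonic quadrature domain for $\nu$ in the sense of Definition~\ref{defn:q-dom}. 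Since $\Omega$ arises in the analysis as the partial balayage region of $\nu$ to density $\tfrac1\pi$, it is the unique such domain $\Omega_\nu$, by the standard uniqueness theory for partial balayage. (In the disk-like case this recovers Theorem~\ref{thm:a-circ}, with $\Lambda=\{z_0\}$ and $\rho_{z_0}=\e r^2$.)

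The main obstacle is the assertion in the previous paragraph that $\Lambda$ is \emph{finite} --- equivalently, that the forbidden region is a \emph{classical} quadrature domain --- together with the positivity of the weights $\rho_\lambda$. This is precisely where the obstacle problem forced to be harmonic inside the hole does the real work: the support of the multiplier should be locally finite by real-analyticity of the harmonic-replacement solution in $\G$, but one must also control its behaviour near the merely $C^2$-smooth boundary $\partial\G$, ruling out accumulation of poles there and degeneracy at each of them. By comparison, the large-deviations reduction of the first step and the Green's-identity argument of the last step are routine once the extremal problem and this obstacle-problem dictionary are in place.
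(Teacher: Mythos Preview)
Your high-level plan is correct and you identify the right crux, but two steps you call routine are precisely where the paper's work lies.

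\textbf{The probabilistic reduction.} There is no off-the-shelf LDP for the zeros of the GEF; the Zeitouni--Zelditch rate functional $I_\alpha$ governs the zeros of the \emph{truncated} Weyl polynomials $P_{N,L}$ with $N=\lceil L^2\log L\rceil$. The paper therefore does not use Gibbs conditioning. Instead it (i) truncates and controls the tail by Rouch\'e (Propositions~\ref{prop:T_N_bound}--\ref{prop:lower_bnd_away_from_zeros}); (ii) manufactures the lower bound for $\calP(\calH_L(\G))$ by discretising $\mu_{\alpha,\G}$ with weighted Fekete points (Appendix~\ref{app:Fekete}), which in turn requires global H\"older regularity of $U^{\mu_{\alpha,\G}}$ (Theorem~\ref{thm:Lipschitz-reg}); and (iii) proves an effective large-deviation upper bound for linear statistics. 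Your ``routine $\delta$-thickening'' hides the quantitative stability estimate $I_\alpha(\mu_{\alpha,\G_t})=I_\alpha(\mu_{\alpha,\G})+\Ordo(t^\beta)$ of Proposition~\ref{prop:quant-stab}, without which the truncation-induced perturbation of $\G$ cannot be absorbed.

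\textbf{The finiteness of $\Lambda$.} You are right that this is the main obstacle, but ``real-analyticity of the harmonic-replacement solution in $\G$'' does not yield it: the interior coincidence set $\calI=\{z\in\overline\G:U^{\mu_{\alpha,\G}}(z)-\tfrac{|z|^2}{2\alpha}=B_\alpha(\mu_{\alpha,\G})\}$ could a priori contain analytic arcs terminating at $\partial\G$ or countably many points accumulating there (these are exactly the degenerate cases in Sakai's Theorem~\ref{thm:sakai}). The paper's mechanism is not a Lagrange-multiplier computation but a two-step variational scheme: first recast $U^{\mu_{\alpha,\G}}$ as the solution of a \emph{mixed} obstacle problem with an unknown thin obstacle $g$ on $\partial\G$ (Theorem~\ref{thm:basic-obst}); then perturb $g\mapsto g+\epsilon h$ and derive the first-variation formula $\dir(u_\epsilon)=\dir(u_0)-4\pi\epsilon\int_{\partial\G}h\,\diff\mu_0^\s+\ordo(\epsilon)$ (Corollary~\ref{cor:variational}). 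Choosing $h$ to be harmonic-measure differences, this formula is exploited in Theorem~\ref{thm:s-ndeg} to force $\calI$ strictly away from $\partial\G$ when $\partial\G$ is $C^2$; only then does Sakai's theorem give finiteness. The positivity of the weights $\rho_\lambda$ and the balayage representation $\mu_0^\s=\sum_\lambda\rho_\lambda\omega_{\G,\lambda}$ are obtained by the same perturbation scheme (Lemma~\ref{lem:pre-thm-sqdom}), via harmonic interpolation on the finite set $\calI$. Your Euler--Lagrange heuristic does not supply any of this, because the supremum term $B_\alpha(\mu)$ makes $I_\alpha$ nondifferentiable in the classical sense; the obstacle-problem reformulation is what makes the variational inequalities tractable.

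Your Green's-identity verification that $\Omega$ is a subharmonic quadrature domain for $\nu$ is fine; the paper instead invokes the envelope characterisation of Theorem~\ref{thm:q-dom} directly, but the two are equivalent here.
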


The assumption that $\partial\G$ is $C^2$-smooth is made to fix ideas. 
A stronger version (Theorem~\ref{thm:main-GEF2}) is proven in \S~\ref{s:Weyl} below.

The notion of vague convergence in distribution is described in \cite[Ch.\ 4]{Kallenberg}.
In our case, it amounts to proving that for any continuous 
compactly supported test-function $f$, the random variables 
$L^{-2}\int f\,\diff\mu_{L,\G}^\C$ converges in probability to
$\int f\,\diff\mu_\G^\C$.

\begin{rem}
Let $\nu=\sum_{\lambda}\rho_\lambda\delta_\lambda$ be a finitely supported 
positive measure, and denote by $\Omega$ the associated
subharmonic quadrature domain. Then, for any bounded harmonic function $h$ on $\Omega$,
we have the quadrature rule
\[
\sum_{\lambda}\rho_\lambda h(\lambda)=\frac{1}{\pi}\int_{\Omega}h(z)\diffA(z).
\] 
Quadrature domains for \emph{harmonic functions} are those domains $\Omega$
who satisfy this equality for all bounded harmonic $h$. For many 
measures $\nu$ the two notions are equivalent, but in general a harmonic 
quadrature domain is \emph{not} uniquely determined by the measure $\nu$ 
(see \cite[Theorem 4.1 and \S~10.4]{GustafssonPutinar}). 
\end{rem}

\subsection{Characterization of extremal measures}
\label{ss:extremal-intro}
The work \cite{ZZ} of Zeitouni and Zelditch and \cite{GhoshNishry1} suggest that
the solution to the hole problem can be understood in terms of constrained minimizers 
of the convex energy functional
\begin{equation}\label{eq:def-funct}
I_{\alpha}(\mu)=-\Sigma(\mu)+2\sup_{z\in\C}\big(U^\mu(z)
-\tfrac{|z|^2}{2\alpha}\big),
\end{equation}
over the class of probability measures on $\C$ which give no mass to the hole,
where $\alpha>0$ is a large parameter.
Here, $U^\mu$ and $\Sigma(\mu)$ denote, respectively, the logarithmic potential 
and the (negative) logarithmic energy
\begin{equation}\label{eq:pot-energy}
U^\mu(z)=\int\log|z-w|\,\diff\mu(w),\quad -\Sigma(\mu)=-\int \log|z-w|\,\diff\mu(z)\diff\mu(w).
\end{equation}
The functional \eqref{eq:def-funct} was originally introduced in 
\cite{ZZ} and studied in \cite{GhoshNishry1}
in connection with the circular hole problem $\G=\D$.
It may be mentioned that $I_\alpha$ is the Large Deviations Principle (LDP)
rate function for the zero process associated to the (Weyl) polynomials
obtained by truncating the GEF.

Here, we study the constrained extremal problem
for a general bounded Jordan domain $\G$. We denote by 
$\mu_{\alpha,\G}$ the extremal measure obtained as the unique 
solution to the problem
\begin{equation}\label{eq:basic-extremal}
\text{minimize}\quad I_\alpha(\mu)\quad \text{subject to}\quad \mu\in\calM_\G,
\end{equation} 
where $\calM_\G$ is the collection of
all probability measures on $\C$ with $\mu(\G)=0$.

If $\partial\G$ is a piecewise smooth curve without cusps, 
we denote its set of corner points by $\calE=\calE(\partial\G)$.
We say that $\mu$ has \emph{regular support} in $\overline{\G}$, 
if $\mathrm{supp}(\mu)$ is contained in a set $\Lambda$ 
consisting of a finite number
of analytic cross-cuts\footnote{A cross-cut is an arc $\gamma\subset\G$ without loops,
which connects two boundary points $z,w\in \partial\G$, thereby splitting $\G$ in two connected 
components.} 
and countably many points, such that in addition 
$\Lambda\cap\partial\G\subset\calE$. 
We denote by $\mathrm{Bal}(\nu,\G^c)$ the \emph{balayage} of a measure $\nu$
to the boundary $\partial\G$. We recall the definition of 
this potential theoretic notion in 
\S~\ref{ss:bal}.

The following result is used to establish our main results for the GEF.

\begin{thm}\label{thm:s-qdom}
There exists an absolute constant $\alpha_0$, 
such that if $\G\subset\D$ is a Jordan domain with 
piecewise $C^2$-smooth boundary without cusps, then there exists a
positive measure $\nu$ with regular support in $\overline\G$,
such that for all $\alpha\ge\alpha_0$ the extremal measure is given by
\[
\diff\mu_{\alpha,\G}=\tfrac{1}{\alpha}\diff\mathrm{Bal}(\nu,\G^c)
+\tfrac{1}{\pi\alpha}\chi_{\D(0,\sqrt{\alpha})\setminus \Omega_\nu}\diffA,
\]
where $\Omega_\nu$ denotes the subharmonic quadrature
domain with respect to $\nu$.
\end{thm}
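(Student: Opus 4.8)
The plan is to analyze the constrained extremal problem \eqref{eq:basic-extremal} via the Euler--Lagrange (variational) conditions, and to recognize in them the defining properties of the balayage and of a subharmonic quadrature domain. First I would record the first-order optimality conditions for the convex functional $I_\alpha$ over $\calM_\G$: since $I_\alpha$ is strictly convex (the energy term is), the minimizer $\mu_{\alpha,\G}$ is unique, and there is a constant $\ell=\ell_{\alpha,\G}$ and an ``equilibrium region'' such that, writing $Q_\alpha(z)=\frac{|z|^2}{2\alpha}$ and letting $z^\star$ be a point achieving the supremum in \eqref{eq:def-funct}, the effective potential $U^{\mu_{\alpha,\G}} - $ (a multiple of the obstacle) satisfies $\le \ell$ everywhere on $\C\setminus\G$, with equality on $\mathrm{supp}(\mu_{\alpha,\G})\setminus\partial\G$, while inside $\G$ the potential is unconstrained except through the global $\sup$ term. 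The key structural input, which I would borrow from the companion obstacle-problem reformulation alluded to in the abstract and in \S\ref{ss:extremal-intro}, is that the variational solution forces $U^{\mu_{\alpha,\G}}$ to be \emph{harmonic inside $\G$} — equivalently, $\mu_{\alpha,\G}$ puts no mass in $\G$ (this is the constraint) and its ``harmonic extension'' across $\G$ is governed by a measure $\nu$ supported on $\overline\G$.

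Next I would carry out the following decomposition. Away from $\overline\G$ and inside the disk $\D(0,\sqrt\alpha)$ the obstacle $Q_\alpha$ is smooth and strictly subharmonic with $\Delta Q_\alpha = \frac{2}{\alpha}$, so on the coincidence set the distributional Laplacian of the potential matches that of the obstacle: this yields the bulk density $\frac{1}{\pi\alpha}\chi_{\{\text{coincidence set}\}}\diffA$, and one identifies the coincidence set with $\D(0,\sqrt\alpha)\setminus K$ for some compact ``dead zone'' $K\supset\overline\G$. Outside $\D(0,\sqrt\alpha)$ the obstacle grows faster than any logarithmic potential of a probability measure, so $\mu_{\alpha,\G}$ is supported in $\overline{\D(0,\sqrt\alpha)}$; this is where $\alpha\ge\alpha_0$ and $\G\subset\D$ enter, guaranteeing $\overline\G$ sits well inside the droplet and the geometry is nondegenerate. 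On $\partial\G$ the minimizer carries a singular part: because $\mu$ is not allowed mass in $\G$, the part of the ``would-be'' equilibrium measure that wants to live in $\G$ is swept to the boundary, which is precisely balayage. Thus I expect $\mu_{\alpha,\G}=\frac1\alpha\,\mathrm{Bal}(\nu,\G^c)+\frac{1}{\pi\alpha}\chi_{\D(0,\sqrt\alpha)\setminus K}\diffA$ for a positive measure $\nu$ on $\overline\G$, with $\frac1\alpha\nu$ the ``pre-balayage'' charge, and the scaling by $\alpha^{-1}$ is forced by matching total mass $1$ against $\Delta Q_\alpha=\frac2\alpha$ on a region of area $\sim\pi\alpha$.

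The crucial remaining point is to identify the dead zone: $K=\Omega_\nu$, the subharmonic quadrature domain of $\nu$. Here is the mechanism I would use. Inside $K\setminus\G$ there is no charge and no obstacle contact, so $U^{\mu_{\alpha,\G}}$ is harmonic there; combined with harmonicity inside $\G$ (the reformulation's key feature) and the jump of the normal derivative across $\partial\G$ encoding $\mathrm{Bal}(\nu,\G^c)$, the potential of $\frac1\alpha\nu$ and of $\frac1\alpha\,\mathrm{Bal}(\nu,\G^c)$ agree \emph{outside} $\overline\G$ (defining property of balayage) — hence also outside $K$. So on $\C\setminus K$ the configuration behaves as if the charge were $\frac1\alpha\nu + \frac{1}{\pi\alpha}\chi_{\D(0,\sqrt\alpha)\setminus K}\diffA$, and the obstacle-contact ($C^{1,1}$-matching) condition on $\partial K$ says exactly that $U^{\nu}+\frac1\pi U^{\chi_{\C\setminus K}\diffA}$ matches a quadratic up to first order across $\partial K$ — this is one of the standard equivalent characterizations of $K$ being the subharmonic quadrature domain for $\nu$ (the other being the inequality \eqref{eq:q-dom-ineq}, derived by testing against subharmonic $u$ and using that the potential lies below its harmonic majorant on $K$ and equals it off $K$). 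Uniqueness of $\Omega_\nu$ is classical. Finally I would upgrade ``for some $\nu$'' to ``$\nu$ has regular support in $\overline\G$ with $\mathrm{supp}(\nu)\cap\partial\G\subset\calE$'': this is a free-boundary regularity statement for $\partial\G$ piecewise $C^2$ without cusps, obtained by studying where the harmonic-inside-$\G$ solution can fail to extend — the singular set of the obstacle-problem solution restricted to $\G$ consists of analytic arcs (cross-cuts) plus at worst countably many points, and it can only reach $\partial\G$ at corners.

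I expect the main obstacle to be precisely this last identification $K=\Omega_\nu$ together with the regularity of $\nu$'s support: it requires a careful passage through the novel obstacle problem (harmonicity forced inside $\G$), a clean statement of the $C^{1,1}$ free-boundary matching that survives the presence of the constraint set $\G$, and enough free-boundary regularity to control how $\mathrm{supp}(\nu)$ meets $\partial\G$. The balayage bookkeeping and the scaling constants, by contrast, should be routine once the variational conditions are in place.
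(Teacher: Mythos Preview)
Your proposal has the right high-level outline (singular part on $\partial\G$ as a balayage, bulk part as $\tfrac{1}{\pi\alpha}\chi$, dead zone identified as a quadrature domain), but the mechanism you sketch for producing $\nu$ is not the one that works, and the gap is precisely where you locate it.

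The central issue is your treatment of the non-local term $B_\alpha(\mu)=\sup_z(U^\mu(z)-\tfrac{|z|^2}{2\alpha})$. Your Euler--Lagrange sketch essentially treats the problem as a standard weighted energy minimization with the constraint $\mu(\G)=0$; in that setting your heuristic ``the part of the would-be equilibrium that wants to live in $\G$ is swept to the boundary'' is correct, and it gives $\nu=\tfrac{1}{\pi}\chi_\G\diffA$ with no forbidden gap (this is the Ginibre answer). The $B_\alpha$ term changes the picture because its supremum is attained \emph{inside} $\G$ as well as on the bulk (Lemma~\ref{lem:balance}): the interior coincidence set $\calI=\{z\in\overline\G:U^{\mu_{\alpha,\G}}(z)-\tfrac{|z|^2}{2\alpha}=B_\alpha(\mu_{\alpha,\G})\}$ is nonempty, and any admissible perturbation must respect this. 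Your ``effective potential $\le\ell$ with equality on the support'' conditions do not see $\calI$, so they cannot produce a $\nu$ supported there.

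What the paper actually does is a two-stage variational argument built on the mixed obstacle reformulation (Theorem~\ref{thm:basic-obst}). First, it develops a perturbation theory for the \emph{thin obstacle} $g$ on $\partial\G$: if $h$ is harmonic on $\G$ and strictly negative on $\calI$, then replacing $g$ by $g+\epsilon h$ yields admissible competitors, and the variational formula (Corollary~\ref{cor:variational}) gives $I_\alpha(\mu_\epsilon)=I_\alpha(\mu_0)-4\pi\epsilon\int_{\partial\G}h\,\diff\mu_0^\s+\ordo(\epsilon)$. Minimality forces $\int h\,\diff\mu_0^\s\le 0$ for all such $h$. Second, once one knows $\calI$ is a \emph{finite} set $\Lambda\subset\G$, harmonic interpolation on $\Lambda$ turns this family of inequalities into the identity $\int h\,\diff\mu_0^\s=\sum_{\lambda\in\Lambda}\rho_\lambda h(\lambda)$ with $\rho_\lambda\ge 0$, i.e.\ $\mu_0^\s=\mathrm{Bal}(\nu,\G^c)$ for $\nu=\sum\rho_\lambda\delta_\lambda$ (Lemma~\ref{lem:pre-thm-sqdom}). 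The finiteness of $\calI$ is itself nontrivial: it requires the non-degeneracy result Theorem~\ref{thm:s-ndeg} (proved by another application of the same perturbation scheme, comparing harmonic measures of small boundary arcs) together with Sakai's regularity theorem. Only after $\nu$ is pinned down does one identify the dead zone as $\Omega_\nu$ via Theorem~\ref{thm:q-dom}; the piecewise smooth case then follows by approximation from inside by $C^2$-smooth domains. None of these steps are visible in your $C^{1,1}$-matching outline, and the identification of $\nu$ cannot be recovered from free-boundary regularity of $\partial K$ alone.
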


The proof is contained in \S~\ref{s:variational}.
For comparison with Theorem~\ref{thm:main-GEF}, 
we mention that the balayage of a unit point 
charge at $\lambda\in \G$ to $\G^c$ 
is the harmonic measure $\omega_{\G,\lambda}$.

\subsection{Connection to an obstacle problem}
\label{ss:intro-obst}
Theorem~\ref{thm:s-qdom} is obtained by
a variational argument in two steps, which we briefly describe.

\medskip

\noindent {\sc Step 1:}\:{\em An implicit obstacle problem}.\:
For a \emph{thin obstacle} $g$ in the Sobolev space $H^{\frac12}(\partial\G)$ 
(see \S~\ref{ss:sobolev} below for details), the full obstacle function 
$\phi_\alpha(z)=\tfrac{1}{2\alpha}|z|^2$ and $D=\D(0,\sqrt{\alpha})$,
we consider the class of functions
\begin{equation}\label{eq:class-g-intro}
\calK_g=\Big\{u\in H^1(D):\, u\le g \text{ on }\partial\G,\,\,u\le\phi_\alpha \text{ on } D,
\;\,u=\phi_\alpha \text{ on } \partial D \Big\}.
\end{equation}
Whenever $\calK_g$ is non-empty, there is a unique solution $u_{g}$ to
the obstacle problem
\begin{equation}
\label{eq:obst-g}
\inf_{u\in\calK_{g}}\dir(u)\coloneqq \inf_{u\in\calK_g}\int_{D}|\nabla u|^2\diffA.
\end{equation}
We will show that for $\alpha$ large
enough, there is \emph{some} thin obstacle function $g$ such that 
the extremal measure $\mu_{\alpha,\G}$ equals the Riesz measure 
$\mu_g=\tfrac{1}{2\pi}\Delta u_g$. 
In fact, it holds that $I_\alpha(\mu_g)=\dir(u_g)+C(\alpha)$ 
for an explicit constant $C(\alpha)$, and
if we denote by $\widetilde{g}$ the harmonic extension of $g$ to $\G$, the extremal 
problem \eqref{eq:basic-extremal} reduces to
\begin{equation}\label{eq:reform-extremal-intro}
\text{minimize }\quad \dir(u_g) \quad\text{subject to}\quad \widetilde{g}\le 
\phi_\alpha\text{ on }\G\,\text{ and }\, g\in H^{\frac12}(\partial\G).
\end{equation}
Here, the condition that the harmonic extension $\widetilde{g}$ lies below the
full obstacle $\phi_\alpha$ ensures that $\mu_g(\G)=0$.

\medskip

\noindent {\sc Step 2:}\:{\em Finding the optimal thin obstacle}.\:
We wish to find the thin obstacle $g$ which solves \eqref{eq:reform-extremal-intro}, and to this end 
we devise a perturbation argument. We put 
\[
B_\alpha(\mu)= \sup_{z\in\C}
\big(U^{\mu}(z)-\tfrac{1}{2\alpha}|z|^2\big)
\]
and define the {\em interior coincidence set} 
\[
\calI=\big\{z\in\overline{\G}: U^{\mu_{\alpha,\G}}(z)-\tfrac{1}{2\alpha}|z|^2
=B_\alpha(\mu_{\alpha,\G})\big\}.
\]
If $g$ is the solution to \eqref{eq:reform-extremal-intro}, then $\calI$
is also the coincidence set on $\overline{\G}$ for $u_g$ with 
the full obstacle $\phi_\alpha$.
For a harmonic function $h$ on $\G$ 
which is negative on $\calI$, 
we denote by $u_\epsilon=u_{g_\epsilon}$ the solutions to the 
obstacle problem \eqref{eq:obst-g} 
where $g$ is replaced by
$g_\epsilon=g+\epsilon h\vert_{\partial\G}$ with $\epsilon>0$.
It turns out that for such $h$, the Riesz measures 
$\mu_\epsilon\coloneqq \mu_{g_\epsilon}$ belong to $\calM_\G$ for small $\epsilon$.
One of the keys to our approach is the variational formula of 
Corollary~\ref{cor:variational} below, which reads
\begin{equation}\label{eq:intro-variational}
\dir(u_{\epsilon})= \dir(u_{0})
-4\pi \epsilon\int_{\partial\G} h\,\diff\mu_{0}+\ordo(\epsilon),
\qquad \epsilon\to 0^+.
\end{equation}
In view of \eqref{eq:reform-extremal-intro}, the integral appearing on the right-hand side must be 
\emph{negative} for all admissible perturbations $h$, or else $g$
would not be extremal.
Using harmonic interpolation, this inequality yields Theorem~\ref{thm:s-qdom} provided 
that the set $\calI$ is finite.
When $\partial\G$ is $C^2$-smooth, we show that 
the potential $U^{\mu_{\alpha,\G}}$ is {\em non-degenerate}, meaning that 
\begin{equation}\label{eq:non-deg}
\sup_{z\in\partial \G}\big(U^{\mu_{\alpha,\G}}(z)
-\tfrac{|z|^2}{2\alpha}\big)<B_\alpha(\mu).
\end{equation}
In particular, the coincidence set $\calI$ is separated 
from $\partial\G$, see Theorem~\ref{thm:s-ndeg} below. 
A classical regularity result of Caffarelli and Rivi{\`e}re 
\cite{CaffarelliRiviere} implies that under such a separation condition, 
either $\calI$ has positive area (impossible),
or it is a finite set.

For a more general Jordan domain $\G$, we argue 
by approximation with smooth domains from inside.

\begin{rem}\label{rem:topology}
We stress that the assumption that $\G$ is simply connected
is crucial for the conclusion of
Theorem~\ref{thm:main-GEF}. Indeed, in the theorem of Caffarelli-Rivi{\`e}re,
closed curves are also possible components of the coincidence set. 
Since we work with simply connected
$\G$ and a subharmonic full obstacle, the region enclosed by the
curve would then be part of the coincidence set as well. 
Since $\calI$ has vanishing area, this cannot happen.
If $\G$ is not simply connected, 
then we cannot draw this conclusion, and the interior
coincidence is not finite in general.
\end{rem}

\subsection{Related work}
\label{ss:related}
In one dimension, hole probabilities 
are also known as \emph{gap probabilities}, and
have been studied extensively.
We mention in particular work by Majumdar, Nadal, 
Scardicchio and Vivo \cite{Nadal} on a log-gas with quadratic potential, 
where the probability of large gaps is computed, and the conditional 
distribution is described in detail. A closely related problem
is the computation of \emph{persistence probabilities} for real stochastic processes, 
see \cite{Persistence}.

In the two-dimensional setting, the hole event 
has been studied for Coulomb gases, including the Ginibre ensemble. In this case,
the LDP functional is the weighted logarithmic energy (see 
\cite{SaffTotik}).
In the work by Armstrong, Serfaty and Zeitouni \cite{ASZ} the 
constrained minimization problem for the limiting
deficiency and overcrowding events\footnote{In the 
$p$-deficiency (resp.\ $q$-overcrowding) problem,
the hole event is replaced by the event that $\G$ contains at 
most a fraction $p$ with $0\le p<1$ of the expected number of 
points (resp. at least a fraction $q> 1$ of the expected number).}
is studied with variational techniques;
in the work \cite{Adhikari} by Adhikari 
and Reddy, a connection 
to the Ginibre ensemble is drawn and 
the hole probabilities are computed for a
large class of holes. 
Shirai \cite{Shirai} studies annular holes for 
Ginibre-type point processes,
and describes the 
distribution of points on the hole event, in particular near the 
singular part of the limiting measure.

For the random zeros considered in this work, the asymptotics of 
hole probabilities was first studied by Sodin and Tsirelson
\cite{SodinTsirelson} who estimated the decay rate of 
hole probabilities for circular holes. 
In \cite{Nishry},
the exact leading order asymptotics was found, 
and as mentioned previously the existence of forbidden regions was observed
for the first time in \cite{GhoshNishry1} in the context of a 
circular hole. 
For a more detailed account of related work, see \cite{GhoshNishry2}
and the references therein.

The emergence of a 
forbidden region seems unique to random zeros,
as it comes about due to the non-local nature of the 
term $B_\alpha$ of the functional $I_\alpha$.
To our knowledge, similar phenomena have not been discovered elsewhere.
To the best of our knowledge, the 
functional $I_\alpha$ appeared first in work by 
Zeitouni and Zelditch \cite{ZZ}, 
who established a LDP 
for zero sets of random polynomials with Gaussian coefficients.
This was originally carried out 
on Riemann surfaces (see also the works by 
Butez \cite{Butez} and Zelditch \cite{Zelditch}).

The hole event is also interesting for dynamic point processes, 
for instance the dynamic GEF obtained
by letting the random variables $\xi_n=\xi_{n,t}$ 
undergo an independent Ornstein-Uhlenbeck evolution. 
It was shown by Hough \cite{Hough} that the
probability that a hole to persist for times 
$0\le t<T$ is exceedingly small (of order
$\exp(-T\e^{cL^2})$). For other 
dynamic point processes such as lattice gases,
one can answer more delicate questions, 
such as how the hole appears and 
how it subsequently disappears, see the article 
\cite{Meerson} by Krapivsky, Meerson and Sasorov.

For further results pertaining to probabilities of rare events for
stationary random processes, 
we refer to \cite{akemann, CardinalSine, Basu, Chafai, FeldheimFeldheim, Krishnapur, PeresVirag}.

Recently, questions pertaining to quantitative stability
of obstacle problems have been studied. Among recent results we mention
work by Serfaty and Serra \cite{SerfatySerra}, 
and Blank and LeCrone \cite{Blank}.
This connects to our developments in \S~\ref{s:perturb-stab}.

Quadrature domains appear in several different areas of mathematics.
Examples include the study of aggregation models in 
probability through the Diaconis-Fulton smash sum \cite{Diaconis} studied by Levine and Peres
\cite{LevinePeres}, fluid mechanics and random matrix theory 
as explained in \cite[\S 6.2]{GTV} (see also the work by Mineev-Weinstein, 
Wiegmann and Zabrodin~\cite{MWZ}), and complex dynamics
as illustrated by the work of Lee, Lyubich and Makarov \cite{Makarov1, Makarov2}.

\subsection{Structure of the paper}
Section~\ref{s:background} contains preliminary material 
from geometric function theory, potential theory
and the theory of free boundary problems.

The main ideas underlying the proof of Theorem~\ref{thm:s-qdom} were sketched above, 
and this program is carried out in
\S~\ref{s:perturb-stab} to \S~\ref{s:variational}. 

In \S~\ref{s:reg} and \S~\ref{s:perturb-holes-quant} we study regularity for the potential of
the extremal measure, and quantitative
stability under domain perturbations, respectively. 
These results are used in \S~\ref{s:Weyl} to prove Theorem~\ref{thm:main-GEF}
(see \ref{ss:guide:probab} for an overview of the proof).

In \S~\ref{s:examples} we obtain Theorem~\ref{thm:a-circ} as a consequence
of a general form of Theorem~\ref{thm:main-GEF} and a \emph{sufficient condition}
for a measure $\mu_0$ to be extremal for $I_\alpha$ over $\calM_\G$. We also
discuss a family of examples of holes for which the forbidden regions are two-point
quadrature domains.

Finally, the appendix (a collaboration with S. Ghosh) contains a result on 
approximation of measures using weighted Fekete
points, which is used to obtain lower bounds for the hole probability.

\section{Preliminaries and notation}\label{s:background}
\subsection{Notational conventions}
We use {\em positive} and {\em negative} in the weak sense,
allowing e.g. the value zero for a positive parameter. Similarly, {\em greater
than} and {\em smaller than} allow for equality, unless otherwise stated.

Unless we indicate otherwise, inequalities between functions in $L^p$ and Sobolev
spaces are interpreted in the almost everywhere sense.

For a set $E\subset\C$, the standard symbols $E^c$, $E^\circ$ and $\overline{E}$
are used to denote complement, interior and closure (as a subset of
Euclidean space $\C=\R^2$). Moreover, $|E|$ will be used to denote the Lebesgue
measure of $E$. If it is understood that $E$ is confined to a one-dimensional
rectifiable subset $\Gamma$, then $|E|$ denotes the arc-length measure of 
$E$. When more precision is needed, we express these measures
in terms of the length and area elements, which are denoted by
$\diffs$ and $\diffA$, respectively.

We will oftentimes deal with asymptotics of quantities depending on
a parameter, and in particular deal with inequalities between quantities
depending on that parameter.
If $(a(t))_{t\in \calT}$ and 
$(b(t))_{t\in \calT}$ are quantities depending on
the parameter $t$ in an index set $\calT$, then we write
\[
a(t)\lesssim b(t)
\]
if there exists a constant $C\in\R^+$ such that for all $t\in\calT$
we have
\[
a(t)\le C b(t).
\]
The constant $C$ may depend on other parameters.
If $a(t)\lesssim b(t)$ and $a(t)\gtrsim b(t)$
hold simultaneously, then we write $a(t)\cong b(t)$.
In addition we will use the standard notation 
$f=\Ordo(g)$ and $f=\ordo(g)$.

For a measure $\mu$ on $\C$, we use the notation $\mu^\s$ and $\mu^\c$ for the
singular and continuous parts in the Lebesgue decomposition 
with respect to area measure.

\subsection{Logarithmic potentials}
For a signed measure $\mu$, we recall that the logarithmic potential $U^\mu$ of
$\mu$ is the convolution of $\mu$ with the logarithmic kernel
\[
U^\mu(z)=\int_\C\log|z-w|\,\diff\mu(w),
\]
where we have chosen the normalization such that
\[
\tfrac{1}{2\pi}\Delta U^\mu=\mu,
\]
in the sense of distributions.
The logarithmic energy of $\mu$ is the quantity
\[
-\Sigma(\mu)=-\int\log|z-w|\,\diff\mu(z)\diff\mu(w)=-\int U^\mu(z)\diff\mu(z)
\]
which we consider for compactly supported finite measures $\mu$ 
for which  the function $|\log |z-w||$ is integrable with 
respect to the product measure $\mu\times\mu$
(so that the above computation is justified by Fubini's theorem).
These measures are known  as measures
of finite logarithmic energy. A standard reference for potential
theory in the plane is the monograph \cite{SaffTotik} by Saff and Totik. 
As a word of caution to the reader, we mention that in \cite{SaffTotik} the 
logarithmic potential is defined to have the \emph{opposite} 
sign compared to what we use here.

\subsection{Sobolev spaces}
\label{ss:sobolev}
Unless otherwise stated, every domain $D$ considered in this article will
be a Lipschitz domain, meaning that at each boundary point $z_0$ 
there exists a number $\epsilon>0$ such that $\partial D\cap \D(z_0,\epsilon)$ 
is the graph of a Lipschitz function $f:[0,1]\to\C$, after applying an appropriate
rotation.

For the basic properties and definitions of Sobolev spaces, we refer the
reader e.g.\ to the monograph \cite{Adams}. Below, we will merely fix notation and
recall some properties that are especially important in what follows.

For a (Lipschitz) domain $D\subset\C$ 
we consider the space $H^1(D)$
of functions $u\in L^2(D)$ whose first order (distributional) 
partial derivatives lie in $L^2(D)$ as well.
If $\Gamma$ is a closed Lipschitz curve enclosing some domain $\Omega$, 
we denote the (Dirichlet) trace space
of $H^1(\Omega)$ by $H^{\frac12}(\Gamma)$. The space 
$H^{\frac12}(\Gamma)$ is itself
a Sobolev space with an intrinsic characterization \cite{Ding}. 
The dual spaces are defined as usual, 
and they are denoted by $H^{-1}(D)=(H^1(D))^*$ and 
$H^{-\frac12}(\Gamma)=(H^{\frac12}(\Gamma))^*$, respectively.
The spaces $H^1(D)$, $H^{\frac12}(\Gamma)$ as well as their duals 
are Hilbert spaces, and as such are in particular weakly
sequentially compact.
For the precise definitions and further basic properties of the 
spaces mentioned above, the reader 
may consult e.g.\ the monograph \cite{Adams}.

We mention that whenever $\Gamma$ is a simple 
closed Lipschitz curve lying inside $D$ and $u$ is a function in
$H^1(D)$ with $\Delta u\in L^2(D)$ (in particular, if $u$ is harmonic), 
then we can also define a Neumann trace 
of $u$ on $\Gamma$.
We denote by $\calN_\Gamma(u)$
the unique trace function which for smooth functions $u$ satisfies
\[
\calN_{\Gamma}(u)=
-\Big(\partial_{\mathrm{n}_{D^+}}+\partial_{\mathrm{n}_{D^-}}\Big)u,
\]
where $D^\pm$ denote the two components of $D\setminus \Gamma$, 
and $\partial_{\mathrm{n}_{D^\pm}}$
denote the normal derivatives in the outward normal direction 
from inside $D^+$ and $D^-$, respectively.
By a standard mollification argument, it is readily verified that 
$\calN_{\Gamma}(u)\in H^{-\frac12}(\Gamma)$ whenever
$u\in H^{1}(D)$ with $\Delta u\in L^2(D)$ (cf.\ Proposition~\ref{prop:reg} below).

\subsection{Harmonic measures and the Poisson kernel}
For a Lipschitz domain $\Omega$ 
and a point $z_0\in\Omega$, the harmonic measure
$\omega_{\Omega,z_0}(\cdot)=\omega(z_0,\cdot,\Omega)$ 
is a Borel measure on $\partial\Omega$, 
with the property that for any $E\subset \partial\Omega$, the measure
$\omega_{\Omega,z_0}(E)$ is the probability 
that a Brownian motion started at $z_0$
exits $\Omega$ through the set $E$.
We will only need some very basic properties of harmonic measure.
The first property is that when $\Omega$ is simply connected, the density
of harmonic measure with respect to arc length measure $\diffs$
on $\partial\Omega$ is given by the Poisson kernel:
\begin{equation}\label{eq:HM-density}
\frac{\diff\omega_{\Omega,z_0}}{\diffs}=P_{\Omega}(z_0,\cdot)
\quad \text{on }\; z\in\partial\Omega.
\end{equation}
This allows us to estimate harmonic measures using conformal 
invariance and Kellog's
theorem, which we proceed to state (see p. 426 in \cite{Goluzin}).
For a domain $\Omega$ we denote by $C^{k,\beta}(\Omega)$ the space of 
$k$ times differentiable functions in $\Omega$, 
whose partial derivatives of order $k$ satisfy a H{\"o}lder 
condition with exponent $\beta$ on $\overline{\Omega}$.

\begin{thm}\label{thm:Kellog}
Denote by $\Omega$ a simply connected domain with $C^{k,\beta}$-smooth
Jordan curve boundary, for some integer $k \ge 1$ and H{\"o}lder exponent 
$0<\beta<1$.
Denote by $\varphi$ a conformal mapping of $\D$
onto $\Omega$ and by $\phi$ its inverse. 
Then $\varphi$ and $\phi$ are $C^{k,\beta}$-smooth on the closed unit disk and the
closure of $\Omega$, respectively.
\end{thm}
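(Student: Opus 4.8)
This is the classical Kellogg--Warschawski theorem (see \cite{Goluzin}, p.~426), so in the paper we simply invoke it; here I sketch the standard argument. The plan is to reduce the statement to the base case $k=1$ and then bootstrap, the engine throughout being the mapping properties of harmonic conjugation on H\"older spaces. By Carath\'eodory's theorem $\varphi$ extends to a homeomorphism of $\overline{\D}$ onto $\overline{\Omega}$. Parametrize $\partial\Omega$ by arc length via $\gamma$, so that $|\gamma'|\equiv 1$ and $\tau:=\arg\gamma'\in C^{k-1,\beta}$ since $\partial\Omega$ is $C^{k,\beta}$. Let $s(\theta)$ be the arc-length parameter determined by $\varphi(e^{i\theta})=\gamma(s(\theta))$. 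Differentiating along the circle,
\[
\varphi'(e^{i\theta})=s'(\theta)\,\exp\!\big(i\big(\tau(s(\theta))-\theta-\tfrac{\pi}{2}\big)\big),
\]
so on $\partial\D$ one has $|\varphi'|=s'$ and $\arg\varphi'=\tau\circ s-\theta-\tfrac{\pi}{2}$. Since $\varphi'$ is zero-free in $\D$, $\log\varphi'$ is holomorphic there and $\log|\varphi'|$, $\arg\varphi'$ are harmonic conjugates, which on the boundary reads $\log|\varphi'|=\widetilde{\arg\varphi'}+\text{const}$, where $\widetilde{(\cdot)}$ denotes the conjugation operator on $\T$. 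The whole argument consists in feeding this functional relation back on itself.

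For the base case, first apply the Lindel\"of--Warschawski theorem: for a $C^1$ Jordan boundary with Dini-continuous tangent (in particular $C^{1,\beta}$), $\varphi'$ extends continuously to $\overline{\D}$ and is non-vanishing there. Hence $s'=|\varphi'|$ is continuous and positive, so $s\in C^1$; then $\tau\circ s$ is a $C^{0,\beta}$ function composed with a $C^1$ function, hence lies in $C^{0,\beta}(\T)$, and so $\arg\varphi'|_{\partial\D}\in C^{0,\beta}$. By Privalov's theorem the conjugation operator preserves $C^{0,\beta}(\T)$, so $\log|\varphi'|\,|_{\partial\D}\in C^{0,\beta}$ as well; thus $\log\varphi'$ is harmonic in $\D$ with $C^{0,\beta}$ boundary data, and the Poisson integral (equivalently, boundary Schauder estimates) gives $\log\varphi'\in C^{0,\beta}(\overline{\D})$. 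Consequently $\varphi'\in C^{0,\beta}(\overline{\D})$ with $\varphi'\neq 0$ on $\overline{\D}$, i.e.\ $\varphi\in C^{1,\beta}(\overline{\D})$.

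Now bootstrap by induction: assume $\varphi\in C^{j,\beta}(\overline{\D})$ with $1\le j\le k-1$ and $\varphi'$ zero-free on $\overline{\D}$. Then $s'=|\varphi'|\,|_{\partial\D}\in C^{j-1,\beta}$ and $s'>0$, so $s\in C^{j,\beta}$; since $\tau\in C^{k-1,\beta}$ and $j\le k-1$, the composition $\tau\circ s$ lies in $C^{j,\beta}(\T)$, whence $\arg\varphi'|_{\partial\D}\in C^{j,\beta}$, and Privalov (applied to $j$-th derivatives) gives $\log|\varphi'|\,|_{\partial\D}\in C^{j,\beta}$. Then $\log\varphi'$ is harmonic with $C^{j,\beta}$ boundary data, so boundary Schauder yields $\log\varphi'\in C^{j,\beta}(\overline{\D})$, i.e.\ $\varphi'\in C^{j,\beta}(\overline{\D})$ and $\varphi\in C^{j+1,\beta}(\overline{\D})$. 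After $k-1$ steps, $\varphi\in C^{k,\beta}(\overline{\D})$. Finally, since $\varphi\in C^{k,\beta}(\overline{\D})$ and the Jacobian $|\varphi'|^2$ is bounded below on the compact set $\overline{\D}$, the inverse function theorem up to the boundary gives $\phi=\varphi^{-1}\in C^{k,\beta}(\overline{\Omega})$.

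The main obstacle is entirely concentrated in the base case: on the one hand, producing a \emph{continuous, non-vanishing} boundary extension of $\varphi'$ (ruling out boundary zeros, via Lindel\"of--Warschawski and the Dini condition); on the other hand, the boundedness of the conjugation operator on H\"older classes (Privalov), which is what transfers regularity from $\arg\varphi'$ to $\log|\varphi'|$ and, through the Poisson integral, propagates it into the disk. Once these two ingredients are in place, the higher-order statement follows by the routine induction above, with no further analytic input.
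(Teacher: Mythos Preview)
The paper does not prove this theorem at all; it is stated as the classical Kellogg--Warschawski theorem with a citation to \cite{Goluzin}, p.~426, and the only additional remark is that the $C^{k,\beta}$-regularity of the inverse $\phi$ follows from standard methods once one knows $\varphi'$ is non-vanishing on $\overline{\D}$. Your sketch is the standard argument (harmonic conjugation via Privalov, Lindel\"of--Warschawski for the base case, then bootstrap), and it correctly isolates the non-vanishing of $\varphi'$ on $\overline{\D}$ as the point that yields the statement for $\phi$---exactly what the paper singles out in its one-sentence comment after the theorem.
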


The statement for the inverse mapping is, strictly speaking, 
not included in the presentation in \cite{Goluzin}, but
follows from standard methods in view of the non-vanishing of the derivative
on the closure $\overline{\D}$.

Another important feature of harmonic measures is the following simple monotonicity 
property. If $\Omega$ is a subset of $\Omega'$ and $E\subset\partial\Omega$ 
is a subset of $E'\subset\partial\Omega'$, then for any $z_0\in\Omega$,
it trivially holds that
\begin{equation}\label{eq:minotonicity-hm}
\omega_{\Omega,z_0}(E)\le \omega_{\Omega',z_0}(E').
\end{equation}
This is most easily seen from the equivalent definition of 
$z\mapsto \omega(z,E,\Omega)$
as the solution to the Dirichlet problem with boundary data $\chi_{E}$.
We can also argue probabilistically as follows. Any Brownian path which starts at $z_0$
and exits $\Omega$ through $E$ is also a path in $\Omega'$ which exists $\Omega'$
through $E$, so in particular it exits through $E'$. 

\subsection{Piecewise smooth domains and conformal mappings near corners}
A simple closed curve $\Gamma$ is 
said to be $C^2$-smooth if it admits a $C^2$-smooth
parameterization $f:[0,1]\to \Gamma$ with non-vanishing derivative. 
If $\Gamma$ is a simple arc, we say that it is $C^2$-smooth if
there exists a larger arc $\tilde{\Gamma}$ with a $C^2$-smooth (surjective)
parameterization $f:[0,1]\to\tilde{\Gamma}$ with non-vanishing derivative,
such that $\Gamma$ is contained in the image 
$f([\epsilon,1-\epsilon])$ for some $\epsilon>0$.

\begin{defn}
We say that a Jordan curve $\Gamma$ is piecewise smooth
if it is made up of finitely many $C^2$-smooth arcs $(\Gamma_j)_{j=1}^M$.
When two arcs $\Gamma_j$ and $\Gamma_{j+1}$ meet 
(we identify $\Gamma_{M+1}$ with $\Gamma_1$), they form an interior angle 
$\pi\sigma$, where $\sigma=\sigma(\Gamma_j,\Gamma_{j+1})\in[0,2]$. 
If each angle $\sigma\pi$ lies in the open interval 
$(0,2\pi)$ we say that $\G$ is piecewise smooth
without cusps.

A domain is said to be piecewise smooth if its boundary curve is.
\end{defn}

The following is a small modification of Theorem 3.9 of \cite{Pommerenke}
(cf.\ Exercise 3.4.1 \emph{loc.\ cit.}). 

\begin{prop}\label{prop:conf-corner}
Denote by $\G$ a piecewise smooth simply connected domain without cusps, 
and let $w_0\in\partial\G$ a corner point with interior angle $\pi\sigma$.
If $f$ denotes a conformal mapping of $\D$ onto $\G$ with $f(\zeta)=w_0$,
there exists a non-zero complex constant $a$ such that for any $\epsilon>0$,
we have with $\sigma'=\min\{\sigma,1\}$ that
\[
f(z)=f(\zeta)+a(z-\zeta)^\sigma + \Ordo(|z-\zeta|^{\sigma+\sigma'-\epsilon}),
\qquad z\in\overline{\D}.
\]
and
\[
\frac{f'(z)}{(z-\zeta)^{\sigma-1}}=a\sigma+\Ordo(|z-\zeta|^{\sigma'-\epsilon}),
\qquad z\in\overline{\D}.
\]
\end{prop}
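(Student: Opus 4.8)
The plan is to reduce Proposition~\ref{prop:conf-corner} to the cited statement in Pommerenke by a standard localization and straightening argument. First I would normalize so that $\zeta = 1$ and $w_0 = 0$, and observe that near $1$ the boundary $\partial\D$ is a smooth arc while near $w_0$ the boundary $\partial\G$ consists of two $C^2$-smooth arcs meeting at the interior angle $\pi\sigma$ with $\sigma \in (0,2)$. The first honest step is to reduce to the case where the two arcs of $\partial\G$ at the corner are straight line segments: pick conformal maps of small half-disk neighborhoods that straighten each of the two $C^2$ arcs, and note (via Kellogg's theorem, Theorem~\ref{thm:Kellog}, applied locally) that these straightening maps are $C^{1,\beta}$ up to the boundary for every $\beta<1$, hence the error they introduce is $\Ordo(|z-\zeta|^{1+\beta})$, which is absorbed into the stated remainder once one checks $\sigma + \sigma' - \epsilon \le \sigma + \beta$ for $\beta$ close enough to $1$ — here the key point is that $\sigma' = \min\{\sigma,1\} \le 1$, so the exponent on the error term is genuinely weaker than the $1+\beta$ coming from the boundary regularity.

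Next, for the model corner (the sector of opening $\pi\sigma$ with straight sides) the map $w\mapsto w^{1/\sigma}$ takes it to a half-plane, which composed with a Möbius map gives $\D$; tracking this through, one gets exactly $f(z) = a(z-\zeta)^\sigma(1 + o(1))$ with a refined expansion, which is Theorem~3.9 of \cite{Pommerenke}. The modification relative to the book is only in making the remainder quantitative with the exponent $\sigma + \sigma' - \epsilon$, which is what Exercise~3.4.1 there supplies, or can be reproven by differentiating the half-disk straightening map: its derivative is H\"older-$\beta$, so it equals a nonzero constant plus $\Ordo(|z-\zeta|^\beta)$ near the corner, and substituting this into $f = (\text{straightening})^{-1} \circ (\text{model map})$ and expanding by the chain rule produces the claimed error after matching powers. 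The second displayed formula for $f'/(z-\zeta)^{\sigma-1}$ follows by differentiating the first, which is legitimate on $\overline\D$ since $f$ extends to a function analytic in $\D$ and $C^1$ up to $\partial\D$ away from finitely many corners, and the expansion can be differentiated term by term because it comes from an actual analytic (model) map composed with a $C^{1,\beta}$ correction; the gain of one power in the exponent of $z-\zeta$ upon differentiation is consistent with the stated $\Ordo(|z-\zeta|^{\sigma'-\epsilon})$.

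The main obstacle is the bookkeeping in the straightening step: one must verify that composing the global conformal map $f$ with a local $C^{1,\beta}$ diffeomorphism that straightens the boundary near $w_0$ does not destroy analyticity and does produce an error of the claimed order uniformly on $\overline\D$ near $\zeta$, and in particular that the two arcs can be straightened simultaneously near the corner by a single conformal map whose boundary regularity is controlled. This is essentially the content of the reflection-principle plus Kellogg-type argument in \cite{Pommerenke}, so I would cite it, but the careful reader will want the remark that the exponent $\sigma + \sigma' - \epsilon$ is optimal in the sense that it reflects precisely the $C^2$ (hence $C^{1,1^-}$) regularity of the arcs near the vertex; if the arcs were merely $C^{1,\beta}$ one would replace $\sigma'$ by $\beta$ throughout. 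Everything else — the normalization, the reduction to a sector, the model computation $w\mapsto w^{1/\sigma}$, and differentiating the asymptotic — is routine.
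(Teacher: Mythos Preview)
Your overall plan (localize, straighten the corner, apply Kellogg) is the same as the paper's, which in turn follows Pommerenke's Theorem~3.9. But there is a genuine gap in your execution, concentrated exactly at the step you flag as ``the main obstacle'' and then defer.

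You propose to first reduce to the case where the two boundary arcs at $w_0$ are straight line segments, via a single conformal map that is $C^{1,\beta}$ up to the boundary for every $\beta<1$. This is essentially circular: a conformal map taking a curved corner of opening $\pi\sigma$ to a straight sector of the same opening, with $C^{1,\beta}$ regularity up to the vertex, is precisely what the proposition asserts (composed with the model map $(z-\zeta)^\sigma$). Straightening the two arcs \emph{individually} by Schwarz reflection plus Kellogg gives two different local maps that do not combine into one. And in fact, for $\sigma>1$ such a straightening map is \emph{not} $C^{1,\beta}$ for all $\beta<1$; working it out via the power map shows it is only $C^{1,(1-\epsilon)/\sigma}$ at best. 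Your subsequent bookkeeping (``check $\sigma+\sigma'-\epsilon\le\sigma+\beta$'') therefore does not derive the exponent $\sigma'$; it merely verifies consistency with a result you have assumed.

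The paper reverses the order: it applies the power map $w\mapsto w^{1/\sigma}$ directly to the \emph{curved} localized domain $\G_0$, producing a domain $H$ whose two boundary arcs are now tangent at the image of the corner. The key computation---and the place where $\sigma'=\min\{\sigma,1\}$ genuinely enters---is a chain-rule estimate showing that if $w(s)$ is a $C^2$ parametrization of an arc of $\partial\G_0$, then $v(t)=(w(ct^\sigma))^{1/\sigma}$ has $\omega(\delta,v')\lesssim\omega(\delta^{\sigma'},w')$, so $\partial H$ is $C^{1,\sigma'(1-\epsilon)}$-smooth. Kellogg then gives a map $h\colon\D\to H$ of that same regularity, and one writes $f(z)=w_0+(g(z)-g(\zeta))^\sigma$ with $g=h\circ\varphi^{-1}\in C^{1,\sigma'-\epsilon}$. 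Both displayed asymptotics follow at once from this representation; the derivative formula is not obtained by differentiating the first expansion term by term but by computing $f'(z)=\sigma(g(z)-g(\zeta))^{\sigma-1}g'(z)$ directly.
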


\begin{proof}[Proof sketch]
We explain how the proof of Theorem 3.9 of \cite{Pommerenke} should be modified
to yield this claim (in this proof we use the terminology of that reference).
Without loss of generality we may assume that $w_0=0$ and $\zeta=1$.

As explained in the proof of \cite[Theorem~3.9]{Pommerenke}, we may localize 
to a piecewise smooth domain $\G_0$ containing $\G\cap\D(0,\delta)$, for some
small $\delta>0$. This localization is done so that $0$
is the only singular boundary point at $\partial\G_0$, and amounts to constructing a
conformal mapping $\varphi$ of $\D$ onto $f^{-1}(\G_0)\subset\D$ (cf. fig 3.3 in \emph{loc.\ cit.})
which is $C^{1,1-\epsilon'}$-smooth for any given $\epsilon'>0$.

Denote by $H$ the domain resulting from applying the straightening mapping 
$w^{\frac{1}{\sigma}}$ to $\G_0$ near $0$,
with $0$ being mapped to the origin.
We claim the boundary of $H$ consists of two arcs
(corresponding to the two arcs 
$\Gamma^\pm$ meeting at $w_0$)
which are both $C^{1,\sigma(1-\epsilon')}$-smooth for any 
$\epsilon'>0$, and that are tangent at $0$. 
Indeed, the mapping straightens the angle at $w_0$ by construction,
and if $w$ denotes the given parameterization 
of one of the arcs $\Gamma^\pm$, 
then a one-sided parameterization $v$ of $\partial H$ at $0$
is obtained by $v(t)=(w(ct^\sigma))^{1/\sigma}$ where $c=1/|w'(0)|$. 
Following along the lines of the original proof, we have
\begin{equation}\label{eq:v'-def}
v'(t)=c^{\frac1\sigma}w'(ct^\sigma)u(ct^\sigma),\qquad 
u(s)=\big(\tfrac{1}{s}w(s)\big)^{\frac{1-\sigma}{\sigma}}.
\end{equation}
The non-vanishing of $w'(0)$ and hence of $w(s)/s$ for $s$ 
small shows that the modulus of continuity of the function
$u(s)$ is controlled by that of $s^{-1}w(s)$. Here, 
we recall that the modulus
of continuity $\omega(\delta,F)$ of a function $F$ is defined by
\[
\omega(\delta,F)=\sup_{|z-w|\le \delta} |F(z)-F(w)|.
\]
In view of the computation
\[
\frac{w(s_1)}{s_1}-\frac{w(s_1)}{s_2}=\int_0^1(w'(s_1x)-w'(s_2x))\diff x
\]
it follows that $\omega(\delta,u)\lesssim \omega(\delta,w')$. 
It follows from \eqref{eq:v'-def} that $\omega(\delta,v')$ 
is controlled by $\omega(c\delta^{\sigma'},w)$ where $\sigma'=\min\{\sigma,1\}$
(cf.\ (8) and the equation immediately following it in \textit{loc.\ cit.}). 
So, after straightening the corner, the domain is $C^{1,\sigma'(1-\epsilon')}$-smooth 
for any $\epsilon'>0$, where $\sigma'=\min\{\sigma,1\}$. The claim then
follows by invoking Kellogg's theorem 
(Theorem~\ref{thm:Kellog}, cf.\ \cite[Theorem 3.6]{Pommerenke})
for the mapping $h:\D\to H$. Indeed, the mapping $f$ is given in terms of $h$ and the
localization mapping $\varphi$ by
$f(z)=(h(\varphi^{-1}(z))^{\sigma}$, where $h\circ\varphi^{-1}$ is a 
$C^{1,\sigma'(1-\epsilon')}$-smooth for any $\epsilon'$.
\end{proof}

\begin{rem}\label{rem:prop-conf-corner}
For future reference we record that in the above proof, we found that
$f(z)=w_0+(g(z)-g(1))^{\sigma}$, where for any $\epsilon>0$ the mapping 
$g$ belongs to the space $C^{1,\sigma'(1-\epsilon)}$ with $\sigma'=\min\{\sigma,1\}$,
where $w_0=f(1)$. 
Hence both $f$ and $f^{-1}$ are H\"older continuous.
\end{rem}

\begin{rem}\label{rem:prop-conf-corner-stab}
Let $H_t$  and $H$ be a family of Jordan domains with $C^2$-smooth boundaries,
such that $H_t$ approaches $H$ as $t\to 0$. 
Suppose $\partial H_t$ has a parameterization $w_t$, and
denote by $w$ the corresponding parameterization of $\partial H$. 
Then, by a stability theorem of Warschawski's \cite{Warschawski}, 
if $\lVert w_t''-w''\rVert_{L^2[0,1]}\to 0$ and $w_t\to w$ uniformly, 
then we have
\[
\sup_{z\in\overline{\D}}|h_t'(z)-h'(z)|\to 0.
\]
Assume now that $\G_t$ and $\G$ are piecewise 
smooth, such that the above condition
holds for all the $C^2$-smooth boundary subarcs $\Gamma_{t,j}$ and $\Gamma_j$. 
Then apply Warschawski's theorem
to the mapping functions $h_t$ and $h_0$, and 
$\varphi_t$ and $\varphi_0$ appearing in the proof of 
Proposition~\ref{prop:conf-corner} onto the domains $H_t$ and $H$,
obtained by straightening
a given angle at $\G_t$ and $\G$. 
This shows in particular that the implicit constants in 
Remark~\ref{rem:prop-conf-corner} may be taken uniform over appropriately
convergent sequences of piecewise smooth domains $\G_t$. Indeed, 
the functions $g_t$ with $f_t(z)=f_t(1)+(g_t(z)-g_t(1))^{\sigma_t}$ appearing
in Remark~\ref{rem:prop-conf-corner} ($\sigma_t$ being the $t$-dependent angle)
are $C^1$ with uniform control on their $C^1$-norm.
\end{rem}

\begin{defn}\label{defn:cone}
A domain $\G$ is said to meet a uniform cone condition,
if there exist numbers 
$\epsilon$ and $\theta>0$, such that for any boundary point $z_0\in\partial\G$
there exists a $c\in[0,2\pi)$,
such that the planar cone 
\[
W=\e^{\imag c}\big\{z\in\C:|\arg(z-z_0)|\le \theta\big\},
\]
with apex at $z_0$ locally contains $\partial\G$, in the sense that  
$\partial\G\cap \D(z_0,\epsilon)\subset W$.

The parameter $\theta$ is called the {\em aperture} of the cone.
\end{defn}

\subsection{Balayage measures}\label{ss:bal}
For a measure $\nu\in H^{-1}(D)$ where $D$ is a bounded Lipschitz domain, 
we define its balayage $\mathrm{Bal}(\nu,D^c)$ (sweeping)
to the complement $D^c$ as the unique measure $\mu$ supported on 
$\partial D$ which satisfies
\[
U^\mu(z)=U^{\nu}(z),\qquad z\in D^c.
\] 
By the maximum principle, it follows that we have the global bound
\[
U^\mu(z)\ge U^\nu(z),\qquad \text{q.e. }z\in\C,
\]
and from the equality of the potentials near infinity
it follows that $\mu$ and $\nu$ have the same mass.
If $\mu$ has finite logarithmic energy and $D$ is a Lipschitz domain, 
then for $\nu=\mathrm{Bal}(\mu,D^c)$ we have $-\Sigma(\nu)\le -\Sigma(\mu)$.

\begin{rem}
More generally, if $\nu$ is supported on $\C$, we define 
the balayage measure as 
\[
\mathrm{Bal}(\nu,D^c)=\mathrm{Bal}(\chi_D\nu,D^c) 
+ \chi_{D^c}\,\nu,
\]
provided that $\chi_D\,\nu$ is regular enough.
\end{rem}

Let us only say a few words about the existence of Balayage measures.
In view of the assumption $\nu\in H^{-1}(\Omega)$, 
the Dirichlet problem on $\Omega$ with boundary datum 
$f=U^\nu\big\vert_\Omega\in H^{\frac12}(\partial \Omega)$ admits a solution 
$V\in H^1(\Omega)$, and the function $V_0$ which equals $V$ on $\Omega$ and 
$U^\mu$ elsewhere has (distributional) Laplacian in the Sobolev
space $H^{-1}(\Omega)$, in fact in $H^{-\frac12}(\Gamma)$.
The balayage measure of $\nu$ to $\Omega^c$ then equals
$\mu=\tfrac{1}{2\pi}\Delta V_0$.
In particular, under these 
mild conditions, the balayage measure exists. For an introductory 
account of these matters, we refer to \cite[Ch. II]{SaffTotik}.

Let $\nu$ be a measure supported on a bounded domain $\Omega$, which we assume to
be simply connected. Let 
$\mu=\mathrm{Bal}(\nu,\Omega^c)$. For $w\in\Omega$, 
the potential of the Balayage measure
$\mathrm{Bal}(\delta_w,\Omega^c)$ is given by 
\[
U^{\mathrm{Bal}(\delta_w,\Omega^c)}(z)=
\begin{cases}
\log\frac{|z-w|}{|\phi_w(z)|},& z\in\Omega\\
\log|z-w|,& z\in\Omega^c,
\end{cases}
\]
where $\phi_w$ is the Riemann mapping of $\Omega$ onto $\D$ with $\phi_w(w)=0$.
In view of this formula, it is clear that we have the representation
\begin{equation}\label{eq:bal-pot-nu}
U^\mu(z)=\int \log\frac{|z-w|}{|\phi_w(z)|}\diff\nu(w),\qquad z\in\Omega,
\end{equation}
see eq. (5.3) on p. 124 in  \cite{SaffTotik}.
Similarly, one can deduce the representation formula
\begin{equation}\label{eq:bal-dens-nu}
\frac{\diff\mathrm{Bal}(\nu,\Omega^c)}{\diffs}
=\int\frac{\diff\omega_{\Omega,z}(\cdot)}{\diffs}
\,\diff\nu(z)
\end{equation}
for the density with respect to arc length on $\partial\Omega$, whenever $\nu$
is compactly
supported on the domain $\Omega$ (c.f.\ Eq.\ (4.44) on p. 122 in \cite{SaffTotik}).

\subsection{Quadrature domains}
We already encountered the notion of 
(subharmonic\footnote{Quadrature domains can also be defined for 
harmonic or analytic functions instead of subharmonic ones.
Here, the notion of quadrature domains 
refers only to the subharmonic version, which
is the most restrictive of the three.}) quadrature 
domains in Definition~\ref{defn:q-dom}. 
Let $\nu$ be a positive atomic measure 
\[
\nu=\sum_{\lambda\in\calI}\omega_\lambda\delta_\lambda
\]
with finite support $|\calI|<\infty$.
Recall that a planar open set $\Omega$ (not necessarily connected) 
is said to be a quadrature domain for subharmonic 
functions (or simply a `subharmonic quadrature domain') with respect to the measure 
$\nu$ if, for any $L^1$-integrable subharmonic function $u$ on 
$\Omega$ we have
\[
\sum_{\lambda\in\calI}
\omega_\lambda u(\lambda) \le \frac{1}{\pi}\int_{\Omega}u(z)\diffA(z).
\]
The simplest quadrature domain is a disk, as 
follows from the sub-mean value property for 
subharmonic functions.
The simplest non-trivial example is given by 
the \emph{Neumann oval}, which we will return to later in 
\S~\ref{s:examples}.

Given a finitely supported atomic 
measure $\nu$, there exists 
a unique subharmonic quadrature domain $\Omega_\nu$ 
relative to $\nu$\footnote{Actually, this holds for 
much more general measures: $\nu$ has to be 
sufficiently concentrated, which holds, for instance
for measures $\nu$ of \emph{regular support} 
as defined before Theorem~\ref{thm:s-qdom}.} 
(see \cite{Sakai2}, Theorems 3.4 and 3.5), 
and it may be constructed by a sweeping 
out process known as partial balayage 
(see \cite[Ch. 2]{Sakai2}). We will need the following
characterization of $\Omega_\nu$, see 
Theorem~4.8 in \cite{GustafssonShahgholian}. 
We denote by $\mathrm{SH}(\C)$ the class of subharmonic functions
on $\C$.

\begin{thm}\label{thm:q-dom}
Given a finitely supported positive atomic measure $\nu$, 
the quadrature domain
$\Omega_\nu$ is the non-coincidence set 
  $\{z\in\C: U(z)<\tfrac12|z|^2\}$ for the subharmonic envelope
function
\[
U(z)=\sup\big\{u(z):\,
u\in\mathrm{SH}(\C),\,\,u(z)\le \tfrac12|z|^2-U^\nu(z)\big\}.
\]
\end{thm}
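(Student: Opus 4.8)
The statement to prove is Theorem~\ref{thm:q-dom}, characterizing the subharmonic quadrature domain $\Omega_\nu$ as the non-coincidence set of the obstacle problem with obstacle $\tfrac12|z|^2 - U^\nu(z)$. Since the excerpt cites \cite[Theorem~4.8]{GustafssonShahgholian} for this result, the cleanest route is to present the classical partial balayage argument. The plan is to set $\psi(z) = \tfrac12|z|^2 - U^\nu(z)$ and let $U$ be the largest subharmonic minorant of $\psi$ on $\C$; one first checks $U$ is well-defined (the family is nonempty since $u\equiv -\infty$, or better a large negative constant minus a logarithmic correction, lies below $\psi$ near infinity where $\psi(z)\sim \tfrac12|z|^2 - (\text{mass of }\nu)\log|z| \to +\infty$) and that $U$ is itself subharmonic and $\le\psi$. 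Then define $\Omega = \{U < \psi\}$ (open, by upper semicontinuity of $U - \psi$ from above and lower semicontinuity from below — actually $U-\psi$ is upper semicontinuous as a difference, so $\{U<\psi\}$ is open once one knows $U$ is continuous, which follows from standard obstacle-problem regularity since $\Delta\psi = 2 - 2\pi\nu$ is a nice distribution away from $\mathrm{supp}(\nu)$, and near $\mathrm{supp}(\nu)$ the obstacle tends to $+\infty$ so the coincidence set avoids it). The key structural facts are: (i) $\Delta U = 0$ on $\Omega$ by the standard variational argument (on the non-coincidence set the minorant is harmonic); (ii) $\Delta U = \Delta\psi = 2 - 2\pi\nu$ on the coincidence set $\C\setminus\Omega$ in the sense that $U=\psi$ there, hence $\tfrac{1}{2\pi}\Delta U = \tfrac{1}{\pi}\chi_{\C\setminus\Omega} - \nu$ globally once one also accounts for the boundary; (iii) $\mathrm{supp}(\nu)\subset\Omega$.

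With these in hand the quadrature inequality is a short computation. Writing $\sigma = \tfrac{1}{2\pi}\Delta U$, which is a positive measure (since $U$ is subharmonic) with $\sigma = \tfrac1\pi\chi_{\C\setminus\Omega}\diffA$ on $\Omega^c$ and $\sigma = 0$ on $\Omega$ except possibly on $\partial\Omega$, and comparing potentials: the function $U - U^\sigma$ is harmonic on $\C$ and bounded (using the matching growth $\tfrac12|z|^2$ at infinity and the fact that $\sigma$ has total mass $= |\Omega|/\pi$... one must check $|\Omega| = \pi\,\nu(\C)$, which comes from comparing the $\log$-terms in the asymptotic expansion at infinity of $U=\psi$ outside a compact set), so it is constant. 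This gives $U^\sigma(z) = U^\nu(z) + \text{const}$ near infinity, forcing $\sigma$ and $\nu + \tfrac1\pi\chi_{\Omega^c}\diffA$ to agree in the appropriate sense; more directly, for subharmonic $u$ integrable on $\Omega$ one uses $U = \psi = \tfrac12|z|^2 - U^\nu$ on $\Omega^c$ together with $U$ harmonic on $\Omega$ and $U\le\psi$ globally, integrates $u\,\Delta U$ against $U\,\Delta u \ge 0$ by Green's formula / the definition of distributional Laplacian, and collects terms. Essentially: $\int_\Omega u\,\diffA - \pi\sum_\lambda \omega_\lambda u(\lambda) = \pi\int u\, d(\tfrac1\pi\chi_\Omega\diffA - \nu)$, and since $\tfrac1\pi\chi_\Omega\diffA - \nu$ has the same potential as... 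I would instead follow \cite[Ch.~2]{Sakai2} verbatim for this accounting.

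The uniqueness of $\Omega_\nu$ (asserted in the surrounding text, needed implicitly) follows from the standard argument that two subharmonic quadrature domains $\Omega_1,\Omega_2$ for the same $\nu$ must satisfy $|\Omega_1\setminus\Omega_2| = |\Omega_2\setminus\Omega_1| = 0$ by testing with suitable truncated logarithmic kernels, and then a connectivity/maximal argument identifies them; but since the theorem as stated is only the characterization, I would cite \cite{Sakai2} for uniqueness and focus the written proof on the obstacle-problem identification above, which is what is actually invoked later. The main obstacle I anticipate is the regularity at $\partial\Omega$: one must rule out that $\sigma$ carries singular mass on $\partial\Omega$ and show $\partial\Omega$ has zero area, so that $\sigma = \tfrac1\pi\chi_{\C\setminus\Omega}\diffA - \nu$ is an honest identity of measures; this is exactly where the regularity theory for the obstacle problem (or the $C^{1,1}$ regularity of $U$) enters, and rather than reprove it I would appeal to \cite[Theorem~4.8]{GustafssonShahgholian} and \cite{Sakai2}, presenting the argument above as a self-contained sketch of why the cited characterization holds in our setting.
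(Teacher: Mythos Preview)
The paper does not prove Theorem~\ref{thm:q-dom}; it is stated as a known result and attributed to \cite[Theorem~4.8]{GustafssonShahgholian}, with existence and uniqueness of $\Omega_\nu$ separately credited to \cite{Sakai2}. So there is no ``paper's own proof'' to compare against---the theorem functions as background input.

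Your proposal is therefore not a competing proof but a sketch of why the cited result holds, and you are transparent about this. The outline is broadly sound: nonemptiness of the admissible class (constants work, since $\psi(z)=\tfrac12|z|^2-U^\nu(z)\to+\infty$ both at infinity and at the atoms of $\nu$), harmonicity of $U$ on the non-coincidence set, the inclusion $\mathrm{supp}(\nu)\subset\Omega$, and the mass identity $|\Omega|=\pi\,\nu(\C)$ from matching logarithmic growth at infinity are all the right ingredients. You also correctly flag the one genuinely delicate point---that $\tfrac{1}{2\pi}\Delta U$ carries no singular mass on $\partial\Omega$---and sensibly defer it to the cited regularity theory rather than attempt it from scratch. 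The passage where you derive the quadrature inequality is admittedly loose (you start a Green's-formula computation, abandon it, and fall back on \cite{Sakai2}); if you were to write this up, the cleanest route is to test the identity $U^{\frac{1}{\pi}\chi_\Omega}-U^\nu=\psi-U=\tfrac12|z|^2-U^\nu-U$ on $\Omega^c$ together with the global inequality $U\le\psi$ against an integrable subharmonic $u$ via the Riesz representation, which gives $\int u\,d\nu\le\tfrac{1}{\pi}\int_\Omega u\,\diffA$ directly. But since the paper treats the theorem as a black box, none of this is required.
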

In view of Sakai's regularity theorem 
(see Theorem~\ref{thm:sakai} below), the 
boundary $\partial\Omega$ is piecewise real-analytic. As mentioned above,
Aharonov and Shapiro \cite{AharonovShapiro} established the stronger result
that the boundary of a subharmonic quadrature domain is algebraic.

\begin{rem}
We use the notion of quadrature domain in the extended sense when the measure 
$\nu$ is supported on union of a finitely many analytic curves 
and countably many points. The same definition applies, 
and Theorem~\ref{thm:q-dom} holds also in this case.
\end{rem}

\subsection{Several variants of the obstacle problem}
Obstacle problems
may be posed in many different ways, which 
is one of the reasons why they are so useful. 
The {\em Dirichlet energy}
$\dir(u)$ of $u\in H^1(D)$ is defined as
\begin{equation}\label{eq:dir}
\dir(u)=\int_D|\nabla u|^2\,\diffA.
\end{equation}
We remind the reader that for $f$ and $g$ in the Sobolev space $H^1(D)$
(or $H^{\frac12}(\Gamma)$, 
the inequality $f\le g$ is understood as
$f(z)\le g(z)$ a.e.\ $z\in D$ (resp. $z\in\Gamma$).
\begin{defn}\label{defn:obst-bdd}
\noindent {\rm (a)}\, For a Lipschitz 
domain $D$ and functions $\psi\in H^1(D)$, 
and $f\in H^{\frac12}(\partial D)$, we 
denote by $\calK_{\psi}^{f}=\calK_{\psi}^{f}(D)$ 
the class
\begin{equation}\label{eq:obst-class}
 \calK_{\psi}^{f}=\big\{u\in H^1(D): 
 u\le \psi,\,u\vert_{\partial D}=f\big\}.
\end{equation}
The minimizer $v_0$ of $\dir(v)$ over 
$v\in \calK_{\psi}^{f}$ is said
to solve the obstacle problem with full obstacle $\psi$  
and boundary datum $f$.

\smallskip

\noindent {\rm (b)}\, If moreover $\Gamma$ denotes a Lipschitz curve whose 
closure is contained in $D$,
and $g\in H^{\frac12}(\Gamma)$, 
we denote by $\calK_{\psi,g}^{f}=\calK_{\psi,g}^{f}(D)$ the class
\begin{equation}\label{eq:obst-class-mixed}
\calK_{\psi,g}^{f}=\big\{u\in H^1(D): 
u\le \psi,\,u\vert_{\Gamma}\le g,\,u\vert_{\partial D}=f\big\}.
\end{equation}
The minimizer $v_0$ of $\dir(v)$ 
for $v\in \calK_{\psi,g}^{f}$ is said
to solve the mixed obstacle problem with thin obstacle $g$ on $\Gamma$, 
full obstacle $\psi$ on $D$, and boundary datum $f$.
\end{defn}

When the boundary datum agrees with the full obstacle, 
we denote the classes simply by $\calK_\psi$ and $\calK_{\psi,g}$, respectively.
To simplify terminology, if $\calK$ is any convex and nonempty subset of $H^1(D)$, 
we speak of the solution to the obstacle
problem for $\calK$.

\begin{rem}[\S 3.19, 3.21 and in \cite{HKM}]\label{rem:variational-basic}
Whenever the class $\calK$ is non-empty, 
there exists a unique solution to any of the
obstacle problems of Definitions~\ref{defn:obst-bdd}. 
Moreover, the solution is always subharmonic.
In addition, if $v_0$ denotes the
minimizer the Dirichlet energy over the convex 
class $\calK$, it holds that
\[
\int \nabla v_0\cdot\nabla(v-v_0)\diffA\ge 0,\qquad v\in\calK.
\]
\end{rem}

\begin{rem}\label{rem:one-obst}
If $G$ denotes the harmonic function 
on $D\setminus \Gamma$ which equals $g$
on $\Gamma$ and $f$ on $\partial D$, then the solution of the mixed obstacle
problem for $\calK_{\psi,g}^{f}$ agrees with the classical solution for 
$\calK_\eta$, where
\[
  \eta=\min\{G,\psi\}.
\]
To see why the two solutions agree, one simply 
notices that the two classes agree: 
indeed, for any $u\in \calK_{\psi,g}^{f}$ we
have $u\le G$ by the maximum principle. Moreover, 
$u$ has the right boundary data, and $u\le \psi$. 
Hence, $u\in \calK_{\eta}$. The reverse 
direction follows since traces respect
inequalities among functions in $H^1(D)$.
\end{rem}

We will moreover need to consider obstacle problems 
for \emph{unbounded} domains. In this situation, 
minimization of Dirichlet energy 
does not make sense, but we instead rely on an envelope 
formulation involving subharmonic functions 
(see e.g.\ \cite{Convexity} for background on subharmonic functions).
For the definition, we need the following notions.
We denote by $\mathrm{SH}_1(\C)$ the class of 
subharmonic functions of growth
\[
v(z)=\log|z|+\Ordo(1),\qquad z\to\infty.
\]
Equivalently, $\mathrm{SH}_1(\C)$ is the class of 
subharmonic functions whose Riesz masses are probability
measures with finite logarithmic energy. When working 
with global obstacle problems on the entire complex plane,
we will consider obstacles $\psi$ subject to the growth condition
\begin{equation}\label{eq:growth-phi}
\liminf_{|z|\to\infty}\frac{\psi(z)}{\log|z|}>1.
\end{equation}

The following definition is central to our work.

\begin{defn}\label{defn:obst-unb}
{\rm (a)}\, 
Let $\psi\in H^1_{\mathrm{loc}}(\C)$ be a 
real-valued function subject to the growth bound 
\eqref{eq:growth-phi}.
We say that $v_0$ solves the 
global obstacle problem with full obstacle $\psi$ if
$v_0\in\mathrm{SH}_1(\C)$ with $v_0\le\psi$ and
\[
v_0(z)=\mathrm{sup}\big\{v(z):v\in\mathrm{SH}_1(\C),\, v\le \psi\big\}.
\]

\smallskip

\noindent {\rm (b)}\, A function $v_0$ 
is said to solve the global obstacle problem
with thin obstacle $g$ in $H^{\frac12}(\Gamma)$
and full obstacle $\psi\in H^1_{\mathrm{loc}}(\C)$
if $v_0\in\mathrm{SH}_1(\C)$, $v_0\le \psi$, $v_0\vert_{\Gamma}\le g$ and
\[
v_0(z)=
\mathrm{sup}\big\{v(z):v\in\mathrm{SH}_1(\C),\;\, 
v\le \psi,\;\text{and }\,v\big\vert_{\Gamma}\le g\big\}.
\]
\end{defn}

\begin{rem}
In part {\rm (a)} of Definitions~\ref{defn:obst-bdd} and \ref{defn:obst-unb}, 
the solution $v_0$ is known to be 
as regular as the obstacle $\psi$, up to order $C^{1,1}$. 
Provided that $\psi$ is smooth enough, the function $v_0$ solves the PDE
\[
\Delta v_0=\chi_{\calS}\Delta \psi,
\]
in the distributional sense
for some compact set $\calS$. Since this set is a priori unknown, 
its boundary is known as a {\em free boundary}. 
The set $\calS$ is precisely the {\em coincidence
set}
\begin{equation}\label{eq:coincidence}
\calS=\big\{z:v_0(z)=\psi(z)\big\}.
\end{equation}
In part {\rm (b)} of the same definitions, 
the same holds away from $\Gamma$.
In addition to being supported on $\calS$, 
the Laplacian $\Delta v_0$ will contain
a {\em singular part}, supported on $\Gamma$ 
(see Proposition~\ref{prop:reg} below).
\end{rem}

\begin{rem}\label{rem:monotone-coincidence}
In the context of any of the above obstacle problems 
(Definitions~\ref{defn:obst-bdd} and \ref{defn:obst-unb}),
if the thin obstacle $g$ is replaced by another obstacle $\tilde{g}$
with $\tilde{g}<g$, then the corresponding solution $\tilde{v}_0$ satisfies 
$\tilde{v}_0\le v_0$. As a consequence, the coincidence set with the fixed full
obstacle decreases. Similarly, if the full obstacle $\psi$ is replaced by 
$\tilde{\psi}<\psi$, then the coincidence set with the fixed thin obstacle 
$g$ on $\Gamma$ shrinks.
\end{rem}

\begin{rem}\label{rem:obst-defn}
In case of obstacle problems on a bounded domain 
(Definition~\ref{defn:obst-bdd}), 
the solution is in fact known to be given as the upper
envelope 
\[
v_0(z)=\sup\big\{v(z):v \in\calK\cap\mathrm{SH}(D)\big\}, 
\]
where $\mathrm{SH}(D)$ denotes
the class of subharmonic functions on $D$ 
(see \cite[Theorem~6.2, Ch. II]{Kinderlehrer-Stampaccia}).
This observation puts this problem on the same 
ground as those in Definition~\ref{defn:obst-unb}.
When available, the energy minimization point of view is 
often advantageous, since tools from functional analysis
are more readily available.
\end{rem}

The upper envelope $u_0$ of a family of subharmonic 
functions may fail to be upper semi-continuous.
It is then convenient to define its upper 
semi-continuous regularization $u_0^*(z)$. 
It is known that $u_0=u_0^*$ outside a polar
set, i.e. a set of vanishing logarithmic capacity (see
\cite[pp. 24-25]{SaffTotik}).
In view of the Brelot--Cartan 
theorem (\cite[Ch. II.2]{SaffTotik}), 
the function $u_0^*$ is a subharmonic function on the domain $D$
of $u_0$.  As such, the F. Riesz Theorem 
(\cite[Ch. II.3]{SaffTotik}) yields the existence 
of a measure $\mu_0$, called the Riesz mass of $u_0^*$ 
on $D$ and a harmonic function $h$ such that
\[
u_0^*(z)=U^{\mu_0}(z)+h(z),\qquad z\in D.
\]
Abusing the notation slightly, will refer to $\mu_0$ as 
the Riesz mass of $u_0$ as well.

\subsection{Classical regularity and stability for the obstacle problem}
Next, we will need some results pertaining to the regularity 
of the coincidence set $\calS$ for the obstacle problem. 
The following result was obtained by 
Sakai, see \cite{Sakai}, Theorem~1.1.

\begin{thm}\label{thm:sakai}
Denote by $\psi$ a strictly subharmonic 
real-analytic function in $\D$, and assume that
$u_0\in C^1(\D)$ solves
\[
\Delta u_0=0\qquad \text{in }\: \D\setminus\calS,
\]
where $\calS$ is the coincidence set $\calS=\{z:u_0(z)=\psi(z)\}$,
and that $0\in\partial\calS$.
Then there exists a positive number $\epsilon>0$
such that either
\begin{enumerate}[(i)]
\item $0$ is a regular boundary point of $\calS$ 
in the sense that $\calS\cap \D(0,\epsilon)$
is a simply connected domain with non-empty interior, 
and $\partial\calS\cap \D(0,\epsilon)$ is an analytic arc.

\item the set\, $\calS\cap \D(0,\epsilon)$ is open, 
consisting of either one (I) or 
two (II) simply connected components, 
and $\partial\calS\cap \D(0,\epsilon)$
consists of two analytic arcs which terminate 
at the origin in a cusp (case I), 
or pass through and are tangent at $0$
forming a double point (case II).

\item the origin is a degenerate point, in 
the sense that $\calS\cap\D(0,\epsilon)$ is
either an isolated point or an analytic arc.
\end{enumerate}
\end{thm}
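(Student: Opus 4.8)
Since this is the classical regularity theorem of Sakai, here is the plan I would follow. \textbf{Setup.} I would write $v=\psi-u_0$, so that $v\ge 0$ with $\calS=\{v=0\}$. Because $0$ lies in $\partial\calS$ and $v$ attains its minimum on $\calS$, both $v$ and $\nabla v$ vanish on $\calS$. Invoking the classical interior regularity theory for the obstacle problem (Frehse, Caffarelli), $v\in C^{1,1}_{\mathrm{loc}}(\D)$, hence $\Delta u_0=\chi_{\calS}\,\Delta\psi$ a.e.\ and $v$ solves $\Delta v=\chi_{\calS^c}\,\Delta\psi$ with $\Delta\psi$ real-analytic and strictly positive near $0$. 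The entire content is then the local shape of $\partial\calS$ at the origin.

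\textbf{Passing to a holomorphic object.} The plan is to exploit the planar complex structure. Since $\Delta\psi$ is real-analytic near $0$, I would choose a real-analytic $\Psi$ with $\partial_{\bar z}\Psi=\tfrac14\Delta\psi$ there. Then $F:=\partial_z v-\Psi$ is holomorphic on $\D\setminus\calS$, and since $\nabla v=0$ on $\calS$ one gets $F=-\Psi$ on $\calS$; as $v\in C^{1,1}$, $F$ extends Lipschitz-continuously to $\partial\calS$ with real-analytic boundary values $-\Psi|_{\partial\calS}$. Wherever $\partial\calS$ is a smooth arc $\gamma$, solving the relation $\Psi(z,\bar z)=-F(z)$ for $\bar z$ (possible since $\partial_{\bar z}\Psi=\tfrac14\Delta\psi\neq 0$) produces a \emph{Schwarz function} for $\gamma$, i.e.\ an analytic $S$ with $S(z)=\bar z$ on $\gamma$, which forces $\gamma$ to be a real-analytic arc. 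The classification thus reduces to: which local configurations of $\calS$ are compatible with a boundary that carries (in this weak sense) a Schwarz function with real-analytic data?

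\textbf{The regular case.} If the Lebesgue density of $\calS$ at $0$ is positive, then by Caffarelli's blow-up dichotomy (alternatively the set-up of \cite{CaffarelliRiviere}) the free boundary is $C^{1,\alpha}$ near $0$; once it is a $C^1$ arc, the Schwarz-function/Cauchy–Kovalevskaya bootstrap upgrades it to an analytic arc, yielding alternative (i). Equivalently, the blow-up of $v$ at $0$ is, after rotation, the half-plane solution of the form $c\,(x^+)^2$.

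\textbf{The singular and degenerate cases; the main obstacle.} The remaining case — $\calS$ has zero density at $0$, so the blow-up of $v$ is a half-plane solution approached tangentially or a degenerate line/point solution — is the genuinely hard part, and I expect it to be the crux. Here one analyzes the local power-series expansion of $F$ and of the candidate Schwarz function, and must exclude pathological boundaries (oscillation, spiralling, infinitely many components) using real-analyticity of $\Delta\psi$ decisively, via a monotonicity/ODE analysis that pins down the admissible leading behaviour. The outcome should be exactly the stated trichotomy: two analytic arcs meeting in a cusp (case~I), a tangential double point bounding two lens-shaped components (case~II), or a coincidence set reduced to an isolated point or a single analytic arc (the degenerate case). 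A self-contained treatment of this step amounts to reproducing the local analysis of \cite{Sakai}; for our purposes the theorem is quoted from there.
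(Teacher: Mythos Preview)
Your proposal is a reasonable outline of the ideas behind Sakai's argument, but there is nothing to compare it against: the paper does not prove this theorem. It is stated in the preliminaries section as a quoted result, with the attribution ``The following result was obtained by Sakai, see \cite{Sakai}, Theorem~1.1,'' and no proof is given. Your own proposal correctly recognizes this at the end (``for our purposes the theorem is quoted from there''), so you and the paper are in agreement that this is a cited black box rather than something to be established here.
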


\begin{figure}
\includegraphics[width=.4\textwidth]{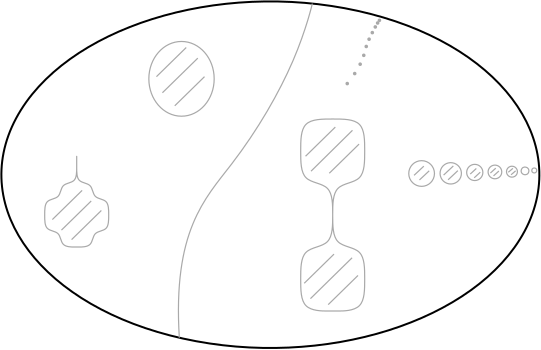}
\caption{The free boundary in the obstacle problem with analytic obstacle, 
showing regular and singular boundary points, 
and possible degeneracies near the boundary.}
\label{fig:Sakai}
\end{figure}

For a schematic illustration of the different free 
boundary types allowed by Sakai's theorem, see Figure~\ref{fig:Sakai}.
In particular, there are only three types of singular boundary points: cusps, double points
and sets with non-empty interior. The latter consists of 
isolated points or a analytic curves, which has to terminate at the 
boundary and be loop-free (cross-cuts). In addition, 
accumulation of isolated points or of components
of $\calI$ with positive area can only occur at the boundary.

In view of Proposition~\ref{prop:reg} below, Sakai's theorem applies
for the solution $v_0$ to the mixed obstacle problem for 
$\calK_{\psi,g}$ for free boundary points away from $\Gamma$, 
wherever $\psi$ is real-analytic with strictly positive Laplacian.
In particular, if $\G$ is simply connected and the interior coincidence set
$\calI=\calS\cap\overline{\G}$ has zero area, then $\partial\calI=\calI$ 
is a union of isolated points which accumulate 
only on $\partial\G$, as well as
real-analytic curves (cross-cuts) in $\G$. 
If we moreover know that $\partial\calI$ is separated from 
$\partial\G$, then $\calI$ can only be a finite 
collection of points. We remark that the finiteness of
$\calI$ may be deduced from the earlier regularity theorem of 
Caffarelli and Rivi{\`e}re \cite{CaffarelliRiviere},
but we will find the other conclusions 
of Sakai's theorem useful later on.

Next, we need an elementary approximation 
property for obstacle problems
on bounded domains. The following is 
Theorem 3.78 of \cite{HKM}.
When formulating results for a general obstacle problem,
we agree to tacitly assume that the sets $\calK$ involved in
their definitions are non-empty.

\begin{prop}
  \label{prop:approx} Assume that $\psi_t\in H^1(D)$ 
  is a convergent sequence, which approximates the limit 
  $\psi\in H^1(D)$ from below, and denote by $v_t$ the 
  solutions to the obstacle problem for $\calK_{\psi_t}$.
  Then $v_t$ converges in $H^1(D)$ to the solution $v$ to the
  obstacle problem for $\calK_\psi$.
\end{prop}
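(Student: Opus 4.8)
The plan is to exploit the Hilbert space structure of $H^1(D)$ together with the monotonicity of the obstacles. First I would record the two elementary facts that drive everything. Since $\psi_t \le \psi$ (approximation from below), every $v \in \calK_{\psi_t}$ also lies in $\calK_\psi$, so $\calK_{\psi_t} \subset \calK_\psi$, and in particular $\dir(v_t) \ge \dir(v)$ for every $t$. Second, because $\psi_t \to \psi$ in $H^1(D)$, the solution $v$ to the $\calK_\psi$-problem can be approximated: the competitors $w_t := v + (\psi_t - \psi)^- \cdot(\text{something})$—more cleanly, I would use $w_t := \min\{v, \psi_t\}$ (noting $\min$ of $H^1$ functions is $H^1$), which lies in $\calK_{\psi_t}$ and satisfies $\|w_t - v\|_{H^1(D)} \to 0$ because $\psi_t \to \psi$ in $H^1$ and $v \le \psi$, so $\min\{v,\psi_t\} \to \min\{v,\psi\} = v$. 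This gives $\limsup_t \dir(v_t) \le \limsup_t \dir(w_t) = \dir(v)$, and combined with $\dir(v_t) \ge \dir(v)$ we conclude $\dir(v_t) \to \dir(v)$.

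Next I would upgrade this energy convergence to strong $H^1$ convergence. From $\dir(v_t) \to \dir(v)$ and, if needed, a uniform bound on $\|v_t\|_{L^2}$ (which follows since $v_t \le \psi_t$ is bounded above and $v_t$ is subharmonic, hence controlled below by its boundary values; alternatively one works modulo the fixed boundary datum), the sequence $(v_t)$ is bounded in $H^1(D)$, so a subsequence converges weakly to some $v_\infty \in H^1(D)$. Each $v_t \le \psi_t$ and the boundary trace is fixed, so passing to the weak limit gives $v_\infty \le \psi$ with the correct trace, i.e.\ $v_\infty \in \calK_\psi$; by weak lower semicontinuity of $\dir$ and the energy convergence, $\dir(v_\infty) \le \liminf \dir(v_t) = \dir(v)$, so by uniqueness of the minimizer $v_\infty = v$. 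Since every subsequence has a further subsequence converging weakly to $v$, the whole sequence converges weakly to $v$; and weak convergence together with convergence of the (equivalent, modulo the fixed trace) $H^1$-norms yields strong convergence $v_t \to v$ in $H^1(D)$.

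The only delicate point is the very first step: producing the recovery competitors $w_t \in \calK_{\psi_t}$ with $w_t \to v$ in $H^1$. The subtlety is that one must stay below the moving obstacle $\psi_t$ while not losing control of the gradient; the choice $w_t = \min\{v, \psi_t\}$ handles this cleanly because the map $(\phi,\eta) \mapsto \min\{\phi,\eta\}$ is continuous on $H^1(D)$ (its gradient is a measurable selection of $\nabla\phi$ and $\nabla\eta$ on the respective coincidence regions, and Lebesgue dominated convergence applies along the convergent sequence $\psi_t$), and because $v \le \psi$ forces $\min\{v,\psi\} = v$ in the limit. I expect this to be the main obstacle, and it is exactly the content one invokes from \cite[Theorem~3.78]{HKM}; a reader content to cite that reference may take the whole proposition as given, but the argument above makes the dependence transparent.
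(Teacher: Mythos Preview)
The paper does not supply its own proof of this proposition; it simply records it as Theorem~3.78 of \cite{HKM}. Your outline is the standard Mosco-convergence argument for obstacle problems and is essentially correct: build a recovery sequence $w_t=\min\{v,\psi_t\}\in\calK_{\psi_t}$ with $w_t\to v$ in $H^1(D)$ to get $\limsup_t\dir(v_t)\le\dir(v)$, then use weak compactness and weak lower semicontinuity of $\dir$ to identify the weak limit as $v$ and upgrade to strong convergence via norm convergence.

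One small correction is worth flagging. Under the paper's convention, $\calK_{\psi_t}$ carries the boundary datum $\psi_t\vert_{\partial D}$, which varies with $t$; hence your claimed inclusion $\calK_{\psi_t}\subset\calK_\psi$ is false as stated (an element of $\calK_{\psi_t}$ has the wrong trace to belong to $\calK_\psi$), and the inequality $\dir(v_t)\ge\dir(v)$ does not follow directly. Fortunately you do not actually need it: your recovery step already gives $\limsup_t\dir(v_t)\le\dir(v)$, and for the matching lower bound you pass to a weak limit $v_\infty$, observe that continuity of the trace operator forces $v_\infty\vert_{\partial D}=\lim \psi_t\vert_{\partial D}=\psi\vert_{\partial D}$ while $v_t\le\psi_t\to\psi$ gives $v_\infty\le\psi$, so $v_\infty\in\calK_\psi$ and weak lower semicontinuity yields $\dir(v)\le\dir(v_\infty)\le\liminf_t\dir(v_t)$. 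With this adjustment (and the observation that your recovery $w_t=\min\{v,\psi_t\}$ does have the correct trace $\psi_t\vert_{\partial D}$, since $v=\psi\ge\psi_t$ on $\partial D$), the argument goes through.
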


The following stability result will be useful.

\begin{prop}
\label{prop:L-infty}
Let $g\in H^{\frac12}(\Gamma)$ and assume that $h\in C(\Gamma)$.
Let $g_\epsilon=g+\epsilon h$ and let $v_\epsilon$ denote the solution
to the obstacle problem for 
$\calK_{\psi,g_\epsilon}$, where $\psi\in C^{1,1}(D)$.
Then we have the linear $L^\infty$-stability bound
\begin{equation}
\label{eq:L-infty-stability}
\lVert v_\epsilon-v_0\rVert_{L^\infty(D)}
\lesssim \epsilon \lVert h\rVert_{\infty},\qquad \text{as }\;\epsilon\to 0.
\end{equation}
\end{prop}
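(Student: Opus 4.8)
The plan is to reduce the mixed obstacle problem to an ordinary (full) obstacle problem via Remark~\ref{rem:one-obst}, and then to establish the $L^\infty$-stability bound there. Concretely, let $G_\epsilon$ be the harmonic function on $D\setminus\Gamma$ equal to $g_\epsilon$ on $\Gamma$ and to $\psi$ on $\partial D$ (recall $f=\psi$ since we are in the $\calK_{\psi,g}$ setting), and set $\eta_\epsilon=\min\{G_\epsilon,\psi\}$. By Remark~\ref{rem:one-obst}, $v_\epsilon$ is the solution to the plain obstacle problem for $\calK_{\eta_\epsilon}$. The key point is that $\lVert G_\epsilon-G_0\rVert_{L^\infty(D)}\lesssim\epsilon\lVert h\rVert_\infty$: indeed $G_\epsilon-G_0$ is the harmonic extension (from $\Gamma$, with zero data on $\partial D$) of $\epsilon h|_\Gamma$, so by the maximum principle its sup-norm is at most $\epsilon\lVert h\rVert_{L^\infty(\Gamma)}$. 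Consequently $\lVert\eta_\epsilon-\eta_0\rVert_{L^\infty(D)}\le\lVert G_\epsilon-G_0\rVert_{L^\infty(D)}\lesssim\epsilon\lVert h\rVert_\infty$, using that $x\mapsto\min\{x,c\}$ is $1$-Lipschitz.

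It then remains to show the general comparison principle: if $v_0,v_\epsilon$ solve the obstacle problems for $\calK_{\eta_0},\calK_{\eta_\epsilon}$ on the Lipschitz domain $D$ with $v_0=v_\epsilon=\psi$ on $\partial D$, then $\lVert v_\epsilon-v_0\rVert_{L^\infty(D)}\le\lVert\eta_\epsilon-\eta_0\rVert_{L^\infty(D)}$. Set $\delta=\lVert\eta_\epsilon-\eta_0\rVert_{L^\infty(D)}$, so $\eta_\epsilon\le\eta_0+\delta$ a.e. I claim $v_\epsilon\le v_0+\delta$. Using the envelope characterization (Remark~\ref{rem:obst-defn}), $v_\epsilon=\sup\{v:v\in\mathrm{SH}(D),\,v\le\eta_\epsilon,\,v|_{\partial D}\le\psi\}$; for any such competitor $v$, the function $v-\delta$ is subharmonic, satisfies $v-\delta\le\eta_\epsilon-\delta\le\eta_0$, and has boundary values $\le\psi-\delta\le\psi$, hence $v-\delta\le v_0$, giving $v\le v_0+\delta$ and thus $v_\epsilon\le v_0+\delta$. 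Exchanging the roles of $\epsilon$ and $0$ gives the reverse inequality, hence $\lVert v_\epsilon-v_0\rVert_{L^\infty(D)}\le\delta$. (Alternatively, one can argue via energy minimization: test $v_0$ against the competitor $\max\{v_0,v_\epsilon-\delta\}\in\calK_{\eta_0}$ and $v_\epsilon$ against $\min\{v_\epsilon,v_0+\delta\}\in\calK_{\eta_\epsilon}$ in the variational inequality of Remark~\ref{rem:variational-basic}, add, and conclude that $\nabla(v_\epsilon-\delta-v_0)_+\equiv 0$ on $D$, so this function is constant; since it vanishes near $\partial D$ it is identically zero.) Chaining the two estimates yields \eqref{eq:L-infty-stability}.

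The only mild subtlety — and the step I would be most careful about — is the passage from the mixed obstacle problem on $D$ with the interior thin obstacle on $\Gamma$ to the plain obstacle problem for $\eta_\epsilon$, since $\eta_\epsilon$ is only Lipschitz across $\Gamma$ rather than $C^{1,1}$; but the $L^\infty$ comparison argument above uses only that $\eta_\epsilon\in H^1(D)\cap C(\overline D)$ and the envelope/variational formulation, both of which hold here ($h\in C(\Gamma)$ and $\psi\in C^{1,1}(D)$ guarantee $\eta_\epsilon$ is continuous up to the boundary and lies in $H^1(D)$, and non-emptiness of $\calK_{\psi,g_\epsilon}$ is assumed as per the standing convention). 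The hypothesis $\psi\in C^{1,1}(D)$ is in fact not needed for this particular bound — continuity of $\psi$ suffices — but it is harmless to keep it, as it matches the regularity used elsewhere. One should also note the implicit constant in \eqref{eq:L-infty-stability} is in fact $1$ (it only becomes a genuine constant because the harmonic-extension step is stated with $\lesssim$, though there too one gets constant $1$ by the maximum principle), so the estimate is completely explicit.
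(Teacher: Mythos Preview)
Your proof is correct and complete. The reduction to a single full obstacle via Remark~\ref{rem:one-obst}, the $1$-Lipschitz property of $\min$, and the envelope comparison (shift a competitor down by $\delta$) all go through as you describe; the minor point about the boundary condition being an equality in $\calK_{\eta_\epsilon}$ rather than an inequality is harmless, since any subharmonic $v\le\eta_0$ automatically has $v\vert_{\partial D}\le\eta_0\vert_{\partial D}=\psi\vert_{\partial D}$, and taking $\max\{v,v_0\}$ recovers the correct boundary data.

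The paper argues somewhat differently: it stays within the mixed obstacle framework throughout and constructs a single explicit barrier. Namely, it lets $H$ be the harmonic function on $D\setminus\Gamma$ with $H=-\lVert h\rVert_\infty$ on $\Gamma$ and $H=0$ on $\partial D$ (equivalently, the solution of the thin obstacle problem with constant thin obstacle $-\lVert h\rVert_\infty$ and trivial full obstacle). Since $H\le 0$, the function $v_0+\epsilon H$ is subharmonic, lies below $\psi$, and on $\Gamma$ satisfies $v_0+\epsilon H\le g-\epsilon\lVert h\rVert_\infty\le g_\epsilon$, so it competes for the $g_\epsilon$-problem and hence $v_0+\epsilon H\le v_\epsilon$; swapping roles gives the other inequality. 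Your approach is more modular (reduce, then invoke a clean comparison principle for the plain obstacle problem), while the paper's is more direct (no reduction, one barrier). Both yield the sharp constant $1$: in the paper's version $\lVert H\rVert_\infty=\lVert h\rVert_\infty$ by the maximum principle, and in yours $\lVert G_\epsilon-G_0\rVert_\infty\le\epsilon\lVert h\rVert_\infty$ likewise. Your remark that $\psi\in C^{1,1}$ is not actually needed for this particular estimate is also correct.
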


\begin{proof}
We define an auxiliary function $H$ as the solution to 
the thin obstacle problem on $D$ with thin obstacle $-\lVert h\rVert_\infty$
on $\Gamma$,
vanishing Dirichlet boundary condition on $\partial D$ and
trivial full obstacle which equals $+\infty$ throughout $D$. Notice that $H$
is homogeneous with respect to rescaling of $h$.

We claim that 
\[
v_0+\epsilon H\le v_\epsilon\le v_0-\epsilon H\qquad \text{on }D. 
\]
The lower bound is evident in view of the fact that
\[
v_0+\epsilon H\le g -\epsilon \lVert h\rVert_\infty\le g+\epsilon h\qquad\text{on }\,\Gamma 
\]
so that $v_0+\epsilon H\in \calK_{\psi,g_\epsilon}\cap\mathrm{SH}(D)$
in view of the bound $H\le 0$ on $D$.
Similarly, we have
\[
v_\epsilon +\epsilon H\le g+\epsilon h-\epsilon\lVert h\rVert_\infty\le g\qquad \text{on }\,\Gamma,
\]
so $v_\epsilon+\epsilon H\in \calK_{\psi,g}\cap\mathrm{SH}(D)$.
But then it follows that
$v_\epsilon\le v_0-\epsilon H$ on $D$,
so we get the two-sided bound
\[
-\epsilon \lVert H\rVert_\infty\le v_\epsilon-v_0
\le \epsilon \lVert H\rVert_\infty
\]
on $D$. As the supremum norm of $H$ only depends on
the curve $\Gamma$ and the domain $D$, and linearly on 
the number $\lVert h\rVert_{L^\infty(\Gamma)}$,
this completes the proof.
\end{proof}

Lastly, we need a restriction property (see p. 61 in \cite{HKM}).

\begin{prop}
\label{prop:localize}
Denote by $\Omega$ a subdomain of $D$ with Lipschitz boundary, 
let $v_0$ denote the
solution to the obstacle problem for 
$\calK_{\psi}^{f}(D)$ and set $g=v_0\vert_{\partial \Omega}$.
Then $v_0\vert_{\Omega}$ solves the obstacle 
problem for $\calK_{\psi}^{g}(\Omega)$.
\end{prop}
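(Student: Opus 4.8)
The statement to prove is Proposition~\ref{prop:localize}: if $v_0$ solves the obstacle problem for $\calK_\psi^f(D)$ and $g = v_0|_{\partial\Omega}$, then $v_0|_\Omega$ solves the obstacle problem for $\calK_\psi^g(\Omega)$.

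\medskip

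The plan is to use the variational inequality characterization of solutions to obstacle problems, as recorded in Remark~\ref{rem:variational-basic}. First I would observe that $v_0|_\Omega$ is clearly an admissible competitor for $\calK_\psi^g(\Omega)$: it lies in $H^1(\Omega)$ since $v_0\in H^1(D)$ and $\Omega\subset D$, it satisfies $v_0\le\psi$ a.e.\ on $\Omega$ because this already holds on all of $D$, and its trace on $\partial\Omega$ equals $g$ by the definition of $g$. So $\calK_\psi^g(\Omega)$ is non-empty and it suffices to verify that $v_0|_\Omega$ satisfies the Euler--Lagrange variational inequality on $\Omega$, namely
\[
\int_\Omega \nabla v_0\cdot\nabla(w - v_0)\,\diffA \ge 0 \qquad \text{for all } w\in\calK_\psi^g(\Omega),
\]
since by Remark~\ref{rem:variational-basic} the solution of the obstacle problem for any non-empty convex class is the unique element of that class satisfying the corresponding variational inequality.

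\medskip

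The key step is to promote an arbitrary competitor $w\in\calK_\psi^g(\Omega)$ to a global competitor on $D$. Given such $w$, define $\widetilde w$ on $D$ by setting $\widetilde w = w$ on $\Omega$ and $\widetilde w = v_0$ on $D\setminus\Omega$. Since $w$ and $v_0$ have the same trace $g$ on the shared Lipschitz boundary $\partial\Omega$, the gluing $\widetilde w$ lies in $H^1(D)$ (there is no singular contribution to the distributional gradient along $\partial\Omega$). Moreover $\widetilde w\le\psi$ on $D$ and $\widetilde w|_{\partial D}=v_0|_{\partial D}=f$, so $\widetilde w\in\calK_\psi^f(D)$. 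Applying the variational inequality for $v_0$ on $D$ to the test function $\widetilde w$ and noting that $\nabla\widetilde w - \nabla v_0$ is supported on $\overline\Omega$ and equals $\nabla w - \nabla v_0$ there, we get
\[
0 \le \int_D \nabla v_0\cdot\nabla(\widetilde w - v_0)\,\diffA = \int_\Omega \nabla v_0\cdot\nabla(w - v_0)\,\diffA,
\]
which is exactly the variational inequality we wanted on $\Omega$. By uniqueness, $v_0|_\Omega$ is the solution of the obstacle problem for $\calK_\psi^g(\Omega)$.

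\medskip

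The main (though still routine) obstacle is the gluing lemma: verifying that $\widetilde w\in H^1(D)$ with $\nabla\widetilde w$ given piecewise by $\nabla w$ and $\nabla v_0$. This is a standard fact for Sobolev functions that agree in trace across a Lipschitz interface — one checks that the distributional gradient has no surface-measure part on $\partial\Omega$ by integrating against test vector fields and using the trace theorem — but it is the only place where the Lipschitz regularity of $\partial\Omega$ is genuinely used. Everything else is a direct consequence of the definitions and the variational characterization in Remark~\ref{rem:variational-basic}. One could alternatively run the whole argument through the envelope formulation of Remark~\ref{rem:obst-defn}, but the energy/variational-inequality route is cleaner here.
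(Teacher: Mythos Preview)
Your argument is correct and entirely standard. The paper does not supply its own proof of this proposition; it simply cites \cite[p.~61]{HKM} and treats the result as known. Your variational-inequality-plus-gluing argument is precisely the kind of proof one finds in such references, so there is nothing to compare.

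One minor quibble: Remark~\ref{rem:variational-basic} as stated in the paper only asserts the forward implication (minimizer $\Rightarrow$ variational inequality), not the converse you invoke for uniqueness. The converse is of course elementary---expand $\dir(w)=\dir(v_0)+2\int_\Omega\nabla v_0\cdot\nabla(w-v_0)\,\diffA+\dir(w-v_0)$---but you could avoid the detour altogether by comparing energies directly: since $\dir_D(\widetilde w)\ge\dir_D(v_0)$ and the two functions agree on $D\setminus\Omega$, subtracting $\dir_{D\setminus\Omega}(v_0)$ from both sides gives $\dir_\Omega(w)\ge\dir_\Omega(v_0)$ immediately.
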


\section{Perturbation theory
for the mixed obstacle problem}
\label{s:perturb-stab}
\subsection{Perturbations of the thin obstacle: a preliminary view}
The topic of the following 
section could be of independent interest,
and therefore we will work in slightly greater generality than
needed for the applications we have in mind.
Denote by $\Gamma$ a simple Lipschitz curve in $\C$,
which lies in a bounded domain $D$ with Lipschitz boundary. 
Denote by $\psi\in H^1(D)$ a full 
obstacle function (below we will impose additional regularity 
on $\psi$), and let $f\in H^{\frac12}(\partial D)$ 
be a boundary datum with $f\le \psi\vert_{\partial D}$. 
Let moreover $g\in H^{\frac12}(\Gamma)$ be 
a thin obstacle with $g\le\psi\vert_{\Gamma}$, 
where we remind the reader once again that inequalities in Sobolev
spaces are understood in the almost everywhere sense, unless specifically
stated otherwise.

We recall the notation $\dir(u)$ for the Dirichlet energy
\[
\dir(u)=\int_D|\nabla u|^2\,\diffA,\qquad u\in H^1(D),
\]
where $\diffA$ denotes planar area measure, and
consider the solution $v_0$ to the  obstacle problem
for $\calK_{\psi,g}^f$, that is
\begin{multline}\label{eq:obst-thin}
\inf_{u\in \calK_{\psi,g}^{f}}\dir(u),
\\
\text{where }\;\;\calK_{\psi,g}^{f}=
\big\{u\in H^1(D)\,:\,u\vert_{\partial D}=
f,\,u\le \psi\text{ on } D, \text{ and } 
u\vert_\Gamma\le g\big\}.
\end{multline}
Recall that, in view of \cite[Theorem 6.4, Ch II]{Kinderlehrer-Stampaccia}, 
the solution $v_0$ is just the upper envelope of the class 
$\calK_{\psi,g}^{f}\cap\mathrm{SH}(D)$.

Let now $h\in H^{\frac12}(\Gamma)$, define for $\epsilon>0$ 
a family of thin obstacles $g_\epsilon$ on 
$\Gamma$ by $g_\epsilon=g+\epsilon h$ and let $v_\epsilon$ be
the solution to the obstacle problem for $\calK_{\psi,g_\epsilon}^f$.
We are interested in the stability of this 
obstacle problem as $\epsilon$ varies. 
In particular,
we seek a \emph{variational formula} for the Dirichlet energy of the solution. 

Assuming that $h$ and $g$ are sufficiently smooth, 
and that $g-\psi\vert_{\Gamma}$ is negative and bounded away from zero,
one can prove $C^{1,\beta}$-stability for $v_\epsilon$ 
valid up to the curve $\Gamma$. Moreover, we recall the
linear $L^\infty$-stability with respect 
to the data on the curve $\Gamma$ of Proposition~\ref{prop:L-infty}.
Using these facts, it is rather straightforward to derive a variational 
formula for the Dirichlet energy
\begin{equation}\label{eq:variational}
\dir(v_\epsilon)=\dir(v_0)
-4\pi\epsilon\int_\Gamma h\,\diff\calN_\Gamma(v_0)+\ordo(\epsilon)
\end{equation}
as $\epsilon\to0$.
We omit the details since we will
show something more general below.
We return to the variational formula after discussing a preliminary 
result on the regularity of the solution of
the extremal problem in \eqref{eq:obst-thin}.

\subsection{Structure of the solution to the mixed obstacle problem}
If the obstacle $\psi$ in Definition~\ref{defn:obst-bdd} is regular enough, then
the Riesz mass $\mu_0$ of the solution splits into two components. 
One is singular with
respect to area measure and lives on the curve $\Gamma$, while the other 
is continuous with respect to area measure and is supported on the set 
$\calS=\{z\in D:v_0(z)=\psi(z)\}$. This is the content of the next result.
It is likely well-known to experts, 
but we include a proof of it for completeness.

\newpage

\begin{prop}\label{prop:reg}
The Riesz mass $\mu_0$ of the solution $v_0$ to the obstacle problem
\[
\inf_{v\in\calK}\dir(v),\quad \calK=\big\{v\in H^1(D)
:\,v\vert_\Gamma\le g,\;v\le \psi,\,v\vert_{\partial D}=f\big\}
\]
with thin obstacle $g\in H^{\frac12}(\Gamma)$, 
boundary datum $f\in H^{\frac12}(\partial D)$ 
and full obstacle $\psi\in C^{1,1}(\overline{D})$
on a Lipschitz domain $D$ containing the Lipschitz curve $\Gamma$ enjoys a decomposition
\[
\diff\mu_0=\diff\mu_0^\s+\frac{1}{2\pi}\Delta \psi\chi_\calS\diffA,
\]
where $\mu_0^\s\in H^{-\frac12}(\Gamma)$, 
and the solution
$v_0$ belongs to $C^{1,\beta}(D\setminus\Gamma)$ for any $\beta<1$. In addition,
Green's formula
\begin{equation}\label{eq:Green}
\int_D\nabla u\cdot\nabla v_0\,\diffA=
-2\pi \int_\Gamma u\,\diff\mu_0^\s - \int_{\calS}u\,\Delta \psi\,\diffA
\end{equation}
holds for any $u\in H^{1}_0(D)$. 
\end{prop}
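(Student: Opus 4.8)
The plan is to split the analysis into the region off $\Gamma$ and the curve $\Gamma$ itself. Off $\Gamma$ the thin‑obstacle constraint $u|_\Gamma\le g$ plays no role, so $v_0$ ought to solve locally a classical one‑sided obstacle problem with the $C^{1,1}$‑obstacle $\psi$; this produces the interior regularity, the complementarity identity $\Delta v_0=\chi_\calS\Delta\psi$ there, and forces whatever Riesz mass remains onto $\Gamma$. The only genuinely new point is then to check that this remaining mass defines an element of $H^{-\frac12}(\Gamma)$, for which the crucial input is the a priori membership $v_0\in H^1(D)$ together with a duality estimate. Green's formula will fall out by unwinding the distributional identity $\Delta v_0=2\pi\mu_0$.

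\medskip

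\noindent\emph{Step 1: structure away from $\Gamma$.} Fix a Lipschitz subdomain $\Omega$ with $\overline\Omega\subset D\setminus\Gamma$. Since $\Gamma\cap\overline\Omega=\emptyset$, the thin obstacle is inactive on $\Omega$, and the upper‑envelope characterization of the solution (Remark~\ref{rem:obst-defn}) together with the standard gluing lemma for subharmonic functions shows that $v_0|_\Omega$ solves the one‑sided obstacle problem for $\calK_\psi^{v_0|_{\partial\Omega}}(\Omega)$ with the $C^{1,1}$‑obstacle $\psi$ (this is the analogue for the mixed problem of the restriction property in Proposition~\ref{prop:localize}). By the classical regularity theory for the obstacle problem (cf.\ \cite{CaffarelliRiviere} and the references therein) we obtain $v_0\in C^{1,1}_{\mathrm{loc}}(D\setminus\Gamma)$, hence $v_0\in C^{1,\beta}$ on compact subsets of $D\setminus\Gamma$ for every $\beta<1$, and $\Delta v_0=\chi_\calS\Delta\psi$ a.e.\ and distributionally on $D\setminus\Gamma$, where $\calS=\{v_0=\psi\}$. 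Consequently the distribution
\[
\mu_0^\s:=\mu_0-\tfrac{1}{2\pi}\chi_\calS\Delta\psi\,\diffA
\]
vanishes on the open set $D\setminus\Gamma$ and hence is supported on $\Gamma$; since $v_0$ is subharmonic (Remark~\ref{rem:variational-basic}), $\mu_0\ge0$, and as $\Gamma$ carries no area, $\mu_0^\s$ equals the restriction of $\mu_0$ to $\Gamma$, a nonnegative Borel measure. This already gives the decomposition $\diff\mu_0=\diff\mu_0^\s+\tfrac{1}{2\pi}\chi_\calS\Delta\psi\,\diffA$.

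\medskip

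\noindent\emph{Step 2: $\mu_0^\s\in H^{-\frac12}(\Gamma)$.} It suffices to bound $\big|\int_\Gamma\zeta\,\diff\mu_0^\s\big|$ by a constant multiple of $\lVert\zeta\rVert_{H^{\frac12}(\Gamma)}$ for $\zeta$ ranging over a dense subclass of $H^{\frac12}(\Gamma)$ having continuous $H^1$‑extensions. Given such a $\zeta$, pick $Z\in H^1_0(D)$ with $Z|_\Gamma=\zeta$ and $\lVert Z\rVert_{H^1(D)}\lesssim\lVert\zeta\rVert_{H^{\frac12}(\Gamma)}$, i.e.\ a bounded extension of $\zeta$ across $\Gamma$ with zero trace on $\partial D$ (see \S~\ref{ss:sobolev}). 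Pairing $\Delta v_0=2\pi\mu_0^\s+\chi_\calS\Delta\psi$ with $Z$ and using $\langle\Delta v_0,Z\rangle=-\int_D\nabla v_0\cdot\nabla Z\,\diffA$ (valid for $v_0\in H^1(D)$, $Z\in H^1_0(D)$), we obtain
\[
2\pi\int_\Gamma\zeta\,\diff\mu_0^\s=-\int_D\nabla v_0\cdot\nabla Z\,\diffA-\int_\calS Z\,\Delta\psi\,\diffA.
\]
By Cauchy--Schwarz the first term is at most $\lVert\nabla v_0\rVert_{L^2(D)}\lVert\nabla Z\rVert_{L^2(D)}$, and since $\Delta\psi\in L^\infty(D)$ the second is at most $\lVert\Delta\psi\rVert_{L^\infty(D)}\,|D|^{1/2}\,\lVert Z\rVert_{L^2(D)}$; both are $\lesssim\lVert v_0\rVert_{H^1(D)}\,\lVert\zeta\rVert_{H^{\frac12}(\Gamma)}$, with a constant depending only on $D$, $\Gamma$ and $\psi$. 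Hence $\mu_0^\s$ extends to a bounded linear functional on $H^{\frac12}(\Gamma)$, that is $\mu_0^\s\in H^{-\frac12}(\Gamma)$.

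\medskip

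\noindent\emph{Step 3: Green's formula, and where the difficulty lies.} For $u\in H^1_0(D)$, the definition of the distributional Laplacian gives $\int_D\nabla u\cdot\nabla v_0\,\diffA=-\langle\Delta v_0,u\rangle=-2\pi\langle\mu_0^\s,u\rangle-\int_\calS u\,\Delta\psi\,\diffA$, where $\langle\mu_0^\s,u\rangle$ is understood through the trace $\mathrm{tr}_\Gamma u\in H^{\frac12}(\Gamma)$ — legitimate by Step 2 — and is written $\int_\Gamma u\,\diff\mu_0^\s$, while $\int_\calS u\,\Delta\psi\,\diffA$ is finite because $u\in L^2(D)$ and $\chi_\calS\Delta\psi\in L^2(D)$; this is \eqref{eq:Green}. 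The one step that asks for real care is the localization in Step 1 — making precise that the thin obstacle is inactive off $\Gamma$ and that the effective obstacle on a subdomain $\overline\Omega\subset D\setminus\Gamma$ is exactly the $C^{1,1}$‑function $\psi$ — after which the interior regularity is classical and Steps 2–3 are soft. I expect that gluing argument, and the verification that the pasted function is admissible in the global envelope defining $v_0$, to be the main (modest) technical point.
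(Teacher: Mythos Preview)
Your argument is correct and is genuinely more direct than the paper's. The paper proceeds in two stages: first it treats the separated case $g\le\psi|_\Gamma-\delta$ (so that $\calS$ stays away from $\Gamma$), localizes to each component of $D\setminus\Gamma$, and builds the Neumann trace $\mu_0^\s$ by mollifying $v_0$ and passing to the limit in Green's formula; then it removes the separation hypothesis by approximating $g$ from below by $g-\epsilon$ and tracking the convergence of the coincidence sets $\calS_\epsilon$ (invoking the Blank--LeCrone stability result to show $|\calS\setminus\calS^\star|=0$). You bypass both stages. Your localization in Step~1 works for arbitrary $g$, because the gluing/envelope argument only needs $\overline\Omega\cap\Gamma=\emptyset$, not separation of $\calS$ from $\Gamma$; and your Step~2 obtains $\mu_0^\s\in H^{-\frac12}(\Gamma)$ by a clean duality estimate using only $v_0\in H^1(D)$ and $\Delta\psi\in L^\infty$, with no approximation at all. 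Green's formula then really is just the distributional identity, extended from $C_c^\infty(D)$ to $H^1_0(D)$ by density and the trace theorem. What the paper's route buys is an explicit identification of $\mu_0^\s$ as the two-sided Neumann jump $\calN_\Gamma(v_0)$ via the mollification, which is conceptually nice and is the way $\mu_0^\s$ is used later; your route buys a shorter, softer proof that avoids the coincidence-set stability machinery entirely. The one point you flag as needing care---that the restricted problem on $\Omega\Subset D\setminus\Gamma$ has effective obstacle exactly $\psi$---is handled by precisely the pasting argument you indicate (paste $\max(w,v_0)$ on $\Omega$ with $v_0$ outside; the thin constraint is automatic since the pasted function equals $v_0$ on $\Gamma$), so this is indeed modest.
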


\begin{proof}
The proof splits into two steps: first we obtain the result under the condition that
$g\le \psi\vert_\Gamma-\delta$ for some $\delta>0$, 
then this condition is relaxed by an approximation argument.
\\

\noindent {\sc Step 1.}\,
Assume first that $g\le \psi\vert_{\Gamma}-\delta$ for some $\delta>0$. 
It is clear that $\calS$ is separated from $\Gamma$, 
by upper continuity of $v_0-\psi$, since otherwise 
there would exist a sequence of points $z_j$ converging to 
$\Gamma$, along which $u-\psi$ vanishes, which 
would force $v_0-\psi$ to vanish on $\Gamma$.
As a consequence, $v_0$ is harmonic in a region $V\setminus \Gamma$,
where $V$ is a neighbourhood of $\Gamma$.

In view of Proposition~\ref{prop:localize} we may think 
of $v_0$ as the solution
to the classical obstacle problem in each component of 
$D\setminus \Gamma$ separately, 
with boundary data $g_0=v_0\vert_{\Gamma}$. 
Since the coincidence set $\calS$ remains bounded away from
the fixed boundary $\Gamma$, it follows from 
classical theory, see e.g.\  \cite[Theorem~2.3]{CK},
that the solution is $C^{1,1}$ smooth away from $\Gamma$
and that $\Delta v_0=0$ outside the coincidence set 
$\calS_0$.
In fact, the distributional 
Laplacian away from $\Gamma$ takes the form
$\chi_{D\setminus \Gamma}\mu_0=\tfrac{1}{2\pi}\chi_{\calS}\Delta \psi$,
see \cite[Theorem~3.10]{HM}.

We may then define a two-sided Neumann trace
on $\Gamma$. By a standard convolution argument 
there exists a sequence $(v_j)_j$
of smooth functions whose gradients are continuous 
up to the boundary of $D\setminus\Gamma$,
such that $v_j\to v_0$ in $H^1(D)$. 
In addition we have $\Delta v_j\to \Delta v_0$ 
pointwise a.e. and in $L^2$.
But then by the standard Green's formula we have
\begin{equation}
\int_{D}\nabla u\cdot \nabla v_j\,\diffA
=-2\pi\int_{\Gamma} u\,\diff\mu_j^{\s}-
\int_{D}u\Delta v_j\,\diffA,
\end{equation} 
where $\mu_j^\s$ is the Riesz measure of $v_j$ on $\Gamma$. From the convergences 
$v_j\to v_0$ in $H^1(D)$ and $\Delta v_j\to \Delta v_0$ in $L^2(D)$ it follows that 
$\mu_j^\s$ converges to a bounded functional on $H^{\frac12}(\Gamma)$,
and we find that the limiting Green's formula for $v_0$ holds.

In summary, we have found that $\mu_0$ is supported on $\Gamma\cup\calS$ and since 
$\calS$ is relatively compact in $D\setminus\Gamma$, 
the Riesz mass decomposes as
\[
\diff\mu_0=\diff\mu_0^\s+\frac{1}{2\pi}\Delta \psi\chi_{\calS}\,\diffA
\]
and that Green's formula holds.
The singular measure (with respect to $\diffA$)
$\mu_0^\s$ is the Neumann trace $\calN_\Gamma(v_0)$ and
belongs to $H^{-\frac12}(\Gamma)$.

\noindent {\sc Step 2.}\:
We now consider a general thin obstacle $g$, which does
not necessarily lie strictly below the full obstacle. 
We will approximate $g$ and $f$ from below, and show that 
we have a strong enough convergence
of the corresponding solutions in order to 
reach the desired conclusions for the limiting object.
We let $g_\epsilon=g-\epsilon$ and $f_\epsilon=f-\epsilon$, 
and denote by $v_\epsilon$ the corresponding solutions. 
For each $v_\epsilon$ we may apply the above argument.
By the approximation property of Proposition~\ref{prop:approx}, 
we know that $v_\epsilon\to v_0$ in $H^1(D)$. 
But then it follows that
$\mu_\epsilon\to\mu_0$. 
Let $\calS_\epsilon$ denote the coincidence set for $v_\epsilon$
with the full obstacle $\psi$. The set $\calS_\epsilon$ increases
monotonically as $\epsilon$ tends to zero (see Remark~\ref{rem:monotone-coincidence}), 
and in view of the regularity assumption we may conclude that 
$\mu_\epsilon^\c=\frac{1}{2\pi}\chi_{\calS_\epsilon}\Delta \psi$
is convergent in $L^2$ (e.g. by monotone convergence) towards the
limit function $\frac{1}{2\pi}\chi_{\calS^\star}\Delta \psi$,
where $\calS^\star=\cup_\epsilon\calS_\epsilon$. But then
it also follows from Green's formula for $v_\epsilon$
that $\mu_\epsilon^\s$ is convergent as a 
functional on $H^{\frac12}(\Gamma)$ towards $\mu_0^\s\in H^{-\frac12}(\Gamma)$.
In summary, we have the convergence
\[
\int_D\nabla u\cdot\nabla v_0\,\diffA=
-\lim_{\epsilon\to 0}\Big(2\pi \int_\Gamma u\,\diff\mu_\epsilon^s
+\int_{\calS_\epsilon}u\,\Delta \psi\,\diffA\Big)=
-2\pi \int_\Gamma u\,\diff\mu_0^\s - \int_{\calS^\star} u\,\Delta \psi\,\diffA
\]
It only remains to conclude that $\calS^\star=\calS$ up to the removal
of a null set.
But this follows from the fact that 
$v_0\in\mathrm{SH}(D)$, that $v_0\vert_{D\setminus\Gamma}$ is continuous,
the linear stability result of Proposition~\ref{prop:L-infty} together with 
the stability for the coincidence set under perturbations, see e.g. 
\cite{Blank}. Indeed, outside any fixed neighbourhood $\Gamma_\delta$ of $\Gamma$, 
the above results combine to say that 
$|(\calS\setminus\calS_\epsilon)\cap\Gamma_\delta^c|=\Ordo(\epsilon)$, 
so that $|(\calS\setminus\calS^\star)\cap\Gamma_\delta^c|=0$.
This completes the proof.

As a consequence of the smoothness of $\psi$, 
we saw that the solution $v_0$ solves the PDE 
$\Delta v_0=\Delta \psi\chi_{\calS}
\in L^\infty$ in $D\setminus \Gamma$. 
In view of \cite[Proposition~1.1]{Zhang} this implies that 
$v_0\in W^{2,p}(D\setminus\Gamma)$ for any $p<\infty$, so by the classical
Sobolev embedding theorem it follows that $v_0\in C^{1,\beta}$ for any $\beta<1$.
\end{proof}

\subsection{A few subtleties}
\label{ss:subtleties}
In this section, we verify two seemingly obvious facts, 
which are slightly subtle due to the
low regularity of the function 
$g\in H^{\frac12}(\Gamma)$.

\begin{prop}\label{prop:subtleties1}
  Let $f\in H^{\frac12}(\Gamma)$ and assume that $f\ge 0$ 
  in the sense of traces, i.e. there exists $u\in H^1(D)$ with $u\ge 0$ 
  Lebesgue a.e. in $D$, with $u\vert_\Gamma=f$. Then, for any positive measure
  $\mu\in H^{-\frac12}(\Gamma)$ we have $\int f\diff\mu\ge 0$.
\end{prop}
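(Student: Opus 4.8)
The point that makes the statement nontrivial is that $f\in H^{\frac12}(\Gamma)$ is only an equivalence class modulo sets of vanishing arc length, whereas the positive measure $\mu$ may well be singular with respect to arc length. Hence the hypothesis "$f\ge 0$ in the sense of traces" (equivalently, $f\ge 0$ $\diffs$-a.e.\ on $\Gamma$, as one sees by replacing any $H^1$-extension $w$ of $f$ by $\max\{w,0\}$) does \emph{not} directly yield "$f\ge 0$ $\mu$-a.e.", and the quantity $\int f\diff\mu$ must be understood as the duality pairing $\langle\mu,f\rangle$ between $H^{-\frac12}(\Gamma)$ and $H^{\frac12}(\Gamma)$. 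The plan is therefore to approximate $f$ in $H^{\frac12}(\Gamma)$ by \emph{continuous and nonnegative} functions, for which the inequality is trivial, and then to pass to the limit using that $\mu$ is a bounded functional on $H^{\frac12}(\Gamma)$.

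First I would manufacture the approximating family by interior mollification of the given $u$. Since $\overline{\Gamma}$ is compact in the open set $D$, there is $\delta_0>0$ with $\Gamma_{2\delta_0}:=\{z:\mathrm{dist}(z,\Gamma)<2\delta_0\}\Subset D$. For a standard mollifier $\rho_\epsilon$ with $0<\epsilon<\delta_0$, the convolution $u_\epsilon=u*\rho_\epsilon$ is smooth on $\Gamma_{\delta_0}$, it is nonnegative there (being the convolution of the nonnegative functions $u$ and $\rho_\epsilon$), and $u_\epsilon\to u$ in $H^1(\Gamma_{\delta_0})$ by the usual properties of mollification. Set $f_\epsilon:=u_\epsilon\vert_\Gamma$. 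Since the trace on $\Gamma$ is continuous from $H^1$ of a neighbourhood of $\Gamma$ into $H^{\frac12}(\Gamma)$, we get $f_\epsilon\to f$ in $H^{\frac12}(\Gamma)$; and $f_\epsilon$ is a continuous nonnegative function on $\Gamma$, being the restriction of the continuous nonnegative function $u_\epsilon$.

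Finally, for each $\epsilon$ the genuine integral $\int f_\epsilon\diff\mu$ is nonnegative, since $\mu$ is a positive measure and $f_\epsilon$ is continuous and nonnegative; moreover this integral equals the pairing $\langle\mu,f_\epsilon\rangle$ because $f_\epsilon$ extends to the ambient smooth function $u_\epsilon$ and $\mu\in H^{-\frac12}(\Gamma)$ acts on $H^{\frac12}(\Gamma)$ consistently with its action as a measure on continuous functions. Using $f_\epsilon\to f$ in $H^{\frac12}(\Gamma)$ and the boundedness of $\mu$ on this space,
\[
\int f\diff\mu=\langle\mu,f\rangle=\lim_{\epsilon\to 0}\langle\mu,f_\epsilon\rangle
=\lim_{\epsilon\to 0}\int f_\epsilon\diff\mu\ge 0,
\]
which is the claim. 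The only genuinely delicate point is the one flagged at the outset — correctly interpreting $\int f\diff\mu$ and verifying that interior mollification preserves nonnegativity of the resulting trace — after which the argument is routine. If one prefers to separate the two ingredients, an equivalent route is to first observe that the trace operator preserves positivity (so that $f\ge 0$ $\diffs$-a.e.\ on $\Gamma$), and then approximate the nonnegative function $f\in H^{\frac12}(\Gamma)$ by nonnegative smooth functions via mollification in an arc-length parameter.
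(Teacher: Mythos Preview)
Your proof is correct and follows essentially the same strategy as the paper: approximate $f$ in $H^{\frac12}(\Gamma)$ by nonnegative smooth functions obtained by mollifying a nonnegative $H^1$-extension, integrate against the positive measure $\mu$, and pass to the limit using that $\mu\in H^{-\frac12}(\Gamma)$. The only cosmetic difference is that the paper represents the pairing $\langle\mu,f\rangle$ concretely via Green's formula as $\tfrac{1}{2\pi}\int_D\nabla u\cdot\nabla v\,\diffA$ with $v$ a potential for $\mu$, whereas you invoke trace continuity and the $H^{-\frac12}$--$H^{\frac12}$ duality directly; both routes accomplish the same thing.
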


\begin{proof}
A standard dilation and mollification trick 
shows that any non-negative $u\in H^1(D)$ can be approximated
by non-negative smooth functions in $H^1(D)$. 

We denote by $u$ the non-negative $H^1_0(D)$-function 
which is harmonic in $D\setminus \Gamma$ while $u\vert_\Gamma=f$, 
and apply this approximation trick to get
$u_j\in C^\infty(D)$ with $u_j\to u$ in $H^1(D)$ and $u_j\ge 0$. 
Denote by $v$ 
an element of $H^1_0(D)$ with Riesz mass $\mu$. Then
\[
  \int_\Gamma f\,\diff\mu=\frac{1}{2\pi} \int_D\nabla u\cdot\nabla v\,\diffA=
  \lim_{j\to\infty} \frac{1}{2\pi}\int_D\nabla u_j\cdot\nabla v\,\diffA
  =\lim_{j\to\infty} \int_\Gamma u_j\,\diff\mu\ge 0.
\]
This completes the proof.
\end{proof}

If the obstacles $\psi$ and $g$ are smooth, and $u$ denotes the solution
to the obstacle problem for $\calK_{\psi,g}$, 
then $(u-g)\diff\calN_\Gamma=0$ pointwise on $\Gamma$, and $(u-\psi)\Delta u=0$
on $D\setminus \Gamma$. The latter survives in our setting, but we prefer 
not to attempt to interpret the former in a pointwise sense. However, we have
the following averaged version.

\begin{prop}\label{prop:thin-coincidence-spt}
  Assume that $v_0$ solves the mixed obstacle problem with thin 
  obstacle $g$ on $\Gamma$
  and full obstacle $\psi\in C^{1,1}(\overline{D})$. 
  Then, if $\mu_0^\s$ denotes the 
  singular part of the Riesz mass of $v_0$, 
  we have 
  $\int_\Gamma (v_0-g)\diff\mu_0^\s=0$.
\end{prop}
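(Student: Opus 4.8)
The plan is to pit the variational inequality satisfied by $v_0$ against Green's formula from Proposition~\ref{prop:reg}, testing against a competitor chosen so that the area term disappears and only the singular contribution on $\Gamma$ survives. First I would record the easy direction: since $v_0\in\calK_{\psi,g}$ we have $v_0\le g$ on $\Gamma$, and since $v_0$ is subharmonic (Remark~\ref{rem:variational-basic}) its Riesz mass $\mu_0=\tfrac{1}{2\pi}\Delta v_0$ is a positive measure; as $\mu_0^\s$ is the singular part of $\mu_0$ with respect to area measure, it is positive as well. Hence $\int_\Gamma(v_0-g)\,\diff\mu_0^\s\le 0$, where the integral is interpreted as the $H^{-\frac12}(\Gamma)$--$H^{\frac12}(\Gamma)$ pairing of $\mu_0^\s$ against the trace of the $H^1(D)$-function $v_0-g$, which is $\le 0$ a.e.\ on $\Gamma$. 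It thus suffices to prove the reverse inequality $\int_\Gamma(g-v_0)\,\diff\mu_0^\s\le 0$.

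To this end I would construct a competitor $w\in\calK_{\psi,g}$ satisfying $w\vert_\Gamma=g$ and, crucially, $w=v_0$ a.e.\ on the coincidence set $\calS=\{v_0=\psi\}$. Choose any $\tilde g\in H^1(D)$ extending the data, i.e.\ with $\tilde g\vert_\Gamma=g$ and $\tilde g\vert_{\partial D}$ equal to the prescribed boundary datum (such $\tilde g$ exists since $\overline\Gamma$ is disjoint from $\partial D$), and put
\[
w=\min\big\{\psi,\ \max\{v_0,\tilde g\}\big\}\in H^1(D).
\]
Using that the trace operator commutes with the lattice operations $\min$ and $\max$ (each being Lipschitz, hence bounded on $H^1$, so one reduces to smooth functions by density), together with the standing hypothesis $g\le\psi\vert_\Gamma$ and the constraint $v_0\vert_\Gamma\le g$, one checks that $w\vert_\Gamma=\min\{\psi\vert_\Gamma,\max\{v_0\vert_\Gamma,g\}\}=g$, that $w\vert_{\partial D}$ equals the prescribed boundary datum, and that $w\le\psi$ on $D$; hence $w\in\calK_{\psi,g}$. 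Finally, on $\calS$ we have $\max\{v_0,\tilde g\}\ge v_0=\psi$, so $w=\psi=v_0$ a.e.\ on $\calS$.

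With this $w$ the conclusion is immediate. By Remark~\ref{rem:variational-basic}, $\int_D\nabla v_0\cdot\nabla(w-v_0)\,\diffA\ge 0$; since $w-v_0\in H^1_0(D)$, we may insert $u=w-v_0$ into Green's formula \eqref{eq:Green} to obtain
\[
0\le\int_D\nabla v_0\cdot\nabla(w-v_0)\,\diffA=-2\pi\int_\Gamma(w-v_0)\,\diff\mu_0^\s-\int_{\calS}(w-v_0)\,\Delta\psi\,\diffA .
\]
The area integral vanishes because $w=v_0$ a.e.\ on $\calS$, and on $\Gamma$ one has $w-v_0=g-v_0$; therefore $\int_\Gamma(g-v_0)\,\diff\mu_0^\s\le 0$. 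Combined with the bound $\int_\Gamma(v_0-g)\,\diff\mu_0^\s\le 0$ from the first step, this yields $\int_\Gamma(v_0-g)\,\diff\mu_0^\s=0$.

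The heart of the matter is the choice of the competitor $w$; everything else is bookkeeping. I expect the only place requiring genuine care to be the low regularity near $\Gamma$: checking that the truncations $\min,\max$ preserve membership in $H^1(D)$ with the correct traces (so that $w\in\calK_{\psi,g}$ and $w-v_0\in H^1_0(D)$), and that the pairing $\int_\Gamma(\cdot)\,\diff\mu_0^\s$ between $\mu_0^\s\in H^{-\frac12}(\Gamma)$ and traces of $H^1$-functions behaves as expected under these manipulations. Both follow from continuity of the trace operator and of truncation on $H^1$, together with Proposition~\ref{prop:reg} for the structure of $\mu_0$ and the validity of \eqref{eq:Green}.
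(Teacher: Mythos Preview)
Your proof is correct and follows the same template as the paper's: combine the variational inequality for $v_0$ with Green's formula \eqref{eq:Green}, testing against a competitor whose trace on $\Gamma$ equals $g$ and which coincides with $v_0$ on the full coincidence set $\calS$, so that the area term drops out. The only genuine difference is the choice of competitor. The paper takes $\eta=\min\{G,\psi\}$, where $G$ is the two-sided harmonic extension of $(g,f)$ to $D\setminus\Gamma$ (the ``effective obstacle'' of Remark~\ref{rem:one-obst}); the maximum principle gives $v_0\le G$ and hence $v_0\le\eta$, and both inequalities $\le 0$ and $\ge 0$ for $\int_D\nabla(v_0-\eta)\cdot\nabla v_0\,\diffA$ fall out of a single test. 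Your truncation $w=\min\{\psi,\max\{v_0,\tilde g\}\}$ is slightly more elementary in that it avoids solving a Dirichlet problem for $G$ and works with an arbitrary $H^1$-extension $\tilde g$; you then handle the ``easy'' inequality separately via positivity of $\mu_0^\s$ (which is the content of Proposition~\ref{prop:subtleties1}). Both arguments are equally short and rest on the same mechanism.
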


\begin{proof}
  Denote by $\eta$ the $H^1(D)$-obstacle discussed in Remark~\ref{rem:one-obst} 
  \[
   \eta= \min\{G,\psi\},
 \]
 where $G$ is the harmonic function in $D\setminus \Gamma$ which 
 equals $f$ on $\partial D$ and $g$ on $\Gamma$.
On the one hand, $\eta\in \calK_\eta=\calK_{\psi,g,f}$ 
so in view of Remark~\ref{rem:variational-basic} we have
\[
  \int_D\nabla (v_0-\eta)\cdot\nabla v_0\,\diffA\le 0,
\]
by the variational inequalities for minimization of the Dirichlet energy 
(obtained as usual by noticing that $u_t=(1-t)v_0+t\eta$ 
is a competitor for the minimization for any $t\ge 0$,
and then expanding the energy and letting $t\to 0^+$). 
On the other hand we have $v_0\le G$ by the maximum
principle, and $v_0\le \psi$ by definition, so it holds that $v_0\le \eta$,
from which we conclude that
\[
  \int \nabla (v_0-\eta)\cdot \nabla u\,\diffA
  =-2\pi \int (v_0-\eta)\diff\mu_0\ge 0.
\]
Hence, $\langle v_0-g,v_0\rangle =2\pi \int_{D}(v_0-\eta)\diff\mu_0=0$. 
By Proposition~\ref{prop:reg}, Green's formula applies, so by \eqref{eq:Green} we have
\begin{multline*}
  \int_\Gamma (v_0-g)\diff\mu_0^\s \\ = 
  \frac{1}{2\pi}\int \nabla (v_0-\eta)\cdot \nabla v_0\,\diffA
  - \frac{1}{2\pi}\int_{\calS_0}(v_0-\eta)\Delta \psi\diffA
  =- \frac{1}{2\pi}\int_{\calS_0}(v_0-\eta)\Delta \psi\diffA
\end{multline*}
where $\calS_0$ denotes the coincidence set $\{v_0=\psi\}$.
But since $v_0\le \eta\le \psi$ we find that
$v_0=\psi=\eta$ on the support of $\Delta v_0$,
so the last integral vanishes and the claim follows.
\end{proof}

\subsection{The variational formula}
We begin with a preliminary stability estimate for the
singular part of the Riesz mass of $v_\epsilon$.

\begin{lem}
\label{lem:prel-stab}
In the setting of Proposition~\ref{prop:reg}, it holds that
\[
\int_\Gamma h\,\diff\calN_\Gamma(v_0-v_\epsilon)
=\ordo(1)+\Ordo\big(\dir(v_\epsilon-v_0)^{\frac12}\big)\qquad \text{as }\,\epsilon\to0.
\]
\end{lem}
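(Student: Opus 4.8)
The plan is to express the quantity $\int_\Gamma h\,\diff\calN_\Gamma(v_0-v_\epsilon)$ in terms of Dirichlet-energy pairings of $v_0-v_\epsilon$ against a fixed test function, and then to control those pairings using the Cauchy--Schwarz inequality together with the $H^1$-convergence $v_\epsilon\to v_0$ (which follows from Propositions~\ref{prop:approx} and \ref{prop:L-infty}). More precisely, let $\tilde h$ denote the function in $H^1(D)$ which is harmonic in $D\setminus\Gamma$, equal to $h$ on $\Gamma$, and equal to $0$ on $\partial D$. Since by Proposition~\ref{prop:reg} the singular parts $\mu_0^\s=\calN_\Gamma(v_0)$ and $\mu_\epsilon^\s=\calN_\Gamma(v_\epsilon)$ lie in $H^{-\frac12}(\Gamma)$, Green's formula \eqref{eq:Green} applied to $v_0-v_\epsilon$ gives
\[
\int_D \nabla \tilde h\cdot \nabla(v_0-v_\epsilon)\,\diffA
= -2\pi\int_\Gamma h\,\diff\calN_\Gamma(v_0-v_\epsilon)
- \int_D \tilde h\,\big(\chi_{\calS_0}-\chi_{\calS_\epsilon}\big)\Delta\psi\,\diffA,
\]
using that $\tilde h$ vanishes on $\partial D$ and that the continuous parts of the Riesz masses are $\tfrac{1}{2\pi}\chi_{\calS_0}\Delta\psi$ and $\tfrac{1}{2\pi}\chi_{\calS_\epsilon}\Delta\psi$ respectively.

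Rearranging, $2\pi\int_\Gamma h\,\diff\calN_\Gamma(v_0-v_\epsilon)$ equals the sum of two terms, and I would estimate each separately. The first, $-\int_D \nabla\tilde h\cdot\nabla(v_0-v_\epsilon)\,\diffA$, is bounded by $\lVert\nabla\tilde h\rVert_{L^2(D)}\,\dir(v_0-v_\epsilon)^{1/2}$ by Cauchy--Schwarz, which is $\Ordo(\dir(v_\epsilon-v_0)^{1/2})$ since $\tilde h$ is fixed; this accounts for the second error term in the statement. The second term, $-\int_D \tilde h\,(\chi_{\calS_0}-\chi_{\calS_\epsilon})\Delta\psi\,\diffA$, is controlled by $\lVert\tilde h\rVert_{L^\infty}\lVert\Delta\psi\rVert_{L^\infty}\,|\calS_0\triangle\calS_\epsilon|$; here I would invoke the stability of the coincidence set recorded in the proof of Proposition~\ref{prop:reg} (combining Proposition~\ref{prop:L-infty} with the coincidence-set stability of \cite{Blank}, and the monotonicity of Remark~\ref{rem:monotone-coincidence} if $h$ does not have a sign, after decomposing $h=h_+-h_-$), which yields $|\calS_0\triangle\calS_\epsilon|=\ordo(1)$ as $\epsilon\to 0$. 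This gives the $\ordo(1)$ term.

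The main subtlety — and the step I would be most careful about — is the justification of Green's formula \eqref{eq:Green} for the \emph{difference} $v_0-v_\epsilon$ paired against $\tilde h$, since $\tilde h$ itself is only in $H^1(D)$ (with a jump in normal derivative across $\Gamma$) rather than in $H^1_0(D)\cap C^1$. One route is to note that $\tilde h\in H^1_0(D)$, approximate it in $H^1_0(D)$ by smooth compactly supported functions and pass to the limit, using that $\calN_\Gamma(v_0-v_\epsilon)$ is a bounded functional on $H^{1/2}(\Gamma)$ and that the $L^2$ Laplacians converge appropriately; alternatively one applies \eqref{eq:Green} to $v_0$ and to $v_\epsilon$ separately and subtracts, provided \eqref{eq:Green} is known to hold with the fixed full obstacle $\psi$ in both cases (which it is, by Proposition~\ref{prop:reg}). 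A second point requiring care is that $h\in H^{1/2}(\Gamma)$ need only be in $L^\infty$ if we additionally assume $h\in C(\Gamma)$, which is the hypothesis under which Proposition~\ref{prop:L-infty} is stated; since the lemma is invoked in the setting of Proposition~\ref{prop:reg}, I would carry along whatever regularity on $h$ is available there and use it to bound $\lVert\tilde h\rVert_{L^\infty}$ via the maximum principle.
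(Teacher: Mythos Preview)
Your approach is essentially the paper's: the same harmonic extension $H=\tilde h$, the same Green's-formula identity (applied to $v_0$ and $v_\epsilon$ separately and subtracted) producing a Dirichlet pairing plus an integral over $\calS_0\triangle\calS_\epsilon$, Cauchy--Schwarz on the former, and $|\calS_0\triangle\calS_\epsilon|=\ordo(1)$ for the latter. Two refinements in the paper resolve exactly the points you flag as delicate. First, the area term is bounded by Cauchy--Schwarz using only $H\in L^2(D)$ (automatic since $H\in H^1_0(D)$), which avoids your $L^\infty$ concern and works for arbitrary $h\in H^{1/2}(\Gamma)$. Second, since the Blank--LeCrone stability result requires continuous boundary data while $v_0\vert_\Gamma,v_\epsilon\vert_\Gamma$ are only $H^{1/2}$, the paper localizes to auxiliary curves $\Gamma_\eta^\pm$ at fixed positive distance from $\Gamma$, where $v_0,v_\epsilon$ are $C^1$ by Proposition~\ref{prop:reg}; Blank--LeCrone on the domains bounded by $\Gamma_\eta^\pm$ gives $|(\calS_0\triangle\calS_\epsilon)\cap\{\mathrm{d}(\cdot,\Gamma)\ge\eta\}|=\Ordo(\epsilon)$, and letting $\eta\to 0$ after $\epsilon\to 0$ yields $\ordo(1)$. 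Your proposed decomposition $h=h_+-h_-$ does not address this continuity issue and is not needed.
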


\begin{proof}
Let us look at the integral
\[
\int_{D}\nabla (v_0-\psi)\cdot\nabla H\,\diffA,
\]
where $H$ is the harmonic extension to $D\setminus \Gamma$ 
of the boundary values given by $h$ on $\Gamma$
and by $0$ on $\partial D$. 
In terms of the Neumann jump $\calN_\Gamma(v_0)$, 
Green's formula \eqref{eq:Green} reads
\[
\int_{D}\nabla (v_0-\psi)\cdot\nabla H\,\diffA=
-\int_{D\setminus \calS}H\Delta \psi
-2\pi\int_{\Gamma}h\,\diff\calN_\Gamma(v_0).
\]
Subtracting the same calculation for $v_\epsilon$, we see that
\begin{equation}\label{eq:h-v-eps-v}
2\pi\int_{\Gamma}h\,\diff\calN_\Gamma(v_0-v_\epsilon)=
\int_D\nabla (v_\epsilon-v_0)\nabla H\,\diffA
-\int_{\calS_\epsilon\Delta\calS_0}
\mathrm{sgn}(v_\epsilon-v_0)H\Delta\psi\,\diffA,
\end{equation}
where $A\Delta B$ denotes the symmetric 
difference $(A\setminus B)\cup (B\setminus A)$.
Invoking the Cauchy-Schwarz inequality, we find
\[
2\pi \int_{\Gamma}h\,\diff\calN_\Gamma(v_0-v_\epsilon)=
-\int_{\calS_\epsilon\Delta\calS_0}
\mathrm{sgn}(v_\epsilon-v_0)H\Delta\psi\,\diffA+
\Ordo(\dir(v_\epsilon-v_0)^{\frac12}).
\]
It remains to show that the symmetric difference $\calS_\epsilon\Delta\calS_0$
has $|\calS_\epsilon\Delta\calS_0|=\ordo(1)$. 
Indeed, since $H\in L^2$ and $\Delta\psi\in L^\infty$, 
the claim would then follow by an 
application of the Cauchy-Schwarz inequality.
To study the symmetric difference, we note that $\calS_\epsilon$ and $\calS_0$
are the coincidence sets for a classical obstacle 
problem on the domain $D\setminus \Gamma$
where the full obstacle is given by $\psi$, and the 
boundary data on $\Gamma$ given by the functions $v_\epsilon\vert_\Gamma$ and
$v_0\vert_\Gamma$, respectively, which by Proposition~\ref{prop:L-infty} 
differ by $\Ordo(\epsilon)$.
We wish to deduce the result by invoking a 
result of Blank and LeCrone (see below),
but their theorem requires that the boundary data is continuous.
This issue may be circumvented, as follows.
First, notice that it is enough to prove that
for any given $\eta>0$, we have
\begin{equation}\label{eq:eta-stab-measure}
|(\calS_\epsilon\Delta\calS_0)
\cap\{z\in D:\mathrm{d}(z,\Gamma)\ge\eta\}|=\Ordo(\epsilon)
\end{equation}
as $\epsilon\to 0$ with $\eta$ held fixed.
Indeed, if we assume that \eqref{eq:eta-stab-measure} holds, it follows that
\[
\limsup_{\epsilon\to 0}|\calS_\epsilon\Delta\calS_0|\le 
|\{z\in D: \diff(z,\Gamma)\le \eta\}| \le C_0\eta,
\]
where $C_0$ is a uniform constant that depends on $\Gamma$. 
But $\eta$ was arbitrary,
so it follows that $|\calS_\epsilon\Delta\calS_0|=\ordo(1)$.

To see why the former bound \eqref{eq:eta-stab-measure} 
holds, let $\Gamma_\eta^\pm,$ denote two smooth Jordan curves
between $\Gamma$ and the sets 
\[
D^\pm_\eta\coloneqq \{z\in D^\pm: \mathrm{d}(z,\Gamma)\ge\eta\},
\]
respectively,
where we recall that $D^\pm$ denote the two 
components of $D\setminus \Gamma$.
We require that the two curves $\Gamma_\eta^\pm$ lie at a positive distance
from $\Gamma$ as well as from the corresponding set $D^\pm_\eta$.
The restrictions of $v_\epsilon$ and $v_0$ to the curves $\Gamma_\eta^\pm$ 
are $C^1$-smooth
by Proposition~\ref{prop:reg}, and by the linear 
stability bound \eqref{eq:L-infty-stability}
the difference $v_0-v_\epsilon$ is of order $\Ordo(\epsilon)$ on $D$,
so in particular on each of the two curves.
Hence, the assumptions of Blank and LeCrone's results in 
\cite{Blank} are satisfied,
so applying their result twice, once for each domain $D_\eta^\pm$, 
we get that \eqref{eq:eta-stab-measure} holds.
\end{proof}

The following stability estimate for 
the energy $\dir(v_\epsilon)$ as $\epsilon\to0$
is a key ingredient in what follows.

\begin{lem}\label{lem:stab}
In the setting of Proposition~\ref{prop:reg}, we have the stability 
\[
\dir(v_\epsilon-v_0)=\ordo(\epsilon)
\]
as $\epsilon\to 0$. 
\end{lem}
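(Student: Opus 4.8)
The plan is to establish the quantitative improvement $\dir(v_\epsilon - v_0) = \ordo(\epsilon)$ by combining two pieces of information: a crude a priori bound $\dir(v_\epsilon - v_0) = \Ordo(\epsilon)$, obtained from the $L^\infty$-stability of Proposition~\ref{prop:L-infty} and an energy comparison, together with the preliminary stability estimate for the Neumann trace from Lemma~\ref{lem:prel-stab}. The key structural fact is that $v_\epsilon$ and $v_0$ differ by a function whose Dirichlet energy is controlled by boundary-type quantities on $\Gamma$, since both solve the same obstacle problem away from $\Gamma$ with the full obstacle $\psi$ held fixed.

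First I would record the crude bound. Since $v_\epsilon$ and $v_0$ are the minimizers of $\dir$ over $\calK_{\psi, g_\epsilon}^f$ and $\calK_{\psi,g}^f$ respectively, one uses competitor functions: adjust $v_0$ by the auxiliary thin-obstacle function $H$ of Proposition~\ref{prop:L-infty} (linear in $\epsilon$, with controlled Dirichlet energy $\dir(\epsilon H) = \Ordo(\epsilon^2)$) to produce an element of $\calK_{\psi,g_\epsilon}^f$, and symmetrically. Expanding $\dir(v_\epsilon) = \dir(v_0) + 2\int \nabla v_0 \cdot \nabla(v_\epsilon - v_0)\,\diffA + \dir(v_\epsilon - v_0)$ and using the variational inequality $\int \nabla v_0 \cdot \nabla(w - v_0)\,\diffA \ge 0$ for admissible competitors $w$ (Remark~\ref{rem:variational-basic}), together with the matching inequality with the roles reversed, yields $\dir(v_\epsilon - v_0) \lesssim \epsilon \lVert v_\epsilon - v_0\rVert_{L^\infty} + \Ordo(\epsilon^2)$, which by Proposition~\ref{prop:L-infty} gives $\dir(v_\epsilon - v_0) = \Ordo(\epsilon^2)$ already — or at the very least $\Ordo(\epsilon)$, which suffices to feed into Lemma~\ref{lem:prel-stab}.

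Next I would bootstrap. Write $w = v_\epsilon - v_0$. Using Green's formula \eqref{eq:Green} for both $v_\epsilon$ and $v_0$ and subtracting, together with the complementarity relation $\int_\Gamma (v_0 - g)\diff\mu_0^\s = 0$ from Proposition~\ref{prop:thin-coincidence-spt} (and its analogue for $v_\epsilon$ with $g_\epsilon$), one expresses $\dir(w)$ — or rather $\int \nabla w \cdot \nabla w\,\diffA$ — in terms of $\int_\Gamma h\,\diff\calN_\Gamma(v_0 - v_\epsilon)$ plus an area integral over the symmetric difference $\calS_\epsilon \Delta \calS_0$ against the bounded function $\Delta\psi$. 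The area-integral term is $\ordo(\epsilon)$ because $|\calS_\epsilon \Delta \calS_0| = \ordo(1)$ (established inside the proof of Lemma~\ref{lem:prel-stab}) and the integrand is bounded and of size $\Ordo(\epsilon)$ in sup-norm. For the boundary term, Lemma~\ref{lem:prel-stab} gives $\int_\Gamma h\,\diff\calN_\Gamma(v_0 - v_\epsilon) = \ordo(1) + \Ordo(\dir(w)^{1/2})$, but the relevant quantity here carries an extra factor of $\epsilon$ (since the perturbation of the thin obstacle is $\epsilon h$), so this contributes $\ordo(\epsilon) + \Ordo(\epsilon\,\dir(w)^{1/2})$.

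Putting these together gives an inequality of the shape $\dir(w) \le \ordo(\epsilon) + C\epsilon\,\dir(w)^{1/2}$, and absorbing the last term via Young's inequality ($C\epsilon\,\dir(w)^{1/2} \le \tfrac12 \dir(w) + \tfrac{C^2}{2}\epsilon^2$) yields $\dir(w) = \ordo(\epsilon)$, as desired. The main obstacle I anticipate is the bookkeeping in the Green's-formula manipulation — correctly pairing the singular parts $\mu_0^\s$ and $\mu_\epsilon^\s$ against the right test functions ($v_\epsilon - g_\epsilon$ versus $v_0 - g$, and $h$) so that the complementarity conditions from Proposition~\ref{prop:thin-coincidence-spt} actually apply and eliminate the uncontrolled terms, rather than leaving a leftover pairing like $\int_\Gamma (v_0 - g)\diff\calN_\Gamma(v_\epsilon)$ that is not obviously small. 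Care is also needed because $h \in H^{1/2}(\Gamma)$ only, so all pairings on $\Gamma$ must be interpreted in the $H^{\pm 1/2}$-duality and justified by the smooth-approximation argument of Proposition~\ref{prop:subtleties1}.
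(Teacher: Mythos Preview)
Your overall shape is right and matches the paper: apply Green's formula to $w=v_\epsilon-v_0$, isolate the term $\epsilon\int_\Gamma h\,\diff\calN_\Gamma(v_0-v_\epsilon)$, invoke Lemma~\ref{lem:prel-stab}, and close with the quadratic inequality $\dir(w)\le \ordo(\epsilon)+C\epsilon\,\dir(w)^{1/2}$. Two points, however.

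First, the ``crude bound'' step is unnecessary. The paper never establishes $\dir(w)=\Ordo(\epsilon)$ as a preliminary; the final inequality is self-improving and needs no a priori input. (Also, your claimed intermediate estimate $\dir(w)\lesssim \epsilon\lVert w\rVert_{L^\infty}+\Ordo(\epsilon^2)$ does not follow from the variational inequality alone in the way you sketch; the standard competitor argument with $v_0+\epsilon H$ and $v_\epsilon+\epsilon H$ only yields $\dir(w)=\Ordo(\epsilon)$.)

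Second, and more substantively, the obstacle you flag at the end is exactly the point where your outline is incomplete. After adding and subtracting $\epsilon h$ inside the boundary integral, the two residual pieces
\[
-\int_\Gamma (v_\epsilon-v_0-\epsilon h)\,\diff\calN_\Gamma(v_\epsilon-v_0)
\quad\text{and}\quad
-\int_{D\setminus\Gamma}(v_\epsilon-v_0)\,\Delta(v_\epsilon-v_0)\,\diffA
\]
are \emph{not} shown to be small; the paper shows they are \emph{nonpositive} and simply drops them. The mechanism is a sign argument: split each against $\calN_\Gamma(v_\epsilon)$ and $\calN_\Gamma(v_0)$ (respectively $\Delta v_\epsilon$ and $\Delta v_0$), then use Proposition~\ref{prop:thin-coincidence-spt} to replace $v_\epsilon$ by $g_\epsilon$ and $v_0$ by $g$ on the relevant supports, after which the cross pairings such as $\int_\Gamma (g-v_0)\,\diff\calN_\Gamma(v_\epsilon)$ are nonnegative by Proposition~\ref{prop:subtleties1} (since $g-v_0\ge 0$ and the Neumann jump is a positive measure). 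This yields directly
\[
\dir(v_\epsilon-v_0)\le 2\pi\epsilon\int_\Gamma h\,\diff\calN_\Gamma(v_0-v_\epsilon),
\]
and the rest is as you say. Your alternative treatment of the area term (bounding by $\lVert w\rVert_{L^\infty}\cdot\lVert\Delta\psi\rVert_{L^\infty}\cdot|\calS_\epsilon\Delta\calS_0|=\ordo(\epsilon)$) is valid, but for the boundary term there is no analogous size estimate available---you really need the sign.
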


\begin{proof}
Recall that for $\epsilon\ge 0$, the function $v_\epsilon$ solves the 
obstacle problem for the class $\calK_{\psi,g_\epsilon}^f$, 
where $g_\epsilon=g+\epsilon h$
and $g,h \in H^{\frac12}(\Gamma)$.
By Green's formula \eqref{eq:Green} applied to the difference $v_\epsilon-v_0$
we have
\[
\dir(v_\epsilon-v_0)=-2\pi\int_\Gamma (v_\epsilon-v_0)\diff\calN_\Gamma(v_\epsilon-v_0)
-\int_{D\setminus\Gamma} (v_\epsilon-v_0)\Delta(v_\epsilon-v_0)\diffA,
\]
where we write $D\setminus \Gamma$ for the domain of integration in the area
integral simply to indicate that Green's formula was 
applied in this domain.
We add and subtract $\epsilon h$ inside the boundary integral, and find
\begin{multline*}
\dir(v_\epsilon-v_0)
=-2\pi\int_\Gamma(v_\epsilon-v_0-\epsilon h)\diff\calN_\Gamma(v_\epsilon-v_0)
-\int_{D\setminus \Gamma} (v_\epsilon-v_0)\Delta(v_\epsilon-v_0)\diffA
\\
+2\pi \epsilon \int_\Gamma h\,\diff\calN_\Gamma(v_0-v_\epsilon).
\end{multline*}
We next claim that
\begin{equation}
\label{eq:term1-neg}
-\int_\Gamma (v_\epsilon-v_0-\epsilon h)\diff\calN_\Gamma(v_\epsilon-v_0)\le 0
\end{equation}
and that
\begin{equation}
\label{eq:term2-neg}
-\int_{D\setminus \Gamma} (v_\epsilon-v_0)\Delta(v_\epsilon-v_0)\diffA\le 0.
\end{equation}
We first argue heuristically.
On the support of the (positive) measure $\calN_\Gamma(v_\epsilon)$, we have
$v_\epsilon=g+\epsilon h$, while $-v_0\ge -g$ holds on the entire curve $\Gamma$, 
so that 
$v_\epsilon-v_0-\epsilon h\ge 0$ holds on 
$\mathrm{supp}(\calN_\Gamma(v_\epsilon))$. It would follow that
\begin{equation}\label{eq:term-1-1-neg}
-\int_\Gamma (v_\epsilon-v_0-\epsilon h)\diff\calN_\Gamma(v_\epsilon)\le 0.
\end{equation}
Moreover, on the support of $\diff\calN_\Gamma(v_0)$,
we have $v_\epsilon-v_0-\epsilon h\le 0$, so that
\begin{equation}\label{eq:term-1-2-neg}
\int_\Gamma(v_\epsilon-v_0-\epsilon h)\diff\calN_\Gamma(v_0)\le 0.
\end{equation}
Adding these up would give \eqref{eq:term1-neg}. 

Since the function $g$ is possibly very irregular, 
we prefer not to rely on pointwise equalities 
on the supports $\mathrm{supp}(\calN_\Gamma(v_\epsilon))\subset\Gamma$. 
We may justify \eqref{eq:term-1-1-neg} and $\eqref{eq:term-1-2-neg}$ 
by reinterpreting the above heuristics in a weak sense, with the aid of
\S~\ref{ss:subtleties}.
Indeed, we illustrate this for the term \eqref{eq:term-1-1-neg}.
We rewrite
\[
v_\epsilon-v_0-\epsilon h=(v_\epsilon-g-\epsilon h)-(v_0-g),
\]
and notice that in view of
Proposition~\ref{prop:thin-coincidence-spt}, we have
\[
\int (v_\epsilon-g-\epsilon h)\diff\calN_\Gamma(v_\epsilon)=0,
\]
from which we conclude that
\[
-\int (v_\epsilon-v_0-\epsilon h)\diff\calN_\Gamma(v_\epsilon)=
-\int (g-v_0)\diff\calN_\Gamma(v_\epsilon)\le 0,
\]
where the last inequality uses Proposition~\ref{prop:subtleties1}
and the bound $v_0\le g$ on $\Gamma$ (a.e.).
As a consequence, the weak interpretation of the initial 
inequality yields the desired claim.

The claim \eqref{eq:term2-neg} is proven similarly, 
but the argument can be understood in the pointwise sense,
in view of the regularity imposed on the full obstacle $\psi$:
outside the union $\calS_\epsilon\cup\calS_0$, both functions are harmonic and 
so do not contribute to the integral.
On the coincidence set $\calS_\epsilon$ (which contains 
$\mathrm{supp}(\Delta v_\epsilon)$), we have $v_\epsilon-v_0\ge 0$, so
\[
-\int_{D\setminus \Gamma} (v_\epsilon-v_0)\Delta v_\epsilon\,\diffA\le 0.
\]
Similarly, on $\calS_0$ we have $v_\epsilon-v_0\le 0$, so
\[
\int_{D\setminus \Gamma} (v_\epsilon-v_0)\Delta v_0\,\diffA\le 0.
\]
Adding up these terms gives \eqref{eq:term2-neg}.
In conclusion, we find that
\begin{equation}\label{eq:energy-upper bound}
\dir(v_\epsilon-v_0)\le 2\pi \epsilon\int_\Gamma h\,\diff\calN_\Gamma(v_0-v_\epsilon).
\end{equation}
In view of Lemma~\ref{lem:prel-stab}, we find that
\[
\dir(v_\epsilon-v_0)\le C\epsilon\dir^{\frac12}(v_\epsilon-v_0)+\ordo(\epsilon),
\]
which implies that $\dir(v_\epsilon-v_0)=\ordo(\epsilon)$, which proved the claim.
\end{proof}

\begin{rem}
In case it holds that $g<\psi\vert_\Gamma$, 
so that $\calS_\epsilon$ stays bounded away from
$\Gamma$, we can improve this to an 
optimal stability bound of the form
\[
\dir(v_\epsilon-v_0)=\Ordo(\epsilon^2).
\]
Indeed, the number $\eta$ appearing in the 
proof of Lemma~\ref{lem:prel-stab} can be chosen 
sufficiently small but fixed, that
the whole of the symmetric difference $\calS_\epsilon\Delta\calS_0$ is 
contained in $D_\eta^+\cup D_\eta^-$,
so we find $|\calS_\epsilon\Delta\calS_0|=
\Ordo(\epsilon)$. Inserting
this into \eqref{eq:h-v-eps-v} and applying 
the Cauchy-Schwarz inequality, 
we find that
\[
\dir(v_\epsilon-v_0)\le 
C\epsilon\dir^{\frac12}(v_\epsilon-v_0)
+C'\epsilon^2,
\]
which can be solved to yield the claimed conclusion.
\end{rem}

We can now deduce our key stability lemma.
\begin{cor}\label{cor:variational}
In the setting of Lemma~\ref{lem:stab}, we have the 
variational formula
\[
\dir(v_\epsilon)= \dir(v_0) - 4\pi\epsilon\, \int_\Gamma h\,\diff\calN_\Gamma(v_0)+
\ordo(\epsilon).
\]
\end{cor}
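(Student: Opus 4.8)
The plan is to combine the energy-minimization characterization of $v_\epsilon$ with Green's formula and the quantitative stability bound of Lemma~\ref{lem:stab}. First I would expand the Dirichlet energy of the difference in the standard way: writing $v_\epsilon = v_0 + (v_\epsilon - v_0)$ and using bilinearity,
\[
\dir(v_\epsilon) = \dir(v_0) + 2\int_D \nabla v_0\cdot\nabla(v_\epsilon-v_0)\,\diffA + \dir(v_\epsilon-v_0).
\]
By Lemma~\ref{lem:stab} the last term is $\ordo(\epsilon)$, so the whole task reduces to evaluating the cross term $2\int_D\nabla v_0\cdot\nabla(v_\epsilon-v_0)\,\diffA$ up to an error of order $\ordo(\epsilon)$.

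For the cross term I would apply Green's formula~\eqref{eq:Green} from Proposition~\ref{prop:reg} with $u = v_\epsilon - v_0 \in H^1_0(D)$ (both functions agree with $f$ on $\partial D$, so the difference lies in $H^1_0(D)$), giving
\[
\int_D \nabla v_0\cdot\nabla(v_\epsilon-v_0)\,\diffA
= -2\pi\int_\Gamma (v_\epsilon-v_0)\,\diff\mu_0^\s - \int_{\calS_0}(v_\epsilon-v_0)\,\Delta\psi\,\diffA.
\]
The area term is $\Ordo(\epsilon)$ in the wrong sense but can be shown to be $\ordo(\epsilon)$: on $\calS_0$ one has $v_0 = \psi \ge v_\epsilon$, while the support of $\Delta v_0$ inside $\calS_0$ is exactly where $v_0=\psi$, and combined with the linear $L^\infty$-stability of Proposition~\ref{prop:L-infty} together with the argument in Proposition~\ref{prop:thin-coincidence-spt} (showing $v_0=\psi=\eta$ on $\mathrm{supp}(\Delta v_0)$), this integral should actually vanish or be of order $\ordo(\epsilon)$. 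For the boundary term I would write $v_\epsilon - v_0 = (v_\epsilon - g - \epsilon h) - (v_0 - g) + \epsilon h$ and use Proposition~\ref{prop:thin-coincidence-spt} applied to $v_\epsilon$ (giving $\int(v_\epsilon-g-\epsilon h)\,\diff\mu_\epsilon^\s=0$, though here we integrate against $\mu_0^\s$, not $\mu_\epsilon^\s$, so one must be careful) — cleaner is to integrate against $\mu_0^\s = \calN_\Gamma(v_0)$ directly: by Proposition~\ref{prop:thin-coincidence-spt} we have $\int_\Gamma(v_0-g)\,\diff\mu_0^\s = 0$, so
\[
\int_\Gamma(v_\epsilon-v_0)\,\diff\mu_0^\s = \int_\Gamma(v_\epsilon - g)\,\diff\mu_0^\s - \int_\Gamma(v_0-g)\,\diff\mu_0^\s = \int_\Gamma(v_\epsilon - g - \epsilon h)\,\diff\mu_0^\s + \epsilon\int_\Gamma h\,\diff\mu_0^\s.
\]

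The remaining point is that $\int_\Gamma(v_\epsilon-g-\epsilon h)\,\diff\mu_0^\s = \ordo(\epsilon)$. Here $v_\epsilon - g_\epsilon = v_\epsilon - g - \epsilon h \le 0$ on $\Gamma$, and $\mu_0^\s \ge 0$, so this integral is $\le 0$; for the matching lower bound I would write it as $\int_\Gamma(v_\epsilon - v_0)\,\diff\mu_0^\s + \int_\Gamma(v_0 - g - \epsilon h)\,\diff\mu_0^\s$ — the first piece is $\Ordo(\epsilon)$ in absolute value by the $L^\infty$-stability of Proposition~\ref{prop:L-infty} combined with the fact that $\mu_0^\s$ is a fixed finite (positive) measure in $H^{-\frac12}(\Gamma)$ pairing against an $\Ordo(\epsilon)$ bounded function, and the second piece equals $-\epsilon\int_\Gamma h\,\diff\mu_0^\s$ by Proposition~\ref{prop:thin-coincidence-spt}; putting these together with Lemma~\ref{lem:prel-stab} (which controls $\int_\Gamma h\,\diff\calN_\Gamma(v_0-v_\epsilon)$ and, via Lemma~\ref{lem:stab}, gives the needed $\ordo(\epsilon)$) yields $\int_\Gamma(v_\epsilon-g-\epsilon h)\,\diff\mu_0^\s = \ordo(\epsilon)$. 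Collecting everything,
\[
2\int_D \nabla v_0\cdot\nabla(v_\epsilon-v_0)\,\diffA = -4\pi\epsilon\int_\Gamma h\,\diff\mu_0^\s + \ordo(\epsilon) = -4\pi\epsilon\int_\Gamma h\,\diff\calN_\Gamma(v_0) + \ordo(\epsilon),
\]
which combined with the first display and $\dir(v_\epsilon - v_0) = \ordo(\epsilon)$ gives the stated formula.

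The main obstacle I anticipate is the careful bookkeeping around which singular measure ($\mu_0^\s$ versus $\mu_\epsilon^\s$) the various functions are integrated against, since Proposition~\ref{prop:thin-coincidence-spt} gives the complementarity relation for each solution paired with \emph{its own} Neumann trace, not the other's; the sign inequalities from Propositions~\ref{prop:subtleties1} and~\ref{prop:thin-coincidence-spt} must be deployed on exactly the right pairings, and the passage from one-sided bounds to the $\ordo(\epsilon)$ equality genuinely needs the quantitative input of Lemma~\ref{lem:stab} rather than just qualitative convergence. The low regularity of $g\in H^{\frac12}(\Gamma)$ means all these manipulations have to be read as pairings in the $H^{\pm\frac12}(\Gamma)$ duality, as in \S~\ref{ss:subtleties}, rather than pointwise on supports.
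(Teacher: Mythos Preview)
Your overall architecture matches the paper's: expand $\dir(v_\epsilon)$ bilinearly, kill $\dir(v_\epsilon-v_0)$ with Lemma~\ref{lem:stab}, and analyse the cross term via Green's formula. The treatment of the area integral can be made rigorous along the lines you suggest (on $\calS_0\cap\calS_\epsilon$ the integrand vanishes, and $|\calS_0\setminus\calS_\epsilon|=\ordo(1)$ from the proof of Lemma~\ref{lem:prel-stab} combined with the $L^\infty$-bound gives $\ordo(\epsilon)$). The one-sided inequality $\int_\Gamma(v_\epsilon-g-\epsilon h)\,\diff\mu_0^\s\le 0$ is also correct.

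The genuine gap is your lower bound for the boundary remainder. Your proposed decomposition
\[
\int_\Gamma(v_\epsilon-g-\epsilon h)\,\diff\mu_0^\s=\int_\Gamma(v_\epsilon-v_0)\,\diff\mu_0^\s+\int_\Gamma(v_0-g-\epsilon h)\,\diff\mu_0^\s
\]
is circular: the second term is $-\epsilon\int h\,\diff\mu_0^\s$, and the first is exactly the quantity you started from plus $\epsilon\int h\,\diff\mu_0^\s$. Bounding the first term crudely by $L^\infty$-stability only gives $\Ordo(\epsilon)$, not $\ordo(\epsilon)$, and invoking Lemma~\ref{lem:prel-stab} does not close this, since that lemma controls $\int_\Gamma h\,\diff\calN_\Gamma(v_0-v_\epsilon)$, a different pairing. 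You correctly flag the $\mu_0^\s$-versus-$\mu_\epsilon^\s$ bookkeeping as the obstacle, but do not resolve it. The paper's fix is precisely what you are missing: use complementarity with $\mu_\epsilon^\s$ to get the reverse inequality $\int_\Gamma(v_\epsilon-v_0-\epsilon h)\,\diff\mu_\epsilon^\s\ge 0$ (and similarly for the area term with $\Delta v_\epsilon$), so that the cross term is sandwiched between $-4\pi\epsilon\int_\Gamma h\,\diff\calN_\Gamma(v_0)$ and the same quantity plus an excess. That excess, combining both boundary and area contributions, is then rewritten via Green's formula \emph{in reverse} as $-2\dir(v_\epsilon-v_0)-4\pi\epsilon\int_\Gamma h\,\diff\calN_\Gamma(v_\epsilon-v_0)$, which is $\ordo(\epsilon)$ by Lemmas~\ref{lem:prel-stab} and~\ref{lem:stab}. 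Treating the area and boundary remainders separately, as you attempt, does not obviously yield the needed cancellation.
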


\begin{rem}
If in addition we assume that $g_\epsilon-\psi\vert_{\Gamma}$ 
is negative and bounded away 
from zero for small enough $\epsilon$, the error term
may be improved to $\Ordo(\epsilon^2)$.
\end{rem}

\begin{proof}
As observed above, we by Green's formula \eqref{eq:Green}
\[
\int_{D}\nabla u\cdot\nabla v_\epsilon\,\diffA=
-2\pi \int_\Gamma u\,\diff\calN_\Gamma(v_\epsilon)
-\int_{D\setminus \Gamma} u\Delta v_\epsilon\,\diffA,\qquad
u\in H_0^1(D).
\]
A computation using this yields that
\begin{align}\label{eq:pre-variational0}
\dir(v_\epsilon)=&\dir(v_0)
+2\int_D \nabla (v_\epsilon-v_0)\,\cdot \nabla v_0\,\diffA+\dir(v_\epsilon-v_0)
\\
=&\dir(v_0)-4\pi \int_\Gamma (v_\epsilon-v_0)\diff\calN_\Gamma(v_0)
-2\int_{D\setminus \Gamma}(v_\epsilon-v_0)\Delta v_0\,\diffA+\dir(v_\epsilon-v_0).
\label{eq:pre-variational}
\end{align}
The last term is $\ordo(\epsilon)$ by 
the improved stability bound of Lemma~\ref{lem:stab}. 
The main non-trivial 
contribution to the first variation in 
\eqref{eq:pre-variational} should come from the second term, 
while the third should be small. We notice 
that the second term of \eqref{eq:pre-variational} 
can be renormalized with the function $h$, 
\[
-2\int_\Gamma (v_\epsilon-v_0)\diff\calN_\Gamma(v_0)
=-4\pi\epsilon \int_\Gamma h\,\diff\calN_\Gamma(v_0)
-4\pi\int_\Gamma \big((v_\epsilon-v_0)-\epsilon h)\diff\calN_\Gamma(v_0).
\]
The key point here is that the last integral on the 
right-hand side is negative (so that the last term is positive).
Indeed, repeating the argument in the proof of Lemma~\ref{lem:stab}
we find that where the (positive) 
measure $\calN_\Gamma(v_0)$ is supported, 
$v_0$ reaches up to its obstacle $g$, 
while $v_\epsilon\le g+\epsilon h$, and
hence $v_\epsilon-v_0-\epsilon h\le 0$. Similarly, we have that
\[
-4\pi\int_\Gamma \big((v_\epsilon-v_0)- 
\epsilon h\big)\diff\calN_\Gamma(-v_\epsilon)\ge 0,
\]
so we obtain a 
two-sided 
bound for the integral over $\Gamma$
\begin{multline*}
-4\pi\epsilon\int_\Gamma h\,\diff\calN_\Gamma(v_0)\le 
-4\pi\int_\Gamma (v_\epsilon-v_0)\diff\calN_\Gamma(v_0)
\\
\le -4\pi\epsilon\int_\Gamma h\,\diff\calN_\Gamma(v_0)
-4\pi\int_\Gamma \big((v_\epsilon-v_0)-\epsilon h)
\diff\calN_\Gamma(v_0-v_\epsilon)
\end{multline*}
An analogous comparison shows that where the 
positive measure $\Delta v_0$ is supported, we have
$v_\epsilon-v_0\le 0$. Similarly, on the support 
of the measure $\Delta v_\epsilon$, we have
$v_\epsilon-v_0\ge0$. Hence, we obtain
\[
0\le -2\int_{D\setminus\Gamma}(v_\epsilon-v_0)\Delta v_0\,\diffA\le 
2\int_{D\setminus\Gamma}(v_\epsilon-v_0)\Delta (v_\epsilon-v_0)\diffA.
\]
Adding up everything, we see that
\begin{multline*}
-4\pi\epsilon \int_\Gamma h\,\diff\calN_\Gamma(v_0)\le 
2\int_D \nabla (v_\epsilon-v_0)\cdot\nabla v_0\,\diffA
\le -4\pi \epsilon \int_\Gamma h\,\diff\calN_\Gamma(v_0) 
\\
-4\pi\int_\Gamma \big((v_\epsilon-v_0)-\epsilon h)
\diff\calN_\Gamma(v_0-v_\epsilon)-2\int_{D\setminus\Gamma}
(v_0-v_\epsilon)\Delta (v_\epsilon-v_0)\diffA.
\end{multline*}
By applying Green's formula in reverse we see that 
we may rewrite the right-hand side as
\begin{multline*}
-4\pi \int_\Gamma \big((v_\epsilon-v_0)-\epsilon h)\diff
\calN_\Gamma(v_0-v_\epsilon)-2\int_{D\setminus\Gamma}(v_\epsilon-v_0)
\Delta (v_0-v_\epsilon)\diffA\\
=4\pi \int_\Gamma (v_\epsilon-v_0)\diff\calN_\Gamma(v_\epsilon-v_0)
+2\int_{D\setminus \Gamma}(v_\epsilon-v_0)
\Delta(v_\epsilon-v_0)\diffA - 4\pi \epsilon 
\int_\Gamma h\,\diff\calN_\Gamma(v_\epsilon-v_0)\\
=-2\dir(v_\epsilon-v_0)-4\pi \epsilon 
\int_\Gamma h\,\diff\calN_\Gamma(v_\epsilon-v_0)=\ordo(\epsilon),
\end{multline*}
where the last step uses Lemma~\ref{lem:prel-stab} and Lemma~\ref{lem:stab}.
So, we obtain the inequality
\[
-4\pi \epsilon\int_\Gamma h\,\diff\calN_\Gamma(v_0)\le 
2\int_D\nabla (v_\epsilon-v_0)\cdot \nabla v_0\,\diffA
\le -4\pi \epsilon\int_\Gamma h\,\diff\calN_\Gamma(v_0)
+\ordo(\epsilon).
\]
Using this and Lemma~\ref{lem:stab} in 
\eqref{eq:pre-variational}, it we find that
\[
\dir(v_\epsilon)= 
\dir(v_0)-4\pi \epsilon \int_{\Gamma}h\,\diff\calN_\Gamma(u_0)+\ordo(\epsilon).
\]
This completes the proof of the claim.
\end{proof}

\section{Reformulation of the constrained extremal problem}
\label{s:fund-obst}
\subsection{Constrained extremal measures solve an obstacle problem}
The following result is the starting point for 
our approach to obtaining Theorem~\ref{thm:s-qdom}. 
It serves as a first step in a two-step 
variational characterization of the equilibrium measure.
We recall briefly the extremal problem under consideration.
The functional is given, for $\alpha>0$, by
\begin{equation}\label{def:I-alpha}
I_\alpha(\mu)=-\Sigma(\mu)+2B_\alpha(\mu),
\end{equation}
where
\begin{equation}\label{eq:def-B-alpha}
B_\alpha(\mu)=\sup_{z\in\C}\big(U^\mu(z)-\tfrac{1}{2\alpha}|z|^2\big).
\end{equation}
The minimizer of $I_\alpha$ among all $\mu\in\calM_\G$ - the class of
compactly supported probability measures on $\C$ with
finite logarithmic energy, which give no mass to $\G$ - is denoted by 
$\mu_{\alpha,\G}$. 
In order to keep the notation manageable, 
we allow ourself to denote the measure $\mu_{\alpha,\G}$ 
by $\mu_0$ when there can occur no confusion.

\begin{thm}\label{thm:basic-obst}
There exists a function $g\in H^{\frac12}(\partial\G)$ 
such that $\mu_{\alpha,\G}$ is the Riesz mass of the 
solution $v_0$ to the mixed 
obstacle problem with thin obstacle 
$g$ on $\partial \G$ and global obstacle
$\tfrac{1}{2\alpha}|z|^2$, in the sense of 
Definition~\ref{defn:obst-unb}(b).
\end{thm}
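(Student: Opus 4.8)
The plan is to read the thin obstacle directly off the extremal measure. Write $\phi_\alpha(z)=\tfrac{1}{2\alpha}|z|^2$ for the full obstacle, abbreviate $\mu_0=\mu_{\alpha,\G}$ and $u_0=U^{\mu_0}$, and put
\[
v_0:=u_0-B_\alpha(\mu_0),\qquad g:=v_0\big\vert_{\partial\G}.
\]
Since $\mu_0$ is a compactly supported probability measure of finite logarithmic energy, $u_0(z)=\log|z|+\ordo(1)$ as $z\to\infty$ and $\nabla u_0\in L^2_{\mathrm{loc}}(\C)$, so $v_0\in\mathrm{SH}_1(\C)\cap H^1_{\mathrm{loc}}(\C)$ and $g\in H^{\frac12}(\partial\G)$, while $v_0\le\phi_\alpha$ everywhere by the definition of $B_\alpha$. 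In particular $v_0$ is an admissible competitor in Definition~\ref{defn:obst-unb}(b), so $v_0\le u_g$, where $u_g$ denotes the solution of the mixed obstacle problem with thin obstacle $g$ and full obstacle $\phi_\alpha$. It then suffices to prove the reverse inequality, for then $\tfrac{1}{2\pi}\Delta u_g=\tfrac{1}{2\pi}\Delta v_0=\mu_0$, which is the assertion.

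The heart of the matter is the reverse inequality $u_g\le v_0$, and for this I would use the first-order (Frostman-type) optimality conditions for the convex minimization of $I_\alpha$ over $\calM_\G$. The coincidence set $\calS_0:=\{z:u_0(z)-\phi_\alpha(z)=B_\alpha(\mu_0)\}=\{v_0=\phi_\alpha\}$ is compact because $u_0-\phi_\alpha\to-\infty$ at infinity; as $B_\alpha$ is a supremum of functionals affine in $\mu$ whose supremum is attained precisely on $\calS_0$, Danskin's theorem yields, along segments $\mu_t=(1-t)\mu_0+t\sigma$ with $\sigma\in\calM_\G$,
\[
\frac{\diff}{\diff t}\Big\vert_{t=0^+}I_\alpha(\mu_t)=-2\int u_0\,\diff(\sigma-\mu_0)+2\sup_{z\in\calS_0}\big(U^\sigma(z)-u_0(z)\big),
\]
so minimality of $\mu_0$ forces $\int u_0\,\diff(\sigma-\mu_0)\le\sup_{z\in\calS_0}(U^\sigma(z)-u_0(z))$ for every $\sigma\in\calM_\G$. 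From this inequality I would extract, by a Frostman-type perturbation (displacing a small amount of the mass of $\mu_0$ lying at positive distance from $\calS_0$ onto a fixed finite-energy probability measure concentrated near $\calS_0$, composing with balayage into $\C\setminus\G$ whenever necessary to remain inside $\calM_\G$), the equilibrium inclusion $\mathrm{supp}(\mu_0)\cap(\C\setminus\overline{\G})\subset\calS_0$. Together with the defining constraint $\mu_0(\G)=0$, this shows that the closed set $\partial\G\cup\calS_0$ carries $\mu_0$, hence that $v_0=u_0-B_\alpha(\mu_0)$ is harmonic on the open set $U:=\C\setminus(\partial\G\cup\calS_0)$. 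This Frostman step is the one I expect to be the main obstacle: the nonlocality of $B_\alpha$ makes the perturbative bookkeeping more delicate than in classical weighted-equilibrium problems, and one must check that $\calS_0\ne\emptyset$ and that the balayage used to preserve the constraint does not annihilate the gain obtained by relocating mass off the contact set.

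To conclude I would compare $u_g$ with $v_0$ on $U$. On $\calS_0$ one has $v_0=\phi_\alpha$, while $v_0\le u_g\le\phi_\alpha$ always, so $u_g=v_0$ on $\calS_0$; on $\partial\G$ the trace inequality $u_g\vert_{\partial\G}\le g=v_0\vert_{\partial\G}$ combined with $u_g\ge v_0$ gives $u_g=v_0$ quasi-everywhere on $\partial\G$. Hence the nonnegative function $w:=u_g-v_0$ vanishes (q.e.) on $\C\setminus U$, is subharmonic on $U$ (the difference of a subharmonic and a harmonic function), and — extended by zero across $\partial U$ — is subharmonic on all of $\C$; since $u_g$ and $v_0$ both grow like $\log|z|+\Ordo(1)$ at infinity, $w$ is bounded above, so $w$ is constant by the Liouville theorem for subharmonic functions. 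As $w$ vanishes on the nonempty set $\C\setminus U$, we get $w\equiv0$, that is $u_g=v_0$, and therefore $\mu_{\alpha,\G}=\tfrac{1}{2\pi}\Delta u_g$ is the Riesz mass of the mixed obstacle solution, as claimed. (The gluing step needs only mild regularity of $v_0$ across $\partial U$ — e.g.\ the fine continuity of potentials — and if one prefers to avoid this, one can instead invoke the maximum principle for subharmonic functions on the unbounded component of $U$, using the logarithmic growth at infinity to control the behaviour there.)
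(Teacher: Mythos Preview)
Your setup agrees with the paper's (take $g$ as the trace of $U^{\mu_0}-B_\alpha(\mu_0)$), but the arguments then diverge genuinely. The paper does \emph{not} prove $u_g=v_0$ via Frostman-plus-Liouville. Instead it runs a competitor argument: it shows that the Riesz mass $\eta$ of $u_g$ lies in $\calM_\G$ (the thin obstacle gives $u_g\le v_0$ on $\G$ by the maximum principle, so the interior coincidence set $\{u_g=\phi_\alpha\}\cap\G$ has zero area, and Grishin's lemma then forces $\eta(\G)=0$). Writing $U^\eta=u_g+C_0$ and using $u_g\ge v_0$, two lines of energy bookkeeping give $-\Sigma(\eta)\le-\Sigma(\mu_0)+2B_\alpha(\mu_0)-2C_0$ and $B_\alpha(\eta)\le C_0$, hence $I_\alpha(\eta)\le I_\alpha(\mu_0)$; uniqueness of the minimizer forces $\eta=\mu_0$. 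This sidesteps both the Frostman analysis and the Liouville gluing entirely. (Your Frostman step is the paper's Proposition~\ref{prop:contact-at-mass}, proved there later by a one-line balayage perturbation rather than via Danskin.)

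Your route is more geometric but has a real gap in the closing step. Claiming that $w=u_g-v_0$ ``extended by zero is subharmonic on all of $\C$'' requires upper semicontinuity across $\partial\G$, where equality holds only quasi-everywhere and neither $u_g$ nor $v_0$ is known to be continuous; the sub-mean-value inequality alone is not enough. The alternative you offer---maximum principle on the unbounded component of $U$---does not close either: a bounded nonnegative subharmonic function on an exterior domain with zero boundary data need not vanish, and here you only know $w(z)\to c\ge 0$ at infinity, with no independent argument that $c=0$. These issues are likely fixable (e.g.\ pass to the USC regularization $w^*$ and exploit that both Riesz masses are probability measures, so $\Delta w$ has total mass zero), but the paper's energy-competitor argument is both shorter and robust to exactly these regularity headaches.
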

\begin{proof}
To ease notation, set $\mu_0=\mu_{\alpha,\G}$
and $v_0=U^{\mu_{\alpha,\G}}$.
Define the function $w$ by $w=v_0-B_\alpha(\mu_{0})$,
and denote by $g$ its restriction to $\partial \G$.
We moreover consider the solution $u$ to the obstacle problem with full obstacle 
$\tfrac1{2\alpha}|z|^2$ and thin obstacle $g$,
see Definition~\ref{defn:obst-unb}, Part (b).
It is immediate that the upper semicontinuous regularization $u^*$ of $u$ is a subharmonic 
function and that its Riesz mass is a probability measure. Moreover, it may
happen that $u^*\ne u$ only on a polar subset of $\Gamma$. In addition,
since $u^*$ satisfies the sub-mean value inequality and $u^*(z)\le \frac1{2\alpha}|z|^2$ a.e.,
it is easy to see that
$\sup_{z\in\C} \big(u^*(z)-\tfrac{1}{2\alpha}|z|^2\big)=0$.
Abusing notation slightly, we write $u$
for the regularized function.

Observe that
$w$ is harmonic 
in the hole $\G$, so it follows that the set
\[
\calI'=\big\{z\in\overline{\G}:w(z)=\tfrac{|z|^2}{2\alpha}\big\}
\] 
has zero (planar) Lebesgue measure. 
As a consequence of the maximum principle we have $u\le w$ on $\G$, and we 
conclude that also the interior coincidence set 
\[
\calI=\big\{z\in\overline{\G}:u(z)=\tfrac{|z|^2}{2\alpha}\big\}
\]
has zero Lebesgue measure as well. Since $u$ solves the obstacle problem
in $\G$ with obstacle $\tfrac1{2\alpha}|z|^2$, it follows 
(e.g. by Proposition~\ref{prop:reg}) that the Riesz measure
of $u$ is contained in $\calI$. By Grishin's lemma \cite{Grishin} 
(see also \cite{Sodin} and \cite{BrezisPonce}), 
we have $\Delta u\le \Delta \psi_\alpha$ on $\calI$, 
so it follows that the Riesz measure $\eta$ of $u$ gives total mass 0 to $\G$.
Let $C_0$ be the constant
such that $U^\eta=u+C_0$. Such a constant exists, since by Riesz' theorem
it holds that $U^\eta=u+h$ for some harmonic function, which has to be constant
since the functions $U^\eta(z)$ and $u(z)$ have the same growth at infinity, up to
order $\Ordo(1)$.

The function $w$ 
is an admissible subharmonic function for the obstacle problem on $\C$
with thin obstacle $g$ on $\partial\G$ and full obstacle $\tfrac{1}{2\alpha}|z|^2$,
so we have
$u\ge w=v_0-B_{\alpha}(\mu_{0})$, and as a consequence also
\[
U^\eta(z)\ge v_0-B_{\alpha}(\mu_{0})+C_0=U^{\mu_0}-B_{\alpha}(\mu_{0})+C_0.
\] 
From this it follows that, using the notation 
$\Sigma(\mu,\mu')=-\int U^\mu\,\diff\mu'$,
we have
\[
-\Sigma(\eta)=-\Sigma(\eta,\eta)\le -\Sigma(\mu_{0},\eta) - C_0+B_\alpha(\mu_0)
\le -\Sigma(\mu_0)-2C_0+2B_\alpha(\mu_0).
\]
Since $u(z)\le \frac{1}{2\alpha}|z|^2$ a.e.\ and since $u$ 
satisfies the sub-mean value property, 
it follows that $B_{\alpha,\G}(\eta)\le C_0$. 
Adding up the two terms that make up $I_\alpha(\eta)$
we find that the constants $2C_0$ cancel,
and that $I_\alpha(\eta)\le I_\alpha(\mu_{0})$. 
Since the extremal measure $\mu_{0}$ is the 
unique minimizer of $I_\alpha$, the result follows.
\end{proof}

\subsection{Two elementary observations}
We begin with an observation which tells us how the 
potential of the extremal measure 
$\mu_{\alpha,\G}$ behaves on its support away from the hole.

\begin{prop}\label{prop:contact-at-mass}
Let
$z_0\in\mathrm{supp}(\mu_{\alpha,\G})\setminus \partial \G$. Then
\[
U^{\mu_{\alpha,\G}}(z_0)-\frac{|z_0|^2}{2\alpha}
=B_{\alpha}(\mu_{\alpha,\G}).
\]
\end{prop}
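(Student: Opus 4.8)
The plan is to exploit the obstacle-problem characterization from Theorem~\ref{thm:basic-obst}. Write $\mu_0=\mu_{\alpha,\G}$, $v_0=U^{\mu_0}-B_\alpha(\mu_0)$, and $\phi_\alpha(z)=\tfrac{1}{2\alpha}|z|^2$. By Theorem~\ref{thm:basic-obst}, $\mu_0$ is the Riesz mass of the solution to the global mixed obstacle problem with full obstacle $\phi_\alpha$ on $\C$ and thin obstacle $g=v_0|_{\partial\G}$ on $\partial\G$. The statement to prove is precisely that $v_0(z_0)=\phi_\alpha(z_0)$ for every $z_0\in\mathrm{supp}(\mu_0)\setminus\partial\G$, i.e.\ that $\mathrm{supp}(\mu_0)\setminus\partial\G$ is contained in the coincidence set $\calS=\{z: v_0(z)=\phi_\alpha(z)\}$.

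First I would record the general principle (cf.\ the Remark following Definition~\ref{defn:obst-unb}, and Proposition~\ref{prop:reg}) that away from $\Gamma=\partial\G$, the solution $v_0$ of the obstacle problem is harmonic on the complement of $\calS$: indeed on any disk $\D(w,r)$ disjoint from both $\Gamma$ and $\calS$, the function $v_0$ coincides on that disk with the Perron envelope of subharmonic minorants of $\phi_\alpha$ subject only to the thin constraint on $\Gamma$, and since neither constraint is active there, $v_0$ is harmonic. Consequently $\Delta v_0=0$ on $\C\setminus(\Gamma\cup\calS)$, which means the Riesz measure $\mu_0=\tfrac{1}{2\pi}\Delta v_0$ gives no mass to the open set $\C\setminus(\Gamma\cup\calS)$. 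Hence $\mathrm{supp}(\mu_0)\subset \Gamma\cup\overline{\calS}$. Since $\calS$ is closed (by upper semicontinuity of $v_0-\phi_\alpha$, which is $\le 0$ with equality on $\calS$), we get $\mathrm{supp}(\mu_0)\setminus\Gamma\subset\calS$, which is exactly the claim.

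The one step that deserves care is justifying that $\calS$ is closed and that the harmonicity argument is valid with the low regularity at hand: $v_0$ here is the upper-semicontinuous regularization of a Perron-type envelope, so $v_0-\phi_\alpha$ is upper semicontinuous, its super-level set $\{v_0-\phi_\alpha\ge 0\}=\{v_0-\phi_\alpha=0\}=\calS$ is therefore closed, and away from $\Gamma$ the interior regularity (from Proposition~\ref{prop:reg}, applied locally on bounded Lipschitz subdomains, or from classical obstacle-problem theory) gives $v_0\in C^{1}_{\mathrm{loc}}$ off $\Gamma$ and $\Delta v_0=\chi_{\calS}\Delta\phi_\alpha$ there. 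I expect this bookkeeping — matching the global envelope formulation of Definition~\ref{defn:obst-unb}(b) with the local bounded-domain results of \S~\ref{s:background} — to be the only mild obstacle; the potential-theoretic core (support of the Riesz mass sits inside the coincidence set union the thin curve) is routine once that is in place. Finally, translating back: $v_0(z_0)=\phi_\alpha(z_0)$ unwinds to $U^{\mu_{\alpha,\G}}(z_0)-\tfrac{|z_0|^2}{2\alpha}=B_\alpha(\mu_{\alpha,\G})$, as desired.
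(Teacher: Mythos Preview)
Your proposal is correct but takes a genuinely different route from the paper. The paper gives a short, self-contained variational argument: assuming by contradiction that $U^{\mu_0}(z_0)-\tfrac{|z_0|^2}{2\alpha}<B_\alpha(\mu_0)$ at some $z_0\in\mathrm{supp}(\mu_0)\setminus\partial\G$, one picks a small neighbourhood $V$ of $z_0$ carrying positive mass and replaces $\mu_0$ by $\eta=\chi_{V^c}\mu_0+\mathrm{Bal}(\chi_V\mu_0,V^c)$. If $V$ is small enough the supremum $B_\alpha$ is unchanged (the balayage raises the potential only on $V$, where it was strictly below the maximum), while $-\Sigma$ strictly decreases under balayage, so $I_\alpha(\eta)<I_\alpha(\mu_0)$, a contradiction. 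Your approach instead invokes Theorem~\ref{thm:basic-obst} and then reads off from the obstacle-problem structure (via Proposition~\ref{prop:reg} localized away from $\partial\G$) that the Riesz mass off $\partial\G$ is confined to the coincidence set. This is conceptually clean and perfectly valid---there is no circularity, since the proof of Theorem~\ref{thm:basic-obst} does not use Proposition~\ref{prop:contact-at-mass}---but it leans on the heavier machinery already assembled. The paper's balayage argument is deliberately elementary (the subsection is titled ``Two elementary observations'') and stands on its own, independent of the obstacle reformulation.
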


\begin{proof}
Suppose, on the contrary, that we have
\[
U^{\mu_{\alpha,\G}}(z_0)-\frac{|z_0|^2}{2\alpha}
  <B_{\alpha}(\mu_{\alpha,\G}).
\]
We may choose a small neighbourhood $V$ of $z_0$ 
where the equilibrium measure has positive mass.
If $V$ is made small enough, we may then replace $\mu_{\alpha,\G}$ by 
\[
\eta=\chi_{V^c}\mu_{\alpha,\G}+\mathrm{Bal}(\mu_{\alpha,\G},V^c)
\]
without increasing the value of $B_{\alpha}(\mu_{\alpha,\G})$. 
Since the energy $-\Sigma$ only decreases under the 
balayage operation it follows that 
$I_{\alpha}(\eta)<I_{\alpha}(\mu_{\alpha,\G})$, which is a contradiction.
\end{proof}

We also have the following simple lemma.
\begin{lem}\label{lem:balance}
Assume that $\G$ is contained in the unit disk.
For $\alpha> \e$, the value 
$B_\alpha(\mu_{\alpha,\G})$ is attained by
$U^{\mu_{\alpha,\G}}(z)-\frac{|z|^2}{2\alpha}$ on $\G$ 
as well as on $\overline{\G}^c$. Moreover, the singular mass
is bounded by
\[
\mu_{\alpha,\G}^{\s}(\C)\le \e\alpha^{-1}.
\]
\end{lem}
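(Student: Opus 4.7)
The plan is to realize $u := U^{\mu_{\alpha,\G}} - B_\alpha(\mu_{\alpha,\G})$ as the solution of the mixed obstacle problem of Theorem~\ref{thm:basic-obst} with full obstacle $\phi(z)=|z|^2/(2\alpha)$, and to combine this with Proposition~\ref{prop:contact-at-mass}. Writing $\calS=\{u=\phi\}$ for the coincidence set, the two attainment claims amount to $\calS\cap\overline{\G}\ne\emptyset$ and $\calS\cap\overline{\G}^c\ne\emptyset$. Since $\mu_{\alpha,\G}$ charges no mass in $\G$, the potential $U^{\mu_{\alpha,\G}}$ is harmonic on $\G$, so $u-\phi$ is strictly superharmonic there (Laplacian $-2/\alpha$); by the strong maximum principle, any coincidence point in $\overline{\G}$ must lie on $\partial\G$.

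For attainment on $\overline{\G}^c$, Proposition~\ref{prop:contact-at-mass} gives $\mathrm{supp}(\mu_{\alpha,\G})\setminus\partial\G\subset \calS\cap\overline{\G}^c$, so it suffices to rule out that $\mu_{\alpha,\G}$ is concentrated on $\partial\G$. Suppose otherwise, so $\mathrm{supp}(\mu_{\alpha,\G})\subset\overline{\D}$. Circular averaging $u\le\phi$ on $|z|=\sqrt{\alpha}$ gives $B_\alpha(\mu_{\alpha,\G})\ge\tfrac{1}{2}\log\alpha-\tfrac{1}{2}$, while the bound $u\le \tfrac{1}{2\alpha}$ on $\overline{\D}$ gives $\int u\,d\mu_{\alpha,\G}\le\tfrac{1}{2\alpha}$, and consequently $I_\alpha(\mu_{\alpha,\G})=B_\alpha(\mu_{\alpha,\G})-\int u\,d\mu_{\alpha,\G}\ge\tfrac{1}{2}\log\alpha-\tfrac{1}{2}-\tfrac{1}{2\alpha}$. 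On the other hand, the admissible competitor $\tilde\nu=\chi_{\G^c}\nu_\alpha+\mathrm{Bal}(\chi_\G\nu_\alpha,\partial\G)$, with $\nu_\alpha=(\pi\alpha)^{-1}\chi_{\D(0,\sqrt{\alpha})}\diffA$, satisfies $I_\alpha(\tilde\nu)=\tfrac{1}{2}\log\alpha-\tfrac{3}{4}+O(1/\alpha)$ by a direct computation using $U^{\nu_\alpha}(z)=\phi(z)+\tfrac{1}{2}\log\alpha-\tfrac{1}{2}$ on $\D(0,\sqrt{\alpha})$; for $\alpha>\e$ this contradicts the minimality of $\mu_{\alpha,\G}$. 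For attainment on $\overline{\G}$ (hence on $\partial\G$), if $u<\phi$ strictly on $\overline{\G}$ then by continuity the gap extends to a neighborhood of $\overline{\G}$, so the thin obstacle of Theorem~\ref{thm:basic-obst} is inactive near $\partial\G$, forcing $\mu^s_{\alpha,\G}=0$; but then $u$ solves the global obstacle problem in $\mathrm{SH}_1(\C)$ with only the full obstacle $\phi$, whose unique Riesz mass is $\nu_\alpha$, and $\nu_\alpha$ charges $\G\subset\D\subset\D(0,\sqrt{\alpha})$ (for $\alpha>1$), violating admissibility.

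For the mass bound, the obstacle decomposition gives $\mu^c_{\alpha,\G}=(\pi\alpha)^{-1}\chi_{\calC}\diffA$ with $\calC=\calS\cap\overline{\G}^c$, so $\mu^s(\C)=1-|\calC|/(\pi\alpha)$ and the claim is equivalent to $|\calC|\ge\pi(\alpha-\e)$. I would compare $u$ with the maximal subharmonic minorant $\tilde u$ of $\phi$ in $\mathrm{SH}_1(\C)$, which has Riesz mass $\nu_\alpha$ and equals $\phi$ on $\D(0,\sqrt{\alpha})$. Since $u\in\mathrm{SH}_1(\C)$ with $u\le\phi$, monotonicity of the obstacle problem gives $u\le\tilde u$, so $\tilde u-u=\phi-u\ge 0$ on $\D(0,\sqrt{\alpha})$ and vanishes precisely on $\calS$. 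Integrating this relation over $\D(0,\sqrt{\alpha})$ and inserting the explicit potential formulas yields the identity
\[
\int_{\D(0,\sqrt{\alpha})\setminus\calS}(\phi-u)\,\diffA = \pi\alpha\bigl(B_\alpha(\mu_{\alpha,\G})-\tilde B\bigr) + \tfrac{\pi\alpha}{4} - \tfrac{\pi}{2}\textstyle\int|w|^2\,d\mu_{\alpha,\G},
\]
with $\tilde B=\tfrac{1}{2}\log\alpha-\tfrac{1}{2}$. Combining this identity with a pointwise bound on $\phi-u$ on $\D(0,\sqrt{\alpha})$ and exploiting $\mathrm{supp}(\mu^s)\subset\overline{\D}$ to estimate $\int|w|^2\,d\mu^s\le\mu^s(\C)$ yields a linear inequality relating $|\calC|$ to $B_\alpha-\tilde B$ and $\mu^s(\C)$, from which the claimed bound emerges.

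The mass estimate is the most delicate step: the coincidence set $\calS$ can have complicated topology (thin components, pinch points, etc.), so the argument must proceed via area and integral estimates rather than pointwise geometric description. The specific constant $\e$ in $\mu^s(\C)\le\e/\alpha$ reflects a careful balance between the second-moment bound on $\mu^s$ and the ``defect'' of $B_\alpha(\mu_{\alpha,\G})$ from the unconstrained value $\tilde B=\tfrac{1}{2}\log\alpha-\tfrac{1}{2}$; the threshold $\alpha>\e$ ensures $\e/\alpha<1$, making the bound nontrivial.
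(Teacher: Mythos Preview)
Your argument for attainment on $\overline{\G}^c$ (ruling out full concentration on $\partial\G$ by an energy comparison with $\tilde\nu$) is fine, though different from the paper, which instead proves the mass bound $\mu^\s(\C)\le \e/\alpha$ first and reads off non-concentration from it.

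There are, however, two genuine gaps.

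\textbf{Attainment on $\overline{\G}$.} Your implication ``$u<\phi$ on a neighbourhood of $\overline{\G}$ $\Rightarrow$ the thin obstacle is inactive $\Rightarrow$ $\mu^\s=0$'' does not hold. The condition $u<\phi$ says the \emph{full} obstacle is inactive near $\partial\G$; it says nothing about the thin obstacle $g$, which by construction equals $u\vert_{\partial\G}$ and is therefore always touched. The singular part $\mu^\s$ is the Neumann jump of $u$ across $\partial\G$, and there is no reason this jump should vanish just because $u$ stays below $\phi$ nearby. The paper closes this gap with a balayage improvement: if the supremum is \emph{not} attained on a neighbourhood $\overline{\G_1}\supset\overline{\G}$ and $\mu^\s\ne 0$, sweep part of $\mu^\s$ to $\partial\G_1$. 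This strictly decreases $-\Sigma$ while leaving $B_\alpha$ unchanged (the potential changes only inside $\G_1$, where the sup was not attained), contradicting minimality. Only then does one invoke ``$\mu^\s=0\Rightarrow$ thin obstacle inactive $\Rightarrow$ $\mu=\nu_\alpha$, inadmissible'' --- note the direction of the first implication.

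\textbf{The mass bound.} Your integral identity is correct, but the sketch that follows is not a proof: you do not specify the ``pointwise bound on $\phi-u$'' you intend to use, nor do you show how the specific constant $\e$ emerges from the resulting inequality. I do not see how to extract $\mu^\s(\C)\le \e/\alpha$ from this route without essentially redoing the analysis of the circular hole. The paper takes a completely different path: if $\mu^\s(\C)=p/\alpha$ with $p>\e$, then since $\partial\G\subset\overline{\D}$ the measure $\mu_{\alpha,\G}$ is admissible for the \emph{overcrowding} problem on $\D$ with parameter $p$, and the results of \cite{GhoshNishry1} give $I_\alpha(\mu_{\alpha,\G})\ge I_\alpha(\mu^p_{\alpha,\D})>I_\alpha(\mu_{\alpha,\D})\ge I_\alpha(\mu_{\alpha,\G})$, the last inequality by $\G\subset\D$. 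The constant $\e$ is exactly the threshold from the overcrowding analysis of the disk.
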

\begin{proof}
We first show that the measure $\mu_{\alpha,\G}$ 
is not concentrated on $\partial \G$, meaning that 
\[
\mu_{\alpha,\G}^\s(\C)=\mu_{\alpha,\G}(\partial\G)<1.
\]
In fact, we will find a stronger bound on the total mass
of $\mu_{\alpha,\G}^{\s}$.
Suppose that $\mu_{\alpha,\G}(\partial\G)=\frac{p}{\alpha}$ for some $p>1$,
so that $\mu_{\alpha,\G}$ is a candidate for the overcrowding problem 
on $\D$ with parameter $p$ (see \cite{GhoshNishry1}). 
But if $p>\e$, we have from the assumption of mass concentration on $\partial\G$,
the result of \cite{GhoshNishry1} on the optimal values for the overcrowding problem,
and lastly from the inclusion $\G\subset\D$ that
\[
I_{\alpha}(\mu_{\alpha,\G})\ge 
I_{\alpha}(\mu^p_{\alpha,\D})>I_{\alpha}(\mu_{\alpha,\D})\ge 
I_{\alpha}(\mu_{\alpha,\G}),
\]
which yields a contradiction.

Since there exist points $z\in \partial \G^c$ 
which belong to the support of $\mu_{\alpha,\G}$,
we find that $B_\alpha(\mu_{\alpha,\G})$ 
is attained outside the closure of $\G$ by 
Proposition~\ref{prop:contact-at-mass}.

For the other direction, assume that the supremum is not 
attained on $\overline{\G}$. But then it is not attained 
on $\overline{\G_1}$, where $\G_1$ contains a neighbourhood
of $\overline{\G}$. We split $\mu_{\alpha,\G}$
into a sum $\mu_1+\mu_2$, where $\mu_1$ is supported 
on $\partial \G$ and $\mu_2$ is supported on 
$\overline{\G}^c$. If 
$\mu_1$ is non-zero, then we may form the balayage measure 
$\eta=\mathrm{Bal}(\mu_1,\G_1^c)$ and put 
\[
\mu_t=(1-t)\mu_1+t \eta + \mu_2
\]
to obtain an admissible measure $\mu_t$ for which 
$-\Sigma(\mu_t)<-\Sigma(\mu_{\alpha,\G})$ while 
$B_\alpha(\mu_t)=B_\alpha(\mu_{\alpha,\G})$. 
This contradicts minimality of $I_\alpha(\mu_{\alpha,\G})$.
It remains to be prove that $\mu_1$ is non-zero. 
This is obvious, however, from the obstacle problem.
Indeed, if $\mu_1=0$ this means that the obstacle on 
$\Gamma$ is inactive, which would say that
\[
U^{\mu_{\alpha,\G}}(z)=\sup\big\{u(z):
  u\in\mathrm{SH}_1,\,u(z)\le \tfrac{|z|^2}{2\alpha}+C\big\},
\]
for some constant $C$, the solution of which is the potential of the 
unconstrained minimization of $I_\alpha$, that is 
equal to $U^{\alpha^{-1}\chi_{\D(0,\sqrt{\alpha})}}$,
which is inadmissible. The conclusion of the lemma follows.
\end{proof}

\subsection{The structure of extremal measures}
We next supply a crude description of the structure 
of equilibrium measures on the hole event.
The information gained on the coincidence set is of 
particular importance for later applications,
as it allows to localize the problem of minimizing $I_\alpha(\mu)$
over $\calM_\G$ to the disk 
$\D(0,\sqrt{\alpha})$, fix the value of $B_\alpha(\mu_{\alpha,\G})$,
and replace the logarithmic energy
by a Dirichlet integral.

Before we proceed, we need a specific function to compare
the solution of an obstacle problem with.
Denote by $\nu_{\alpha,r}$ the measure
\[
\nu_{\alpha,r}=\tfrac{1}{\pi\alpha}\chi_{\D(0,\sqrt{\alpha})
\setminus \D(0,\sqrt{r})}\,\diffA,
\] 
and define the function $V$ on $\D(0,\sqrt{\alpha})$ by
\begin{align}
\label{eq:align-Vr1}
V(z)&=\frac{r}{\alpha}\log|z|+U^{\nu_{\alpha,r}}(z)-c_\alpha\\
&=\frac{r}{\alpha}
\big(U^{\delta_0}(z) - U^{\frac{1}{\pi r}\chi_{\D(0,\sqrt{r})}}(z)\big) 
+ U^{\frac{1}{\pi\alpha}\chi_{\D(0,\sqrt{\alpha})}}(z)-c_\alpha,
\label{eq:align-Vr2}
\end{align}
where the constant $c_\alpha$ is given by
\begin{equation}\label{eq:c-alpha}
c_\alpha=\tfrac12\big(\log\alpha-1\big).
\end{equation}
\begin{prop}\label{prop:Vr} 
The function $V$ meets the bound
$V(z)\le \frac{|z|^2}{2\alpha}$ on $\D(0,\sqrt{\alpha})$, 
and if $\alpha\ge \alpha_0\coloneqq 16\e^3$ and $r=\frac14\alpha$,
the coincidence set for $V$ with $\psi_\alpha$ contains
$\mathbb{A}(\tfrac12\sqrt{\alpha},\sqrt{\alpha})$, and we have
$V\vert_{\T(0,2)}\le 
-\frac14$.
\end{prop}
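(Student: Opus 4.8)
The plan is to reduce everything to the explicit formula for the logarithmic potential of the normalized indicator of a disk, together with a one-line convexity inequality. Recall that for $s>0$,
\[
U^{\frac1\pi\chi_{\D(0,s)}}(z)=
\begin{cases}
\tfrac12\big(|z|^2-s^2\big)+s^2\log s,& |z|\le s,\\
s^2\log|z|,& |z|\ge s,
\end{cases}
\]
which follows from Newton's theorem in the exterior region and a direct integration (or the sub-mean value property) in the interior. Substituting $\nu_{\alpha,r}=\tfrac1{\pi\alpha}\big(\chi_{\D(0,\sqrt\alpha)}-\chi_{\D(0,\sqrt r)}\big)\diffA$ into \eqref{eq:align-Vr2}, I would evaluate $V$ separately on the annulus $\{\sqrt r\le|z|\le\sqrt\alpha\}$ and on the inner disk $\{|z|<\sqrt r\}$.

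On the annulus one has $U^{\frac1\pi\chi_{\D(0,\sqrt r)}}(z)=r\log|z|$, so the two occurrences of $\tfrac r\alpha\log|z|$ in \eqref{eq:align-Vr1} cancel against each other, and with $c_\alpha=\tfrac12(\log\alpha-1)$ the remaining constants cancel as well, leaving $V(z)=\tfrac{|z|^2}{2\alpha}=\psi_\alpha(z)$ identically on $\{\sqrt r\le|z|\le\sqrt\alpha\}$. For $r=\tfrac14\alpha$ this region is exactly $\mathbb{A}(\tfrac12\sqrt\alpha,\sqrt\alpha)$, which gives the assertion about the coincidence set; observe that this part uses nothing beyond $0<r<\alpha$.

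On the inner disk the same computation, now using the interior branch of $U^{\frac1\pi\chi_{\D(0,\sqrt r)}}$, produces after the constants again cancel
\[
V(z)=\frac r\alpha\Big(\log\frac{|z|}{\sqrt r}+\frac12\Big),\qquad |z|<\sqrt r;
\]
as a consistency check, the two expressions for $V$ agree at $|z|=\sqrt r$, where both equal $\tfrac r{2\alpha}$. It remains to verify the bound $V\le\psi_\alpha$ in this region: with the substitution $t=|z|^2/r\in(0,1)$ the inequality $V(z)\le\tfrac{|z|^2}{2\alpha}$ reduces to $\log t\le t-1$, which holds for every $t>0$. Combined with the equality on the annulus, this yields $V\le\psi_\alpha$ throughout $\D(0,\sqrt\alpha)$, for all $r$ and $\alpha$. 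Finally, if $\alpha\ge 16\e^3$ then $2<\tfrac12\sqrt\alpha=\sqrt r$, so $\T(0,2)$ lies inside $\D(0,\sqrt r)$ and the inner-disk formula applies; plugging $|z|=2$ and $r=\tfrac14\alpha$ gives $V\vert_{\T(0,2)}=\tfrac14\big(\log\tfrac{4}{\sqrt\alpha}+\tfrac12\big)$, and the bound $V\vert_{\T(0,2)}\le-\tfrac14$ is equivalent to $\log\alpha\ge\log(16\e^3)$, i.e.\ to the hypothesis $\alpha\ge\alpha_0$ (with equality at $\alpha_0$).

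Every step is either an explicit evaluation of the two disk potentials or the elementary inequality $\log t\le t-1$, so I do not expect a genuine obstacle. The only thing demanding care is the bookkeeping: keeping straight which branch of each disk potential is in force in each region, and checking the continuity of $V$ across $|z|=\sqrt r$ as a verification of the algebra.
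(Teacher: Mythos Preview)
Your proof is correct and follows essentially the same route as the paper: explicit evaluation of the two disk potentials, verification that $V=\psi_\alpha$ on the annulus, and the inequality $\log t\le t-1$ on the inner disk. Your inner-disk formula $V(z)=\tfrac{r}{2\alpha}\big(\log\tfrac{|z|^2}{r}+1\big)$ is in fact the correct one (the paper's displayed formula has a typo, writing $|z|^2/r$ in place of the constant $1$), and your computation shows that the bound $V\vert_{\T(0,2)}\le-\tfrac14$ is sharp at $\alpha=\alpha_0$.
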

\begin{proof}
That the global bound holds and that the coincidence set contains the annulus annulus
$\mathbb{A}(\sqrt{r},\sqrt{\alpha})$ is clear from the fact that the first term
in \eqref{eq:align-Vr2} is negative, and vanishes outside $\D(0,\sqrt{r})$.
Provided that $|z|^2\le r$ we have 
\[
V(z)=\frac{r}{2\alpha}\Big(\log \frac{|z|^2}{r} + \frac{|z|^2}{r}\Big),
\]
so if $r>2$ we find that
\[
V(z)= -\frac{r}{2\alpha}\log\big(\tfrac{r}{4\e^{4/r}}\big),\qquad |z|=2.
\]
Let $\alpha\ge\alpha_0\coloneqq 16\e^3$, and choose $r=\frac14\alpha$. 
We find that the coincidence set for $V$ equals
$\mathbb{A}(\tfrac12\sqrt{\alpha},\sqrt{\alpha})$ 
and that 
\[
V(2)= -\tfrac18\log \tfrac{\alpha}{16\e^{\frac{16}{\alpha}}}\le 
-\tfrac18\log \e^2=
-\tfrac14.
\]
This completes the proof
\end{proof}
\begin{prop}\label{prop:struct}
Assume that $\G$ is contained in the unit disk. 
Then the equilibrium measure takes the form
\[
\diff\mu_{\alpha,\G}=\diff\mu_{\alpha,\G}^{\s}
+\tfrac{1}{\pi\alpha}\chi_{\D(0,\sqrt{\alpha})\setminus\Omega}\,\diffA,
\]
where $\Omega=\Omega_\alpha$ is an open set containing $\G$ and where the 
singular part of the measure is supported on the boundary of $\G$.
Moreover, there exists a universal constant $\alpha_0$, such that 
the closed annulus $\overline{\mathbb{A}}(\tfrac12\sqrt{\alpha},\sqrt{\alpha})$ 
belongs to the support of $\mu_{\alpha,\G}$ whenever $\alpha\ge \alpha_0$.
\end{prop}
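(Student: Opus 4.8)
The plan is to combine the obstacle-problem description of $\mu_{\alpha,\G}$ from Theorem~\ref{thm:basic-obst} with the structural decomposition from Proposition~\ref{prop:reg} and the comparison function $V$ from Proposition~\ref{prop:Vr}. First, by Theorem~\ref{thm:basic-obst}, there is a thin obstacle $g\in H^{\frac12}(\partial\G)$ such that $\mu_{\alpha,\G}$ is the Riesz mass of the solution $v_0$ to the global mixed obstacle problem with thin obstacle $g$ on $\partial\G$ and full obstacle $\phi_\alpha(z)=\tfrac{1}{2\alpha}|z|^2$. The growth condition \eqref{eq:growth-phi} fails at infinity for $\phi_\alpha$, but only the region $\D(0,\sqrt\alpha)$ matters: outside $\D(0,\sqrt\alpha)$ the comparison with the unconstrained equilibrium potential $U^{\alpha^{-1}\chi_{\D(0,\sqrt\alpha)}}$ shows the coincidence set is contained in $\overline{\D(0,\sqrt\alpha)}$, so that (after localizing via Proposition~\ref{prop:localize} to $D=\D(0,\sqrt\alpha)$) the hypotheses of Proposition~\ref{prop:reg} apply with $\Gamma=\partial\G$ and $\psi=\phi_\alpha\in C^{1,1}$. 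Since $\tfrac{1}{2\pi}\Delta\phi_\alpha=\tfrac{1}{\pi\alpha}$, Proposition~\ref{prop:reg} immediately gives the decomposition
\[
\diff\mu_{\alpha,\G}=\diff\mu_{\alpha,\G}^\s+\tfrac{1}{\pi\alpha}\chi_{\D(0,\sqrt\alpha)\setminus\Omega}\,\diffA,
\]
where $\Omega=\D(0,\sqrt\alpha)\setminus\calS$ and $\calS=\{v_0=\phi_\alpha\}$ is the coincidence set, and $\mu_{\alpha,\G}^\s\in H^{-\frac12}(\partial\G)$ lives on $\partial\G$.

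Next I would check that $\Omega$ contains $\G$. This is precisely the statement that the interior coincidence set $\calI=\calS\cap\overline\G$ has empty interior — indeed, that it has zero area, which was already observed inside the proof of Theorem~\ref{thm:basic-obst}: the harmonic extension $w=v_0-B_\alpha(\mu_0)$ dominates $v_0$ on $\G$ by the maximum principle, $w$ is harmonic in $\G$ while $\phi_\alpha$ is strictly subharmonic, so $\{w=\phi_\alpha\}$ has zero Lebesgue measure, and hence so does $\calI\subset\{w=\phi_\alpha\}$. Since $\Omega$ is open (complement of the closed coincidence set) and $\G\setminus\Omega\subset\calI$ has empty interior while $\G$ is open, $\G\subset\overline\Omega$; a slightly more careful argument — using that the continuous density $\tfrac1{\pi\alpha}\chi_{\D\setminus\Omega}$ cannot be supported on a set meeting $\G$ in positive measure combined with the fact (Lemma~\ref{lem:balance}) that the supremum defining $B_\alpha$ is attained on $\G$, forcing $v_0<\phi_\alpha$ off a null set there — shows in fact $\G\subset\Omega$. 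The singular part $\mu_{\alpha,\G}^\s$ being supported on $\partial\G$ is part of the output of Proposition~\ref{prop:reg}, and it is nonzero by Lemma~\ref{lem:balance}.

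It remains to prove the last assertion, that $\overline{\mathbb{A}}(\tfrac12\sqrt\alpha,\sqrt\alpha)$ lies in $\mathrm{supp}(\mu_{\alpha,\G})$ for $\alpha\ge\alpha_0$. Here I would compare $v_0$ with a shifted copy of the function $V$ from Proposition~\ref{prop:Vr}. Since $\G\subset\D\subset\D(0,\sqrt r)$ with $r=\tfrac14\alpha$ (as $\alpha\ge\alpha_0=16\e^3>4$), the measure $\nu_{\alpha,r}$ gives no mass to $\G$, and the function $V$ (suitably normalized by an additive constant, and noting $V$ has the correct $\log|z|+\Ordo(1)$ growth) is an admissible subharmonic competitor for the global obstacle problem with some thin obstacle on $\partial\G$ dominated by $g$; by the envelope characterization and monotonicity (Remark~\ref{rem:monotone-coincidence}) one gets $v_0\ge V+\text{const}$ wherever the thin obstacles are comparable, and on the annulus $\mathbb{A}(\tfrac12\sqrt\alpha,\sqrt\alpha)$, where $V$ coincides with $\phi_\alpha$, this forces $v_0$ to coincide with $\phi_\alpha$ too — hence this annulus lies in the coincidence set $\calS$, i.e. in $\D(0,\sqrt\alpha)\setminus\Omega$, which carries the continuous density $\tfrac1{\pi\alpha}\diffA>0$. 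Taking closures gives $\overline{\mathbb{A}}(\tfrac12\sqrt\alpha,\sqrt\alpha)\subset\mathrm{supp}(\mu_{\alpha,\G})$.

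\textbf{Main obstacle.} The delicate point is making the comparison $v_0\ge V$ (equivalently, showing the coincidence set of $v_0$ contains that of $V$) rigorous despite the unknown thin obstacle $g$ and the merely $H^{\frac12}$ regularity of $g$ on $\partial\G$. The clean way is to replace the global problem by its localization to $\D(0,\sqrt\alpha)$ via Proposition~\ref{prop:localize} and then invoke the monotonicity of coincidence sets under lowering the full obstacle / thin obstacle; one must verify that the restriction of $v_0$ to $\partial\G$ lies above the corresponding restriction of $V$, which is where the estimate $V\vert_{\T(0,2)}\le-\tfrac14$ from Proposition~\ref{prop:Vr} and the fact that $\G\subset\D\subset\D(0,2)$ are used — together with the normalization that fixes the additive constant in $v_0$ so that $\sup_\C(v_0-\phi_\alpha)=0$ while $V$ has been arranged to satisfy the same normalization. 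Checking these inequalities carefully, and confirming that the non-degeneracy of the comparison survives the passage to $\partial\G$, is the technical heart of the argument.
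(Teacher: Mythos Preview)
Your plan for the decomposition part is correct and matches the paper's approach: localize to $D=\D(0,\sqrt\alpha)$, apply Proposition~\ref{prop:reg}, and read off the structure of $\mu_{\alpha,\G}$.

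The gap is in the annulus argument. You correctly identify the difficulty --- making the comparison $v_0\ge V$ rigorous in the presence of the unknown thin obstacle $g$ --- but your proposed resolution does not work. You want to check $v_0\vert_{\partial\G}\ge V\vert_{\partial\G}$; you can indeed bound $V\vert_{\partial\G}\le -\tfrac14$ (since $V$ is radially increasing on $\D(0,\sqrt{r})$ and $\partial\G\subset\overline{\D}\subset\D(0,2)$), but the normalization $\sup_\C(v_0-\phi_\alpha)=0$ gives only an \emph{upper} bound for $v_0$ on $\partial\G$, not the lower bound $v_0\vert_{\partial\G}\ge -\tfrac14$ you need. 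There is no a~priori lower bound on $g=v_0\vert_{\partial\G}$ available at this stage, so you cannot verify that $V$ is admissible for the mixed obstacle problem on $\D(0,\sqrt\alpha)$.

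The paper sidesteps the thin obstacle entirely by moving the comparison boundary outward to $\T(0,2)$. Since $\partial\G\subset\D(0,2)$, on $\C\setminus\overline{\D(0,2)}$ the function $U=v_0=U^{\mu_{\alpha,\G}}-B_\alpha(\mu_{\alpha,\G})$ solves a \emph{pure} obstacle problem (full obstacle $\phi_\alpha$, boundary data $U\vert_{\T(0,2)}$, no thin obstacle). The work then goes into estimating $U\vert_{\T(0,2)}$ from below by a quantity exceeding $-\tfrac14$. This uses two explicit potential-theoretic estimates: an upper bound on $B_\alpha(\mu_{\alpha,\G})$ obtained by evaluating $U^{\mu_{\alpha,\G}}-\phi_\alpha$ on $\G$ (where the supremum is attained by Lemma~\ref{lem:balance}) using the singular-mass bound $\mu_{\alpha,\G}^\s(\C)\le e/\alpha$ and extremal configurations for $U^{\alpha^{-1}\chi_\Omega}$; and a lower bound on $U^{\mu_{\alpha,\G}}\vert_{\T(0,2)}$ using positivity of $U^{\mu_{\alpha,\G}^\s}$ there and the same control on $|\Omega|\le e$. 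These combine to give $U\vert_{\T(0,2)}\ge -\tfrac{1}{30}>-\tfrac14\ge V\vert_{\T(0,2)}$, so $V$ is a competitor for the exterior obstacle problem and $U\ge V$ on $\C\setminus\D(0,2)$, forcing the coincidence set to contain $\mathbb{A}(\tfrac12\sqrt\alpha,\sqrt\alpha)$. This quantitative estimate of $U$ on $\T(0,2)$ is the missing idea in your proposal.
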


\begin{rem}
{\rm (a)}\; The domain $\Omega$ is the forbidden region defined in the introduction.

\noindent {\rm (b)}\;We may take $\alpha_0=16\e^3$. This is likely far from sharp, 
but easy to obtain.
\end{rem}

\begin{proof}
The decomposition of the measure $\mu_{\alpha,\G}$ is 
evident in view Proposition~\ref{prop:reg},
and the fact that $U^{\mu_{\alpha,\G}}$ may be 
regarded as the solution to the mixed obstacle problem
on a bounded domain $D=\D(0,\sqrt{\alpha})$ with 
boundary data $f=U^{\mu_{\alpha,\G}}\vert_{\T(0,\sqrt{\alpha})}$.
To see why the latter statement holds, we define
the function
\[
v_0(z)=\sup\big\{v(z):v\in\mathrm{SH}(D)\cap\calK_{\psi,g}^f\big\}
\]
and its upper semicontinuous regularization $v_0^*$, which by the regularity of $\psi$ has
$v^*_0=v_0$ except possibly on a polar subset of $\partial\G$.
Abusing notation slightly, we denote the regularized function by $v_0$,
and note that this function solves the obstacle problem for $\calK_{\psi,g}^f$. Moreover, 
$v_0\ge U^{\mu_{\alpha,\G}}$ with equality on $\partial D$.
If we glue $v_0$ and $U^{\mu_{\alpha,\G}}$ together along $\partial D$
we obtain a majorant $w$ of $U^{\mu_{\alpha,\G}}$. If we can show that $w$
is subharmonic, it follows that the Riesz measure of 
which meets all requirements for the defining minimization problem
for $\mu_{\alpha,\G}$. But this follows from the sub-mean value property,
which trivially holds on $D$ and on the interior of $D^\c$, while on $\partial D$
we have
\[
w(z)=U^{\mu_{\alpha,\G}}(z)\le \frac{1}{|B|}\int_B U^{\mu_{\alpha,\G}}\,\diffA
\le \frac{1}{|B|}\int_B w\,\diffA.
\]

It remains to prove the statement concerning the coincidence set. 
We first observe that the singular mass $\mu_{\alpha,\G}^{\s}(\C)$ 
meets the bound
\[
\mu_{\alpha,\G}^{\s}(\C)\le \frac{\e}{\alpha},
\]
by Lemma~\ref{lem:balance}. 
Secondly, we observe that away from $\G$, say for definiteness 
outside $\D(0,2)$, 
we may reconstruct the function 
$U=U^{\mu_{\alpha,\G}}-B_\alpha(\mu_{\alpha,\G})$ 
from its boundary values $f=U\vert_{\T(0,2)}$ by solving an obstacle problem 
in the same way as above, so that
\[
U(z)=\sup\big\{v(z):v\in\mathrm{SH}_1(\C\setminus \D(0,2)), 
u\le \tfrac{|z|^2}{2\alpha}, u\vert_{\T(0,2)}=f\big\}.
\]
We will complete the proof by estimating the 
boundary values of $U$ from below by a constant $C(\alpha)$, 
and show that the solution $U$ dominates the solution $V$ 
of Proposition~\ref{prop:Vr}, whose coincidence set
contains the annulus $\mathbb{A}(\tfrac12\sqrt{\alpha},\sqrt{\alpha})$.
We proceed to estimate $B_\alpha(\mu_{\alpha,\G})$ from above. 
We know that $B_\alpha(\mu_{\alpha,\G})$ is attained on 
both $\G$ and $\G^c$. On $\G$, we have that 
\[
U^{\mu_{\alpha,\G}^{\s}}(z)\le 
\mu_{\alpha,\G}^{\s}(\C)\sup_{w\in\partial \G}\log|z-w|\le 
\frac{\e\log 2}{\alpha}.
\]
This allows us to estimate 
\[
U^{\mu_{\alpha,\G}}(z)-\tfrac{|z|^2}{2\alpha}
\le\big(U^{\mu_\alpha}(z)-\tfrac{|z|^2}{2\alpha}\big)-
U^{\frac{1}{\alpha}\chi_\Omega}(z)+\tfrac{\e \log 2}{\alpha}
=c_\alpha-U^{\frac{1}{\alpha}\chi_{\Omega}}(z)+\tfrac{\e \log 2}{\alpha},
\]
where $c_\alpha$ is given by \eqref{eq:c-alpha} and where 
$\mu_\alpha=\tfrac{1}{\alpha}\chi_{\D(0,\sqrt{\alpha})}$.
From the estimate of the singular mass in Lemma~\ref{lem:balance}
and the fact that 
$\mu_{\alpha,\G}$ is a probability measure, we infer that $|\Omega|\le \e$.

We next observe that there are two {\em extremal distributions of mass}, 
which allow us to bound $U^{\chi_{\Omega}}(z)$. Indeed, by monotonicity of the 
logarithmic kernel, it holds that $U^{\chi_\Omega}(z)\le U^{\mu_\alpha^{z,1}}(z)$, 
where $\mu_\alpha^{z,1}=|\Omega|\delta_w$ where  $w=-\sqrt{\alpha}\tfrac{z}{|z|}$, 
which lies at maximal 
distance from $z$ within the allotted domain. Similarly, now using the 
fact that $\mu_{\alpha,\G}^\c$ has uniform density with 
respect to area measure on some plane region, 
we find that $U^{\chi_{\Omega}}(z)\ge U^{\chi_{\D(z,\sqrt{|\Omega|})}}(z)$. 
Performing the necessary computations, 
we see see that 
\[
\frac{1-\e}{2\alpha}\le U^{\frac{1}{\alpha}\chi_{\Omega}}(z)\le 
\frac{\e\log(2\sqrt{\alpha})}{\alpha},
\qquad z\in\G.
\]
In conclusion, we have that 
\[
B_\alpha(\mu_{\alpha,\G})\le c_\alpha+\frac{\e (2\log 2+1)-1}{2\alpha}.
\]
We turn to estimating $U\vert_{\T(0,2)}$ from below,
and first notice that the potential of the singular measure $\mu_{\alpha,\G}^\s$ is positive
on the circle $\T(0,2)$. As was also observed above, we have 
$U^{\mu_{\alpha,\G}^{\c}}(z)
=U^{\alpha^{-1}\chi_{\D(0,\sqrt{\alpha})}}(z)-U^{\frac{1}{\alpha}\chi_\Omega}$, 
so by our previous estimate on $B_\alpha(\mu_{\alpha,\G})$ 
we have that on $\T(0,2)$
\begin{equation}\label{eq:lower-bd-rad2}
U^{\mu_{\alpha,\G}}(z)-B_\alpha(\mu_{\alpha,\G})\ge - 
\frac{2\e\log(2\sqrt{\alpha})+\e\,(2\log 2+1)-5}{2\alpha}.
\end{equation}
The lower bound \eqref{eq:lower-bd-rad2} 
for $U\vert_{\T(0,2)}$
is monotonically decreasing in $\alpha$ for $\alpha>\e$, 
so we may replace $\alpha$ by $\alpha_0=16\e^3$. 
For this value the bound \eqref{eq:lower-bd-rad2} gives
$U\vert_{\T(0,2)}= \frac{5-\e(4+\log(256)}{32\e^3}> -\frac{1}{30}>-\tfrac{1}{4}$,
so that the function $V=V_{r,\alpha}$ of Proposition~\ref{prop:Vr} 
is a competitor for the obstacle problem. 
But then we have  $U\ge V$ on $\C$, and it follows that the support of 
$\mu_{\alpha_0,\G}$ covers the annulus 
$\mathbb{A}(\tfrac12\sqrt{\alpha},\sqrt{\alpha})$.
\end{proof}

We next show that $\mu_{\alpha,\G}$ stabilizes 
as the parameter $\alpha$ grows.

\begin{prop}
\label{prop:glue}
Assume that $\alpha\ge \alpha_0$.
Then we have that
\[
\mu_{\alpha,\G}=
\tfrac{\alpha_0}{\alpha}\mu_{\alpha_0,\G}
+\tfrac{1}{\pi\alpha}
\chi_{\mathbb{A}(\tfrac12\sqrt{\alpha_0},\sqrt{\alpha})}\diffA.
\]
\end{prop}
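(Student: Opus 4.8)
The plan is to produce the right-hand side — write $\tilde\mu_\alpha$ for it — as the Riesz mass of a solution to the mixed obstacle problem of Theorem~\ref{thm:basic-obst}, and then to identify that solution with the one attached to the minimizer $\mu_{\alpha,\G}$. First I would record two consequences of Proposition~\ref{prop:struct} at the level $\alpha_0$: the forbidden region satisfies $\G\subset\Omega_{\alpha_0}\subset\D(0,\tfrac12\sqrt{\alpha_0})$, and $\mu_{\alpha_0,\G}=\mu_{\alpha_0,\G}^\s+\tfrac1{\pi\alpha_0}\chi_{\D(0,\sqrt{\alpha_0})\setminus\Omega_{\alpha_0}}$ with $\mu_{\alpha_0,\G}^\s$ carried by $\partial\G$. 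Since $\T(0,\sqrt{\alpha_0})$ lies in the coincidence set $\overline\D(0,\sqrt{\alpha_0})\setminus\Omega_{\alpha_0}$, the potential $U^{\mu_{\alpha_0,\G}}$ is constant on $\T(0,\sqrt{\alpha_0})$; being harmonic outside $\overline\D(0,\sqrt{\alpha_0})$ and equal to $\log|z|+\ordo(1)$ at infinity, the exterior maximum principle forces $U^{\mu_{\alpha_0,\G}}(z)=\log|z|$ for $|z|\ge\sqrt{\alpha_0}$, and hence $B_{\alpha_0}(\mu_{\alpha_0,\G})=c_{\alpha_0}$ (with $c_\alpha$ as in \eqref{eq:c-alpha}). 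Using $\Omega_{\alpha_0}\subset\D(0,\tfrac12\sqrt{\alpha_0})$ one checks that $\tilde\mu_\alpha=\tfrac{\alpha_0}{\alpha}\mu_{\alpha_0,\G}^\s+\tfrac1{\pi\alpha}\chi_{\D(0,\sqrt\alpha)\setminus\Omega_{\alpha_0}}$, which is a probability measure of finite logarithmic energy, supported in $\overline\D(0,\sqrt\alpha)$, with $\tilde\mu_\alpha(\G)=0$; thus $\tilde\mu_\alpha\in\calM_\G$.

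Next I would analyze the candidate potential $\tilde u_\alpha:=U^{\tilde\mu_\alpha}-c_\alpha$. Writing $\tilde\mu_\alpha=\tfrac{\alpha_0}{\alpha}\mu_{\alpha_0,\G}+\tfrac1{\pi\alpha}\chi_{\mathbb{A}(\sqrt{\alpha_0},\sqrt\alpha)}$ and using the explicit potential of the radially symmetric annular charge together with $U^{\mu_{\alpha_0,\G}}(z)=\log|z|$ for $|z|\ge\sqrt{\alpha_0}$, a direct computation gives $\tilde u_\alpha=\tfrac{\alpha_0}{\alpha}u_{\alpha_0}$ on $\D(0,\sqrt{\alpha_0})$ (where $u_{\alpha_0}:=U^{\mu_{\alpha_0,\G}}-c_{\alpha_0}$ is the level-$\alpha_0$ obstacle solution; here one uses the constancy of the annular potential on $\D(0,\sqrt{\alpha_0})$ and $B_{\alpha_0}=c_{\alpha_0}$), $\tilde u_\alpha=\psi_\alpha$ on the closed annulus $\overline{\mathbb{A}}(\sqrt{\alpha_0},\sqrt\alpha)$, and $\tilde u_\alpha(z)=\log|z|-c_\alpha$ for $|z|\ge\sqrt\alpha$. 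Since $u_{\alpha_0}\le\psi_{\alpha_0}$ with equality precisely on $\overline\D(0,\sqrt{\alpha_0})\setminus\Omega_{\alpha_0}$, and $\psi_\alpha=\tfrac{\alpha_0}{\alpha}\psi_{\alpha_0}$, this shows $\tilde u_\alpha\le\psi_\alpha$ on $\C$ with coincidence set exactly $\overline\D(0,\sqrt\alpha)\setminus\Omega_{\alpha_0}$, with $\sup_\C(\tilde u_\alpha-\psi_\alpha)=0$ (attained), and with $B_\alpha(\tilde\mu_\alpha)=c_\alpha$.

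I would then verify that $\tilde u_\alpha$ solves the obstacle problem of Definition~\ref{defn:obst-unb}(b) with full obstacle $\psi_\alpha$ and thin obstacle $g:=\tilde u_\alpha|_{\partial\G}$. Membership in $\mathrm{SH}_1(\C)$ is clear, and the obstacle inequalities are the content of the previous paragraph; for the envelope property, $\tilde u_\alpha$ is a competitor, so the envelope $u$ dominates it and equals $\psi_\alpha=\tilde u_\alpha$ on $\overline{\mathbb{A}}(\sqrt{\alpha_0},\sqrt\alpha)$ and for $|z|\ge\sqrt\alpha$; restricting to $\D(0,\sqrt{\alpha_0})$ (using the localization of Proposition~\ref{prop:localize} in the mixed setting, via Remark~\ref{rem:one-obst}) and rescaling by $\alpha/\alpha_0$, both $u$ and $u_{\alpha_0}$ solve the same bounded-domain mixed obstacle problem on $\D(0,\sqrt{\alpha_0})$ — same full obstacle ($\psi_\alpha=\tfrac{\alpha_0}{\alpha}\psi_{\alpha_0}$), same thin obstacle $g=\tfrac{\alpha_0}{\alpha}\,u_{\alpha_0}|_{\partial\G}$ on $\partial\G$, and the same boundary data $\psi_\alpha|_{\T(0,\sqrt{\alpha_0})}$ — so by uniqueness (Remark~\ref{rem:variational-basic}) $u=\tilde u_\alpha$ on $\D(0,\sqrt{\alpha_0})$, hence everywhere.

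Finally I would identify $\tilde\mu_\alpha$ with $\mu_{\alpha,\G}$. On $\overline\G\subset\D(0,\sqrt{\alpha_0})$ we have $\tilde u_\alpha=\tfrac{\alpha_0}{\alpha}u_{\alpha_0}$ and $\psi_\alpha=\tfrac{\alpha_0}{\alpha}\psi_{\alpha_0}$, so the interior coincidence set $\calI$ of $\tilde u_\alpha$ coincides with that of $u_{\alpha_0}$ and $\tilde\mu_\alpha|_{\overline\G}=\tfrac{\alpha_0}{\alpha}\mu_{\alpha_0,\G}|_{\overline\G}$; therefore, for every harmonic $h$ on $\G$ with $h<0$ on $\calI$,
\[
\int_{\partial\G}h\,\diff\tilde\mu_\alpha=\tfrac{\alpha_0}{\alpha}\int_{\partial\G}h\,\diff\mu_{\alpha_0,\G}\le 0,
\]
the last inequality being the optimality of $\mu_{\alpha_0,\G}$ at level $\alpha_0$ read off from the variational formula of Corollary~\ref{cor:variational}. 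Because $I_\alpha$ is convex on the convex set $\calM_\G$ with a unique minimizer, and $\tilde u_\alpha$ solves the corresponding obstacle problem \eqref{eq:reform-extremal-intro} with $\tilde\mu_\alpha\in\calM_\G$, this first-order condition is also sufficient, whence $\tilde\mu_\alpha=\mu_{\alpha,\G}$; rewriting $\tilde\mu_\alpha$ in the displayed form (the term $\tfrac{\alpha_0}{\alpha}\mu_{\alpha_0,\G}$ together with $\tfrac1{\pi\alpha}\chi_{\mathbb{A}(\tfrac12\sqrt{\alpha_0},\sqrt\alpha)}$ coincides, by $\Omega_{\alpha_0}\subset\D(0,\tfrac12\sqrt{\alpha_0})$, with $\tfrac{\alpha_0}{\alpha}\mu_{\alpha_0,\G}+\tfrac1{\pi\alpha}\chi_{\mathbb{A}(\sqrt{\alpha_0},\sqrt\alpha)}$) finishes the proof. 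The step I expect to be the main obstacle is this final identification: passing from the necessary variational inequality inherited from the level $\alpha_0$ to the conclusion that $\tilde\mu_\alpha$ is the global minimizer, i.e. checking that the thin-obstacle perturbations exhaust the relevant directions; the region-by-region potential computations of the middle paragraphs, though lengthy, are routine.
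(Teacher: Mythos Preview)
Your approach is genuinely different from the paper's, and the gap you flag at the end is real and not easily closed with the tools available at this point in the paper. You construct the candidate $\tilde\mu_\alpha$, verify it solves the mixed obstacle problem for \emph{some} thin obstacle $g$, and then try to certify optimality via the first-order condition $\int_{\partial\G} h\,\diff\tilde\mu_\alpha^\s\le 0$ for harmonic $h$ with $h<0$ on $\calI$. But this is only the variational inequality for perturbations of the thin obstacle --- a restricted family of directions in $\calM_\G$ --- and Corollary~\ref{cor:variational} together with Proposition~\ref{prop:perturb} establish it as a \emph{necessary} condition at the minimizer, not a sufficient one. Convexity of $I_\alpha$ on $\calM_\G$ does not help directly, because the map $g\mapsto\mu_g$ is neither linear nor surjective onto $\calM_\G$, so stationarity along thin-obstacle directions does not give stationarity along every $\nu-\tilde\mu_\alpha$ with $\nu\in\calM_\G$. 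A sufficient criterion of the kind you need is supplied only later (Proposition~\ref{prop:var-ineq} and Lemma~\ref{lem:verif}), and invoking it here would both be forward-referencing and would require the balayage structure of $\mu_{\alpha_0,\G}^\s$ from Theorem~\ref{thm:s-qdom}, which itself uses Proposition~\ref{prop:glue}.

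The paper sidesteps this entirely by going in the other direction: it starts from the actual minimizer $\mu_{\alpha,\G}$, uses Proposition~\ref{prop:struct} to peel off an annular shell $\tfrac{1}{\pi\alpha}\chi_{\mathbb{A}(\frac12\sqrt{\alpha},\sqrt{\alpha})}$, and writes the remainder as $\tfrac14\mu^{\langle 0\rangle}$ for a probability measure supported in $\D(0,\tfrac12\sqrt\alpha)$ with constant potential on the boundary circle. A short computation shows $B_{\alpha/4}(\mu^{\langle 0\rangle})=c_{\alpha/4}$ and that $-\Sigma(\mu_{\alpha,\G})=-\tfrac1{16}\Sigma(\mu^{\langle 0\rangle})+C(\alpha)$, so $\mu^{\langle 0\rangle}$ minimizes $-\Sigma$ over all $\mu\in\calM_\G$ with $B_{\alpha/4}(\mu)=c_{\alpha/4}$ and the support/boundary constraint; but $\mu_{\alpha/4,\G}$ lies in this class and minimizes $-\Sigma$ over the \emph{larger} class $\{\mu\in\calM_\G:B_{\alpha/4}(\mu)=c_{\alpha/4}\}$ (since on that set $I_{\alpha/4}=-\Sigma+2c_{\alpha/4}$), forcing $\mu^{\langle 0\rangle}=\mu_{\alpha/4,\G}$ by uniqueness. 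Iterating in factors of $4$ down to $\alpha_0$ gives the claim. This decomposition-and-compare argument needs no sufficient optimality criterion and no obstacle-problem perturbation theory, which is why it can be placed before \S\ref{s:smooth-non-deg}--\ref{s:variational}. Your region-by-region potential computations are correct and useful, but to finish along your lines you would need to verify Proposition~\ref{prop:var-ineq} directly for $\tilde\mu_\alpha$ (i.e.\ $B_\alpha(\nu)-\int U^\nu\,\diff\tilde\mu_\alpha\ge B_\alpha(\tilde\mu_\alpha)-\int U^{\tilde\mu_\alpha}\,\diff\tilde\mu_\alpha$ for all $\nu\in\calM_\G$), which is essentially the content of Lemma~\ref{lem:verif} and requires more than the inherited thin-obstacle inequality.
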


\begin{proof}
Provided that $\alpha\ge \alpha_0$, we have in view of 
Proposition~\ref{prop:struct} that $\mu_{\alpha,\G}$
takes the form
\[
\mu_{\alpha,\G} =\tfrac{1}{4}\mu^{\langle 0\rangle} 
+ \tfrac{1}{\pi\alpha}\chi_{A_{\alpha}}\diffA
\]
where $\mu^{\langle 0\rangle} $ is a probability measure supported on 
$\D(0,\tfrac12\sqrt{\alpha})$ 
whose potential is constant on $\T(0,\tfrac12\sqrt{\alpha})$, and where 
$A_{\alpha}$ is shorthand for the annulus 
$\A(\tfrac12\sqrt{\alpha},\sqrt{\alpha})$.
Observe that $\mu_{\alpha/4,\G}$ also is a probability measure on 
$\D(0,\tfrac12\sqrt{\alpha})$ with constant potential
on $\T(0,\tfrac12\sqrt{\alpha})$, 
in view of Proposition~\ref{prop:struct}.

The potential of the measure $\frac{1}{\pi\alpha}\chi_{\A_{\alpha}}\diffA$ 
is constant 
on $\D(0,\tfrac12\sqrt{\alpha})$ so in particular on $\G$, and it equals 
$c_{\alpha}-\frac{1}{4}c_{\alpha/4}$
there. Since
we know that the value $B_{\alpha}(\mu_{\alpha,\G})$ is attained
on $\G$ and equals $c_{\alpha}$, we may conclude that
\[
c_{\alpha}=B_{\alpha}(\mu_{\alpha,\G})
=B_{\alpha}(\tfrac{1}{4}\mu^{\langle 0\rangle} ) + c_{\alpha}-
\tfrac{1}{4}c_{\alpha/4}.
\]
Moreover, we have, by scaling,
\[
B_{\alpha}(\tfrac{1}{4}\mu^{\langle 0\rangle} )
=\frac{1}{4}B_{\alpha/4}(\mu^{\langle 0\rangle} ),
\]
so that $B_{\alpha/4}(\mu^{\langle 0\rangle} )
=c_{\alpha/4}=B_{\alpha/4}(\mu_{\alpha/4,\G})$. 

Finally, since the potential $U^{\mu^{\langle 0\rangle} }$ equals 
$\log |z|$ on 
the support of $\chi_{A_{\alpha}}$, it follows that 
the energy $-\Sigma(\mu_{\alpha})$ may be rewritten as
\[
-\Sigma(\mu_{\alpha})=
-\frac{1}{16}\Sigma(\mu^{\langle 0\rangle} )+C(\alpha)
\]
for a constant $C(\alpha)$ which does not depend on the choice of $\mu^{\langle 0\rangle} $,
within the given restrictions mentioned above. It follows that $\mu^{\langle 0\rangle} $
minimizes the energy over all measures $\mu\in\calM_\G$ with support on
$\D(0,\tfrac12\sqrt{\alpha})$ whose potentials are
constant on $\T(0,\tfrac12\sqrt{\alpha})$, under the additional constraint
$B_{\alpha/4}(\mu^{\langle 0\rangle} )=B_{\alpha/4}(\mu_{\alpha/4,\G})$. 
But $\mu_{\alpha/4,\G}$ 
belongs to this class,
and minimizes $-\Sigma(\mu)$ over the larger class
of measures $\mu\in\calM_\G$ with $B_{\alpha/4}(\mu)=c_{\alpha/4}$.
Hence, it follows that $\mu^{\langle 0\rangle} =\mu_{\alpha/4,\G}$.

Whenever $\tfrac{1}{4^{k+1}}\alpha\ge \alpha_0$, this process can be repeated
to express $\mu_{4^{-k}\alpha,\G}$ in terms of $\mu_{4^{-k-1}\alpha,\G}$
by adding an annular shell of uniform mass. For the unique integer $k$ for which
$\alpha_0\in (4^{-k-1}\alpha,4^{-k}\alpha)$, the above procedure instead 
leads to
\[
\mu_{4^{-k}\alpha,\G}=\frac{\alpha_0}{4^{-k}\alpha}\mu_{\alpha_0,\G} + 
\frac{1}{4^{-k}\alpha\pi}
\chi_{\mathbb{A}(\tfrac12\sqrt{\alpha_0},4^{-k}\sqrt{\alpha})}\diffA.
\]
This completes the proof.
\end{proof}

\section{Separation of the free boundary from the thin obstacle}
\label{s:smooth-non-deg}

\subsection{Local control of the Riesz mass on the coincidence set}
Denote by $\calG$ a bounded simply connected domain 
whose boundary $\Gamma=\partial\G$ is 
piecewise $C^2$\,--\,smooth without cusps. 
Denote the collection of all corners of $\Gamma$ by
$\calE$, and by $\calE_0$ the collection of 
all corners with interior angle $\pi\sigma$ for $\sigma\in(0,1)$. We let
$\calE_1=\calE\setminus\calE_0$.

\begin{figure}[t!]
\begin{subfigure}[t]{.38\textwidth}
  \centering
  \includegraphics[width=1\linewidth]{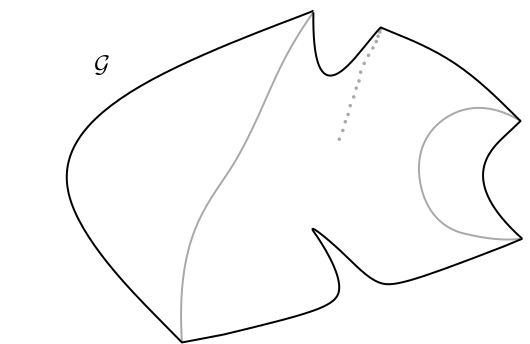}
\end{subfigure}
\hspace{25pt}
\begin{subfigure}[t]{.38\textwidth}
  \includegraphics[width=\linewidth]{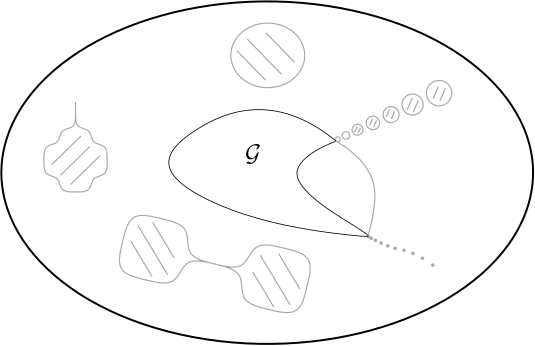}
\end{subfigure}
\caption{The coincidence sets on $\G$ (left) and 
$\C\setminus\overline\G$ (right), 
with contact between the free boundary $\calS$ and the fixed boundary $\partial\G$
only near corners. 
Taken together, the figures illustrate all different types of
free boundary points for an analytic obstacle.} 
\label{fig:G-thin-cont-coincidence}
\end{figure}

We continue the study the extremal measure $\mu_{\alpha,\G}$
for the minimization of $I_\alpha$ over the set of probability measures
which give no mass to $\G$. To ease notation, we let $\mu_0=\mu_{\alpha,\G}$,
$u_0=U^{\mu_{\alpha,\G}}$ and $\psi=\psi_\alpha$. We will assume throughout 
this section that $\alpha\ge\alpha_0(\G)$ (see Proposition~\ref{prop:struct}).
Recall that $u_0$ is said to be non-degenerate if 
$u_0<\psi$ along $\Gamma$,
c.f. \eqref{eq:non-deg}, and denote by $\diff(z,E)$ the distance
$\inf_{w\in E}|z-w|$, where $z\in\C$ and $E\subset\C$.

\begin{thm}\label{thm:s-ndeg}
Assume that $\Gamma$ is piecewise $C^2$\,--\,smooth without cusps.
Then it holds that
\begin{equation}\label{eq:eq-s-ndeg}
\diff(z,\partial\G)\lesssim
\diff(z,\calE_0)^{\max\{1,\gamma\}}, \qquad 
z\in\calI,
\end{equation}
where $\gamma=\frac{1-\sigma}{\sigma}$ and $\sigma$ is the smallest angle for 
corner points on $\partial\G$.
If $\Gamma$ is $C^2$-smooth, then $u_0$ is non-degenerate.
\end{thm}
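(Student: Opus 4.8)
The plan is to prove the estimate \eqref{eq:eq-s-ndeg} by a local analysis near each point of $\partial\G$, and then to obtain the non-degeneracy statement by specialising to a boundary with no corners. I would work throughout with $w:=U^{\mu_0}-B_\alpha(\mu_0)$, which by Theorem~\ref{thm:basic-obst} is harmonic in $\G$, satisfies $w\le\psi$ on $\C$ for the obstacle $\psi(z)=\tfrac{1}{2\alpha}|z|^2$, and has interior coincidence set $\calI=\{z\in\overline\G:w(z)=\psi(z)\}$. A preliminary reduction -- which I would establish together with the exterior part of the separation below -- is that the continuous part $\mu_0^{\c}$ of the extremal measure does not reach $\partial\G$ near any smooth arc or any reflex corner, so that there $w$ is harmonic on both sides of $\partial\G$ apart from a positive singular jump $\mu_0^{\s}$, of total mass $\le\e\alpha^{-1}$ by Lemma~\ref{lem:balance}.

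The main tool is the Poisson--Green representation of $\Psi:=\psi-w\ge 0$, which solves $\Delta\Psi=2\alpha^{-1}$ in $\G$:
\[
\Psi(z)=h(z)-\tfrac{1}{\pi\alpha}\,u_\G(z),\qquad z\in\G,
\]
where $h$ is the harmonic function with boundary data $\Psi|_{\partial\G}\ge 0$ and $u_\G(z)=\int_\G g_\G(z,\zeta)\,\diffA(\zeta)\ge 0$ is the Green potential of the constant function (notation $g_\G(z,\zeta)=-\log|\phi_z(\zeta)|$ as in \S~\ref{ss:bal}). On $\calI\cap\G$ the left side vanishes, so $h(z)=\tfrac{1}{\pi\alpha}u_\G(z)$ there. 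Near a $C^2$\,--\,smooth arc of $\partial\G$, the function $u_\G$ is $C^{1,\beta}$ up to the boundary with a uniform bound, whence $u_\G(z)\lesssim\diff(z,\partial\G)$; on the other hand -- this is the key lower estimate -- the quantitative bounds underlying Proposition~\ref{prop:struct} (in particular $U^{\mu_0^{\s}}\le\tfrac{\e\log 2}{\alpha}$ on $\overline\G$, and the comparison of $B_\alpha(\mu_0)$ with $c_\alpha-U^{\alpha^{-1}\chi_\Omega}$), combined with a Hopf-type comparison for the non-negative harmonic function $h$, should force $h(z)\gtrsim\alpha^{-1}\diff(z,\partial\G)$ for $z$ close to the smooth arc with a \emph{strictly larger} constant; the two estimates together then give $\diff(z,\partial\G)\ge\delta$ on $\calI$ for a uniform $\delta>0$ near every smooth arc, and running the same argument in $\G^c$ yields the exterior separation (hence the preliminary reduction). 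The same reasoning applies near reflex corners, the geometry there only making the relevant harmonic measures more concentrated.

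For a convex corner $w_0\in\calE_0$ of interior angle $\pi\sigma$, $\sigma\in(0,1)$, I would straighten the corner by the Riemann map $f\colon\D\to\G$, $f(\zeta_0)=w_0$. By Proposition~\ref{prop:conf-corner}, $f(z)-w_0\cong a(z-\zeta_0)^{\sigma}$ and $f'(z)\cong(z-\zeta_0)^{\sigma-1}$, which, via the Koebe distortion estimate $\diff(f(a),\partial\G)\cong|f'(a)|(1-|a|)$, becomes the relation
\[
1-|f^{-1}(z)|\;\cong\;\diff(z,\partial\G)\cdot\diff(z,\calE_0)^{\gamma},\qquad z\in\G\text{ near }w_0,
\]
where $\gamma=\tfrac{1-\sigma}{\sigma}$. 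Running the representation of the previous step through $f$ -- the transformed obstacle $\psi\circ f$ is still $C^{1,1}$, with $\Delta(\psi\circ f)\cong|f'|^{2}\cong|z-\zeta_0|^{2\sigma-2}$, integrable since $\sigma>0$, and the transformed Green potential $\int_\D g_\D(\cdot,b)|f'(b)|^2\,\diffA(b)=u_\G\circ f$ again decays near $\zeta_0$ -- and carrying the distortion relation through the balance on $\calI$ should produce $\diff(z,\partial\G)\lesssim\diff(z,\calE_0)^{\max\{1,\gamma\}}$, the regime $\gamma\le1$ (i.e.\ $\sigma\ge\tfrac12$) behaving like a cone condition at the corner and $\gamma>1$ giving the genuine cusp. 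Finally, when $\Gamma$ is $C^2$-smooth there are no corners, $\calE_0=\varnothing$, and the smooth-arc separation holds along all of $\partial\G$; by compactness of $\calI$ this yields a uniform $\delta>0$ with $\calI\subset\{\diff(\cdot,\partial\G)\ge\delta\}$, which is exactly the non-degeneracy $\sup_{\partial\G}(U^{\mu_0}-\psi)<B_\alpha(\mu_0)$.

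I expect the hardest part to be the convex-corner estimate: one must control, uniformly as $z,\zeta\to w_0$, the joint distortion under $f$ of $g_\G(z,\zeta)$, of the harmonic measure $\omega_{\G,z}$, and of $\Delta(\psi\circ f)$ -- whose blow-up at the corner has to be absorbed by the decay of $g_\G(z,\cdot)$ -- with enough precision to extract the sharp exponent $\max\{1,\gamma\}$ rather than a weaker one. The subtler conceptual point, already present in the smooth case, is the lower estimate on $h$ (equivalently on $\Psi|_{\partial\G}$) away from $\calI\cap\partial\G$: it looks circular and must be read off from the explicit potential bounds and a Hopf-type argument rather than presupposed, and it is this same estimate that drives the exterior separation and hence the reduction that $\mu_0^{\c}$ stays away from $\partial\G$.
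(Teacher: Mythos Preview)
Your approach differs fundamentally from the paper's, and the step you yourself flag as delicate---the ``key lower estimate'' $h(z)\gtrsim\alpha^{-1}\diff(z,\partial\G)$ with a \emph{strictly larger} constant than the one in $u_\G(z)\lesssim\diff(z,\partial\G)$---is a genuine gap. Suppose $z_k\in\calI$ with $z_k\to z_0\in\partial\G$ along a smooth arc. Then $\Psi(z_0)=0$, so $h(z_0)=0$; since $\Psi\big|_{\partial\G}\ge 0$ attains a minimum at $z_0$, the tangential derivative of $h$ vanishes there too. Near $z_0$ both $h$ and $u_\G$ vanish to first order, and the balance $h=\tfrac{1}{\pi\alpha}u_\G$ on $\calI$ simply forces $\partial_{\mathrm n}h(z_0)=\tfrac{1}{\pi\alpha}\partial_{\mathrm n}u_\G(z_0)$. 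This is a consistent equation, not a contradiction: Hopf gives only $\partial_{\mathrm n}h(z_0)>0$, with no quantitative lower bound absent pointwise control of $\Psi\big|_{\partial\G}$ near $z_0$. The global bounds you invoke from Proposition~\ref{prop:struct} (on $B_\alpha(\mu_0)$ and $\lVert U^{\mu_0^\s}\rVert_\infty$) do not localise to give such control. In short, the PDE structure ($\Psi\ge 0$, $\Delta\Psi=2/\alpha$) is entirely compatible with $\calI$ touching $\partial\G$; what rules it out is the \emph{extremality} of $\mu_0$, and your argument never uses it in a way sharp enough to produce the needed inequality.

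The paper's proof is a variational contradiction built on the machinery of \S\ref{s:perturb-stab}. Assuming $w_k\in\calI$ approaches $z_0\in\partial\G$ too fast, one first shows (via Lemma~\ref{lem:neumann}, the quadratic bound $\mu_0(\D(z,r))\lesssim r^2$ on $\calS$) that $\mu_0^\s$ charges some arc $U\subset\partial\G$ away from the corners. One then takes the harmonic perturbation
\[
h_k(z)=\omega(z,U,\G)-c_k\,\omega(z,T_k,\G),
\]
where $T_k\subset\partial\G$ is a short arc near $w_k$ and $c_k$ is a carefully chosen large coefficient. Using the harmonic-measure asymptotics near corners (Proposition~\ref{prop:conf-corner}, specifically \eqref{eq:elem-bd-hm}) one checks $h_k\le 0$ on $\calI$, so $h_k$ is an admissible perturbation of the thin obstacle; and using Lemma~\ref{lem:neumann} again, $\mu_0^\s(T_k)\lesssim r_k^2$, so $\int_{\partial\G}h_k\,\diff\mu_0^\s=\mu_0^\s(U)-c_k\,\mu_0^\s(T_k)>0$. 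By the variational formula (Corollary~\ref{cor:variational}) this strictly decreases the Dirichlet energy, contradicting minimality of $\mu_0$. The corner exponent $\max\{1,\gamma\}$ drops out of balancing the decay of $\omega(\cdot,T_k,\G)$ against the quadratic mass bound. Your conformal-straightening ideas are in the right spirit for tracking exponents, but they need to be coupled to this perturbation mechanism rather than to a Hopf comparison.
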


For $j=0,1$ we denote by $\calE^\eta_j$ the fattened singular sets
\begin{equation}
\calE^\eta_j=\bigcup_{z\in\calE_j}\D(z,\eta)
\end{equation}
and put $\calE^\eta=\calE^\eta_0\cup
\calE_1^\eta$. Then for any fixed $\eta>0$ it holds that 
$\mathrm{d}(\calI,\Gamma\setminus \calE^{\eta}_0)>0$. 
The consequence of the theorem is illustrated in 
Figure~\ref{fig:G-thin-cont-coincidence}.

The idea is to use the first order variational 
formula for perturbations of 
\eqref{eq:obst-thin}, and given a sequence of 
points which approach the boundary 
at a rate in breach of Theorem~\ref{thm:s-ndeg}, try to
find suitable perturbations
which decrease the Dirichlet energy while yielding admissible 
measures ($\mu_\epsilon(\G)=0$).

The key to obtaining such a result is a local estimate 
of the distributional Laplacian $\Delta u_0$ near a point $z_0\in \calS$, 
valid in particular for $z_0\in\Gamma\cap\calS$. Here, we recall that 
$\calS$ is the coincidence set defined in \eqref{eq:coincidence}. 
We denote by $\calT$ the coincidence 
set with the full obstacle on $\Gamma$
\[
\calT=\{z\in\Gamma:u(z)=\psi(z)\},
\]
and by $\calT_r$ the $r$-neighbourhood of $\calT$ on $\Gamma$.

\begin{lem}\label{lem:neumann} 
For the full obstacle $\psi_\alpha(z)=\tfrac1{2\alpha}|z|^2$,
it holds for any point $z_0\in\calS$ that
\[
\mu_0(\D(z_0,r))\le \tfrac1{2\alpha}\e^2r^2,
\qquad 0<r<1.
\]
Moreover,
it holds that
\[
\frac{\mu(\calT_r)}{|\calT_r|}\le \tfrac{7}{\alpha}\e^2 r,
\qquad r\to0.
\]
\end{lem}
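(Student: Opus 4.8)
The plan is to reduce both bounds to a single mean--value comparison between the obstacle solution $v_0$ and the full obstacle $\psi_\alpha(z)=\tfrac1{2\alpha}|z|^2$. The key structural input, from Theorem~\ref{thm:basic-obst}, is that $v_0=u_0-B_\alpha(\mu_0)$ solves the global obstacle problem of Definition~\ref{defn:obst-unb}(b), so that $v_0\le\psi_\alpha$ holds on \emph{all} of $\C$, while $v_0=\psi_\alpha$ on the coincidence set $\calS$.

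First I would prove the pointwise estimate. Fix $z_0\in\calS$ and write $\bar f(z_0,r)$ for the mean of $f$ over $\partial\D(z_0,r)$. The classical radial formula for a subharmonic function with $\tfrac1{2\pi}\Delta v_0=\mu_0$ gives $\bar v_0(z_0,r)-v_0(z_0)=\int_0^r t^{-1}\mu_0(\D(z_0,t))\,\diff t$, whereas a one--line computation yields $\bar\psi_\alpha(z_0,r)-\psi_\alpha(z_0)=\tfrac{r^2}{2\alpha}$. Since $v_0(z_0)=\psi_\alpha(z_0)$ and $\bar v_0\le\bar\psi_\alpha$ everywhere, we obtain $\int_0^r t^{-1}\mu_0(\D(z_0,t))\,\diff t\le\tfrac{r^2}{2\alpha}$ for all $r>0$. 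Feeding in $\e r$ for $r$ and using the monotonicity of $t\mapsto\mu_0(\D(z_0,t))$,
\[
\mu_0(\D(z_0,r))=\mu_0(\D(z_0,r))\int_r^{\e r}\frac{\diff t}{t}\le\int_0^{\e r}\frac{\mu_0(\D(z_0,t))}{t}\,\diff t\le\frac{\e^2r^2}{2\alpha},
\]
which is the first claim. (One may take $z_0\in\calT=\calS\cap\Gamma$: there $v_0(z_0)=\psi_\alpha(z_0)$ is finite, so $\mu_0$ has no atom at $z_0$ and the identity above is legitimate.)

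Next, to pass to the averaged bound on $\Gamma$, I would cover $\calT_r$ by arc--length balls. Pick a maximal subset $\{z_i\}_{i=1}^N\subset\calT$ that is $2r$--separated in the arc--length metric of $\Gamma$. Then the arc--length balls $B_\Gamma(z_i,r)$ are pairwise disjoint and contained in $\calT_r$, so $2Nr=\sum_i|B_\Gamma(z_i,r)|\le|\calT_r|$, whence $N\le|\calT_r|/(2r)<\infty$; and by maximality $\calT\subset\bigcup_i B_\Gamma(z_i,2r)$, hence $\calT_r\subset\bigcup_i B_\Gamma(z_i,3r)\subset\bigcup_i\D(z_i,3r)$ (arc length dominates chord length). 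Applying the pointwise bound at each $z_i\in\calS$ with radius $3r$ (valid once $r<\tfrac13$),
\[
\mu_0(\calT_r)\le\sum_{i=1}^N\mu_0(\D(z_i,3r))\le N\cdot\frac{9\e^2r^2}{2\alpha}\le\frac{|\calT_r|}{2r}\cdot\frac{9\e^2r^2}{2\alpha}=\frac{9\e^2}{4\alpha}\,r\,|\calT_r|\le\frac{7\e^2}{\alpha}\,r\,|\calT_r|,
\]
which gives the second claim (with room to spare) as $r\to0$.

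The only point that needs care is the legitimacy of the radial--average identity at points of $\calT$, where the thin obstacle $g\in H^{\frac12}(\partial\G)$ is only of fractional regularity: one should work throughout with the upper semicontinuous regularization of $v_0$, which is genuinely subharmonic, agrees with $v_0$ off a polar set, and still satisfies $v_0=\psi_\alpha$ on $\calS$ with finite values there. Apart from this bookkeeping the argument is elementary --- in particular $|B_\Gamma(z,\rho)|=2\rho$ and $B_\Gamma(z,\rho)\subset\D(z,\rho)$ hold for any rectifiable Jordan curve, so the merely piecewise--$C^2$ nature of $\Gamma$ plays no role --- and I do not anticipate a genuine obstacle.
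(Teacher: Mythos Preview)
Your proposal is correct and follows essentially the same approach as the paper: a Jensen--type mean value comparison between $v_0$ and $\psi_\alpha$ for the local mass bound, followed by a covering argument on $\Gamma$ to obtain the averaged estimate. Your formulation via the integral Jensen formula $\bar v_0(z_0,r)-v_0(z_0)=\int_0^r t^{-1}\mu_0(\D(z_0,t))\,\diff t$ is slightly cleaner than the paper's re-derivation, and your maximal--packing argument yields the constant $\tfrac{9}{4}$ in place of the paper's Vitali--based $\tfrac{25}{4}$, but the underlying ideas are identical.
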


\begin{proof}
We set $v_0=\alpha u_0$, $\nu_0=\alpha\mu_0$ and $\psi=\tfrac12|z|^2$.
We have
\[
v_0(z)-v_0(0)=\int_{\C}(\log|z-w|-\log|w|)\diff\nu_0(w).
\]
Averaging over the small circle $|z|=r$m
we find that
\[
\frac{1}{r}\int_{|z|=r}(v_0(z)-v_0(z_0))\diffs(z)=
\int_{\D(0,r)}H(r,w)\diff\nu_0(w),
\]
where
\[
H(r,w)=\frac{1}{r}\int_{|z|=r}(\log|z-w|-\log|w|)\diffs(z)
=\log\frac{r}{|w|}\chi_{\D(0,r)}.
\]
The kernel $H(r,w)=\log\frac{r}{|w|}$ is bounded from below by $1$ on 
$[0,r\e^{-1}]$ and is positive everywhere on $[0,r]$, 
so 
\[
\int_{\D(z_0,r)}H(r,w)\diff\nu_0\ge \nu_0(\D(z_0,r/\e)).
\]
We next notice that
\[
\frac{1}{r}\int_{\T(0,r)}(\psi(z)-\psi(z_0))\diffs=
\frac{1}{2\pi}\int_{\D(0,r)\setminus\calS}H(r,w)
\Delta \psi(z)\diffA(w).
\]
Since $\Delta \psi= 2$, we find that
the right-hand side is bounded above by 
\[
\frac{1}{\pi}\int_{\D(0,r)}H(r,w)\diffA(w)= \frac{r^2}{2}.
\]
Putting the desired estimates together, we find that 
whenever $v_0(z_0)=\varphi(z_0)$, we have
\begin{multline*}
0\ge \frac{1}{r}\int_{\T(0,r)}\big(v_0(z)-\psi(z)\big)\diffs(z)
\\
=\frac{1}{r}\int_{\T(0,r)}\big(v_0(z)-v_0(z_0)-
(\psi(z)-\psi(z_0))\big)\diffs(z)
\ge\nu_0^{\s}(\D(0,r/\e))
-\frac{r^2}{2},
\end{multline*}
which after replacing $r$ by $\e r$ yields the desired bound
\begin{equation}\label{eq:bound-mu-gamma}
\nu_0^{\s}(\Gamma_{r})\le \tfrac{1}{2}\e^2 r^2.
\end{equation}

In order to reach the final conclusion, we may use the 
Vitali covering lemma to
find a collection of $N(r)<\infty$ disjoint 
balls $B_j$ of radius $\tfrac15 r$, 
centered at points $z_j\in\calT$, such that
that the balls $B_j'$ rescaled by $5$ cover 
$\bigcup_{z\in\calT}\D(z,r/5)$. Notice 
in particular that the latter set covers $\calT_{r/5}$.
We use the notation $\Gamma^{(z_j)}_{r}=\Gamma\cap\D(z_j,r)$. The set 
$\Gamma^{(z_j)}_{r/5}$ are disjoint and satisfy 
$|\Gamma^{(z_j)}_{r/5}|\ge \tfrac25 r$,
while the collection $\Gamma_{r}^{(z_j)}$ cover $\calT_{r/5}$. 
If we invoke the estimate \eqref{eq:bound-mu-gamma}, we find that
\[
\rho_{r/5}=\frac{\nu_0(\calT_{r/5})}{|\calT_{r/5}|}\le 
\frac{\sum_j\nu_0(\Gamma_{r}^{z_j})}{\sum_j|\Gamma^{z_j}_{r/5}|}
\le \frac{\tfrac12\e^2 r^2N(r)}{\tfrac25r N(r)}\le 
\frac{5\e^2}{4}r,
\qquad r\to0,
\]
which after rescaling yields that
\[
\rho_r\le \frac{25\e^2}{4}r.
\]
This completes the proof.
\end{proof}

\subsection{Quantitative non-degeneracy}
\label{ss:q-non-deg}
Recall that $\partial\G$ is a piecewise smooth 
curve with finitely many corners $\{z_j\}=\calE$, and let 
$w\in\partial\G$. 
We need estimates of $\omega(z,\D(w,\epsilon)\cap\partial\G,\G)$
in terms of $\mathrm{d}(z,\calE)$ and $\mathrm{d}(z,\partial\calG)$.
To this end, use Proposition~\ref{prop:conf-corner}
to control the quantity
\begin{equation}\label{eq:P-ker}
\frac{\diff\omega(z,\cdot,\G)}{\diffs}=P_{\G}(z,\cdot)=|\phi_z'(\cdot)|,
\end{equation}
where $\phi_z$ is a conformal mapping 
of $\G$ onto the unit disk so that $\phi_z(z)=0$.
Denote by $m_{\zeta}(z)$ the M{\"o}bius transformation 
$m_\zeta=(z-\zeta)/(1-\bar\zeta z)$.
If $\phi=\phi_{z_0}$ for some fixed interior point $z_0$ of $\G$, we have
\[
\phi_z'(w)=\partial_wm_{\phi(z)}(\phi(w))
=\partial_w\frac{\phi(w)-\phi(z)}{1-\overline{\phi(z)}\phi(w)}
=\phi'(w)\frac{1-|\phi(z)|^2}{(1-\overline{\phi(z)}\phi(w))^2}
\]
Thus, provided that $z$ is confined to the set $\G\setminus \calE_0^{\eta}$
for a fixed $\eta>0$,
we find that when $w$ approaches a point $\zeta\in\calE$, 
the density of harmonic measure
satisfies
\[
|\phi'_z(w)|\gtrsim (1-|\phi(z)|^2)|\phi'(w)|\gtrsim
\diff(z,\partial\G)|\phi'(w)|,
\]
where we have used Remark~\ref{rem:prop-conf-corner}
in the last step.
Hence, it follows from 
Proposition~\ref{prop:conf-corner} that
\begin{equation}\label{eq:elem-bd-hm}
\frac{\diff\omega(z,w,\G)}{\diffs(w)}
\gtrsim\diff(z,\partial\G)|w-\zeta|^\gamma, \qquad z\in\G\setminus\calE_0^\eta
\end{equation}
as $w\to\zeta\in\calE_0$, where $\gamma=(1-\sigma)/\sigma\in(0,\infty)$. 

We are now ready for a proof of the main result of this section.

\begin{proof}[Proof of Theorem~\ref{thm:s-ndeg}]
We begin with a sequence of points $w_k\in\calI$ 
approaching a boundary point $z_0$, 
and aim to control $\diff(w_k,\partial\G)$ from below. 
We begin to establish the bound \eqref{eq:eq-s-ndeg} when $z_0\in\calE_0$, and 
then show that $z_0\in\Gamma\setminus\calE_0^\eta$ is impossible.

Thus let $z_0\in\calE_0$, and suppose that the claim of the theorem is false,
so that there exists a sequence $(w_k)$ of points in $\calI$
converging to $z_0\in\partial\G$, for which
\[
\diff(w_k,\partial\calG)\le |w_k-z_0|^{\max\{\gamma,1\}} a_k,
\]
where $a_k$ are positive numbers
tending to zero.
We let $r_k=2\diff(w_k,\partial\calG)$ 
and define a subarc of $\partial\G$ by 
\[
T_k=\D(w_k,r_k)\cap\partial\G
\]
which since $\partial\G$ is piecewise smooth without cusps has length
comparable to $r_k$.\\
\\
\noindent {\sc Claim 1.}\;
We show that there exists an arc $U\subset\Gamma\setminus\calE^\eta$ 
for some $\eta>0$ 
with positive measure
$\mu_0^\s(U)=\delta>0$. Such a set exists, since if it did not,
then $\calI\cap\partial\G$ would be everywhere 
dense in $\mathrm{supp}(\mu_0)$. 
But then $u_0(z)=\psi(z)$ on $\mathrm{supp}(\mu_0)$
by semi-continuity of $u_0$, and in view of 
Lemma~\ref{lem:neumann} we have
for the set
\[
\calT_r=\{z\in\Gamma:\diff(z,\calI)\le r\}
\]
that
\[
\mu_0(\Gamma)=\mu_0(\calT)\le\mu_0(\calT_r)\le 
7\e^2|\calT_r|r\qquad \text{for all }\,r>0
\]
so $\mu_0(\Gamma)=0$. But then the obstacle is inactive, 
so it follows that $u_0=\psi$,
which is impossible in view of $\Delta u_0\vert_{\G}=0$.

We now define a perturbation $h_k$, by
\[
h_k(z)=\omega(z,U,\calG)-r_k^{-2}a_k^{\frac12}\omega(z,T_k,\G),
\]
and claim that for large enough $k$, 
the function $h_k$ is an admissible 
perturbation which decreases the Dirichlet energy. 
We will show (Claim 2) that $h_k\le 0$ on $\calI$, and (Claim 3) that
\[
\int_{\partial\G}h_k(z)\diff\mu_0^\s(z)>0.
\]

\medskip

\noindent {\sc Claim 2.} We show the inequality $h_k\le 0$ on $\calI$
by showing that the function $h$ is 
non-positive except for a small neighbourhood of 
$U$ in $\G$. This follows by an application
of the maximum principle in the domain
\[
\widetilde{\G}=\{z\in\G:\diff(z,U)\ge C_0\},
\]
where $C_0$ is small enough that $\calI$ belongs 
to the closure of $\widetilde{\G}$
and $\G\setminus\tilde{\G}$ is simply connected.
For this argument to work, we need to 
show that $h_k\le 0$ on the boundary of $\G$ away from the set $U$,
as well as on the cross-cut $\partial\tilde{\G}\setminus\partial\G$.
We may immediately observe that $h_k\equiv 0$ on 
$\partial\G\setminus (U\cup T_k)$ and negative on $T_k$ by definition.

Turning to analyzing $h_k$ on the cross cut, we simply use the estimate
\eqref{eq:elem-bd-hm}, to find that
\begin{equation}\label{eq:est-harm-Tk-E0}
\omega(z,T_k,\G)\gtrsim \diff(z,\partial\calG)\int_{T_k}
|w-z_0|^{\gamma}\diffs(w)
\cong \diff(z,\partial\calG)|w_k-z_0|^\gamma,
\qquad z\in\partial\tilde{\G}\setminus\partial\G.
\end{equation}
But this yields that
\[
h_k(z)\lesssim \diff(z,\partial\G)\Big(1-a_k^{\frac12}r_k^{-2}
|w_k-z_0|^\gamma\Big),
\qquad z\in\partial\tilde{\G}\setminus\partial\G,
\]
where we use the trivial bound 
$\omega(z,U,\partial\G)\lesssim \diff(z,\partial\calG)$, which holds since
$U\subset\partial\G\setminus\calE^\eta$.
Moreover, since by assumption
\[
r_k\le |w_k-z_0|^{\max\{\gamma,1\}} a_k\le |w_k-z_0|^\gamma a_k,
\]
where $a_k\to0$,
it holds that 
\[
a_k r_k^{-1}|w_k-z_0|^\gamma\ge 1,
\]
so that 
\[
a_{k}^{\frac12}r_k^{-2}|w_k-z_0|^{\gamma}\ge 
r_k^{-1}a_k^{-\frac12}\to\infty
\]
and so $h_k<0$ also here. It follows that for 
large enough $k$, say $k\ge k_0$, the
function $h_k$ is an admissible perturbation.

\medskip 

\noindent {\sc Claim 3.}
We need to study the quantity
\[
\int_{\partial\G}h_k(z)\diff\mu_0^\s(z)=\mu_0^\s(U) 
- a_k^{\frac12}r_k^{-2}\mu_0^\s(T_k).
\]
In view of the regularity estimate of Lemma~\ref{lem:neumann}
we have the control
\[
\mu_0(\D(w,r))\lesssim r^2,\qquad w\in\calS.
\]
In addition, we have the inclusion $T_k\subset \D(w_k,r_k)$ 
where we recall that $w_k\in\calI\subset\calS$.
As a consequence, it follows that
\[
\mu_0(T_k)\lesssim r_k^2.
\]
But then it follows, from the fact that 
$\mu_0^{\s}(U)=\delta>0$, that
\[
\int h_k\,\diff\mu_0^\s>0
\]
for $k$ large enough, say $k\ge k_1$. 

\medskip

For $k\ge \max\{k_0,k_1\}$ the perturbation is both 
admissible and decreases the Dirichlet
energy, which is impossible. Hence the result follows
for corner points of the first kind, $z_0\in\calE_0$.

We thus assume that $z_0\in\partial\G\setminus\calE_0$. 
Also in this case we argue by contradiction,
so assume that there exists $a_k=\ordo(1)$ and $w_k\in\calI$ 
approaching $z_0$ with
\[
|w_k-z_0|=a_k.
\]
We define the perturbation
\[
h_k=\omega(z,U,\G)-a_k^{-\frac32}\omega(z,T_k,\G),
\]
and perform the same estimates as above, 
the only difference being that $|\phi'(w)|$ is bounded away from zero near $z_0$, 
so that instead of \eqref{eq:est-harm-Tk-E0}
we now have the lower bound
\[
\omega(z,T_k,\G)\gtrsim \diff(z,\partial\calG)\int_{T_k}
\diffs(w)
\cong \diff(z,\partial\calG),
\qquad z\in\partial\tilde{\G}\setminus\partial\G.
\]
The rest of the argument goes through unchanged, so
that as a consequence, we find that for large 
enough $k$, the function $h_k$ is an admissible 
perturbation which decreases the Dirichlet energy.
Hence we conclude that $a_k=\ordo(1)$ is an impossibility, 
and the claim follows.
\end{proof}

\section{The emergence of quadrature domains}\label{s:variational}
\subsection{The perturbation scheme}
In view of Theorem~\ref{thm:basic-obst}, Proposition~\ref{prop:struct},
and Remark~\ref{rem:obst-defn}, the potential $u_0=U^{\mu_{\alpha,\G}}$
of the extremal measure $\mu_0=\mu_{\alpha,\G}$ solves the obstacle problem 
(see Definition~\ref{defn:obst-bdd})
for the class $\calK_{\psi_\alpha,g}(D)$, $g\in H^{\frac12}(\Gamma)$
and where we use the notation $D=\D(0,\sqrt{\alpha})$.

In order to extract more precise information about $\mu_{0}$, we
will perturb the implicit thin obstacle $g$ by a function 
$\epsilon h$, and solve the mixed 
obstacle problem with thin obstacle $g_\epsilon=g+\epsilon h$
and full obstacle $\psi_\alpha$ on $D$.
This procedure produces a family of functions $u_\epsilon$, 
the Riesz masses of which provide a family $\mu_\epsilon$ of 
competitors for the minimization problem,
under suitable conditions on the perturbation $h$. 
We then hope to reveal information about the minimizer by comparing
$I_\alpha(\mu_\epsilon)$ with $I_\alpha(\mu_{0})$
using Corollary~\ref{cor:variational}.

We recall that when the measure $\mu_{0}$ is non-degenerate
in the sense of \eqref{eq:non-deg}, 
Sakai's regularity theorem (see Theorem~\ref{thm:sakai})
implies that the interior coincidence set is finite.
Moreover, in view of Lemma~\ref{lem:balance} this set is non-empty. 
Whenever $h$ is harmonic on $\G$ with $h(\lambda)<0$ for $\lambda\in\calI$,
it turns out that $g_\epsilon=g+\epsilon h$ is an 
admissible perturbation for small $\epsilon>0$. 

\begin{prop}\label{prop:perturb}
Assume that $\mu_{\alpha,\G}$ is non-degenerate, 
that $\G$ is contained in the unit disk, that
$\alpha\ge \alpha_0$, and let 
\[
g=U^{\mu_{\alpha,\G}}\big\vert_{\partial \G}.
\]
Denote by $h$ the boundary values on $\partial \G$ 
of a function harmonic in a neighbourhood of $\G$ 
and negative on $\calI$.
Then for small enough $\epsilon>0$, 
the Riesz masses $\mu_{\epsilon}$ of 
the solutions $u_\epsilon$ to the obstacle 
problem with thin obstacle $g_\epsilon$ and
global obstacle $\psi_\alpha$ are admissible 
for the hole event, and we have that
$$
\int_{\partial \G}h\,\diff\mu_{\alpha,\G}^{\s}\le0.
$$
\end{prop}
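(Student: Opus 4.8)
The plan is to exploit the variational formula of Corollary~\ref{cor:variational} together with the characterization of $\mu_{\alpha,\G}$ as the minimizer of $I_\alpha$. First I would verify that $g_\epsilon = g + \epsilon h|_{\partial\G}$ is an admissible thin obstacle, i.e. that the associated class $\calK_{\psi_\alpha, g_\epsilon}(D)$ is non-empty and that the Riesz mass $\mu_\epsilon$ of the solution $u_\epsilon$ lies in $\calM_\G$. Non-emptiness is immediate since $u_0 \in \calK_{\psi_\alpha,g}(D) \subset \calK_{\psi_\alpha,g_\epsilon}(D)$ when $h \ge 0$, but in general one argues using the $L^\infty$-stability of Proposition~\ref{prop:L-infty}: the solution $u_\epsilon$ stays within $\Ordo(\epsilon)$ of $u_0$ in sup-norm, so the coincidence set on $\overline{\G}$ perturbs only slightly. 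The delicate point is that we need $\mu_\epsilon(\G) = 0$; this is where the hypothesis that $h$ is \emph{harmonic} on a neighborhood of $\G$ and \emph{negative on $\calI$} enters. Since $\mu_{\alpha,\G}$ is non-degenerate, the interior coincidence set $\calI$ is a finite set of points (via Theorem~\ref{thm:sakai} or Caffarelli--Rivi\`ere), and the harmonic extension $\widetilde{g_\epsilon}$ of $g_\epsilon$ to $\G$ equals $\widetilde g + \epsilon h$. Because $\widetilde g \le \psi_\alpha$ on $\G$ with equality exactly on $\calI$, and $h < 0$ on $\calI$, a compactness argument on the finite set $\calI$ and its neighborhood shows $\widetilde{g_\epsilon} < \psi_\alpha$ throughout $\G$ for $\epsilon$ small. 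By the discussion in Step~1 of \S\ref{ss:intro-obst} (the condition $\widetilde{g_\epsilon} \le \phi_\alpha$ on $\G$ forces $\mu_\epsilon(\G) = 0$), this gives admissibility: $\mu_\epsilon \in \calM_\G$.

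Next I would set up the energy comparison. One needs the identity $I_\alpha(\mu_\epsilon) = \dir(u_\epsilon) + C(\alpha)$ with $C(\alpha)$ independent of $\epsilon$ (this is asserted in Step~1 of the introduction; it follows from Theorem~\ref{thm:basic-obst} and the fact that $B_\alpha(\mu_\epsilon)$ is pinned to the same value once $\mu_\epsilon \in \calM_\G$ and the support structure from Proposition~\ref{prop:struct} is in force). Granting this, Corollary~\ref{cor:variational} with $\Gamma = \partial\G$ yields
\[
I_\alpha(\mu_\epsilon) = I_\alpha(\mu_{\alpha,\G}) - 4\pi\epsilon \int_{\partial\G} h \, \diff\calN_{\partial\G}(u_0) + \ordo(\epsilon), \qquad \epsilon \to 0^+.
\]
Here $\calN_{\partial\G}(u_0) = \mu_0^\s$, the singular part of the Riesz mass of $u_0 = U^{\mu_{\alpha,\G}}$, by Proposition~\ref{prop:reg}. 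Since $\mu_{\alpha,\G}$ is the \emph{unique} minimizer of $I_\alpha$ over $\calM_\G$, and $\mu_\epsilon \in \calM_\G$ for small $\epsilon > 0$, we must have $I_\alpha(\mu_\epsilon) \ge I_\alpha(\mu_{\alpha,\G})$, forcing
\[
-4\pi\epsilon \int_{\partial\G} h \, \diff\mu_0^\s + \ordo(\epsilon) \ge 0.
\]
Dividing by $\epsilon$ and letting $\epsilon \to 0^+$ gives $\int_{\partial\G} h \, \diff\mu_{\alpha,\G}^\s \le 0$, which is exactly the claim.

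The main obstacle, I expect, is the admissibility step — specifically, rigorously establishing that $\mu_\epsilon(\G) = 0$ uniformly in small $\epsilon$. This requires knowing that the perturbation $\epsilon h$ of the thin obstacle does not cause the solution $u_\epsilon$ to ``touch'' the full obstacle $\psi_\alpha$ anywhere on $\overline\G$ except near the original coincidence points, and that even there the strict inequality $\widetilde{g_\epsilon} < \psi_\alpha$ persists in $\G$ (with contact only allowed on $\partial\G$, which by non-degeneracy is ruled out when $\partial\G$ is $C^2$-smooth, and controlled near corners otherwise via Theorem~\ref{thm:s-ndeg}). The finiteness of $\calI$ is essential here: on a finite set one has a uniform gap $h \le -c < 0$, and combined with the $C^{1,1}$-regularity of $\psi_\alpha$ and the $L^\infty$-stability of $u_\epsilon$, this gap survives the perturbation. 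A secondary technical point is justifying $I_\alpha(\mu_\epsilon) = \dir(u_\epsilon) + C(\alpha)$ with a \emph{constant} $C(\alpha)$; this needs the localization to $D = \D(0,\sqrt\alpha)$ and the fact (Proposition~\ref{prop:struct}, Lemma~\ref{lem:balance}) that $B_\alpha$ is attained on $\G$ with value $c_\alpha$ for \emph{all} admissible competitors arising this way, so that the non-local term contributes identically.
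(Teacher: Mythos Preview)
Your proposal is correct and follows essentially the same route as the paper's proof: admissibility of $\mu_\epsilon$ via $h<0$ on $\calI$ (the paper phrases this as ``an argument familiar from the proof of Theorem~\ref{thm:basic-obst}''), pinning of $B_\alpha(\mu_\epsilon)=c_\alpha$ through Proposition~\ref{prop:struct}, and then the variational formula of Corollary~\ref{cor:variational} combined with minimality of $\mu_{\alpha,\G}$. Your write-up is in fact more explicit than the paper's on the admissibility step; note only that compactness of $\calI$ in $\G$ (from non-degeneracy) already suffices there, so you need not invoke finiteness via Sakai's theorem at this stage.
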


\begin{proof}
Since $h$ is negative on $\calI$, it follows (by an argument
familiar from the proof of Theorem~\ref{thm:basic-obst}) 
that the Riesz mass $\mu_\epsilon$ of $u_\epsilon$
is admissible for the hole event.
Moreover, it is clear that 
$B_\alpha(\mu_\epsilon)=B_\alpha(\mu_0)=c_\alpha$, 
by virtue of the coincidence set containing the non-trivial 
annulus $\mathbb{A}(\tfrac12\sqrt{\alpha_0},\sqrt{\alpha})$. Indeed, 
the proof of Proposition~\ref{prop:struct} applies, 
since the boundary values have been perturbed by at most by 
$\epsilon \lVert h\rVert_\infty$. In particular
\eqref{eq:lower-bd-rad2} holds with $\ordo(1)$ added on 
the right hand side, which does not affect the remainder
of the argument.

The functions $u_\epsilon$ are minimizers 
of the Dirichlet energy over the sets $\calK_{\psi, g_\epsilon}$, 
so that the variational 
formula of
Corollary~\ref{cor:variational} applies, 
and yields that
\[
I_{\alpha}(\mu_\epsilon)=
I_{\alpha}(\mu_{\alpha,\G})-4\pi \epsilon\int_{\partial \G}
h\,\diff\mu_{0}^{\s}
+\ordo(\epsilon).
\]
That the boundary integral is negative is 
immediate from minimality of $\mu_0$
for the functional $I_\alpha$ over the class of admissible 
measures for the hole event. This proves the lemma.
\end{proof}

\subsection{Proof of the main theorem}
We first treat the case when $\mu_0$ is non-degenerate.

\begin{lem}\label{lem:pre-thm-sqdom}
Assume that $\alpha\ge \alpha_0$ and that $\mu_0=\mu_{\alpha,\G}$ is non-degenerate.
Then $\mu_0^\s$ there exist a finite set $\Lambda\subset\calG$ and positive weights 
$(\rho_\lambda)_{\lambda\in\Lambda}$ such that
\[
\mu_0^s=\sum_{\lambda\in\Lambda}\rho_\lambda\omega_{\G,\lambda}.
\]
\end{lem}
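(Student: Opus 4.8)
The plan is to combine the variational inequality of Proposition~\ref{prop:perturb} with a harmonic interpolation argument at the finitely many points of $\calI$. Since $\mu_0=\mu_{\alpha,\G}$ is assumed non-degenerate, the non-degeneracy \eqref{eq:non-deg} together with Theorem~\ref{thm:s-ndeg} shows that $\calI$ is separated from $\partial\G$, and Sakai's theorem (Theorem~\ref{thm:sakai}, applied via Proposition~\ref{prop:reg}) then forces $\calI$ to be a finite set of points, say $\calI=\{\lambda_1,\dots,\lambda_N\}\subset\G$; by Lemma~\ref{lem:balance} this set is non-empty. The next observation, already in the proof of Theorem~\ref{thm:basic-obst}, is that $\mu_0^\s$ is swept onto $\partial\G$ from inside: since the Riesz mass of $u_0$ inside $\G$ is carried by $\calI$, we may write $\mu_0^\s=\mathrm{Bal}\big(\sum_\lambda \rho_\lambda\delta_\lambda,\G^c\big)=\sum_\lambda \rho_\lambda\,\omega_{\G,\lambda}$ for some real numbers $\rho_\lambda$; what must be shown is that every $\rho_\lambda>0$.

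First I would establish that each $\rho_\lambda\ge 0$. Fix $\lambda_j$ and choose a harmonic function $h$ on a neighbourhood of $\overline\G$ that is strictly negative at $\lambda_j$ and strictly positive at all the other $\lambda_i$, $i\ne j$ — this is possible since the $\lambda_i$ are finitely many distinct points, e.g. by a small perturbation of $-\log|z-p|+\log|z-q|$ or simply by harmonic interpolation of prescribed values followed by a tiny correction. Proposition~\ref{prop:perturb} does not directly apply since $h$ is not negative on all of $\calI$, so instead I would use the two-sided admissibility: for $h$ with $h(\lambda_j)<0$ one perturbs $g_\epsilon=g+\epsilon h$ with $\epsilon>0$, and the argument of Theorem~\ref{thm:basic-obst} shows the Riesz mass stays admissible provided $h<0$ on \emph{all} of $\calI$; to handle the mixed-sign case one instead feeds into Corollary~\ref{cor:variational} a perturbation supported near $\lambda_j$ only. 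Concretely, the cleaner route is: the conclusion $\int_{\partial\G}h\,\diff\mu_0^\s\le 0$ of Proposition~\ref{prop:perturb} says $\sum_i \rho_i h(\lambda_i)\le 0$ for \emph{every} $h$ harmonic near $\overline\G$ with $h<0$ on $\calI$. Choosing such $h$ with $h(\lambda_i)\to 0$ for $i\ne j$ (and $h(\lambda_j)$ bounded away from $0$, negative) by harmonic interpolation, we get $\rho_j h(\lambda_j)\le o(1)$, hence $\rho_j\ge 0$. Thus all weights are non-negative.

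It remains to upgrade $\rho_j\ge 0$ to $\rho_j>0$, and this is the step I expect to be the main obstacle. The idea is that if $\rho_j=0$, then $\lambda_j$ carries no Riesz mass of $u_0$ inside $\G$, yet $\lambda_j\in\calI$ means $u_0(\lambda_j)=\psi_\alpha(\lambda_j)$ with $u_0$ harmonic in a punctured neighbourhood of $\lambda_j$ (since the interior Riesz mass is carried by $\calI$, a finite set, and vanishes at $\lambda_j$, $u_0$ is actually harmonic \emph{across} $\lambda_j$). But $u_0\le\psi_\alpha$ with equality at $\lambda_j$ and $\psi_\alpha$ strictly subharmonic forces $u_0-\psi_\alpha$ to have a strict interior maximum at $\lambda_j$ with $u_0-\psi_\alpha$ superharmonic there, contradicting the maximum principle — more precisely $\Delta(u_0-\psi_\alpha)=-\Delta\psi_\alpha<0$ near $\lambda_j$, so $u_0-\psi_\alpha$ is strictly superharmonic and cannot attain an interior maximum. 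Hence $\rho_j>0$. One small technical point to check carefully is that $\calI$ having no mass at $\lambda_j$ genuinely makes $u_0$ harmonic in a full neighbourhood (not merely a punctured one): this follows because the only possible singularity of a potential at an isolated point of its support with zero mass there is removable, by the integrability of $\log|z-\lambda_j|$ and Weyl's lemma. Assembling the three pieces — finiteness of $\calI$, $\rho_j\ge 0$ from the variational inequality with interpolated harmonic perturbations, and $\rho_j>0$ from the strict maximum principle — yields the claimed representation $\mu_0^\s=\sum_{\lambda\in\Lambda}\rho_\lambda\,\omega_{\G,\lambda}$ with $\Lambda=\calI$ finite and all $\rho_\lambda>0$.
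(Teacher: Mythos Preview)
Your proposal contains two genuine gaps.

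The first and more fundamental one is the unjustified assertion of the balayage representation $\mu_0^{\s}=\sum_\lambda\rho_\lambda\,\omega_{\G,\lambda}$ with a priori real coefficients. You write that ``the Riesz mass of $u_0$ inside $\G$ is carried by $\calI$'', but $\mu_0(\G)=0$ by definition of $\calM_\G$, so $u_0=U^{\mu_0}$ is harmonic throughout $\G$ and carries \emph{no} Riesz mass there. That $\calI$ is the interior coincidence set does not, by itself, give a balayage formula for the boundary measure $\mu_0^{\s}$; this is essentially the content of the lemma. Everything you do downstream (in particular the identity $\int h\,\diff\mu_0^{\s}=\sum_i\rho_i h(\lambda_i)$ used for $\rho_j\ge0$) presupposes this unproven representation, so the argument is circular. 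The paper's proof proceeds in the reverse order: it first applies Proposition~\ref{prop:perturb} to $\pm h-\delta$ for harmonic $h$ vanishing on $\calI$, and lets $\delta\to0$ to obtain $\int_{\partial\G}h\,\diff\mu_0^{\s}=0$ for all such $h$. This functional identity, combined with a harmonic interpolation basis $(f_\lambda)_{\lambda\in\calI}$, is what identifies $\mu_0^{\s}$ with $\sum_\lambda\rho_\lambda\,\omega_{\G,\lambda}$ where $\rho_\lambda=\int f_\lambda\,\diff\mu_0^{\s}$.

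The second gap is in your argument for strict positivity $\rho_j>0$. You claim that $u_0-\psi_\alpha$, being strictly superharmonic near $\lambda_j$, ``cannot attain an interior maximum''. This is the wrong direction of the maximum principle: superharmonic functions are forbidden interior \emph{minima}, not maxima (consider $-|z|^2$). Indeed $u_0-\psi_\alpha$ is strictly superharmonic on all of $\G$ regardless of the weights, and attains its maximum at every point of $\calI$, with no contradiction; your argument, if valid, would force $\calI=\emptyset$, contrary to Lemma~\ref{lem:balance}. The paper does not claim $\rho_\lambda>0$ for all $\lambda\in\calI$; it simply sets $\Lambda=\{\lambda\in\calI:\rho_\lambda>0\}$ and discards any zero-weight points.
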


\begin{proof}
Since $U^{\mu_{0}}<B_\alpha(\mu_0)$ on $\partial\G$, Sakai's theorem implies
that $\calI$ is a finite subset of $\G$.
If $h$ is any harmonic function in $\G$ with 
$h\vert_{\calI}\equiv0$, then for $\delta>0$ we apply
Proposition~\ref{prop:perturb} to $h-\delta$ and $-h-\delta$ to find that
\[
-\mu_{0}^{\s}(\partial \G)\delta<
\int h\,\diff\mu_{0}^{\s}<\mu_{0}^{\s}(\partial \G)\delta.
\]
Since $\delta>0$ was arbitrary, it follows that
\[
\int h\,\diff\mu_{0}^{\s}=0.
\]
Let now $h$ be any smooth function on $\partial \G$, 
and extend $h$ to a harmonic function on $\G$.
The functions $(f_\lambda)_{\lambda\in\Lambda}$ defined by
\[
f_\lambda=\Re\prod_{\lambda'\ne \lambda} (z-\lambda')/(\lambda-\lambda'),
\qquad \lambda\in\calI
\] 
supply a basis for harmonic interpolation
on $\Lambda$. 
We use this basis to show that the integral of $h$ against $\mu_0^\s$ 
is given by point evaluations on $\calI$:
\begin{equation}\label{eq:int-h}
\int h\,\diff\mu_{0}^{\s}=
\int \Big(h-\sum_{\lambda\in\calI} h(\lambda)f_\lambda(z)\Big)\diff\mu_{0}^{\s}
+\sum_{\lambda\in\calI} \rho_\lambda h(\lambda)
=\sum_{\lambda\in\calI} \rho_\lambda h(\lambda),
\end{equation}
where $\rho_\lambda=\int f_\lambda\,\diff\mu_{0}^{\s}$. 
To see that the weights are non-negative, we observe
that if some weight $\rho_{j_0}$ is negative, then we set 
\[
h=-f_{\lambda_0}-\delta'\sum_{\lambda\ne \lambda_0}f_\lambda
\]
for some $\delta'>0$.
We know by Proposition~\ref{prop:perturb} that $\int h\,\diff\mu_{\alpha,\G}^{\s}<0$,
while at the same time we see that
\[
\int\big(-f_{\lambda_0}-\delta'\sum_{\lambda\ne \lambda_0}f_\lambda\big)
\diff\mu_{0}^{\s}=|\rho_{\lambda_0}|+\Ordo(\delta')
\]
which is positive for small enough $\delta'$, which is a contradiction.
Let $\Lambda=\{\lambda\in\calI:\rho_\lambda>0\}$.

By \eqref{eq:int-h}, integrating $h$ with 
respect to the singular part of the equilibrium 
measure is the same as integrating with respect 
to the measure $\sum_{\lambda\in\Lambda}\rho_\lambda\omega_{\lambda_\lambda,\calG}$.
Since $h$ can be taken as an arbitrary continuous function,
this implies that the two measures agree, which completes the proof
that $\mu_0^\s$ is a finite sum of harmonic measures.
\end{proof}

The above lemma allows us to proceed to the proof of
Theorem~\ref{thm:s-qdom}.

\begin{proof}[Proof of Theorem~\ref{thm:s-qdom}]
Suppose first that $\partial\G$ is $C^2$-smooth. In this case, we may apply 
Theorem~\ref{thm:s-ndeg} to conclude that $\mu_0=\mu_{\alpha,\G}$
is non-degenerate in the sense of \eqref{eq:non-deg}, and 
by Lemma~\ref{lem:pre-thm-sqdom} the 
singular component $\mu_{0}^\s$ of the measure 
$\mu_{0}$ takes the form claimed
in the statement of the theorem. We also know 
that the continuous part takes the form 
\[
\mu_{0}^{\c}=\frac{1}{\alpha}
\chi_{\D(0,\sqrt{\alpha})\setminus\Omega},
\]
where $\G\subset\Omega\subset\D(0,\sqrt{\alpha})$ and where 
\[
\Omega=\D(0,\sqrt{\alpha})\setminus (\G\cup 
\mathrm{supp}(\mu_{0}^{\c}))=
(\D(0,\sqrt{\alpha})\setminus \calS)\cap\G^c.
\] 
We set $U=\alpha U^{\mu_{0}^{\c}}$
and notice that $U$ is the envelope
\begin{equation}\label{eq:U-def-obst}
U(z)=\sup\big\{u(z):\tfrac{1}{\alpha}u\in
\mathrm{SH}_{\tau(\alpha)},\,u(w)\le \tfrac{1}{2}|w|^2-U^\nu(w)\big\}
\end{equation}
where $\nu=\sum_\lambda\rho_\lambda\delta_{\lambda}$ and $\tau(\alpha)=1-\nu(\G)/\alpha$. 
If we replace the family $\mathrm{SH}_{\tau(\alpha)}$ of subharmonic functions 
in \eqref{eq:U-def-obst} 
by the larger family $\mathrm{SH}$,
we obtain a function $V$ with $V\ge U$. However, 
we have $V=U$ in $\D(0,\sqrt{\alpha})$. Indeed, 
by Proposition~\ref{prop:struct}, the coincidence set
$\{U(w)=\tfrac12|w|-U^\nu(w)\}$ contains some non-trivial annulus 
$\A(\tfrac12\sqrt{\alpha_0},\sqrt{\alpha})$.
Hence, if we set
\[
U_0(z)=\sup\big\{u(z):u\in
\mathrm{SH}(\D(0,\sqrt{\alpha})),\,u(w)\le \tfrac{1}{2}|w|^2-U^\nu(w)\big\},
\]
then by a standard pasting argument $U_0$ may be glued together with 
$U\vert_{\C\setminus\D(0,\sqrt{\alpha})}$ to yield an admissible function
for the original problem over $\mathrm{SH}_{\tau(\alpha)}$,
so we have $U\vert_{\D(0,\sqrt{\alpha})}=U_0$.
As a consequence, $U=U_0\ge V $ on $\D(0,\sqrt{\alpha})$, so 
\[
U(z)=\sup\big\{u(z):u\in\mathrm{SH},\,u(w)\le
\tfrac{1}{2}|w|^2-U^\nu(w)\big\}.
\]
It follows from Theorem~\ref{thm:q-dom} 
that $\Omega$ is a quadrature domain with respect to $\nu$. 
Hence any non-degenerate equilibrium 
measure takes the form claimed by Theorem~\ref{thm:s-qdom}, 
and the proof is complete in the smooth case.

Assume now that $\G$ is a bounded simply connected domain
with piecewise smooth boundary without cusps.
We then approximate $\G$ from within by
$C^2$-smooth domains $\G_t$ (e.g.\ as in \cite[Theorem~1.12]{Verchota}) 
with associated extremal measures $\mu_{0,t}$
of the form
\[
\mu_{0,t}=\tfrac{1}{\alpha}\mathrm{Bal}(\nu_t,\G_t^c) + \tfrac{1}{\pi\alpha}
\chi_{\D(0,\sqrt{\alpha})\setminus\Omega_{\nu_t}}\diffA,
\]
for $\nu_t$ finitely supported atomic measures.
Using convergence properties
of subharmonic functions (see \cite[Theorem 3.2.13]{Convexity})
and standard Hilbert space techniques to ensure $I_\alpha$-minimality 
for weak limit points of $(\mu_{0,t})$
(see \cite[Proposition 3.5]{Brezis}), one concludes that 
$\mu_{0,t}\to\mu_0$ weakly, and that
the sequence $\nu_t$ has a limit point $\nu$ supported on $\mathcal{I}$. 
From this one can conclude that $\mu_{0}$ 
takes the required form
\[
\diff\mu_{0}=\tfrac{1}{\alpha}\mathrm{Bal}(\nu,\G^c) + 
\tfrac{1}{\pi\alpha}\chi_{\D(0,\sqrt{\alpha})\setminus\Omega_{\nu}}\diffA.
\]
The properties of the support of $\nu$ follow from
Sakai's theorem. This completes the proof.
\end{proof}

We mention here that there are indeed holes 
for which the associated forbidden region is a non-trivial 
quadrature domain. Namely, for a hole $\G$ obtained 
by two disjoint disks at an appropriate distance, 
or smashing together two overlapping disks, 
one obtains as forbidden region the so-called Neumann ovals.
This is discussed below, in
\S~\ref{s:examples}.

\section{Global H{\"o}lder regularity of the potential}
\label{s:reg}
\subsection{Control of the density near corner points} 
We continue the study of the minimizer $\mu_0=\mu_{\alpha,\G}$ 
of $I_\alpha$ over $\calM_\G$, 
and denote by $u_0=U^{\mu_{0}}$ its potential.
We assume that $\partial\G$ is piecewise smooth without cusps,
and that $\G\subset\D$. 
We fix $\alpha\ge \alpha_0$ (see Proposition~\ref{prop:struct}).

When the boundary $\partial\G$ is a $C^2$-smooth Jordan curve,
Theorem~\ref{thm:s-qdom} combined with Theorem~\ref{thm:s-ndeg} 
shows that the solution $u_0$ is Lipschitz continuous, 
and even $C^{1,1}(D^\pm)$ for each of the two components $D^\pm$ 
of $\C\setminus \partial\G$. 
Here, we will explore the regularity
properties of $u_0$ when $\partial \G$ is merely known to be piecewise smooth
without cusps. 

We recall the notation $\calE$ for the set of corner points, 
which splits into the set $\calE_0$ of corners with interior angle 
$\pi \sigma\in(0,\pi)$, and the set $\calE_1$ of corners with angle 
$\pi\sigma\in[\pi,2\pi)$. We recall the notation $\calE_0^\eta$,
$\calE_1^\eta$, and $\calE^\eta$ for the fattened singular sets,
and we fix a number $\eta=\eta(\G)>0$ small enough that
the disks $(\D(z,2\eta))_{z\in\calE}$ are disjoint.

We also recall the notation $\mu_0^\s$ for the singular part of $\mu_0$, which
coincides with the restriction $\mu_0\vert_{\partial\G}$.
In Theorem~\ref{thm:s-ndeg} we established that the interior coincidence set
$\calI$ may only approach
the boundary through the set $\calE_0$. Thus, on $\D(z_0,\eta)$ we have
\[
\diff(z,\partial\G)\lesssim |z-z_0|^{\max\{\gamma,1\}},
\qquad z\in\calI\cap\D(z_0,\eta),
\]
where $\frac{1-\sigma}{\sigma}$ and $\sigma=\sigma(z_0)\in(0,1)$ is such that 
the corner at $z_0$ has opening $\sigma\pi$.
By using the structural formula
\[
\mu_0^\s=\frac{1}{\alpha}\mathrm{Bal}(\nu,\G^c)
\]
where $\nu$ is a measure supported on $\calI$, 
we will be able to deduce that $u_0$ is H{\"o}lder continuous outside $\calE_0^\eta$,
and with a more precise argument
we will deduce H{\"o}lder regularity also on $\calE^\eta$.

In order to achieve this, we will control the density of $\mu_0^\s$ with 
respect to arc length as we move towards a singularity $z_0\in\calE_0$.
For this we need the following preliminary result.

\begin{lem}
\label{lem:nu-control-coinc}
For $\partial\G$ a simple closed piecewise smooth curve without cusps, 
it holds that
  \[
    \nu(\D(z,r))\lesssim \alpha r^2,\qquad z\in\calI,\;r\ge 2\diff(z,\partial\G)).
  \]
\end{lem}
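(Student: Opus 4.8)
The plan is to bound $\nu(\D(z,r))$ by the $\mu_0$-mass of a slightly larger disk and then invoke the mass bound of Lemma~\ref{lem:neumann}. Recall (as noted at the start of this section, and from the proof of Theorem~\ref{thm:s-qdom}) that $\nu$ is a positive measure supported on the interior coincidence set $\calI\subseteq\overline\G$ with $\mu_0^\s=\tfrac1\alpha\mathrm{Bal}(\nu,\G^c)$, and that by the balayage density formula \eqref{eq:bal-dens-nu}, for any Borel set $E\subseteq\partial\G$,
\[
\mathrm{Bal}(\nu,\G^c)(E)=\int_{\overline\G}\omega_{\G,w}(E)\,\diff\nu(w).
\]
Fix $z\in\calI$ and $r\ge 2\diff(z,\partial\G)$. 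The key estimate I would prove is a uniform lower bound for harmonic measure: there exist $C=C(\G)\ge 1$ and $c_0=c_0(\G)>0$ with
\[
\omega_{\G,w}\big(\D(z,Cr)\cap\partial\G\big)\ge c_0\qquad\text{for all }\,w\in\D(z,r).
\]
Granting this, since $\nu$ is carried by $\calI\subseteq\overline\G$ and $\mu_0^\s\le\mu_0$,
\[
\nu(\D(z,r))\le\frac1{c_0}\int_{\overline\G}\omega_{\G,w}\big(\D(z,Cr)\cap\partial\G\big)\diff\nu(w)=\frac{\alpha}{c_0}\,\mu_0^\s\big(\D(z,Cr)\big)\le\frac{\alpha}{c_0}\,\mu_0\big(\D(z,Cr)\big).
\]
If $Cr<1$, Lemma~\ref{lem:neumann} (applicable since $z\in\calI\subseteq\calS$) gives $\mu_0(\D(z,Cr))\le\tfrac{\e^2}{2\alpha}(Cr)^2$, so $\nu(\D(z,r))\lesssim r^2\le\alpha r^2$; if $Cr\ge1$, then $r\gtrsim1$ and $\nu(\D(z,r))\le\nu(\overline\G)=\alpha\,\mu_0^\s(\partial\G)\le\e\lesssim\alpha r^2$ by Lemma~\ref{lem:balance} and $\alpha\ge\alpha_0$. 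Either way the claim follows.

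It remains to establish the harmonic measure bound, which is where the work lies. Since $\diff(z,\partial\G)\le r/2$, every $w\in\D(z,r)$ has $d_w\coloneqq\diff(w,\partial\G)\le\tfrac32r$, and a nearest boundary point $\zeta_w$ lies in $\D(z,\tfrac52r)$; hence it suffices to find $\lambda_0=\lambda_0(\G)$ and $c_0=c_0(\G)$ with $\omega_{\G,w}\big(\D(\zeta_w,\lambda_0 d_w)\cap\partial\G\big)\ge c_0$, as then the displayed bound holds with $C=\tfrac52+\tfrac32\lambda_0$. This is the standard harmonic-measure corkscrew estimate, valid for any bounded Lipschitz domain with constants depending only on its Lipschitz character; a piecewise smooth Jordan domain without cusps is Lipschitz (its interior angles lie in $(0,2\pi)$), so it applies here. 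I would give the conformal proof, in keeping with the rest of the paper: with $\phi\colon\G\to\D$ a fixed conformal map, $\omega_{\G,w}$ is the image under $\phi^{-1}$ of harmonic measure on $\D$ from $\phi(w)$, so the bound amounts to showing that $\phi\big(\D(\zeta_w,\lambda_0 d_w)\cap\partial\G\big)$ contains a sub-arc of $\T$ of length $\gtrsim1-|\phi(w)|$ around $\phi(w)/|\phi(w)|$. On the $C^2$ subarcs of $\partial\G$ away from the corners this is immediate from Kellogg's theorem (Theorem~\ref{thm:Kellog}), which makes $\phi$ and $\phi^{-1}$ bi-Lipschitz there, so $1-|\phi(w)|\cong d_w$ and arclengths transform comparably. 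Near a corner $w_0$ of opening $\sigma\pi$ one straightens by $\zeta\mapsto(\zeta-w_0)^{1/\sigma}$ as in Proposition~\ref{prop:conf-corner}, after which $\phi$ is bi-Lipschitz up to the boundary and matters reduce to the explicit model of a circular wedge: mapping the wedge to a half-plane by the power map and computing with the Poisson kernel shows that harmonic measure from an interior point $w$ puts a fixed proportion of its mass within distance $\cong|w-w_0|$ of $w_0$, while $|\zeta_w-w_0|\cong|w-w_0|$ and $d_w\le|w-w_0|\sin(\sigma\pi/2)$; this yields the claim with $\lambda_0$ of size $\cong1/\sin(\sigma\pi/2)$. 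Taking $\lambda_0(\G)$ to be the largest of these finitely many local values completes the argument.

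The main obstacle is precisely this near-corner step: the conformal map distorts lengths by a power law there, so one cannot directly transfer the smooth-boundary estimate and must carry out (or quote) the wedge computation with explicit control of its dependence on the opening angle. Once that is in place, the balayage reduction and the invocations of Lemmas~\ref{lem:neumann} and \ref{lem:balance} are routine bookkeeping of constants.
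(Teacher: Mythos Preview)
Your proposal is correct and shares the same architecture as the paper's proof: both reduce to a uniform harmonic-measure lower bound of the form $\omega_{\G,w}\big(\D(z,Cr)\cap\partial\G\big)\ge c_0$ for $w\in\D(z,r)\cap\G$, then combine this with the balayage identity $\alpha\mu_0^\s=\mathrm{Bal}(\nu,\G^c)$ and the mass bound of Lemma~\ref{lem:neumann}.

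The difference lies in how the harmonic-measure bound is obtained. The paper first invokes Theorem~\ref{thm:s-ndeg} to see that $\calI$ can only approach $\partial\G$ through $\calE_0$, so the relevant local geometry is that of a corner with opening $<\pi$; near such a corner the nearest boundary point to any interior point is regular, the boundary is locally a $C^2$ graph, and a fixed arc of the circle $\T(w,2r)$ lies in $\G^c$. Since Brownian motion exits a disk uniformly on its boundary, the bound follows in one line. You instead prove the harmonic-measure estimate in full generality for piecewise smooth domains without cusps, via conformal mapping on the smooth arcs and a wedge computation near corners. This is more work and somewhat sketched near the corners, but it is self-contained and does not appeal to Theorem~\ref{thm:s-ndeg}. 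The paper's route is shorter precisely because it exploits that earlier structural result; yours would apply even without it.
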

\begin{proof}
  Denote by $z_j\in\calI$ a sequence of points converging to $\partial\G$,
  and denote by $r_j$ the associated radii with $r_j\ge 2\diff(z_j,\partial\G)$.
  Taking subsequences, we may assume that $z_j\to z_0$
  for some $z_0\in \partial\G$. In view of Theorem~\ref{thm:s-ndeg}
  we have $z_0\in\calE_0$.
  Now, for any $z\in\D(z_j,r_j)\cap\G$ we must have 
  $\diff(z,\partial\G)\le \tfrac{3}{2}r_j$. We know that a fixed portion
  of $\T(z, 2r_j)$ must lie outside $\G$ for $j$ large enough. 
  To see why, notice first that the closest point $w$ of $\partial\G$ to $z$
  satisfies $|z-w|\le \tfrac{3}{2}r_j$. 
  Since $z_0\in\calE_0$, it is geometrically obvious 
  that $w$ is a regular boundary 
  point ({\em not} a corner) 
  and so the line passing through
  $z$ and $w$ intersects $\partial\G$ orthogonally. 
  But after a rotation and rescaling by $\frac12r_j^{-1}$, 
  this means that $\T(z,2r_j)\cap\G^c$ looks asymptotically like an arc 
  \[
  \big\{z\in \T: \Im z\ge \tfrac{|z-w|}{2r_j}\big\}
  \supset \big\{z\in\T: \Im z\ge \tfrac34 \big\}.
  \]
  The significance of this is that the point of exit from $\D(z,2r_j)$ of 
  a Brownian motion started at $z$ is uniformly distributed on 
  $\T(z,2r_j)$, so at least a fixed portion, say $c_0>0$, 
  of Brownian motions in $\G$ started
  at $\D(z_j,r_j)$ will exit $\partial\G$ already within $\D(z_j,2r_j)$. 
  But taking this together with $\mu_0^\s=\frac{1}{\alpha}\mathrm{Bal}(\nu,\G^c)$ 
  tells us that
  \[
    \mu_0^\s(\D(z_j,2r_j))\ge \frac{c_0}{\alpha}\nu(\D(z_j,r_j)).
  \]
  Finally, in view of Lemma~\ref{lem:neumann} we have the inequality
  \[
    \mu_0(\D(z_j,2r_j))\lesssim r_j^2,
  \]
  from which the claim of the lemma follows.
\end{proof}

We are now ready for the first significant result of this section.

\begin{thm}
\label{thm:density-control}
Let $\alpha\ge \alpha_0$. 
Then, as $z\to z_0\in \calE_0$ with interior angle $\pi\sigma$, 
the density $\rho_0=\alpha\tfrac{\diff\mu_0^\s}{\diffs}$ with respect
to arc-length on $\partial\G$
satisfies
\[
\rho_0(z)\lesssim 
\begin{cases}
|z-z_0|,& 0<\sigma<\frac12\\
|z-z_0|\,\big|\log|z-z_0|\big|,&\sigma=\tfrac12\\
|z-z_0|^{\frac{1-\sigma}{\sigma}},& \frac12<\sigma<1,
\end{cases}
\]
for $z\in\D(z_0,\eta)$.
In particular, the density is bounded on $\D(z,\eta)$.
\end{thm}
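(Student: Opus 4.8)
The plan is to start from the structural identity $\mu_0^\s=\tfrac1\alpha\mathrm{Bal}(\nu,\G^c)$ together with the balayage density formula \eqref{eq:bal-dens-nu}, which give
\[
\rho_0(z)=\alpha\,\frac{\diff\mu_0^\s}{\diffs}(z)=\int_{\calI}P_\G(w,z)\,\diff\nu(w),\qquad z\in\partial\G,
\]
where $P_\G(w,\cdot)$ is the Poisson kernel of $\G$ with source $w$. Fixing a conformal map $\phi=\phi_{z_*}\colon\G\to\D$ at an interior base point and running the M\"obius computation already used in \S~\ref{ss:q-non-deg}, one has, up to a harmless constant, $P_\G(w,z)=|\phi'(z)|\,(1-|\phi(w)|^2)\,|\phi(z)-\phi(w)|^{-2}$ for $z\in\partial\G$ (the identity $|1-\overline{\phi(w)}\phi(z)|=|\phi(z)-\phi(w)|$ being automatic since $|\phi(z)|=1$). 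Everything then reduces to estimating this integral as $z\to z_0\in\calE_0$, with $\gamma=(1-\sigma)/\sigma$ and $\sigma=\sigma(z_0)$.

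Two inputs are needed. The first is a local mass bound $\nu(\D(z_0,r))\lesssim\alpha r^2$ for small $r$; I would obtain it from Lemma~\ref{lem:nu-control-coinc} by a dyadic covering of $\calI\cap\D(z_0,r)$: on each shell $\{2^{-k-1}r<|w-z_0|\le 2^{-k}r\}$ pick (if possible) a point $w_k\in\calI$, apply Lemma~\ref{lem:nu-control-coinc} at $w_k$ with radius $\cong 2^{-k}r\ge 2\diff(w_k,\partial\G)$ (using the weak consequence $\diff(w,\partial\G)\le|w-z_0|$ of Theorem~\ref{thm:s-ndeg}), and sum $\lesssim\alpha(2^{-k}r)^2$. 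The second is the conformal behaviour near the corner from Proposition~\ref{prop:conf-corner}: $|\phi(z)-\zeta_0|\cong|z-z_0|^{1/\sigma}$ and $|\phi'(z)|=|f'(\phi(z))|^{-1}\cong|z-z_0|^{\gamma}$, together with the Koebe distortion estimate $1-|\phi(w)|\cong\diff(w,\partial\G)\,|w-z_0|^{\gamma}$; combined with Theorem~\ref{thm:s-ndeg} (which gives $\diff(w,\partial\G)\lesssim|w-z_0|^{\max\{1,\gamma\}}$ for $w\in\calI$ near $z_0$) this forces $\phi(w)$ to lie extremely close to $\T$ near $\zeta_0$.

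Write $t=|z-z_0|$ and split $\nu=\nu_1+\nu_2+\nu_3$, where $\nu_1=\nu|_{\overline\G\setminus\D(z_0,\eta_0)}$ for a small fixed $\eta_0\le\eta$, $\nu_2$ is carried by $\{Rt\le|w-z_0|\le\eta_0\}$ for a suitable large constant $R$, and $\nu_3=\nu|_{\D(z_0,Rt)}$. For $\nu_1$ the image $\phi(w)$ stays a fixed distance from $\zeta_0$, so $P_\G(w,z)\lesssim|\phi'(z)|\lesssim t^{\gamma}$ and the finite mass of $\nu$ gives a contribution $\lesssim t^{\gamma}$. For $\nu_2$, $\phi(w)$ lies several times farther from $\zeta_0$ than $\phi(z)$, so $|\phi(z)-\phi(w)|\cong|w-z_0|^{1/\sigma}$ while $1-|\phi(w)|^2\lesssim\diff(w,\partial\G)\,|w-z_0|^{\gamma}\lesssim|w-z_0|^{\max\{1,\gamma\}+\gamma}$; hence $P_\G(w,z)\lesssim t^{\gamma}\,|w-z_0|^{\max\{1,\gamma\}-\gamma-2}$, and dyadic summation against $\nu(\D(z_0,r))\lesssim\alpha r^2$ gives $\lesssim t^{\gamma}$ when $\gamma<1$, $\lesssim t^{\gamma}\log(1/t)=t\log(1/t)$ in the borderline case $\gamma=1$ (i.e. $\sigma=\tfrac12$), and $\lesssim t^{\gamma}\log(1/t)\lesssim t$ when $\gamma>1$.

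The delicate piece is $\nu_3$, the near-diagonal regime where $w\in\calI$ lies within $\cong t$ of the corner. Here I would unfold the corner by the straightening map $\zeta\mapsto(\zeta-z_0)^{1/\sigma}$, which carries $\G$ locally onto a half-plane and turns $P_\G$ into a constant multiple of $|z-z_0|^{\gamma}$ times the ordinary half-plane Poisson kernel $\Im\tilde w\,|\tilde z-\tilde w|^{-2}$. By Theorem~\ref{thm:s-ndeg} the height $\Im\tilde w\cong|w-z_0|^{\gamma}\diff(w,\partial\G)$ is tiny, so $\tilde w$ hugs the real axis; although the kernel is then sharply peaked (of size $\sim(\Im\tilde w)^{-1}$) near $\tilde z$, it is integrated against the pushforward of $\nu$, which inherits from $\nu(\D(z_0,r))\lesssim\alpha r^2$ a mass bound under the explicit distortion of the straightening map. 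A Whitney-type decomposition of the half-plane near $\tilde z$, together with this mass bound, controls the contribution by $\lesssim t^{\gamma}$, and in every case by $\lesssim t$. Assembling the three pieces and taking the maximum reproduces the three stated rates ($t^{\gamma}$ for $\sigma\in(\tfrac12,1)$, $t\log(1/t)$ for $\sigma=\tfrac12$, $t$ for $\sigma\in(0,\tfrac12)$); since each tends to $0$ as $t\to0$, the density is bounded on $\D(z_0,\eta)$. I expect the $\nu_3$ estimate to be the main obstacle: without the forced flatness of $\calI$ near a convex corner supplied by Theorem~\ref{thm:s-ndeg}, the peaked Poisson kernel would not be integrable against $\nu$ at the required rate.
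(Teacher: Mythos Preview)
Your plan is essentially correct and closely parallels the paper's argument. The paper also writes $\rho_0(z)=\int_\calI P_\G(\xi,z)\,\diff\nu(\xi)$ and decomposes dyadically, though with annuli centred at the evaluation point $z$ rather than at $z_0$ (equivalent since $z\to z_0$). The mass bound via Lemma~\ref{lem:nu-control-coinc} is used identically; note that $\diff(w,\partial\G)\le|w-z_0|$ is immediate from $z_0\in\partial\G$, so Theorem~\ref{thm:s-ndeg} is not needed for that step. For your $\nu_1$, $\nu_2$ (the paper's ``Type~II'') the paper uses only the crude bound $1-|\phi(\xi)|^2\le 2\,|\phi(\xi)-\phi(z_0)|\lesssim|\xi-z_0|^{1/\sigma}$, not Koebe plus Theorem~\ref{thm:s-ndeg}; your refinement is correct but unnecessary, since the crude bound already produces the sum $t^\gamma\sum_j 2^{(\gamma-1)j}$, whose behaviour in the three regimes $\gamma<1$, $\gamma=1$, $\gamma>1$ gives exactly the stated rates.

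The one substantive difference is your $\nu_3$ versus the paper's ``Type~I''. The map $\zeta\mapsto(\zeta-z_0)^{1/\sigma}$ is not a conformal map of $\G$ onto a half-plane, so the assertion that it ``turns $P_\G$ into'' the half-plane Poisson kernel rests on the error control of Proposition~\ref{prop:conf-corner}, and those errors compete with the main term precisely when $|w-z|$ is much smaller than $t$; your Whitney sketch does not address this. The paper sidesteps the corner entirely: it extends the smooth boundary arc through $z$ to a $C^2$ curve $L$ splitting the plane, takes $\Omega\supset\G$ to be the side containing $\G$, and uses harmonic-measure monotonicity $P_\G\le P_\Omega$. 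Since $\Omega$ is smooth near $z$, a uniformly $C^{1,\beta}$ family $f_z:\Omega\to\HS$ gives $P_\Omega(\xi,z)\lesssim1/|z-\xi|$ directly, and the Type~I contribution is $\sum_j 4^{-j}\cdot 2^j\lesssim|z-z_0|$ in one line. Your route can be completed, but the monotonicity trick is considerably cleaner for this piece.
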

\begin{proof}
The density at $z\in\partial\G$ is given by (cf.\ \eqref{eq:bal-dens-nu})
\[
\rho_0(\cdot)=\int_{\calI}\frac{\diff\omega(\xi,\cdot,\Omega)}{\diffs}\diff\nu(\xi)
=\int_{\calI}P_\G(\xi,\cdot)\diff\nu(\xi)
\]
where $P_\G(\xi,z)$ denotes the Poisson kernel 
for $\G$, $\xi\in\G$ and $z\in\partial\G$, 
so that $\xi\mapsto P_\G(\xi,z)$ is harmonic.
Around $z$, we define annular shells 
$\A_j\coloneqq \A(2^{-j}, 2^{-(j-1)})+z$.
By Lemma~\ref{lem:nu-control-coinc} we have $\nu(\A_j)\le \nu(\D(z_j,2^{-(j-1)}))
\lesssim 4^{-j}$.
We estimate
\[
\rho_0(z)\lesssim \sum_{j\ge -K_0}4^{-j}
\sup_{\xi\in\A_j}P_{\G}(\xi,z),
\] 
where $K_0$ is the smallest number such that $\G\subset\D(z,2^{K_0})$.
This is bounded by an absolute constant $K_1$ independent of $z$, 
by compactness of $\calG$.

We split the annular shells into two categories, which we treat in different ways.\\
\\
\noindent {\sc Type I.}\; The first kind consists of annuli $\mathbb{A}_j$ for 
$j$ such that $2^{-j}<2|z-z_0|$. Then the corner point is far enough from $z$
at the scale determined by $\diff(z,\partial\G)$ to have little impact,
and we will estimate harmonic measures from above by harmonic measures
in a half-plane-like regular domain containing $\G$.
The construction is illustrated in Figure~\ref{fig:HalfPlane}.

Specifically, denote by $\Gamma_0$ be the subarc of 
$\partial\G$ containing $z$, which embeds into the open $C^2$-smooth
arc $\tilde\Gamma_0$. We let 
$L$ be a $C^2$-smooth extension of the arc $\tilde\Gamma_0\cap\D(z_0,\eta)$ 
which splits the plane into two unbounded components and keeps $\calG$ 
on one side. 
Denote by $\Omega$ the component of $\C\setminus L$
containing$\G$.
We fix $z_1\in \Omega\setminus\D(z_0,\eta)$ and 
consider a family $f_z$ of conformal maps of $\Omega$
onto the upper half-plane $\HS$ conformally 
with $f_z(z)=0$ and $f_z(z_1)=\imag$.
In view of the smoothness of $L$, the fact that the family $(z)$ 
remains in a smooth subarc of $\partial\Omega$ and Kellog's theorem
(Theorem~\ref{thm:Kellog}), $(f_z)$ is locally uniformly $C^{1,1-\epsilon}$-smooth 
for any $\epsilon>0$.
We have that
\[
P_\G\big(\xi,z\big)\le P_\Omega\big(\xi,z\big)=
P_{\HS}\big(f_z(\xi),0\big)|f_z'(z)|
\]
where we use \eqref{eq:minotonicity-hm} in the first step.
Moreover, by using the explicit form of the Poisson kernel 
$P_{\HS}$ (see e.g. \cite[p.~4]{GarnettMarshall}) 
we find that
\[ 
  P_{\HS}\big(f_z(\xi),0\big)|f_z'(z)|\lesssim 
  \frac{\Im(f_z(\xi))}{|f_z(\xi)|^2}
  \le \frac{1}{|f_z(\xi)|},
\]
where we use the fact that for any $\epsilon>0$, $(f_z)_z$ is a 
uniformly $C^{1,1-\epsilon}$-smooth family of
conformal mappings of $\overline{\Omega}$ onto the 
closed half-plane, so that in particular
we have that $|f_z'(z)|\asymp |f_{z_0}'(z)|$ is 
bounded and bounded away from zero (the lower bound is needed below).
In summary we obtain the bound
\[
P_\G\big(\xi,z\big)\lesssim \frac{1}{|f_z(\xi)|}.
\]
For the range of $j$ considered here, we have for any $\delta_0>0$ that
\[
|f_z(\xi)|=|f_z'(z)(\xi-z)|+\Ordo(|\xi-z|^{2-\delta_0})\gtrsim|\xi-z|
\]
e.g. by Taylor's formula and Kellog's theorem (Theorem~\ref{thm:Kellog} above).
We stress that because for any $\epsilon>0$ the family $(f_z)$ is uniformly 
$C^{1,1-\epsilon}$-smooth, the implicit
constants involved are bounded are independently of $z$.
As a consequence, the Poisson kernel meets the uniform bound
\[
  P_\G(\xi,z)\lesssim |z-\xi|^{-1}
  \lesssim 2^{j},\qquad \xi\in\A_j.
\]
The total contribution to the density 
of these annuli is small:
\[
  \sum_{j\ge |\log_2 2|z-z_0||}4^{-j}
  \sup_{\xi\in\A_j}P_\G(\xi,z)
  \lesssim  
  \sum_{j\ge |\log_2 2|z-z_0||}2^{-j}
  \lesssim |z-z_0|.
\]
\begin{figure}
\includegraphics[width=.5\textwidth]{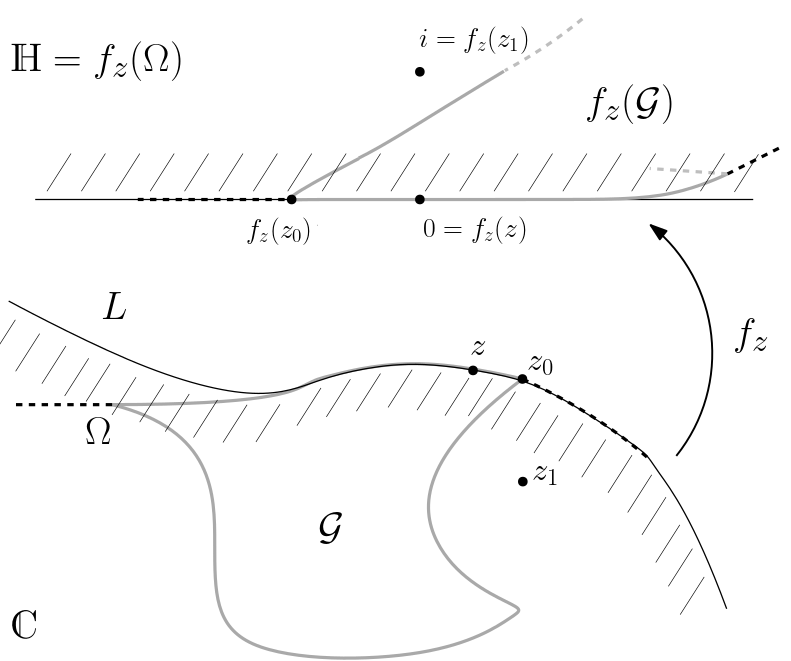}
\caption{The construction of the domain $\Omega$ enclosing $\G$,
and associated conformal maps $f_z:\Omega\to\HS$ with $f_z(z)=0$.}
\label{fig:HalfPlane}
\end{figure}

\noindent {\sc Type II.}\; For the annuli 
$\mathbb{A}_j$ for which $2^{-j}\ge 2|z-z_0|$, 
we need to use the fact that harmonic 
measure decays near corners. We recall (see e.g.\ \cite{GarnettMarshall})
that $P_\G(\xi,z)$ may be expressed in terms of a
conformal mapping $\phi:\G\to\D$ via
\[
  P_\G(\xi,z)=|\phi'(z)|
  \frac{1-|\phi(\xi)|^2}{|\phi(z)-\phi(\xi)|^2}.
\]
and that there exists a constant $a>0$ 
such that
\[
  |\phi(w)-\phi(z_0)| =a|w-z_0|^{\frac{1}{\sigma}}(1+\ordo(1)).
\]
In particular, if $|\xi-z_0|\ge 2|z-z_0|$, 
then by the reverse triangle inequality we also have
\[
  |\phi(z)-\phi(\xi)|\ge (|\phi(\xi)-\phi(z_0)|
  -|\phi(z)-\phi(z_0)|)
  \gtrsim |\xi-z_0|^{\frac{1}{\sigma}}.
\]
Moreover, we have
\[
  1-|\phi(\xi)|^2\lesssim |\xi-z_0|^{\frac{1}{\sigma}}.
\]
Since for $j\le |\log_22|z-z_0||$ we have $|\xi-z_0|\ge 2|z-z_0|$ 
whenever $\xi\in \mathbb{A}_j(z)$,
it holds that
\[
  P_{\G}(\xi,z)\lesssim \frac{|z-z_0|^{\gamma}}{|\xi-z_0|^{\frac{1}{\sigma}}}
  =\frac{|z-z_0|^{\frac{1}{\sigma}}}{|\xi-z_0|^{\frac{1}{\sigma}}}
  \frac{1}{|z-z_0|},
\]
so the contribution to the density of this part of the sum may be bounded 
as follows:
\[
  I(z,z_0)\coloneqq \sum_{j=-K_0}^{|\log_2 2|z-z_0||}\hspace{-5pt}4^{-j}\sup_{\xi\in\A_j}
  \frac{|z-z_0|^{\frac{1}{\sigma}}}{|\xi-z_0|^{\frac{1}{\sigma}}}\frac{1}{|z-z_0|}
  \lesssim |z-z_0|^{\frac{1-\sigma}{\sigma}}\hspace{-16pt}
  \sum_{0\le j\le |\log_22|z-z_0||}\hspace{-10pt}
  2^{\big(\tfrac{1-\sigma}{\sigma}-1\big)j}.
\] 
Computing the sum, using the bound (with $\rho$ fixed)
\[
  \sum_{0\le j\le R}\rho^j\lesssim 
  \begin{cases}
    \rho^R,& \rho>1,\\
    R,&\rho=1,\\
    1,&\rho<1,
  \end{cases}
\]
we find that
\[
  I(z,z_0)\lesssim 
  |z-z_0|^{\frac{1-\sigma}{\sigma}}\times
  \begin{cases}
	1,& \sigma>\tfrac12\\
	\big|\log |z-z_0|\big|,& \sigma=\tfrac12\\
	|z-z_0|^{1-\frac{1-\sigma}{\sigma}},& \tfrac12<\sigma<1.
  \end{cases}.
\]
This completes the proof.
\end{proof}

\subsection{Global H{\"o}lder regularity of the solution}
The main result of this section is now straightforward.

\begin{thm}
\label{thm:Lipschitz-reg}
  Assume that $\G$ is piecewise smooth without cusps.
  Then the potential $U^{\mu_{\alpha,\G}}$ is Lipschitz continuous 
  on $\C\setminus \calE^\eta$ and H{\"o}lder continuous on $\calE^\eta$. 
  In particular $u_0$ is globally H{\"o}lder continuous.
\end{thm}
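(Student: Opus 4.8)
The plan is to split $u_0 = U^{\mu_{\alpha,\G}}$ along the Lebesgue decomposition $\mu_0 = \mu_0^\s + \mu_0^\c$ furnished by Proposition~\ref{prop:struct} and treat the two pieces separately. The continuous part $\mu_0^\c = \tfrac{1}{\pi\alpha}\chi_{\D(0,\sqrt\alpha)\setminus\Omega}\diffA$ has density bounded by $\tfrac1{\pi\alpha}$ and compact support, so $U^{\mu_0^\c}\in W^{2,p}_{\mathrm{loc}}(\C)$ for every $p<\infty$, hence $U^{\mu_0^\c}\in C^{1,\beta}_{\mathrm{loc}}(\C)$ by Sobolev embedding; being harmonic off the support and equal to $\log|z|+\Ordo(|z|^{-1})$ near infinity, it is globally Lipschitz, a fortiori globally H\"older. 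It remains to control the single-layer potential $U^{\mu_0^\s}$, for which we use that $\mu_0^\s = \tfrac1\alpha\mathrm{Bal}(\nu,\G^c)$ with $\nu$ supported in $\calI$ and $\nu(\C)\le\e$ (Theorem~\ref{thm:s-qdom}, Lemma~\ref{lem:balance}), and that $\calI$ lies at a positive distance from $\partial\G\setminus\calE_0^\eta$ for each fixed $\eta>0$ (Theorem~\ref{thm:s-ndeg}).

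The crux is a pointwise description of the arc-length density $\rho_0 = \alpha\,\tfrac{\diff\mu_0^\s}{\diffs}$, which by \eqref{eq:bal-dens-nu} equals $\int_\calI P_\G(\xi,w)\,\diff\nu(\xi)$. Writing $P_\G(\xi,w) = |\phi'(w)|\,\tfrac{1-|\phi(\xi)|^2}{|\phi(w)-\phi(\xi)|^2}$ for a fixed conformal $\phi\colon\G\to\D$, and using that $\phi,\phi^{-1}$ are H\"older on the closures (Remark~\ref{rem:prop-conf-corner}) together with the separation estimate, the factor $|\phi(w)-\phi(\xi)|$ is bounded below by a positive constant uniformly in $\xi\in\calI$ whenever $w\notin\calE_0^\eta$. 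Three cases: on the $C^2$ subarcs away from all corners, local Kellogg regularity (Theorem~\ref{thm:Kellog}, used as in Theorem~\ref{thm:density-control}) makes $w\mapsto|\phi'(w)|$ nonvanishing and $C^{0,\beta}$, so $\rho_0$ is $C^{0,\beta}$ there, uniformly; near a corner $z_0\in\calE_0$, Theorem~\ref{thm:density-control} gives $\rho_0$ bounded, indeed $\rho_0\to0$; near a reentrant corner $z_0\in\calE_1$ of interior angle $\pi\sigma$ with $\sigma\in[1,2)$, Proposition~\ref{prop:conf-corner} yields $|\phi'(w)|\lesssim|w-z_0|^{\frac1\sigma-1}$, hence $\rho_0(w)\lesssim|w-z_0|^{\frac1\sigma-1}$, which is integrable. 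Summing, $\mu_0^\s(\D(z,r))\lesssim r^{\gamma_0}$ uniformly over $z\in\calE^\eta$, with $\gamma_0 = \min\{1,\min_{\calE_1}\tfrac1\sigma\}\in(\tfrac12,1]$.

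Both assertions now follow by routine potential theory. For the Lipschitz bound on $\C\setminus\calE^\eta$, fix a smooth partition $1=\zeta_0+\zeta_1$ on $\partial\G$ with $\zeta_0$ supported in $\calE^{\eta/4}$ and $\equiv1$ on $\calE^{\eta/8}$. Then $\zeta_1\mu_0^\s$ has a $C^{0,\beta}$ density supported compactly in the relative interiors of the $C^2$ subarcs of $\partial\G$, so $U^{\zeta_1\mu_0^\s}$ has bounded gradient up to $\partial\G$ from each side, hence is globally Lipschitz (classical single-layer estimates, i.e.\ boundedness of the Cauchy transform acting on H\"older densities); and for $z$ with $\diff(z,\calE)\ge\eta$ the measure $\zeta_0\mu_0^\s$ lives at distance $\ge\tfrac{3\eta}{4}$ from $z$, so $U^{\zeta_0\mu_0^\s}$ is Lipschitz near $z$ with a uniform constant. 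For the H\"older bound on $\calE^\eta$, feed $\mu_0^\s(\D(z,r))\lesssim r^{\gamma_0}$ into the standard log-potential estimate: with $\delta=|z-z'|$, split $U^{\mu_0^\s}(z)-U^{\mu_0^\s}(z') = \int\log\tfrac{|z-w|}{|z'-w|}\,\diff\mu_0^\s(w)$ at scale $2\delta$, bounding the near part by $\Ordo(\delta^{\gamma_0}|\log\delta|)$ via the mass growth and the far part by $\Ordo(\delta^{\gamma_0})$ via $|\log\tfrac{|z-w|}{|z'-w|}|\lesssim\delta/|z-w|$; this gives $|U^{\mu_0^\s}(z)-U^{\mu_0^\s}(z')| = \Ordo(\delta^\beta)$ for every $\beta<\gamma_0$. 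Combining with the Lipschitz estimate for $U^{\mu_0^\c}$ and a routine patching across $\partial\calE^\eta$ yields global H\"older continuity of $u_0$. The main obstacle is the density analysis of the second paragraph — keeping the family $\{P_\G(\xi,\cdot):\xi\in\calI\}$ under uniform control near both corners and smooth boundary points — which leans on Theorems~\ref{thm:s-ndeg} and \ref{thm:density-control}; the rest is standard.
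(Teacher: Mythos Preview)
Your proof is correct and follows essentially the same route as the paper's: both rely on the balayage representation $\mu_0^\s=\tfrac1\alpha\mathrm{Bal}(\nu,\G^c)$, the separation of $\calI$ from $\partial\G\setminus\calE_0^\eta$ (Theorem~\ref{thm:s-ndeg}), and the density control near $\calE_0$ from Theorem~\ref{thm:density-control}. The differences are organizational rather than substantive --- you work via the arc-length density $\rho_0$ and standard single-layer estimates (partition of unity, mass growth, two-scale splitting), whereas the paper argues more directly through the balayage potential formula $U^{\omega_{\G,\xi}}(z)=\log|z-\xi|-\log|\phi_\xi(z)|$ and the uniform regularity of the family $(\phi_\xi)_{\xi\in\calI}$ near boundary points away from $\calE_0$, then handles $\calE_0^\eta$ by a layer-cake computation yielding $|z-w|\,|\log|z-w||$; both packages encode the same Poisson-kernel bounds.
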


\begin{proof}
For a given boundary point $z_0\in\Gamma\setminus\calE_0^\eta$, 
the family $(\phi_z)_{z\in\calI}$ 
of Riemann mappings $\phi_z:\G\to \D$ with $\phi_z(z)=0$, 
are all of the same regularity near $z_0$.
On the smooth part $\Gamma\setminus\calE^\eta$, they are all 
uniformly Lipschitz regular in a neighbourhood of $z_0$,
so the potential of harmonic measure seen from any point of $\calI$ 
is Lipschitz regular near $z_0$.
But then the potential of the whole balayage measure $U^{\mu_0^\s}$
is also Lipschitz. 
Similarly, for $z\in\calE_1^\eta$ near a corner 
with opening angle $\pi\sigma\in [\pi,2\pi)$,
the mappings $\phi_z$ are uniformly H{\"o}lder continuous
by Remark~\ref{rem:prop-conf-corner}.

We turn to analyzing the regularity in $\calE_0^\eta$.
  It turns out that we merely need to use that the density $\rho_0$ of
  $\mu_0$ with respect to arc-length is bounded. Indeed, 
  in view of Proposition~\ref{prop:reg} we have for $v_0=\alpha u_0$
  \begin{equation}\label{eq:Lip}
    |v_0(z)-v_0(w)|\lesssim 
  \int_{\partial\G}\Big|\log\big|\tfrac{z-\xi}{w-\xi}\big|\Big|
  \,\rho_0(\xi)\diffs(\xi) + |z-w|,
  \end{equation}
  since the continuous part of the measure $\mu_0$
  has a $C^1$-smooth (in particular 
  Lipschitz continuous) 
  potential.  
  By symmetry, it is enough to show that
  \[
    \int_{\partial\G\cap\{|z-\xi|>|w-\xi|\}}\log\big|\tfrac{z-\xi}{w-\xi}\big|
    \,\rho_0(\xi)\diffs(\xi)\lesssim |z-w|\big|\log|z-w|\big|.
  \]
  Since by Theorem~\ref{thm:density-control} the density 
  $\rho_0$ is bounded on $\D(z_0,\eta)$, we have the uniform bound
  \[
    |\partial\G\cap\D(z,\epsilon)|\lesssim \epsilon,\qquad z\in\D(z_0,\eta),
  \]
  and in addition it holds that
  \begin{multline*}
     \int_{\partial\G\cap\{|z-\xi|>|w-\xi|\}}\log\big|\tfrac{z-\xi}{w-\xi}\big|
    \,\rho_0(\xi)\diffs(\xi)\\
    \lesssim \lVert \rho_0\rVert_{L^\infty(\D(z_0,\eta))}
  \int_{0}^\infty\Big|\partial\G\cap\Big\{\xi:\tfrac{|z-\xi|}{|w-\xi|}>\e^\lambda\Big\}
  \Big|\,\diff \lambda.
  \end{multline*}
  We split the integral into two: one over the set $0\le \lambda \le |z-w|$ and one
  over $|z-w|<\lambda<\infty$. For the former, we simply note that the integrand is
  bounded, so
  \[
  \int_{0}^{|z-w|}\Big|\partial\G\cap\Big\{\xi:\tfrac{|z-\xi|}{|w-\xi|}>\e^\lambda\Big\}
  \Big|\,\diff \lambda \lesssim |z-w|.
  \]
  For the remaining integral, notice that 
  the set $\big\{\xi: \tfrac{|z-\xi|}{|w-\xi|}>\e^\lambda\big\}$ is
  contained in the disk $\D(w,(\e^\lambda-1)^{-1}|z-w|)$, so we find that
  \[
    |v_0(z)-v_0(w)|\lesssim |z-w| + \int_{|z-w|}^\infty|z-w|
    \,\frac{\diff \lambda}{\e^\lambda-1}
  \lesssim |z-w|\big|\log|z-w|\big|,
  \] 
  which shows that for any $\epsilon>0$, the function $u_0$ is H{\"o}lder continuous
  with exponent $1-\epsilon$ on $\calE_0^\eta$. 
This completes the proof.
\end{proof}

\section{Quantitative stability under domain perturbations}
\label{s:perturb-holes-quant}
\subsection{A word on Hausdorff distance and approximation of domains}
\label{ss:approx-dom}
For a domain $\G$ whose boundary is piecewise smooth and without cusps,
we want to consider domains 
which approach $\G$ from within and from the outside, respectively.
We define the Hausdorff distance $\diffH(\G,\G')$ between domains 
$\G$ and $\G'$ as
\[
\diffH(\G,\G')=\max\Big\{\sup_{z\in\G}\inf_{w\in\G'}|z-w|,\;
\sup_{z\in\G'}\inf_{w\in\G}|z-w|\Big\}.
\]
The approximation of $\G$ from within may be accomplished by 
considering level curves $\Gamma_t^-$ of the moduli of
the conformal mapping $\varphi:\G\to \D$:
\[
\Gamma_t^-=\{z\in\C:|\varphi(z)|=\e^{-t}\},\qquad t>0.
\]
Denote by $\G_t^-$ the bounded component of 
$\C\setminus\Gamma_t^-$.
The boundary of the domain $\G_t^-$ is a smooth Jordan curve,
and the domains approach $\G$ well as $t\to 0$, in the sense that 
\[
\diffH(\calG_t^-,\G)\le t^\beta
\]
for some $\beta>0$ which depends only on the angles
of points $z\in\calE$.

\begin{figure}
\includegraphics[width=.45\textwidth]{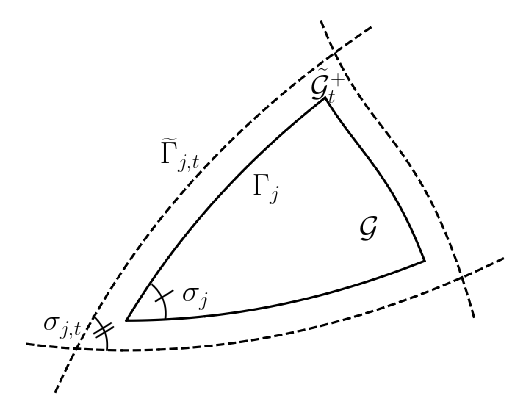}
\caption{The construction for approximating $\G$ from the outside
by $\G_t^+$.}
\label{fig:OuterParallel}
\end{figure}

The approximation of $\G$ from outside 
pertains to obtaining stability bounds for outwards perturbations
of $\G$. These are not needed for our applications, but we include this result for
general interest.
The approximation may be done 
with a regularized outer parallel curve construction, which goes
as follows (see Figure~\ref{fig:OuterParallel}).
For each $C^2$-smooth subarc $\Gamma_j$ of $\partial\G$, let $\tilde{\Gamma}_j$ denote an
open $C^2$-smooth arc containing $\Gamma_j$, and let $\tilde{\Gamma}_{j,t}$ denote 
the curve given by the parameterization ($\epsilon\le s\le 1-\epsilon$)
\begin{equation}\label{eq:param}
(x(s,t),y(s,t))=\Big(x(s)+\tfrac{t y'_\epsilon(s)}{\sqrt{x_\epsilon'(s)^2+y_\epsilon'(s)^2}}, 
y(s)-\tfrac{t}{\sqrt{x_\epsilon'(s)^2+y_\epsilon'(s)^2}}\Big),
\end{equation}
where $(x(s),y(s))$ for $0\le s\le 1$ parametrizes the arc $\tilde{\Gamma}_j$, and
where for a function $F$, $F_\epsilon'(z)$ denotes its averaged derivative 
over the $\epsilon$-neighbourhood of $z$.
If $t$ is small enough (depending only on $\G$) for
\[
\partial_s x(s,t)=x'(s)+\frac{ty_\epsilon''(s)}{((x'_\epsilon)^2
+(y_\epsilon'(s)^2)^{\frac32}}>0,\qquad \epsilon \le s\le 1-\epsilon
\] 
to hold, then $\tilde\Gamma_{j,t}$ is a $C^1$-smooth curve. In fact, and this is the reason
for regularization, $\tilde\Gamma_{j,t}$ is $C^2$-smooth under this condition.
If $\epsilon$
is chosen small enough, then $(x'_\epsilon,-y'_\epsilon)$ points
approximately in the outwards normal direction to $\Gamma$, in which case
the curve $\tilde\Gamma_{j,t}$ lies at a distance of at least, $\frac12t$
from $\G$.

The arcs $\tilde{\Gamma}_{j,t}$ and $\tilde{\Gamma}_{j+1,t}$ 
meet with angles $\sigma_t(j)$
at corners, which are in a one-to-one correspondence with those of $\partial\G$.
That is, $\sigma_t(j)=\sigma(\Gamma_j,\Gamma_{j+1})+\ordo(1)$ as $t\to 0^+$, 
where $\sigma(\Gamma_j,\Gamma_{j+1})$ denotes the angle formed by the subarcs $\Gamma_j$ and
$\Gamma_{j+1}$ of $\partial\G$.
The resulting piecewise smooth Jordan curve $\Gamma_t$ encloses a piecewise smooth 
Jordan domain $\G_t^+$ without cusps (for small enough $t$).
In addition, if $w_t=w_{t,j}$ denotes the parameterization \eqref{eq:param},
then it is clear that $w_t''\to w_0''$ e.g.\ in $L^1(0,1)$ 
(cf. Remark~\ref{rem:prop-conf-corner-stab}).

\subsection{Stability of the Zeitouni-Zelditch functional}
\label{ss:quantStab}
Assume that $\G_0$ is a bounded simply connected 
domain whose boundary $\partial\G$ is piecewise smooth
without cusps.
Denote by $(\G_t)_{t>0}$ a family
of simply connected Lipschitz domains, 
such that $\G_t\to\G$ in the sense
of convergence in Hausdorff distance as 
$t\to0$, and normalized such that
\[
\diffH(\G_t,\G_0)\le t,\qquad t>0.
\] 
In this section, we quantify how well the 
equilibrium measures $\mu_t=\mu_{\alpha,\G_t}$ approximates
$\mu_0$ in the sense of the functional $I_\alpha$.

\begin{prop}
\label{prop:quant-stab} 
Under the above conditions, it holds that
\[
I_\alpha(\mu_t)= I_\alpha(\mu_0)+\Ordo(t^\beta),
\]
for some constant $\beta>0$ depending only on $\G_0$, as $t\to 0$.
\end{prop}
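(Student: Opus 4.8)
The plan is to prove the two matching one-sided inequalities
$I_\alpha(\mu_t)\le I_\alpha(\mu_0)+\Ordo(t^\beta)$ and
$I_\alpha(\mu_0)\le I_\alpha(\mu_t)+\Ordo(t^\beta)$ by exhibiting, in each
direction, an admissible competitor whose functional value is within
$\Ordo(t^\beta)$ of the other extremal measure. The mechanism is the same in
both directions: given the extremal measure for one domain, sweep it out of the
other domain by balayage, control the resulting change in $-\Sigma$ and in
$B_\alpha$, and invoke minimality. I would set things up so that both $\G_t$
and $\G_0$ are contained in the fixed disk $\D$, so that Proposition~\ref{prop:struct}
and Lemma~\ref{lem:balance} apply uniformly: all extremal measures live in
$\D(0,\sqrt{\alpha})$, carry singular mass at most $\e\alpha^{-1}$, and satisfy
$B_\alpha(\mu_{\alpha,\G_t})=c_\alpha$.

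For the bound $I_\alpha(\mu_t)\le I_\alpha(\mu_0)+\Ordo(t^\beta)$ I would use the
approximation of $\G_0$ by the level-curve domains $\G_s^-\subset\G_0$ from
\S~\ref{ss:approx-dom}, which satisfy $\diffH(\G_s^-,\G_0)\le s^{\beta_0}$, and
argue that $\G_t$ contains such a $\G_s^-$ for $s\cong t^{1/\beta_0}$ up to a
small outer perturbation; the cleanest route, however, is to take $\mu_0$ and
form $\widetilde\mu=\mathrm{Bal}(\mu_0,\G_t^c)$, which is admissible for the hole
event on $\G_t$. Since balayage does not increase logarithmic energy,
$-\Sigma(\widetilde\mu)\le -\Sigma(\mu_0)$; and since balayage only moves mass a
distance $\Ordo(t)$ (the support of the portion being swept lies within $\Ordo(t)$
of $\partial\G_0\subset\partial\G_t$-neighbourhood) and leaves the potential
unchanged on $\G_t^c$, one gets $B_\alpha(\widetilde\mu)\le B_\alpha(\mu_0)+\Ordo(t^\beta)$.
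Here the Hölder exponent $\beta$ and the precise power of $t$ are governed by
the modulus of continuity of the potential $U^{\mu_0}$ near $\partial\G_0$,
which is exactly the global Hölder regularity supplied by
Theorem~\ref{thm:Lipschitz-reg}: away from the corners $\calE_0$ the potential is
Lipschitz, and near a corner of smallest angle $\pi\sigma$ it is Hölder with a
computable exponent, so $|U^{\mu_0}(z)-U^{\mu_0}(w)|\lesssim|z-w|^\beta$ for some
$\beta=\beta(\G_0)>0$. Combining, $I_\alpha(\mu_t)\le I_\alpha(\widetilde\mu)\le
I_\alpha(\mu_0)+\Ordo(t^\beta)$ by minimality of $\mu_t$ over $\calM_{\G_t}$.

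For the reverse inequality I would symmetrically sweep $\mu_t$ out of $\G_0$: set
$\widehat\mu=\mathrm{Bal}(\mu_t,\G_0^c)$, which gives no mass to $\G_0$ and hence
is admissible for $\calM_{\G_0}$; the complication is that $\G_0$ need not
contain $\G_t$, so one first enlarges slightly to a common domain and uses the
monotonicity and stability properties of balayage measures recorded in
\S~\ref{ss:bal} together with the uniform control on $\mu_t$ from
Proposition~\ref{prop:struct} and the uniform Hölder continuity of the family
$(U^{\mu_t})_t$. The latter is the point where I expect the real work to lie:
one needs Theorem~\ref{thm:Lipschitz-reg} to hold \emph{uniformly} in $t$, i.e.
the implicit constants in the density bounds of Theorem~\ref{thm:density-control}
and in the estimates $\nu_t(\D(z,r))\lesssim\alpha r^2$ of
Lemma~\ref{lem:nu-control-coinc} must be uniform over the convergent family
$(\G_t)$; this follows from the uniformity of the conformal-mapping estimates in
Remark~\ref{rem:prop-conf-corner-stab} (Warschawski's stability theorem) applied
to the straightening maps near each corner, once one checks that the angles
$\sigma_t\to\sigma_0$ and the $C^2$ data converge. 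Granting this, the same two
estimates — energy does not increase under balayage, and the potential moves by
$\Ordo(t^\beta)$ because of uniform Hölder continuity — give
$I_\alpha(\mu_0)\le I_\alpha(\widehat\mu)\le I_\alpha(\mu_t)+\Ordo(t^\beta)$.
Taking the two inequalities together yields the claim, with $\beta$ the minimum
of the exponents arising in the two directions, which depends only on $\G_0$
through the smallest corner angle. The main obstacle, to reiterate, is not any
single inequality but the bookkeeping needed to make all regularity constants
uniform along the family $(\G_t)_{t>0}$.
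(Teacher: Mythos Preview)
Your overall architecture is the paper's: construct a competitor by balayage, use that $-\Sigma$ does not increase under balayage, and reduce everything to bounding $B_\alpha(\mathrm{Bal}(\mu,\cdot))-B_\alpha(\mu)$. The paper also sandwiches the general case between the inner and outer cases (its Step~3), so your reduction is on target.

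The gap is in the mechanism you propose for the $B_\alpha$ estimate. The claim ``balayage only moves mass a distance $\Ordo(t)$'' is false: sweeping $\delta_\zeta$ with $\zeta\in\partial\G_0$ to $\G_t^c$ produces the harmonic measure $\omega_{\G_t,\zeta}$, which is spread over all of $\partial\G_t$, not concentrated within $\Ordo(t)$ of $\zeta$. Consequently, the H{\"o}lder modulus of $z\mapsto U^{\mu_0}(z)$ does not by itself control $U^{\widetilde\mu}(z)-U^{\mu_0}(z)$; this difference is the Green potential $-\int_{\G_t\setminus\G_0} g_{\G_t}(z,\zeta)\,\diff\mu_0(\zeta)$, and when $z$ lies near $\partial\G_0$ the Green function is \emph{not} uniformly small along the whole integration range. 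What makes the integral small is that $\mu_0$ does not concentrate too much mass near any boundary point, i.e.\ the density bound $\mu_0(\D(z,\rho))\lesssim\rho^{\beta}$ of Theorem~\ref{thm:density-control}. The paper carries this out by a layer-cake argument: write the Green integral as $\int_0^\infty \mu_0(E(z,\lambda,t))\,\diff\lambda$, bound the sublevel sets $E(z,\lambda,t)$ via the conformal map estimates of Proposition~\ref{prop:conf-corner} (this gives $E\subset\D(z,Ct^{\beta_2}(\e^\lambda-1)^{-1/\beta_0})$), and then feed in the density bound. H{\"o}lder continuity of $U^{\mu_0}$ is a \emph{consequence} of that density bound, not a substitute for it in this computation.

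For the reverse direction your instinct about uniformity is right, but the route you propose (uniform H{\"o}lder continuity of $U^{\mu_t}$ via Warschawski) is again not what is needed and is harder than necessary. The paper instead exploits the structural identity $\mu_t^\s=\tfrac1\alpha\mathrm{Bal}(\nu_t,\G_t^c)$ to write $\mathrm{Bal}(\mu_t^\s,\G_0^c)=\tfrac1\alpha\mathrm{Bal}(\nu_t,\G_0^c)$, whence
\[
U^{\mu_t^*-\mu_t}(z)=\int\big(g_{\G_0}(z,w)-g_{\G_t}(z,w)\big)\,\diff\nu_t(w).
\]
With the specific inner approximation $\G_t=\varphi^{-1}(\e^{-t^\tau}\D)$ the Green functions differ by an explicit quantity, and the integral is controlled using the mass bound $\nu_t(\D(z,r))\lesssim r^2$ of Lemma~\ref{lem:nu-control-coinc}, whose constant is essentially universal (it comes from Lemma~\ref{lem:neumann}) and hence automatically uniform in $t$. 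No appeal to uniform regularity of conformal maps for the family $(\G_t)$ is required at this step.

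In short: keep your skeleton, but replace ``H{\"o}lder continuity of the potential controls the $B_\alpha$ shift'' by ``Green-function representation of the balayage potential, estimated via conformal-map asymptotics near corners together with the local mass bounds on $\mu_0$ (Theorem~\ref{thm:density-control}) or on $\nu_t$ (Lemma~\ref{lem:nu-control-coinc})''.
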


The number $\beta$ can be explicitly related 
to angles at the corner points $z\in \calE$,
but we refrain from doing so for reasons of length.

\begin{proof} The proof is rather long, and is split 
into steps for the reader's convenience. We treat separately the cases when $\G_t\subset \G_0$
and when $\G_0\subset\G_t$, and finally argue that this implies the general result.

\medskip

\noindent {\sc Step 1.}\; We first look at the case when $\G_0\subset \G_t$ for $t>0$, 
so that in particular $\mu_t$ is
a competitor for the minimization problem defining $\mu_0$. 
We may, by monotonicity of $I_\alpha(\mu_{\alpha.\G})$ 
with respect to domain inclusion,
assume that $\G_t$ is enclosed by curves $\Gamma_t^+$ constructed as 
in \S~\ref{ss:approx-dom}.
By sweeping the measure $\chi_{\G_t}\mu_{0}$
to $\G_t^c$, we moreover obtain a good upper bound for 
$I_\alpha(\mu_t)$ while $I_\alpha(\mu_0)$ supplies a lower bound
\[
I_\alpha(\mu_0)\le I_\alpha(\mu_t)\le I_\alpha(\mu_t^*)
\] 
where $\mu_t^*=\mathrm{Bal}(\mu_0,\G_t^\c)$,
and it remains to explain how the right-hand side may be estimated.
We recall that $\mathrm{Bal}(\mu_0,\G_t^c)$ is interpreted as
\[
\mathrm{Bal}(\mu_0,\G_t^c)=
\mathrm{Bal}(\chi_{\G_t}\mu_0,\G_t^c) + \chi_{\G_t^c}\mu_0.
\]
The balayage operation decreases the 
energy $-\Sigma(\mu)$ of a measure $\mu$, 
so it suffices to estimate
$B_\alpha(\mu_t^*)$. We claim that
there exists a positive number $\beta>0$ and a 
constant $C<\infty$ depending only on $\G$ such that
\[
0\le \sup_{z\in\G_t}U^{\mu_t^*-\mu_0}(z)\le C t^\beta,
\]
as $t\to0$. Notice first that the potential of the balayage measure 
$\eta_{t,\zeta}=\mathrm{Bal}(\delta_\zeta,\G_t^c)$ is given by (cf.\ \eqref{eq:bal-pot-nu})
\[
U^{\eta_{t,\zeta}}(z)=\log|z-\zeta|-g_{\G_t}(z,\zeta),
\]
where $g_{\G_t}$ is the Green function for $\G_t$,
and hence we may write
\[
U^{\mu_t^*-\mu_0}(z)=
\int_{\G_t\setminus\G_0} (U^{\eta_{t,\zeta}}(z)-\log|z-\zeta|)\diff\mu_0(\zeta)=
-\int_{\G_t\setminus\G_0} g_{\G_t}(z,\zeta)\diff\mu_0(\zeta).
\]
That this identity holds may moreover be seen by 
noting that $U=U^{\mu_t^*-\mu_0}$ is the unique solution
to the boundary value problem
\[
\begin{cases}
\Delta U=-\mu_0& \text{in }\,\G_t\\
U=0&\text{on }\,\partial\G_t.
\end{cases}
\]
We let $E(z,\lambda,t)$ denote the sub-level set
\[
E(z,\lambda,t)=
\big\{\zeta\in\G_t\setminus\G_0:g_{\G_t}(z,\zeta)\le -\lambda\big\}
\]
and find that
\begin{equation}\label{eq:basic-bound-Lebesgue}
\sup_{z\in\G_t}U^{\mu_t^*-\mu_0}(z)
=\sup_{z\in\G_t\setminus\G_0}
\Big(\int_{0}^\infty\mu_0\big(E(z,\lambda,t)\big)
\diff \lambda\Big),
\end{equation}
where we have used the maximum principle to restrict 
the supremum to $\G_t\setminus\G_0$ and 
the layer cake formula to obtain the integral
on the right-hand side.

We proceed to estimate the size of $E(z,\lambda,t)$, 
and to this end we use the fact that $\G_t$ is simply connected.
Hence, there exists a conformal mapping $\varphi_t:\G_t\to\D$, 
which takes a fixed interior point $z_0$ of $\G$
to the origin. The Green function is then given by
\[
-g_{\G_t}(z,\zeta)=
\log\left|\frac{1-\overline{\varphi_t}(z)\varphi_t(\zeta)}
{\varphi_t(\zeta)-\varphi_t(z)}\right|,
\qquad (z,\zeta)\in \G_t\times \G_t.
\]
By construction, the domains $\G_t$ approximate $\G$ well in the sense
of Remark~\ref{rem:prop-conf-corner-stab}, so that by Remark~\ref{rem:prop-conf-corner},
there exist positive constants $c_1, c_2$, $\beta_0$ and $\beta_1$ with $\beta_0>\beta_1$
depending on $\G$ but not on $t$, such that
\[
c_1|z-\zeta|^{\beta_0}\le 
|\varphi_t(\zeta)-\varphi_t(z)|\le c_2|z-\zeta|^{\beta_1},
\qquad (z,\zeta)\in \G_t\times\G_t
\]
as $t\to0^+$.
A point $\zeta\in\G_0$ belongs to $E(z,\lambda,t)$ if and only if
\begin{equation}\label{eq:phi-bdd-1}
\left|\frac{1-\overline{\varphi_t}(z)\varphi_t(\zeta)}
{\varphi_t(\zeta)-\varphi_t(z)}\right|> \e^\lambda.
\end{equation}
But for $z\in \G_t\setminus\G_0$ 
we have the bound $1-|\varphi_t(z)|^2=\Ordo(t^{\beta_1})$, so 
by the triangle inequality it holds that
\[
\left|\frac{1-\overline{\varphi_t}(z)\varphi_t(\zeta)}
{\varphi_t(\zeta)-\varphi_t(z)}\right|= 
\left|\frac{1-|\varphi_t(z)|^2 + 
\overline{\varphi_t}(z)\big(\varphi_t(z)-\varphi_t(\zeta)\big)}
{\varphi_t(\zeta)-\varphi_t(z)}\right| 
\le 1+\frac{Ct^{\beta_1}}{|z-\zeta|^{\beta_0}}.
\]
In view of this estimate, the inequality \eqref{eq:phi-bdd-1} implies that
\[
  |z-\zeta|\lesssim t^{\beta_1/\beta_0}(\e^{\lambda}-1)^{-\frac{1}{\beta_0}},
\]
for some $\beta_2>0$.
It follows that $E(z,\lambda,t)\subset 
\D(z,D_0t^{\beta_2}(\e^\lambda-1)^{-\frac{1}{\beta_0}})$ 
for some constant $D_0$. We split the integral in $\lambda$ over two regions:
one where $\lambda<t^\tau$ and one where $\lambda>t^\tau$, for a constant
$\tau>0$ to be determined. The first is we estimate as
\[
\int_0^{t^\tau}\mu_0\Big((\G_t\setminus \G_0)\cap
\D(z,D_0t^{\beta_2}(\e^{-\lambda}-1)^{-\frac{1}{\beta_0}}\Big)\diff\lambda
\le \int_0^{t^\tau}\mu_0(\G_t)\diff\lambda \lesssim t^\tau,
\]
where we use the fact that $\mu_0$ is a probability measure.
Using Theorem~\ref{thm:density-control} to control the singular
part of the measure, and Proposition~\ref{prop:reg} to control the continuous part,
we find that it holds that
\[
\mu(\D(z,\rho))\lesssim \rho^{\beta_4},\qquad \text{for all }\,z\in\partial\G, \;\rho>0
\]
for some $\beta_4>0$.
Hence, for the second integral (over the region $\lambda>t^\tau$), we estimate
\[
\int_{t^\tau}^\infty\mu_0\Big(\D\big(z,D_0t^{\beta_2}
(\e^\lambda-1)^{-\frac{1}{\beta_0}}\big)\Big)\diff \lambda
\lesssim \int_{t^{\tau}}^\infty t^{\beta_2\beta_3}
(\e^\lambda-1)^{-\frac{\beta_3}{\beta_0}}\,\diff\lambda
\]
for some $\beta_3$.
A simple change of variables shows
that we have
\[
\int_{t^\tau}^\infty (\e^\lambda-1)^{-\frac{\beta_3}{\beta_0}}\,\diff\lambda
\lesssim t^{-\frac{\tau\beta_3}{\beta_0}},
\]
so we may estimate the integral by
\[
\int_{t^\tau}^\infty\mu\big(\D(z,D_0t^{\beta_2}
(\e^\lambda-1)^{-\frac{1}{\beta_0}})\big)\diff \lambda\lesssim 
t^{{\beta_3\beta_2}-\frac{\tau\beta_3}{\beta_0}}=t^{\beta_4}
\]
where $\beta_4>0$ provided that $\tau$ is chosen small enough.
We find that
\[
\sup_{z\in\G_t\setminus \G_0}|U^{\mu_t^*-\mu_0}(z)|\le t^\beta
\]
for some $\beta>0$. It follows that 
\[
B_\alpha(\mu_t)=B_\alpha(\mu_0)+\Ordo(t^\beta), 
\]
whenever $\G_t$ approximates $\G_0$ from the outside, which completes the proof.

\medskip

\noindent {\sc Step 2.}\; We turn to the case when $\G_t\subset\G_0$ for all $t>0$. 
By monotonicity
with respect to domain inclusions, we may without loss
of generality fix $\varphi$ to be a conformal mapping
$\varphi:\G_0\to\D$, and make the specific choice 
$\G_t=\varphi^{-1}(\e^{-t^\tau}\D)$ for some 
appropriate choice of a constant $\tau>0$.

We set $\mu_t^*=\mathrm{Bal}(\mu_t,\G_0^c)$, and notice that 
$I_\alpha(\mu_{\alpha,\G_0})$ is sandwiched in between $I_{\alpha}(\mu_t)$ and
$I_\alpha(\mu_t^*)$ by monotonicity in the domain $\G_0$ and optimality of 
$\mu_{\alpha,\G}$ for the minimization of $I_\alpha$. 
Hence, we need to bound the difference 
\[
I_\alpha(\mu_t^*)-I_\alpha(\mu_t),
\]
by a positive power of $t$. 
As before, the balayage operation only decreases the energy $-\Sigma(\mu)$,
so it suffices to estimate $U^{\mu_t^*-\mu_t}(z)$ for
$z\in\G_0$. We may assume that $\mu_t^\c(\G_0)=0$, since otherwise
we split the measure $\mu_t$ into its singular and
continuous parts, and treat the latter separately as follows: 
By the maximum principle, it is enough to consider the Balayage
potential for $z\in\G_0\setminus\G_t$, and for such $z$ we clearly have
\[
0\le U^{\mathrm{Bal}(\mu_t^{\c},\G_0^c)-\mu_t^\c}(z)\le
\frac{1}{\pi\alpha}
\int_{\G_0\setminus\G_t}|g_{\G_0}(z,w)|\,\diffA(w) \lesssim t^\beta
\]
by mirroring the computations from the first part of the proof.

Hence, $U^{\mu_t}$ may be assumed to be harmonic in $\G_0\setminus \partial\G_t$,
so by the maximum principle, it is sufficient to estimate $U^{\mu_t^*-\mu_t}$
for $z\in \G_0\setminus\G_t$. In fact,
by an additional application of the maximum principle we 
find that it is enough to estimate this for 
$z\in\partial\G_t$.
We use the fact that $\mu_t^\s=\mathrm{Bal}(\nu_t,\G_t^c)$ 
for some atomic measure $\nu_t$, finitely supported on 
$\G_t$. Moreover,
$\mu_t^*=\mathrm{Bal}(\mu_t,\G^c_0)$. 
Therefore, the potentials of $\mu_t^\s$ and $\nu_t$ agree outside
$\G_0$, so in particular their balayage measures to $\G^c_0$ are equal:
\[
\mu_t^*=\mu_t^\c+\mathrm{Bal}(\mu_t^\s,\G^c_0)=\mu_t^\c+\mathrm{Bal}(\nu_t,\G^c_0).
\]
As a consequence, we find that
\[
U^{\mu_t^*-\mu_t}(z)=
U^{\mathrm{Bal}(\nu_t,\G_0^c)-\mathrm{Bal}(\nu_t,\G_t^c)}(z)=\int 
\big(g_{\G_0}(z,w)-g_{\G_t}(z,w)\big)\diff\nu_t(w).
\]
Due to the special choice of approximating domains $\G_t$, the
conformal mapping $\varphi_t$ of $\G_t$ onto $\D$ is given by
$\varphi_t(z)=\e^{t^\tau}\varphi(z)$ for an appropriate positive parameter $\tau$, so 
we find that for $z\in\partial\G_t$ and $w\in\G_t$ we have
\begin{multline*}
|g_{\G_0}(z,w)-g_{\G_t}(z,w)|=
\log\left|\frac{1-\e^{-2t^\tau}\overline{\varphi(w)}\varphi(z)}
{1-\overline{\varphi(w)}\varphi(z)}\right|- t^\tau\\
\lesssim \log\left(1 + \frac{t^{\tau}}
{|1-\overline{\varphi(w)}\varphi(z)|}\right)+\Ordo(t^{\tau}).
\end{multline*}
If $m_\xi$ denotes the M{\"o}bius transformation,
the conformal mapping $\varphi_w(z)=m_{\varphi(w)}(\varphi(z))$ 
maps $\G_0$ into $\D$, so the ratio 
$|\varphi(z)-\varphi(w)|/|1-\bar{\varphi}(w)\varphi(z)|$ is bounded by one. 
From this we may conclude that
\[
|1-\bar{\varphi}(w)\varphi(z)|\ge
|\varphi(z)-\varphi(w)|\gtrsim |z-w|^{\beta_1},
\]
where the last inequality follows from 
the assumed regularity of the boundary $\partial \G_0$ and Remark~\ref{rem:prop-conf-corner}.
By possibly changing the value of $\beta$, we find that
\[
1 + \frac{t^{\tau}}
{|1-\varphi(z)\bar{\varphi}(w)|}\lesssim 
1+\frac{t^\beta}{|z-w|^\beta},\qquad z\in\partial\G_t.
\]
Next, we recall Lemma~\ref{lem:nu-control-coinc}, 
which says that there exists some constant 
$C_1$ (universal) so that
\[
\nu_t(\D(z,r))\le C_1 r^2,\qquad r\ge 2\mathrm{d}(z,\partial\G_t),\;z\in\calI_t.
\]
We next split the integral against $\nu_t$ as follows: 
for each $j\ge 0$, we let $r_j=2^{-j}\mathrm{diam}(\G_t)$ and
set $A_j(z)=z+\mathbb{A}(r_j,r_{j+1})$. Then, 
whenever $A_j(z)\cap \calI_t$ contains some point $z_j$, we have 
\[
\nu_t(A_j(z))\le \nu_t(\D(z_j,r_j))\le C_1 r_j^2,
\]
and if no such $z_j$ may be found, it holds that $\nu_t(A_j(z))=0$.
We may then estimate the sought-after quantity:
\begin{multline*}
U^{\mu_t^*-\mu_t}(z)\le\sum_{j\ge 0}\nu_t(A_j(z))\sup_{w\in A_j(z)}
(g_{\G_0}(z,w)-g_{\G_t}(z,w))\\
\le  C_2\sum_{j\ge 0}\log\Big(1+\frac{t^\beta}{2^{-\beta j}}\Big)2^{-2j}+
\Ordo(t^\tau),
\end{multline*}
where $C_2$ is some constant; 
this is easily seen to be of order 
$\Ordo(t^\beta)$ as $t\to 0$ for $\beta$ chosen small enough. 
This completes the proof also in this case.

\medskip

\noindent {\sc Step 3.}\; Finally, we let $\G_t$ be any family 
of smooth domains at Hausdorff distance
at most $t$ from $\G_0$. We can then find domains $\G_t^\pm$ by the same
process as above, with
\[
\G_t^-\subset\G_0,\qquad
\G_t\subset\G_t^+
\] 
with corresponding extremal measures $\mu_t^\pm$ for $I_\alpha$, such that
$I_\alpha(\mu_0)$ and $I_\alpha(\mu_t)$ are both bounded between
$I_\alpha(\mu_t^-)$ and $I_\alpha(\mu_t^+)$:
\[
I_\alpha(\mu_t^-)\le I_\alpha(\mu_0),\;
I_\alpha(\mu_t)\le I_\alpha(\mu_t^+),
\]
with $\diffH(\G_t^\pm,\G_0)\le t$,
and we may apply the above approach to the measures $\mu_t^\pm$ to conclude
\[
I_\alpha(\mu_{t})=I_\alpha(\mu_0)+\Ordo(t^\beta),\qquad t\to 0,
\]
which completes the proof.
\end{proof}

\section{The limiting conditional zero distribution}
\label{s:Weyl}
\subsection{The general main theorem for random zeros}
For the formulation of the main theorem, we need several notions. We denote by
\[
\mu_L^{\C} = \sum_{ w\,:\, F_L(w) = 0 } \delta_w
\]
the empirical measure of the zeros of $F_L$, and by 
$\mu_{L,\G}^{\C}$ the same measure conditioned on the 
hole event $\calH_L(\G) = \{ F_L \ne 0 \mbox{ in } \G \}$.

We define the \emph{Schwarz potential} $u_\G$ with respect to the
quadrature domain $\Omega_\nu$ as the solution to the boundary value problem
\[
\begin{cases}
\Delta u_\G=1-\nu&\text{on }\;\Omega_\nu;\\
u_\G=0&\text{on }\,\partial\Omega_\nu.
\end{cases}
\]

\begin{thm}\label{thm:main-GEF2}
Let $\G$ be a Jordan domain with piecewise $C^2$-smooth boundary without cusps
and let $\mu_{\G}^\C$ be given by
\[
\diff\mu_\G^\C=\diff\mathrm{Bal}(\nu,\G^c)
+ \tfrac{1}{\pi}\chi_{\C\setminus\Omega_\nu}\diffA,
\]
where $\nu$ and $\Omega_\nu$ are as in Theorem~\ref{thm:s-qdom}. 
Then the empirical measures $L^{-2} \mu_{L,\G}^{\C}$ converge to $\mu_{\G}^\C$ vaguely in distribution
as $L\to\infty$.
In addition, the hole probability satisfies
\[
\frac{1}{L^4}\log \mathcal{P}(\calH_L(\G))=
-\frac{1}{\pi\alpha^2}\int_{\Omega_\nu}u_\G \, \diffA+\Ordo(L^{-2}\log L^2)
\]
as $L\to\infty$.
\end{thm}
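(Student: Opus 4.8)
The plan is to transfer the variational picture for the (Weyl) polynomial truncations of the GEF into the language of the large-deviations rate function $I_\alpha$, and then to use Theorem~\ref{thm:s-qdom} together with the quantitative domain-stability estimate of Proposition~\ref{prop:quant-stab}. Recall that truncating $F_L$ at degree $n=\alpha L^2$ produces random polynomials whose normalized zero counting measures obey an LDP with rate function $I_\alpha$, with speed $L^4$; this is the Zeitouni--Zelditch framework \cite{ZZ}. One must first justify that replacing the full GEF by its truncation costs only a negligible error in the hole probability and in the empirical measure — this is where the error term $\Ordo(L^{-2}\log L^2)$ enters, and it should follow from tail estimates on the Taylor coefficients $\xi_n (Lz)^n/\sqrt{n!}$ on the relevant compact sets (the dominant scale being $|z|\asymp\sqrt{\alpha}$, i.e. $|Lz|\asymp n^{1/2}$), combined with a union bound. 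The upshot of this reduction is that $L^{-4}\log\mathcal P(\calH_L(\G))$ converges to $-\inf\{I_\alpha(\mu):\mu\in\calM_\G\}+\ordo(1)$, and that on the hole event the conditional empirical measure concentrates (in the vague topology) on the minimizer $\mu_{\alpha,\G}$ after rescaling. Strictly speaking the truncation degree should be chosen as $n\approx \alpha_0 L^2$ for the fixed universal $\alpha_0$ of Proposition~\ref{prop:struct}, after which Proposition~\ref{prop:glue} shows the minimizer has stabilized, so the particular choice of $\alpha\ge\alpha_0$ is immaterial; this is exactly the source of the remark following Theorem~\ref{thm:a-circ} that the leading asymptotics depend only on $r$ in the disk-like case.

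Next I would compute the constant. By Theorem~\ref{thm:s-qdom}, the extremal measure is
\[
\diff\mu_{\alpha,\G}=\tfrac1\alpha\diff\mathrm{Bal}(\nu,\G^c)+\tfrac1{\pi\alpha}\chi_{\D(0,\sqrt\alpha)\setminus\Omega_\nu}\diffA,
\]
and one must evaluate $I_\alpha(\mu_{\alpha,\G})=-\Sigma(\mu_{\alpha,\G})+2B_\alpha(\mu_{\alpha,\G})$. The value $B_\alpha(\mu_{\alpha,\G})=c_\alpha=\tfrac12(\log\alpha-1)$ is already identified (Lemma~\ref{lem:balance} and the proof of Proposition~\ref{prop:struct}). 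For the energy term, the efficient route is to subtract off the \emph{unconstrained} minimizer $\mu_\alpha=\tfrac1\alpha\chi_{\D(0,\sqrt\alpha)}$ (whose energy and $B_\alpha$ are explicit and give the $\log\alpha$-type constants that must cancel against $2c_\alpha$), and to express the \emph{difference} $I_\alpha(\mu_{\alpha,\G})-I_\alpha(\mu_\alpha)$ as a Dirichlet-type integral. Writing $w=U^{\mu_{\alpha,\G}}-\tfrac1{2\alpha}|z|^2$, which by Theorem~\ref{thm:basic-obst} solves the obstacle problem, the difference reduces to $\tfrac1{2\pi}\dir$ of the difference of potentials on $\D(0,\sqrt\alpha)$, and a Green's-identity computation (using $\Delta$ of the continuous part equals $\tfrac1{\pi\alpha}$ off $\Omega_\nu$ and off the outer annulus, together with the vanishing of $w-c_\alpha$ outside $\overline\G$ at maximal points) collapses this to an integral over $\Omega_\nu$ of the Schwarz potential $u_\G$ defined just before the theorem — precisely $-\tfrac1{\pi\alpha^2}\int_{\Omega_\nu}u_\G\,\diffA$ after restoring the $1/\alpha$ scaling. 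The quadrature identity $\sum_\lambda\rho_\lambda h(\lambda)=\tfrac1\pi\int_{\Omega_\nu}h\,\diffA$ for harmonic $h$ is what makes the cross terms between $\nu$ and the uniform background telescope cleanly here.

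For the vague convergence in distribution of $L^{-2}\mu_{L,\G}^\C$, the plan is: the LDP gives that, conditionally on $\calH_L(\G)$, the rescaled empirical measure is exponentially concentrated near the set of minimizers of $I_\alpha$ over $\calM_\G$; since $I_\alpha$ is strictly convex (as $-\Sigma$ is strictly convex and $B_\alpha$ is convex) the minimizer is unique, so conditional concentration at $\mu_{\alpha,\G}$ follows, and unscaling via Proposition~\ref{prop:glue} identifies the limit with $\mu_\G^\C=\mathrm{Bal}(\nu,\G^c)+\tfrac1\pi\chi_{\C\setminus\Omega_\nu}\diffA$. The piecewise-smooth (rather than $C^2$) case is handled by the domain approximation of \S~\ref{ss:approx-dom} together with Proposition~\ref{prop:quant-stab}, which guarantees $I_\alpha(\mu_{\alpha,\G_t})\to I_\alpha(\mu_{\alpha,\G})$ at a polynomial rate; this is more than enough for the leading-order constant, and it also pins down the weak limit of the minimizers by the uniqueness just mentioned. \textbf{The main obstacle} I anticipate is the truncation step: one needs a genuinely quantitative comparison between the hole event for $F_L$ and for its degree-$\lfloor\alpha L^2\rfloor$ truncation, uniform in the relevant range, good enough to produce the stated $\Ordo(L^{-2}\log L^2)$ error — this requires care with the Gaussian tails on the annulus $|z|\asymp\sqrt\alpha$ and with how a small perturbation of the analytic function can create or destroy zeros inside $\G$; the rest is essentially bookkeeping on top of the already-established variational machinery. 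The lower bound for $\mathcal P(\calH_L(\G))$ additionally needs the weighted Fekete point approximation of the appendix to realize the extremal measure $\mu_{\alpha,\G}$ by an actual configuration of zeros at a controlled energy cost.
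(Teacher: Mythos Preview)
Your overall architecture matches the paper's: truncate to the Weyl polynomial, use the joint density of its zeros and the functional $I_\alpha$, obtain the lower bound for the hole probability via the Fekete-point discretization of the appendix, obtain the upper bound and the conditional concentration via an effective large deviation estimate for linear statistics, and compute the constant as $I_\alpha(\mu_{\alpha,\G})-I_\alpha(\mu_\alpha)$ (the paper does this directly in Lemma~\ref{lem:cost_of_hole_event}, arriving at $\tfrac1{\pi\alpha^2}\int_{\Omega_\nu}u_\G\,\diffA$ by the balayage identity rather than a Dirichlet-integral rewriting, but the two computations are equivalent).

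There is, however, one concrete mistake in your plan. You propose truncating at degree $n\approx\alpha_0 L^2$ with $\alpha_0$ fixed. This is too small: on a disk $\D(0,B)$ the tail $T_N$ then satisfies only $|T_N|\lesssim \exp(-cL^2)$, whereas the lower bound for $|F_L|$ away from its zeros (Proposition~\ref{prop:lower_bnd_away_from_zeros}) is of order $\exp(-C L^2\log L)$. Rouch\'e's theorem therefore does not apply, and the comparison between zeros of $F_L$ and of $P_{N,L}$ breaks down. The paper takes $N=\lceil L^2\log L\rceil$, so that $\alpha=N/L^2\approx\log L$ grows; Proposition~\ref{prop:T_N_bound} then gives $|T_N|\le\exp(-AL^2\log L)$ and Rouch\'e goes through. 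The stabilization you cite (Proposition~\ref{prop:glue}) is still what makes the choice of growing $\alpha$ harmless at the level of the minimizer, since $\alpha\mu_{\alpha,\G}$ converges to $\mu_\G^\C$.

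A second, smaller correction: the quantitative domain stability of Proposition~\ref{prop:quant-stab} is \emph{not} used in the paper to pass from $C^2$ to piecewise $C^2$ (Theorem~\ref{thm:s-qdom} already covers piecewise smooth holes). Its role in the proof of Theorem~\ref{thm:main-GEF2} is precisely to absorb the Rouch\'e perturbation: truncation moves zeros by at most $L^{-\tau}$, so the hole event for $F_L$ in $\G$ is compared with the hole event for $P_{N,L}$ in the shrunken domain $\G_\delta^-$, and one needs $I_\alpha(\mu_{\alpha,\G})-I_\alpha(\mu_{\alpha,\G_\delta^-})=\Ordo(L^{-4})$ to keep the error at $\Ordo(L^{-2}\log^2 L)$. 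So the stability estimate enters exactly where you flagged the main obstacle, not where you placed it.
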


Let $\varphi$ be a continuous test function with compact support. We write
\[
n_L(\varphi) = \sum_{w : F_L(w) = 0} \varphi(w),
\]
for the \emph{linear statistic} of the zeros of $F_L$ with 
respect to $\varphi$. We also denote by $\mathcal{P}_L^\G$ the 
probability measure conditioned on the event $\calH_L(\G)$, and 
for a compactly supported smooth test function we write
\[
\dir(\varphi) = \int_\C |\nabla \varphi|^2 \, \diff A.
\]

Vague convergence in distribution of the conditional empirical 
measures $\mu_{L,\G}^{\C}$ is equivalent to the convergence in 
distribution of the random variables $L^{-2} n_L(\varphi)$ to the 
limit $\int \varphi \, \diff \mu_{\G}^\C$. Using the same general 
approach as in \cite{GhoshNishry1}, we prove Theorem \ref{thm:main-GEF2} 
by bounding from above the probability that, conditional on the hole 
event $\calH_L(\G)$, the linear statistic $n_L(\varphi)$ is far 
from $L^2 \int \varphi \, \diff \mu_{\G}^\C$.

Let $\epsilon > 0$. For a compactly supported smooth test 
function $\varphi$, we prove in this section the (conditional) large deviation upper bound
\[
\mathcal{P}_L^\G\Big(\Big\{
\big| n_L(\varphi) - L^2 \begingroup\textstyle\int\endgroup 
\varphi \, \diff \mu_\G^\C \big| > \epsilon L^2 \Big\}\Big)\le 
\exp\Big(-\frac{c\epsilon^2L^4}{\dir(\varphi)}+\Ordo\big(L^2\log^2 L\big)\Big)
\]
as $L\to\infty$.

\subsubsection{Negligible events}
In the proof of Theorem \ref{thm:main-GEF2} we may assume without 
loss of generality that $\G$ is contained in the unit disk. We will 
say that an event is \emph{negligible} (with respect to the hole probability, 
depending $L$) if its probability is at most $\exp(-2 L^4)$. 
The precise constant $2$ is not important, but we do use the fact 
that the hole probability for $\G$ is bounded from below by the 
hole probability for $\D$, which decays asymptotically like $\exp(-\tfrac{e^2}{4} L^4)$.

\begin{rem}
Notice that the union of polynomially many (in $L$) negligible 
events consists of a negligible event, for $L$ large.
\end{rem}

\subsection{A guide to the proof of Theorem~\ref{thm:main-GEF2}}
\label{ss:guide:probab}
Roughly, the proof may be split into four steps:

\begin{center}
\begin{tabularx}{\textwidth}{lX}
{\sc Step 1.} & A truncation argument, which replaces
the GEF $F_L$ by a polynomial.\\
{\sc Step 2.} & Obtaining a lower bound for the hole probability.\\
{\sc Step 3.} & Deriving an effective large deviation upper bound for 
linear statistics of the zeros. This also provides an upper bound for the hole probability.\\
{\sc Step 4.} & Deducing the convergence of conditional empirical measures
\end{tabularx}
\end{center}

{\sc Steps 1, 3} 
and {\sc 4} are very similar
to the 
corresponding arguments in \cite{GhoshNishry1}. 
The use of truncation in {\sc Step 1} leads 
to small (random) perturbation in the location of the zeros 
of the polynomial compared with those of $F_L$. The technical 
difficulties induced by this perturbation are rather mild in 
\cite{GhoshNishry1}, since there the domain $\G$ is a disk 
(which, by its convexity, is stable under small perturbations).
Here we have to rely instead on our 
quantitative stability results from 
\S~\ref{s:perturb-holes-quant} (these are also used in {\sc Step 2}).

In \cite{GhoshNishry1} the lower bound for the probability of 
the hole event is obtained explicitly, by constructing an appropriate 
event in terms of the random variables $\{\xi_n\}$. This construction 
depends crucially on the fact that the domain $\G$ is a disk 
(by using the circular symmetry of Taylor series). Our {\sc Step 2} 
requires a completely different approach, which is based on 
discretization of the continuous minimizer $\mu_{\alpha, \G}$ of the functional $I_\alpha$.

\subsection{Truncation of the power series}
Since it is difficult to handle directly the zeros of the GEF $F_L$, 
we first approximate (some of) them by zeros of the \emph{Weyl polynomial}
$$
P_{N,L}(z) = \sum_{n=0}^N \xi_n\frac{(Lz)^n}{\sqrt{n!}},\qquad z\in\C.
$$
For this to work we need to control the size of
\[
T_N(z) = T_{N,L}(z) = \sum_{n=N+1}^\infty \xi_n \frac{(L z)^n}{\sqrt{n!}}, \qquad z \in \C,
\]
that is, the tail of the Gaussian Taylor series $F_L$. Note that $P_{N,L}$ 
and $T_N$ are independent Gaussian analytic functions.
We use the following crude bound for the tail (e.g. \cite[Lemma 3.3]{GhoshNishry1}).
\begin{prop}\label{prop:T_N_bound}
Let $A, B \ge 1$ be fixed parameters. Let $L > 0$ be sufficiently 
large and put $N = \left\lceil L^2 \log L\right\rceil$. 
With probability at least $1 - \exp(-C L^6)$ we have
\[
\sup_{z \in \D(0,B)}|T_N(z)| \le \exp( -A L^2 \log L ),
\]
where $C>0$ is an absolute constant.
\end{prop}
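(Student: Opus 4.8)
The plan is to reduce the probabilistic statement to a deterministic estimate by conditioning on a high--probability event on which the Gaussian coefficients $\xi_n$ grow slowly. Since a standard complex Gaussian satisfies $\mathbb P(|\xi_n|>t)=\e^{-t^2}$, and since $(L^3+n)^2\ge L^6+n$ for every $L\ge 1$, a union bound shows that the event
\[
\calA_L=\big\{\,|\xi_n|\le L^3+n\ \text{ for every }n\ge 0\,\big\}
\]
has $\mathbb P(\calA_L^c)\le\sum_{n\ge 0}\e^{-(L^3+n)^2}\le\e^{-L^6}\sum_{n\ge 0}\e^{-n}\le 2\,\e^{-L^6}$, which is of the required form after shrinking the constant; this produces an absolute $C>0$ with $\mathbb P(\calA_L^c)\le\e^{-CL^6}$ for $L$ large. (Any coefficient growth slower than the Gaussian tails would work; one may instead take $|\xi_n|\le\e^{n}$ for $n>N$, which further trivializes the prefactor bookkeeping below.)

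On $\calA_L$ the bound becomes purely computational. For $|z|\le B$,
\[
|T_N(z)|\le\sum_{n>N}(L^3+n)\,\frac{(LB)^n}{\sqrt{n!}},
\]
and the elementary inequality $n!\ge(n/\e)^n$ yields $\tfrac{(LB)^n}{\sqrt{n!}}\le\big(\tfrac{\e L^2B^2}{n}\big)^{n/2}$. Because $N=\lceil L^2\log L\rceil$, every index $n>N$ satisfies $\tfrac{\e L^2B^2}{n}\le\tfrac{\e B^2}{\log L}<\tfrac12$ once $L$ is large (depending on $B$), and moreover $\tfrac{LB}{\sqrt{n+1}}\le\tfrac{B}{\sqrt{\log L}}$; it follows that the terms of the series — polynomial prefactor $L^3+n$ included — decay geometrically with ratio at most $\tfrac34$. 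Hence the series is at most $4$ times its first term, and, using $L^3+N+1\le 2L^3$ for $L$ large, a short computation gives
\[
\sup_{z\in\D(0,B)}|T_N(z)|\le 8L^3\Big(\frac{\e B^2}{\log L}\Big)^{L^2\log L/2}
=8L^3\exp\!\Big(\tfrac12 L^2\log L\,\big(\log(\e B^2)-\log\log L\big)\Big).
\]
Since $\log\log L\to\infty$, for $L$ large enough that $\log\log L\ge\log(\e B^2)+2(A+1)$ the exponent is at most $-(A+1)L^2\log L$, so after absorbing the harmless factor $8L^3$ we obtain $\sup_{z\in\D(0,B)}|T_N(z)|\le\e^{-AL^2\log L}$ on $\calA_L$. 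Together with the bound on $\mathbb P(\calA_L^c)$ this proves the proposition.

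I do not expect any genuine obstacle: the statement is soft, and the argument in fact produces far more room than needed (a bound of order $\exp(-c\,L^2\log L\,\log\log L)$), so the precise value of $A$ is immaterial — it only affects the threshold beyond which ``$L$ sufficiently large'' must be taken. The two points requiring a little care are arranging the union bound over the infinitely many $n$ to converge, handled by letting the threshold grow with $n$, and checking that the polynomial prefactor $L^3+n$ never competes with the super-exponential decay coming from the factorials once $n>N$, which is of order $L^2\log L$; both are routine, and the whole argument parallels \cite[Lemma 3.3]{GhoshNishry1}.
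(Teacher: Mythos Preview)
Your argument is correct and is exactly the standard proof that the paper defers to via \cite[Lemma~3.3]{GhoshNishry1}: control the coefficients on a high-probability event by a union bound over Gaussian tails, then use Stirling to show the remaining deterministic series decays super-geometrically once $n>N\sim L^2\log L$. The bookkeeping (the inequality $(L^3+n)^2\ge L^6+n$, the ratio test yielding geometric decay, and the absorption of the $8L^3$ prefactor into the spare factor $\e^{-L^2\log L}$) is all in order.
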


\begin{rem}
A small simplification compared with \cite{GhoshNishry1} is 
that we choose the parameter $N$ to be non-random.
\end{rem}

In order to control the perturbation of the zeros of $F_L$ we will 
apply Rouch\'e's theorem.
Thus, we need a lower bound for $|F_L|$ 
away from its zeros. Let $H$ be an entire function, and 
$B > 0, \eta \in (0, \tfrac14]$ real parameters. Denote by 
$w_1, \dots, w_m$ the zeros of $G$ in $\D(0, B)$, including multiplicities. We define
\[
m_H(B;\eta) = \min\big\{ |H(z)| : z \in \D(0,B) , \, 
\mathrm{d}(z, w_j) \ge \eta \mbox{ for all } j \in \{1, \dots, m\} \big\}.
\]
The following result is obtained by combining Lemmas~3.5 and 3.6, 
Corollary~3.4, and Theorem~3.7 from \cite{GhoshNishry1}.
\begin{prop}\label{prop:lower_bnd_away_from_zeros}
Let $B, \tau \ge 1$ be fixed parameters, and $L > 0$ be 
sufficiently large. Then, with probability at least 
$1 - \exp(-C B^4 L^4)$, we have that
\[
m_{F_L}(B, L^{-\tau}) \ge \exp(-C \tau B^2 L^2 \log L),
\]
where $C>0$ is an absolute constant.
\end{prop}

\subsection{Joint density of the zeros of $P_{N,L}$} \label{ss:joint-density-and-heuristic}
Thinking for the moment of $N \in \N, L > 0$ as free parameters, we write
\begin{equation}\label{eq:P-N,L-in-prod}
P_{N,L}(w) =\sum_{n=0}^N \xi_n\frac{(Lw)^n}{\sqrt{n!}} 
= \xi_N \frac{L^N}{\sqrt{N!}} \prod_{j=1}^N (w - z_j) 
\eqqcolon \xi_N \frac{L^N}{\sqrt{N!}} Q_{\bf z}(w).
\end{equation}
By a change of variables (see e.g. \cite[Appendix A, Lemma A.1]{GhoshNishry1}) 
the joint density of the zeros with respect to the product 
measure $\diffA({\bf z})\coloneqq \diffA^{\otimes N}(z_1,\ldots,z_N)$ takes the form
\begin{equation}\label{eq:joint-density}
f_{N,L}({\bf z})=\frac{1}{\calZ_{N,L}} |\Delta(\textbf{z})|^2
\Big(\frac{L^2}{\pi} \int_\C |Q_{\bf z}(w)|^2 \e^{-L^2|w|^2}
\diffA(w)\Big)^{-(N+1)},
\end{equation}
where $|\Delta(\textbf{z})| = \prod_{i<j}|z_i-z_j|$ is the 
Vandermonde determinant, and $\calZ_{N,L} 
= \pi^N L^{N(N+1)} N!^{-1} \prod_{k=1}^N (k!)^{-1}$ is a normalizing constant.
In alignment with \cite{GhoshNishry1}
we use the notation
\[
S({\bf z})=\frac{L^2}{\pi} \int_{\C}|Q_{\bf z}(w)|^2 \, \e^{-L^2|w|^2}\diffA(w).
\]

\begin{rem}
Henceforth, the empirical probability measure of the 
points $\textbf{z} = (z_1, \dots, z_N)$ will be denoted by
\[
\mu_{\textbf{z}} = \frac1N \sum_{j=1}^N \delta_{z_j},
\]
where $\delta_{z_j}$ is a point mass at $z_j$.
\end{rem}

\subsection{The conditional limiting measure}
Put $\alpha = N L^{-2}$. Recall that,
\[
I_{\alpha}(\mu) = -\Sigma(\mu) + 2 B_\alpha(\mu) = -\Sigma(\mu)+2\sup_{z\in\C}\big(U^\mu(z)
-\tfrac{|z|^2}{2\alpha}\big),
\]
and denote by $\mu_{\alpha} = \chi_{\D(0,\sqrt{\alpha})}\, \diffA$ 
the global minimizer of $I_\alpha$.  Also recall that the minimum 
value of $I_\alpha$ over the class of probability measures 
$\calM_{\G}$ which charge zero mass to $\G$ is attained uniquely at $\mu_{\alpha, \G}$.

Roughly speaking, the 
idea of Zeitouni and Zelditch in \cite{ZZ} is to 
approximate the Vandermonde term $|\Delta(\textbf{z})|^2$ in 
\eqref{eq:joint-density} by $\exp(N^2 \Sigma(\mu_{\textbf{z}}))$ (appropriately regularized), 
and to replace $S({\bf z})$ by $\exp(2 N B_\alpha(\mu_{\textbf{z}}) )$ 
(for a more precise statement, see e.g. Proposition \ref{prop:approx-Fekete-points}).

Looking at \eqref{eq:joint-density} at the logarithmic scale, 
and expressing the normalizing constant $\calZ_{N,L}$ in terms 
of $I_\alpha(\mu_\alpha)$, the probability of the hole event 
in $\G$ for $P_{N,L}$ 
is equal, up to smaller error terms, to the maximum of 
$-N^2(I_{\alpha}(\mu) - I_{\alpha}(\mu_\alpha))$ over 
$\mu \in \calM_{\G}$, that is (by Lemma \ref{lem:cost_of_hole_event} below) 
to $-L^4\int_{\Omega_\nu}u_{\G} \, \frac{\diffA}{\pi}$. 
Moreover, the probability of zero configurations which are 
not `close' to $\mu_{\alpha, \G}$ is negligible with 
respect to the hole probability, so that (following \cite{GhoshNishry1}), we show 
that the limiting measure of the zeros of $F_L$ on the 
hole event in $\G$ is given by the Radon measure
\begin{equation}\label{eq:limit_measure_def}
\mu_\G^\C \coloneqq \lim_{\alpha\to\infty} \alpha \, \mu_{\alpha, \G}.
\end{equation}
That the above limit exists can be seen by appealing to 
Proposition \ref{prop:glue}. Moreover, by that proposition we see that
\[
\mu_\G^\C = \alpha_0 \, \mu_{\alpha_0,\G} + \tfrac{1}{\pi}
\chi_{\{z \, : \, |z| \ge \sqrt{\alpha_0}\}} \diffA.
\]
Recall that there is a finite measure $\nu = \nu_G$ supported in $\G$, such that
\[
\mu_{\alpha,\G}=\tfrac{1}{\alpha}\mathrm{Bal}(\nu,\G^c)
+\tfrac{1}{\pi\alpha}\chi_{\D(0,\sqrt{\alpha})\setminus \Omega_\nu}\diffA,
\]
where $\Omega_\nu$ denotes the subharmonic quadrature
domain with respect to $\nu$ (which contains $\G$). Define the 
\emph{Schwarz potential} $u = u_\G$ associated to the data $(\nu,\Omega_\nu)$ 
as the unique solution to the PDE,
\[
\begin{cases}
\tfrac{1}{2\pi}\Delta u = 1-\nu& \text{on }\Omega_\nu, \\
u=0& \text{on }\partial\Omega_\nu.
\end{cases}
\]

\begin{lem}\label{lem:cost_of_hole_event}
With the above definitions, we have
\[
\alpha^2 \big( I_\alpha(\mu_{\alpha,\G})-I_\alpha(\mu_{\alpha}) \big)
= \frac{1}{\pi}\int_{\Omega_\nu}u_{\G} \, \diffA.
\]
\end{lem}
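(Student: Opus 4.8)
The plan is to compute both sides explicitly in terms of the potentials involved, reducing everything to an application of Green's formula on the quadrature domain $\Omega_\nu$. First I would record the structure of the two minimizers. We have $\mu_\alpha = \tfrac1\alpha\chi_{\D(0,\sqrt\alpha)}\diffA$ with potential $U^{\mu_\alpha}(z) = \tfrac{|z|^2}{2\alpha} - c_\alpha$ inside $\D(0,\sqrt\alpha)$, where $c_\alpha = \tfrac12(\log\alpha - 1)$, so $B_\alpha(\mu_\alpha) = -c_\alpha + \sup_z(U^{\mu_\alpha}(z) - \tfrac{|z|^2}{2\alpha}) = -c_\alpha$… wait, more precisely $B_\alpha(\mu_\alpha)$ equals the constant value of $U^{\mu_\alpha}(z) - \tfrac{|z|^2}{2\alpha}$ on $\overline{\D(0,\sqrt\alpha)}$, namely $-c_\alpha$. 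On the other side, Theorem~\ref{thm:s-qdom} gives
\[
\mu_{\alpha,\G} = \tfrac1\alpha\mathrm{Bal}(\nu,\G^c) + \tfrac{1}{\pi\alpha}\chi_{\D(0,\sqrt\alpha)\setminus\Omega_\nu}\diffA,
\]
and by Proposition~\ref{prop:struct} together with Lemma~\ref{lem:balance}, the value $B_\alpha(\mu_{\alpha,\G})$ is attained on $\G$ and equals $c_\alpha$ as well, up to the singular mass correction — here I would be careful and instead use the exact identity $B_\alpha(\mu_{\alpha,\G}) = U^{\mu_{\alpha,\G}}(z) - \tfrac{|z|^2}{2\alpha}$ for $z$ in the coincidence set, and note that on the large annulus $\A(\tfrac12\sqrt\alpha,\sqrt\alpha)$ both $\mu_\alpha$ and $\mu_{\alpha,\G}$ have density $\tfrac1{\pi\alpha}$, so the potentials agree there up to an additive constant which turns out to be zero because both equal $\log|z|$-type behaviour at infinity with the same total mass $1$.

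The key computation is then to expand $I_\alpha(\mu_{\alpha,\G}) - I_\alpha(\mu_\alpha) = -\Sigma(\mu_{\alpha,\G}) + \Sigma(\mu_\alpha) + 2(B_\alpha(\mu_{\alpha,\G}) - B_\alpha(\mu_\alpha))$ and to rewrite the energy difference using the bilinear form $\Sigma(\mu,\mu') = -\int U^\mu\diff\mu'$. Writing $\delta\mu = \mu_{\alpha,\G} - \mu_\alpha$, one has
\[
-\Sigma(\mu_{\alpha,\G}) + \Sigma(\mu_\alpha) = 2\int U^{\mu_\alpha}\diff(\delta\mu) + \big(-\Sigma(\delta\mu)\big) \cdot(-1)\ldots
\]
— more cleanly, $-\Sigma(\mu_{\alpha,\G}) = -\Sigma(\mu_\alpha) - 2\int U^{\mu_\alpha}\diff\delta\mu - \Sigma(\delta\mu,\delta\mu)$, and I would combine this with the contribution from $B_\alpha$. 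Using $U^{\mu_\alpha}(z) = \tfrac{|z|^2}{2\alpha} - c_\alpha$ on the relevant region and $\delta\mu(\C) = 0$, the term $\int U^{\mu_\alpha}\diff\delta\mu$ becomes $\tfrac{1}{2\alpha}\int|z|^2\diff\delta\mu$. The upshot, after bookkeeping, should be that $\alpha^2(I_\alpha(\mu_{\alpha,\G}) - I_\alpha(\mu_\alpha))$ equals $\tfrac{\alpha^2}{4\pi}\int|\nabla(u_{\alpha,\G} - u_\alpha)|^2\,\diffA$ where $u_{\alpha,\G} = U^{\mu_{\alpha,\G}}$, $u_\alpha = U^{\mu_\alpha}$ — i.e. the energy difference is a pure Dirichlet energy of the difference of potentials, which is supported (as a gradient) on $\Omega_\nu$ since the two measures agree outside $\Omega_\nu$ within $\D(0,\sqrt\alpha)$ and agree outside $\D(0,\sqrt\alpha)$ altogether.

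Finally I would identify this Dirichlet energy with $\tfrac1\pi\int_{\Omega_\nu}u_\G\,\diffA$. Set $w = \alpha(u_{\alpha,\G} - u_\alpha + c)$ for the appropriate constant $c$ making $w$ vanish on $\partial\Omega_\nu$; then $\tfrac{1}{2\pi}\Delta w = \alpha(\mu_{\alpha,\G} - \mu_\alpha) = \mathrm{Bal}(\nu,\G^c) - \chi_{\Omega_\nu}\tfrac1\pi\cdot\pi$ on $\Omega_\nu$, which by the balayage identity $U^{\mathrm{Bal}(\nu,\G^c)} = U^\nu$ off $\Omega_\nu$ and harmonicity considerations gives $\tfrac{1}{2\pi}\Delta w = \nu - 1$ on $\Omega_\nu$ in the sense that $w$ solves exactly the Schwarz-potential boundary value problem up to sign, so $w = -u_\G$ (matching the normalization $\Delta u_\G = 1 - \nu$, $u_\G = 0$ on $\partial\Omega_\nu$, noting the factor conventions: the excerpt's second display writes $\tfrac{1}{2\pi}\Delta u = 1 - \nu$). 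Then Green's formula on $\Omega_\nu$, using $w = 0$ on the boundary, yields
\[
\frac{1}{4\pi}\int_{\Omega_\nu}|\nabla w|^2\,\diffA = -\frac{1}{4\pi}\int_{\Omega_\nu}w\,\Delta w\,\diffA = \frac{1}{2}\int_{\Omega_\nu}w\,(1-\nu)\,\diffA = -\frac12\int_{\Omega_\nu}u_\G(1-\nu)\,\diffA,
\]
and since $\Delta u_\G = 2\pi(1-\nu)$ one more integration by parts (again $u_\G|_{\partial\Omega_\nu}=0$) converts $-\tfrac12\int u_\G(1-\nu)$ into $\tfrac{1}{4\pi}\int|\nabla u_\G|^2 = \tfrac1\pi\int_{\Omega_\nu}u_\G\,\diffA$ after using $\int_{\Omega_\nu}u_\G\,\Delta u_\G = -\int|\nabla u_\G|^2$ and $\int u_\G\,\Delta u_\G = 2\pi\int u_\G(1-\nu)$ — so the two expressions $-\tfrac12\int u_\G(1-\nu)$ and $\tfrac1\pi\int u_\G$ must be reconciled, which works out because $\int_{\Omega_\nu}u_\G\,\diffA = \tfrac12\int_{\Omega_\nu}u_\G\cdot 2\,\diffA$ and the quadrature identity $\int_{\Omega_\nu}u_\G\,\diff\nu = \sum\rho_\lambda u_\G(\lambda)$ feeds back consistently; I would simply carry out this one chain of integrations by parts carefully.

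\textbf{Main obstacle.} The main obstacle is bookkeeping the additive constants ($c_\alpha$, the gluing constant $C_0$, the constant making $w$ vanish on $\partial\Omega_\nu$) and the $2\pi$ versus $\pi$ normalization factors between the logarithmic-potential convention ($\tfrac{1}{2\pi}\Delta U^\mu = \mu$) and the Schwarz-potential convention; these must cancel exactly for the clean formula to emerge, and getting a sign or a factor of two wrong anywhere propagates. A secondary technical point is justifying that $\nabla(u_{\alpha,\G} - u_\alpha)$ is genuinely supported in $\overline{\Omega_\nu}$ — i.e. that the two potentials and their gradients really do coincide on $\D(0,\sqrt\alpha)\setminus\Omega_\nu$ and on $\C\setminus\D(0,\sqrt\alpha)$ — which follows from Proposition~\ref{prop:struct} and the matching of total masses, but should be stated precisely so that Green's formula applies on the bounded piecewise-analytic domain $\Omega_\nu$.
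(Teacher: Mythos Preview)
Your approach has a genuine gap at the identification step. You propose to set $w = \alpha(u_{\alpha,\G} - u_\alpha + c)$ and claim that $w$ solves the Schwarz potential problem, i.e.\ $w = -u_\G$. This is false inside $\G$. Indeed $\tfrac{1}{2\pi}\Delta w = \alpha(\mu_{\alpha,\G} - \mu_\alpha) = \mathrm{Bal}(\nu,\G^c) - \tfrac1\pi\chi_{\Omega_\nu}$, whereas $\tfrac{1}{2\pi}\Delta(-u_\G) = \nu - \tfrac1\pi\chi_{\Omega_\nu}$. The balayage measure $\mathrm{Bal}(\nu,\G^c)$ lives on $\partial\G$, not on $\mathrm{supp}(\nu)\subset\G$, so the two Laplacians differ by $\nu - \mathrm{Bal}(\nu,\G^c)$ and the functions differ inside $\G$ by the Green potential of $\nu$. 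Your subsequent chain of integrations by parts therefore cannot close; the hand-wave ``the two expressions must be reconciled, which works out because\ldots'' is covering a genuine discrepancy, not mere bookkeeping. (There is also a sign slip: $B_\alpha(\mu_\alpha) = c_\alpha$, not $-c_\alpha$, and $U^{\mu_\alpha}(z) = \tfrac{|z|^2}{2\alpha} + c_\alpha$ inside the disk.) Relatedly, the claim that the full difference $I_\alpha(\mu_{\alpha,\G}) - I_\alpha(\mu_\alpha)$ reduces to a pure Dirichlet energy of $u_{\alpha,\G}-u_\alpha$ is unsupported: the cross term $-2\Sigma(\mu_\alpha,\delta\mu)$ does not vanish, and you never show how it is absorbed.

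The paper's argument avoids all of this with two observations you are missing. First, $B_\alpha(\mu_{\alpha,\G}) = B_\alpha(\mu_\alpha) = c_\alpha$, so the $B_\alpha$-terms cancel outright and one is left with $\Sigma(\mu_\alpha) - \Sigma(\mu_{\alpha,\G})$. Second, and this is the key point: since $U^{\mu_{\alpha,\G}}$ is harmonic on $\G$, one may replace $\mu_{\alpha,\G}^\s = \tfrac1\alpha\mathrm{Bal}(\nu,\G^c)$ by $\tfrac1\alpha\nu$ when integrating against it; and since $\mathrm{supp}(\nu)\subset\calI$ lies in the coincidence set, one has $U^{\mu_{\alpha,\G}} = U^{\mu_\alpha}$ on $\mathrm{supp}(\nu)$ as well as on $\mathrm{supp}(\mu_{\alpha,\G}^\c)$. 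These two facts let one swap $U^{\mu_{\alpha,\G}}$ for $U^{\mu_\alpha}$ in the energy integral, after which the difference collapses directly to $\tfrac{1}{\alpha}\Sigma(\mu_\alpha,\tfrac1\pi\chi_{\Omega_\nu} - \nu) = \tfrac{1}{\pi\alpha^2}\int_{\Omega_\nu}u_\G\,\diffA$ via the identification $u_\G = U^{\frac1\pi\chi_{\Omega_\nu}-\nu}$. No Dirichlet-energy detour, no integrations by parts, and no additive-constant bookkeeping beyond $c_\alpha$ itself.
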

\begin{proof}
We use the notation
\[
\Sigma(\mu,\nu)=\int U^\mu \, \diff\nu=\int U^\nu \, \diff\mu,
\]
provided that $U^\mu\in L^1(\nu)$ and vice versa.
It then holds that
\begin{align}
I_\alpha(\mu_{\alpha,\G})-I_\alpha(\mu_\alpha)
=\Sigma(\mu_\alpha)-\Sigma(\mu_{\alpha,\G})
&=\Sigma(\mu_\alpha) - 
\Sigma\big(\mu_{\alpha,\G},\mu_{\alpha,\G}^\s
+\tfrac{1}{\alpha}\chi_{\D_{\sqrt{\alpha}}\setminus \Omega_{\nu}}\big)\\
&=\Sigma(\mu_\alpha) - 
\Sigma\big(\mu_{\alpha,\G},\tfrac{1}{\alpha}\nu
+\tfrac{1}{\alpha}\chi_{\D_{\sqrt{\alpha}}\setminus \Omega_{\nu}}\big),
\end{align}
where the last equality holds because 
$\mu_{\alpha,\G}^\s=\tfrac{1}{\alpha}\mathrm{Bal}(\nu,\G^c)$
and since the potential is harmonic across $\G$. 
Next, observe that $U^{\mu_{\alpha,\G}}=U^{\mu_\alpha}$
on the support of the measure 
$\nu+\chi_{\D_{\sqrt{\alpha}}\setminus \Omega_{\nu}}$. 
In view of the identity 
$U^{\chi_{\Omega_{\nu}}-\nu}=u_\G$ we may write
\[
I_\alpha(\mu_{\alpha,\G})-I_\alpha(\mu_\alpha)
=\frac{1}{\alpha}\Sigma(\mu_\alpha,\chi_{\Omega_{\nu}}-\nu)
=\frac{1}{\pi\alpha^2}\int_{\Omega_\nu}u_\G \, \diffA.
\]
This completes the proof.
\end{proof}

\subsection{Fekete points and discretization of the limiting measure}
\label{ss:Fekete-appl}
Before embarking on the proof of a lower bound for the hole probability, 
we explain how to construct a discrete approximation for the conditional limiting measure.

Consider the measure $\mu_{\alpha,\G}$ for $\alpha \ge \alpha_0$ and 
its logarithmic potential $U^{\mu_{\alpha,\G}}$. By 
Theorem~\ref{thm:Lipschitz-reg} and Proposition~\ref{prop:glue}, 
there is a constant $\gamma = \gamma_\G \in (0,1]$ such that the 
potential $U^{\alpha \, \mu_{\alpha,\G}} \mid_{\{ \D(0,\sqrt{\alpha_0})\}}$
is H{\"o}lder 
continuous with exponent $\gamma$ and norm equal to $C_U^\gamma$ 
(both $\gamma$ and $C_U^\gamma$ do not depend on $\alpha$).
Since $U^{\mu_{\alpha,\G}} = U^{\mu_{\alpha}}$ outside $\D(0,\sqrt{\alpha_0})$, we conclude that
\[
\lVert U^{\mu_{\alpha,\G}}\rVert_{C^{0, \gamma}} = \Ordo( \alpha^{- \gamma/2} ).
\]

\begin{lem}\label{lem:discrete-approx-of-min-measure}
Let $\G$ be a domain contained in the unit disk and satisfying 
the conditions of Theorem~\ref{thm:main-GEF2}
and put $N = \left\lceil L^2 \log L\right\rceil = \alpha L^2$. 
For all $L$ sufficiently large (depending on $\G$) there 
is a set of points $\calF_N^\G = (z_1, \dots, z_N) = \textbf{z}$ with the following properties:
\begin{enumerate}
\item \label{itm:Fek-restric} The points lie outside of 
$\G$: $\calF_N^\G \subset \D(0,\sqrt{\alpha}) \setminus \G$.
\item \label{itm:Fek-sep} The points are separated:
\[
\inf_{j \ne k}|z_j - z_k|\ge L^{-3 / \gamma}.
\]
\item \label{itm:Fek-energy} The logarithmic energy of $\mu_{\textbf{z}}$ satisfies the bound
\[
-\Sigma^*(\mu_{\textbf{z}}) \le -\Sigma(\mu_{\alpha, \G}) + \frac{2}{\gamma L^2}.
\] 
\item \label{itm:Fek-potential} The logarithmic potential of $\mu_{\textbf{z}}$ meets the bound
\[
U^{\mu_{\textbf{z}}}(\zeta) \le U^{\mu_{\alpha,\G}}(\zeta) 
+ \frac{12}{\gamma L^2} \qquad \forall \zeta \in \C.
\]
\end{enumerate}
\end{lem}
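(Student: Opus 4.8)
The plan is to realize the points $\textbf{z}$ as a perturbation of weighted Fekete points for the measure $\mu_{\alpha,\G}$, invoking the approximation result from the appendix (the collaboration with S.\ Ghosh). First I would fix $N=\lceil L^2\log L\rceil$ and $\alpha=NL^{-2}$, and consider the compact set $K=\overline{\D(0,\sqrt\alpha)}\setminus\G$, on which $\mu_{\alpha,\G}$ is supported (by Proposition~\ref{prop:struct}, the support lies in $\overline{\D(0,\sqrt\alpha)}$ and misses $\G$). Since by Theorem~\ref{thm:Lipschitz-reg} and Proposition~\ref{prop:glue} the potential $U^{\mu_{\alpha,\G}}$ is H\"older continuous with exponent $\gamma=\gamma_\G\in(0,1]$ and $\lVert U^{\mu_{\alpha,\G}}\rVert_{C^{0,\gamma}}=\Ordo(\alpha^{-\gamma/2})$, the measure $\mu_{\alpha,\G}$ is a sufficiently regular target for the Fekete-point machinery. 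The appendix result should provide, for each $N$, a configuration $\textbf{z}=(z_1,\dots,z_N)$ with (i) all points in (a slight enlargement of) $K$, (ii) separation of order a fixed negative power of $N$ (hence of $L$), (iii) discrete energy $-\Sigma^*(\mu_{\textbf z})$ within $\Ordo(N^{-1}\log N)$ of the continuous energy, and (iv) potential of $\mu_{\textbf z}$ controlled pointwise by $U^{\mu_{\alpha,\G}}$ plus an error of the same order. The numerology in the statement (the explicit constants $2/(\gamma L^2)$, $12/(\gamma L^2)$, and $L^{-3/\gamma}$) is then a matter of bookkeeping: the H\"older modulus of $U^{\mu_{\alpha,\G}}$ enters through the exponent $\gamma$, and the separation exponent $3/\gamma$ is chosen large enough that moving each Fekete point by at most its separation radius changes the energy and potential only at the stated scale.

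The one genuinely delicate point is item~\eqref{itm:Fek-restric}: the raw Fekete points for $\mu_{\alpha,\G}$ may land on $\partial\G$ (this is exactly where the singular part $\mu^\s_{\alpha,\G}=\tfrac1\alpha\mathrm{Bal}(\nu,\G^c)$ is supported) rather than strictly outside $\G$. I would fix this by a deterministic \emph{pushing-out} step: replace any point $z_j\in\partial\G$ (or within distance $L^{-3/\gamma}$ of $\partial\G$) by the nearest point of the outer parallel set $\{z:\mathrm{d}(z,\partial\G)=L^{-3/\gamma}\}$, using the regularized outer-parallel construction of \S\ref{ss:approx-dom} to ensure this is well-defined and that the perturbed configuration still lies in $\D(0,\sqrt\alpha)$. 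Because each point moves by at most $\Ordo(L^{-3/\gamma})$, the Vandermonde $|\Delta(\textbf z)|$ changes by a bounded factor and the logarithmic energy $-\Sigma^*(\mu_{\textbf z})$ changes by $\Ordo(N^{-1}\log L)$; similarly $U^{\mu_{\textbf z}}$ changes by $\Ordo(N^{-1}\log L)$ uniformly on $\C$, by the standard estimate $|U^{\delta_a}(\zeta)-U^{\delta_b}(\zeta)|$ controlled off a small exceptional disk, handled via the layer-cake bound as in the proof of Proposition~\ref{prop:quant-stab}. Maintaining the separation \eqref{itm:Fek-sep} after pushing out requires that the outer parallel set be at distance comparable to $L^{-3/\gamma}$ from $\partial\G$ so that distinct pushed points remain separated; this is automatic from the construction as long as the parameter $t=L^{-3/\gamma}$ is small (true for $L$ large depending on $\G$), using that $\partial\G$ has no cusps.

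The remaining steps are routine. For \eqref{itm:Fek-energy} I would combine the near-optimality of the Fekete energy from the appendix with the $\Ordo(N^{-1}\log L)$ loss from the push-out, and absorb everything into $2/(\gamma L^2)$ using $N\ge L^2\log L$. For \eqref{itm:Fek-potential} the Fekete construction gives $U^{\mu_{\textbf z}}\le U^{\mu_{\alpha,\G}}+\Ordo(N^{-1}\log L)$ on the support of $\mu_{\alpha,\G}$, extended to all of $\C$ by the maximum principle applied to the subharmonic function $U^{\mu_{\textbf z}}-U^{\mu_{\alpha,\G}}$ (both potentials have the same logarithmic growth at infinity, so the difference is bounded and subharmonic off the supports, hence attains its supremum near the supports); the push-out adds another $\Ordo(N^{-1}\log L)$, and the constant $12$ is chosen generously enough to swallow the implied constants for all $L$ large. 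I expect the main obstacle to be precisely the bookkeeping in the push-out step---verifying that a single, carefully chosen displacement simultaneously fixes the location constraint, preserves separation at the prescribed exponent, and costs only the allotted energy and potential error---rather than any conceptual difficulty.
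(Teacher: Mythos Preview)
Your overall strategy---invoke the weighted Fekete machinery from the appendix---is right, but you have misread item~\eqref{itm:Fek-restric} and consequently introduced an unnecessary ``push-out'' step. The constraint is $\calF_N^\G \subset \D(0,\sqrt\alpha)\setminus\G$; since $\G$ is open this set contains $\partial\G$, so Fekete points landing on the boundary are perfectly admissible. Nothing needs to be repaired.

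The paper's approach is correspondingly more direct. Take $\Lambda = \D(0,\sqrt\alpha)\setminus\G$ and external field $Q = U^{\mu_{\alpha,\G}}$. The point of this specific choice of $Q$ (which you leave implicit) is that the weighted equilibrium measure $\mu_{Q,\Lambda}$ is then \emph{exactly} $\mu_{\alpha,\G}$, since the characterizing condition $U^{\mu_{Q,\Lambda}} = Q + C$ on the support is trivially satisfied with $C=0$. The weighted Fekete points confined to $\Lambda$ lie in $\Lambda$ by definition, which gives \eqref{itm:Fek-restric} for free, and Proposition~\ref{prop:Ghosh-Gotz-Saff-sep} together with Theorem~\ref{thm:Ghosh-Gotz-Saff} then supply \eqref{itm:Fek-sep}--\eqref{itm:Fek-potential} directly. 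The explicit constants fall out of tracking $E_1, E_2$ through the relations $N=\alpha L^2$, $\alpha\sim\log L$, $D(Q,\Lambda)=\Ordo(\log\alpha)$ and $\lVert U^{\mu_{\alpha,\G}}\rVert_{C^{0,\gamma}} = \Ordo(\alpha^{-\gamma/2})$.

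Two smaller points. First, you appeal to ``the maximum principle applied to the subharmonic function $U^{\mu_{\textbf z}} - U^{\mu_{\alpha,\G}}$'', but this difference is not subharmonic; the correct tool, used throughout the appendix, is the domination principle \cite[Theorem~II.3.2]{SaffTotik}. Second, the perturbation argument you sketch (moving points by $\Ordo(L^{-\tau})$ and controlling the resulting change in energy and potential via a regularization $\mu_{\textbf z}\ast\sigma_t$) does appear in the paper---but as a separate step, Proposition~\ref{prop:approx-Fekete-points}, \emph{after} the present lemma is in hand. Keeping the two steps separate avoids the bookkeeping tangle you anticipate.
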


\begin{proof}

We apply Proposition \ref{prop:Ghosh-Gotz-Saff-sep} and 
Theorem~\ref{thm:Ghosh-Gotz-Saff} with $\Lambda = \D(0, \sqrt{\alpha}) \setminus \G$, 
$Q = U^{\mu_{\alpha,\G}}$ (so that $\mu_{Q,\Lambda} = \mu_{\alpha,\G}$) 
and we denote by $\calF_N^\G = (z_1, \dots, z_N) = \textbf{z}$ 
the corresponding (weighted) Fekete points, restricted to $\Lambda$. 
Property~\eqref{itm:Fek-restric} holds by definition.
We have,
$$
\inf_{z^\prime \ne z^{\prime\prime}\in\calF_N^\G}|z^\prime 
- z^{\prime\prime}|\ge \frac12 \exp(-C \alpha^{-\gamma/2}) N^{-1/ \gamma},
$$
which gives Property~\eqref{itm:Fek-sep}, when $L$ is sufficiently large. Moreover,
\[
-\Sigma^*(\mu_{\textbf{z}}) \le -\Sigma(\mu_{\alpha, \G}) + E_1(N,\alpha),
\]
where $E_1(N, \alpha)  = N^{-1} [ C \alpha^{-\gamma/2} 
+ 2 \log 2 \sqrt{\alpha} + \gamma^{-1}\log N \big]$.
In addition, by the proof of Proposition~\ref{prop:struct} 
we have that $D(Q,\Lambda) = \sup_{z\in\Lambda} |U^{ \mu_{\alpha,\G}}| 
= \Ordo(\log \alpha)$. 
Therefore,
\[
U^{\mu_{\textbf{z}}}(\zeta) \le U^{\mu_{Q,\Lambda}}(\zeta) 
+ E_2(N,\alpha) \qquad \forall \zeta \in \C,
\]
where
$
E_2(N,\alpha) \le 2 N^{-1} [C \alpha^{-\gamma/2} 
+ \log 2 \sqrt{\alpha} + (2 + 3 \gamma^{-1}) \log N + C \log \alpha].
$
Properties \eqref{itm:Fek-energy} and \eqref{itm:Fek-potential} 
are established, for $L$ sufficiently large, by examining 
the relations between $L, N$ and $\alpha$.
\end{proof}

The following properties of Fekete points 
(and small perturbations of them) are crucial for the 
proof of the lower bound. Recall that 
$|\Delta(\textbf{z})|^2 = \prod_{i\ne j}|z_i-z_j| = \exp\big(N^2 \Sigma^*(\mu_{\textbf{z}})\big)$
and $S({\bf z})= \pi^{-1} L^2\int_{\C}|Q_{\bf z}(w)|^2 \e^{-L^2|w|^2}\diffA(w).$

\begin{prop}\label{prop:approx-Fekete-points}
Let $\calF_N^\G = \textbf{z} = (z_1, \dots, z_N)$ 
be a set of Fekete points, and $\tau > 3/\gamma + 2$ as 
in the previous lemma. Moreover, let the points 
$\textbf{w} = (w_1, \dots, w_N) \in \D(0, \sqrt{\alpha})^N$ 
satisfy $|w_i - z_i| \le L^{-\tau}$ for all $i \in \{1, \dots, N\}$. 
For $L$ sufficiently large, we have
\begin{enumerate}
\item \label{itm:Vandermonde_low_bound} 
$|\Delta(\textbf{w})|^2 \ge \exp\big(N^2 \Sigma(\mu_{\alpha,\G}) - (C/\gamma) (N/L)^2 \big).$
\item \label{itm:S_upp_bound} $S(\textbf{w}) 
\le \exp\big(2 N B_\alpha(\mu_{\alpha,\G}) + \tfrac{C \alpha}{\gamma} \big).$
\end{enumerate}

\end{prop}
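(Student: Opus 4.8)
The plan is to prove the two estimates separately, deducing both from the properties of the Fekete points $\textbf{z}$ established in Lemma~\ref{lem:discrete-approx-of-min-measure} together with a Lipschitz-type perturbation bound that controls the effect of moving each $z_i$ to the nearby point $w_i$ with $|w_i-z_i|\le L^{-\tau}$ and $\tau>3/\gamma+2$. The separation property~\eqref{itm:Fek-sep}, $\inf_{j\ne k}|z_j-z_k|\ge L^{-3/\gamma}$, is the crucial input: since $\tau>3/\gamma$, the perturbation is much smaller than the interpoint distance, so $|w_j-w_k|\ge |z_j-z_k| - 2L^{-\tau}\ge \tfrac12|z_j-z_k|$ for $L$ large, which keeps all logarithms of differences under control.

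For part~\eqref{itm:Vandermonde_low_bound}, I would write
\[
\log|\Delta(\textbf{w})|^2=\sum_{i\ne j}\log|w_i-w_j|
=\sum_{i\ne j}\log|z_i-z_j|+\sum_{i\ne j}\log\frac{|w_i-w_j|}{|z_i-z_j|}.
\]
The first sum is $N^2\Sigma^*(\mu_{\textbf{z}})$, which by property~\eqref{itm:Fek-energy} is at least $N^2\Sigma(\mu_{\alpha,\G})-\tfrac{2N^2}{\gamma L^2}=N^2\Sigma(\mu_{\alpha,\G})-\tfrac{2}{\gamma}(N/L)^2$. For the second sum, the elementary inequality $\big|\log\frac{|w_i-w_j|}{|z_i-z_j|}\big|\le \frac{2|w_i-w_j - (z_i-z_j)|}{|z_i-z_j|}\le \frac{4L^{-\tau}}{|z_i-z_j|}$ (valid once the perturbation is at most half the distance) reduces the problem to bounding $\sum_{i\ne j}|z_i-z_j|^{-1}$. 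Using the separation $|z_i-z_j|\ge L^{-3/\gamma}$ we get $\sum_{i\ne j}|z_i-z_j|^{-1}\le N^2 L^{3/\gamma}$, hence the second sum is $\Ordo(N^2 L^{3/\gamma-\tau})=\ordo((N/L)^2)$ since $\tau>3/\gamma+2$. (A sharper bound exploiting that the $z_j$ lie in $\D(0,\sqrt\alpha)$ would replace the crude $N^2$ by $N\log N$ times a power of $\alpha$, but the crude bound already suffices with room to spare.) Combining, $\log|\Delta(\textbf{w})|^2\ge N^2\Sigma(\mu_{\alpha,\G})-\tfrac{C}{\gamma}(N/L)^2$ as claimed.

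For part~\eqref{itm:S_upp_bound}, recall $S(\textbf{w})=\tfrac{L^2}{\pi}\int_\C|Q_{\textbf{w}}(\zeta)|^2\e^{-L^2|\zeta|^2}\diffA(\zeta)$ with $Q_{\textbf{w}}(\zeta)=\prod_j(\zeta-w_j)$. Writing $|Q_{\textbf{w}}(\zeta)|^2=\exp(2N U^{\mu_{\textbf{w}}}(\zeta))$ and arguing as in \cite{GhoshNishry1} (comparing $U^{\mu_{\textbf{w}}}$ to $U^{\mu_{\textbf{z}}}$, then invoking property~\eqref{itm:Fek-potential} which gives $U^{\mu_{\textbf{z}}}\le U^{\mu_{\alpha,\G}}+\tfrac{12}{\gamma L^2}$ everywhere), one gets a pointwise bound $U^{\mu_{\textbf{w}}}(\zeta)\le U^{\mu_{\alpha,\G}}(\zeta)+\Ordo(1/(\gamma L^2))+\Ordo(L^{-\tau}\cdot\text{(something}\lesssim L^{3/\gamma}))$, the last error again negligible. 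Then $2NU^{\mu_{\alpha,\G}}(\zeta)-L^2|\zeta|^2=2N(U^{\mu_{\alpha,\G}}(\zeta)-\tfrac{|\zeta|^2}{2\alpha})\le 2N B_\alpha(\mu_{\alpha,\G})$ by the definition of $B_\alpha$, so the exponent is bounded by $2NB_\alpha(\mu_{\alpha,\G})+\Ordo(N/(\gamma L^2))=2NB_\alpha(\mu_{\alpha,\G})+\Ordo(\alpha/\gamma)$. Finally the Gaussian-type factor $\tfrac{L^2}{\pi}\int_\C\e^{-L^2|\zeta|^2+(\text{bounded-excess})}\diffA$ is absorbed into the $\Ordo(\alpha/\gamma)$ error after using that $U^{\mu_{\alpha,\G}}(\zeta)-\tfrac{|\zeta|^2}{2\alpha}\to-\infty$ fast enough (e.g. like $-\tfrac{|\zeta|^2}{2\alpha}+\log|\zeta|$) to make the integral converge with a controlled value; this is precisely the truncation of the integral to $\D(0,C\sqrt\alpha)$ carried out in \cite{GhoshNishry1}.

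The main obstacle is the second estimate: turning the pointwise potential bound into a bound on the full integral $S(\textbf{w})$ requires care near infinity, where one must use the growth condition on $U^{\mu_{\alpha,\G}}$ (a consequence of $\mu_{\alpha,\G}$ being a probability measure, cf.\ Proposition~\ref{prop:struct}) to show the integral does not lose more than a factor $\e^{\Ordo(\alpha/\gamma)}$ relative to $\e^{2NB_\alpha}$. The perturbation-control steps, by contrast, are routine once the separation property is used, since $\tau$ was chosen with a margin of $2$ over $3/\gamma$ precisely to kill the error terms $N^2 L^{3/\gamma-\tau}$ and the like.
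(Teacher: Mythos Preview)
Your argument for part~\eqref{itm:Vandermonde_low_bound} is correct and is essentially what the paper does: the reverse triangle inequality together with the separation property~\eqref{itm:Fek-sep} and the energy bound~\eqref{itm:Fek-energy} yields the claim.

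Your argument for part~\eqref{itm:S_upp_bound} has a real gap in the pointwise comparison of potentials. You write that ``comparing $U^{\mu_{\textbf{w}}}$ to $U^{\mu_{\textbf{z}}}$'' produces an error ``$\Ordo(L^{-\tau}\cdot\text{(something}\lesssim L^{3/\gamma}))$'', apparently invoking the separation bound. But the separation $|z_j-z_k|\ge L^{-3/\gamma}$ only controls distances \emph{between} Fekete points; it says nothing about the distance from a generic integration point $\zeta\in\C$ to the nearest $z_j$. When $\zeta$ happens to sit very close to some $z_j$ (but not to $w_j$), the term $\tfrac1N\log|\zeta-z_j|$ sends $U^{\mu_{\textbf{z}}}(\zeta)$ to $-\infty$ while $U^{\mu_{\textbf{w}}}(\zeta)$ remains moderate, so $U^{\mu_{\textbf{w}}}(\zeta)-U^{\mu_{\textbf{z}}}(\zeta)$ is unbounded from above. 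Property~\eqref{itm:Fek-potential} is vacuous at such $\zeta$ and does not transfer to $\mu_{\textbf{w}}$.

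The paper handles this by a regularization that you are missing: with $\sigma_t$ the normalized arc-length measure on $\T(0,t)$ and $t=L^{-2/\gamma}$, one has $U^{\mu*\sigma_t}(\zeta)=\tfrac1N\sum_j\log(|\zeta-z_j|\vee t)$, and the truncated logarithm is now globally Lipschitz in each node with constant $1/t$. This yields the chain
\[
U^{\mu_{\textbf{w}}}(\zeta)\;\le\; U^{\mu_{\textbf{w}}*\sigma_t}(\zeta)\;\le\; U^{\mu_{\textbf{z}}*\sigma_t}(\zeta)+\tfrac{2L^{-\tau}}{t}\;\le\; U^{\mu_{\alpha,\G}}(\zeta)+\tfrac{C}{\gamma L^2}
\]
valid for \emph{every} $\zeta\in\C$: the first inequality is subharmonicity of $U^{\mu_{\textbf{w}}}$, the second is the Lipschitz bound (and $\tau>3/\gamma+2>2/\gamma+2$ forces $L^{-\tau}/t\le L^{-2}$), and the third combines property~\eqref{itm:Fek-potential} averaged over $\T(\zeta,t)$ with the H\"older continuity of $U^{\mu_{\alpha,\G}}$. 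Once this uniform pointwise bound is in hand, your treatment of the Gaussian integral (splitting at $|\zeta|=\sqrt{\alpha}$ and absorbing the polynomial prefactor) is exactly right and matches the paper.
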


\begin{proof}
The reverse triangle inequality
\[
|w_j - w_k| \ge |z_j - z_k| - 2 \max_i |w_i - z_i| \ge |z_j - z_k| - 2 L^{-\tau},
\]
shows that Properties~\eqref{itm:Fek-sep} and 
\eqref{itm:Fek-energy} in Lemma~\ref{lem:discrete-approx-of-min-measure} 
hold also for $\textbf{w}$, perhaps with different (absolute) 
constants on the respective right hand sides. In particular, this proves 
Property~\ref{itm:Vandermonde_low_bound}.

We now show that Property~\eqref{itm:Fek-potential} of 
Lemma~\ref{lem:discrete-approx-of-min-measure} also holds 
for $\textbf{w}$ (with a modified constant). Let 
$\gamma \in (0,1]$ be the H{\"o}lder exponent of $U^{\mu_{\alpha,\G}}$
and let $\sigma_t$ be the normalized 
Lebesgue measure on the circle $\T(0, t)$. If we take $t = L^{-2/\gamma}$, then
\[
U^{\mu_{\textbf{z}} * \sigma_t}(\zeta) 
= \int_\T U^{\mu_{\textbf{z}}}(\zeta + t \e^{i \theta}) \, 
\frac{\diff \theta}{2 \pi} \le U^{\mu_{\alpha, \G}}(\zeta) 
+ \frac{24}{\gamma L^2}, \quad \zeta \in \C,
\]
for $L$ sufficiently large, where we used 
Property~\eqref{itm:Fek-potential} of Lemma~\ref{lem:discrete-approx-of-min-measure}, 
and the choice of $t$.
Put $a \vee b \coloneqq \max\{a,b\}$. 
Observe that for $L$ sufficiently large,
\[
\big| \log(|\zeta - w_j|\vee t) - \log(|\zeta - z_j|\vee t) \big| 
\le \frac{2 L^{-\tau}}{t} \le \frac1{\gamma L^2}, \quad \zeta \in \C.
\]
Since
\[
U^{\mu_{\textbf{w}}} (\zeta) \le U^{\mu_{\textbf{w} * \sigma_t}}(\zeta) 
= \frac1N \sum_{k=1}^N \log(|\zeta - w_j|\vee t) \le 
U^{\mu_{\textbf{z} * \sigma_t}}(\zeta) + \frac{1}{\gamma L^2},
\]
we obtain the required bound for $U^{\mu_{\textbf{w}}}$.

We return to the proof of Property \eqref{itm:S_upp_bound}. Write
\[
S(\textbf{w}) = \int_\C \exp\big( 2N \, U^{\mu_{\textbf{w}}}(\zeta) 
- L^2 |\zeta|^2 \big) \frac{L^2}{\pi}\, \diffA(\zeta) \eqqcolon I_1 + I_2
\]
where $I_1$ in the integral over $D(0, \sqrt{\alpha})$ and 
$I_2$ over $\C \setminus D(0, \sqrt{\alpha})$. For $I_1$ we have the bound
$$
\int_{\D(0,\sqrt{\alpha})} \exp\Big( 2N ( U^{\mu_{\alpha, \G}}(\zeta) 
- \tfrac{|\zeta|^2}{2\alpha} ) + \tfrac{C N}{\gamma L^2} \Big) \frac{L^2}{\pi}\, 
\diffA(\zeta)  \le \alpha L^2 \cdot \exp\big( 2 N B_\alpha(\mu_{\alpha,\G}) + \tfrac{C \alpha}{\gamma}  \big).
$$
We bound $I_2$ by
\[
\int_{\{|\zeta| \ge \sqrt{\alpha}\}} \exp\big( 2N \, 
\log |\zeta| - L^2 |\zeta|^2 \big) \frac{L^2}{\pi}\, \diffA(\zeta)
= \calE \big[ |W|^{2N} \textbf{1}_{\{|W|\ge\sqrt{\alpha}\}} \big],
\]
where $W \sim N_\C(0, L^{-1})$. Now an application of the 
Cauchy-Schwarz inequality shows that $I_2 = \ordo(I_1)$ if $L$ is large.
\end{proof}

\subsection{Lower bound for the hole probability}
Given $\delta > 0$ we write
\[
\G_\delta^+ = \{ w\in \C : d(w,\G) \le \delta \},
\]
for the $\delta$-neighbourhood of $\G$.
Combining Propositions \ref{prop:T_N_bound} and \ref{prop:lower_bnd_away_from_zeros} 
we see that it is enough to construct an event $\calH_{\G, L}^N$ on which the 
polynomial $P_{N,L}$ has no zeros inside $\G_\delta^+$, with $\delta = L^{-\tau}$, 
and $\tau = \frac6\gamma$. Notice we may choose $B$ sufficiently large so that 
in Proposition~\ref{prop:lower_bnd_away_from_zeros} the 
exceptional event is negligible.
Then we choose $A$ in Proposition~\ref{prop:T_N_bound} large so that $P_{N,L}$ 
dominates the tail $T_N$ in $\D(0, B)$ which contains $\G_\delta^+$.

The construction of the event $\calH_{\G, L}^N$ is based on a collection of 
small perturbations of weighted Fekete points $\calF_N^\G = (z_1, \dots, z_N)$ 
from the previous section. More precisely, by the uniform cone condition, 
there is a constant $c_\G >0$ such that
\[
|\calN(z_j, \delta)| \coloneqq |\D(z_j, 2 \delta) \cap \big(\G_\delta^+\big)^c 
\cap \D(0, \sqrt{\alpha}) | \ge c_\G \delta^2, \qquad \forall j \in \{1, \dots, N\}.
\]
Note that such a lower bound is immediate if $z_j$ is sufficiently 
far from $\partial \G$. We define the event $\calH_{\G, L}^N$ by
\[
\calH_{\G, L}^N = \big\{ \textbf{w} = (w_1, \dots, w_N) \in \C^N \, : 
\, w_j \in \calN(z_j, \delta) \quad \forall j \in \{1, \dots, N\} \big\},
\]
and observe that $|\calH_{\G, L}^N| \ge (c_\G \delta^2)^N 
\ge \exp( - 13 \gamma^{-1} L^2 \log^2 L )$, for $L$ sufficiently large.

The expression \eqref{eq:joint-density} for the joint density of 
the zeros, and \eqref{eq:P-N,L-in-prod} give us
\[
\calP \big( \calH_{\G, L}^N \big) = \int_{\calH_{\G, L}^N} 
f_{N,L}({\bf w}) \, \diffA(\textbf{w})  =\frac{1}{\calZ_{N,L}}  
\int_{\calH_{\G, L}^N} |\Delta(\textbf{w})|^2 \,
S(\textbf{w})^{-(N+1)} \, \diffA(\textbf{w}).
\]
The normalizing constant $\calZ_{N,L}$ is 
given by (see e.g. \cite[Appendix A, Lemma A.1]{GhoshNishry1})
\begin{align*}
\calZ_{N,L} & = \frac{\pi^N L^{N(N+1)}}{N!} \prod_{k=1}^N \frac1{k!}
= \exp\big( -\tfrac12 N^2 \log \alpha + \tfrac34 N^2 + \Ordo(L^2 \log^2 L ) \big)\\
& = \exp\big( -N^2 I_\alpha(\mu_{\alpha}) + \Ordo(L^2 \log^2 L) \big),
\end{align*}
where we recall $\mu_{\alpha} = \chi_{\D(0,\sqrt{\alpha})}\, 
\diffA$ is the global minimizer of $I_\alpha$. 

By appealing to Proposition \ref{prop:approx-Fekete-points}, 
we get the bound
\[
\calP \big( \calH_{\G, L}^N \big) \ge |\calH_{\G, L}^N|
\exp\Big( N^2 \big( I_\alpha(\mu_{\alpha}) + \Sigma(\mu_{\alpha,\G}) 
- 2 B_\alpha(\mu_{\alpha,\G}) \big) - E(N,L) \Big),
\]
where
\begin{align*}
E(N,L) & = (C/\gamma) (N/L)^2 + 2 N B_\alpha(\mu_{\alpha,\G}) 
+ (C/\gamma)(N+1)\alpha + \Ordo(L^2 \log^2 L)\\
& = (C/\gamma) L^2 \alpha^2 + N(\log \alpha + 1) 
+(C/\gamma)(N+1)\alpha + \Ordo(L^2 \log^2 L)\\
& = (1/\gamma) \Ordo(L^2 \log^2 L).
\end{align*}
We have shown that
\[
\calP \big( \calH_{\G, L}^N \big) \ge 
\exp\Big( -L^4 \alpha^2 \big( I_\alpha(\mu_{\alpha, \G}) 
- I_\alpha(\mu_{\alpha})\big) + (1/\gamma) \cdot \Ordo(L^2 \log^2 L) \Big),
\]
and therefore the lower bound for the hole probability 
in Theorem~\ref{thm:main-GEF2} follows from Lemma~\ref{lem:cost_of_hole_event}.

\subsection{Large deviation upper bound for linear statistics}
Here we give a brief account of the proof of the large 
deviation upper bound. This proof is largely based on 
\cite[\S~7.3]{GhoshNishry1},
with some important differences, 
concerning the approximation of $\G$, and some 
rather minor technical simplifications.

\subsubsection{Preliminary large deviation bound}
As before, we let $N = \alpha L^2 = \left\lceil L^2 \log L\right\rceil$. 
We use $\textbf{z} = (z_1, \dots, z_N)$ to denote a possible zero 
configuration (that is, the zeros of $P_{N,L}$). 
We also recall the expression for the joint density of the zeros
\[
f_{N,L}({\bf z}) = \frac{1}{\calZ_{N,L}}  |\Delta(\textbf{z})|^2 \,
S(\textbf{z})^{-(N+1)},
\]
where $\calZ_{N,L} = \exp\big( -N^2 I_\alpha(\mu_{\alpha}) + \Ordo(L^2 \log^2 L) \big)$.
Outside a probabilistically negligible event $E_1^N$ (cf. \cite[Claim 4.6]{GhoshNishry1})
we have
\[
S(\textbf{z}) \le \exp(C L^6).
\]
We define
\[
S^\dagger(\textbf{z}) = \sup_{w \in \C} \big\{|Q_{\textbf{z}}(w)|^2 
\e^{-L^2 |w|^2} \big\} = \exp(2 N B_\alpha(\mu_{\textbf{z}}) );
\]
note that in \cite{GhoshNishry1} the above expression is 
denoted by $A(\textbf{z})$. The Bernstein-Markov property 
$S^\dagger(\textbf{z}) \le S(\textbf{z})$ holds 
(e.g. see \cite[Appendix A, Lemma A.4]{GhoshNishry1}), and 
by \cite[Claim 4.5]{GhoshNishry1}, for $b > 1$
\[
\int_{\C^N} S^\dagger(\textbf{z})^{-b} \, \diffA(\textbf{z}) 
\le \exp\Big(b L^2 + N \log \big(\tfrac{Cb}{b-1}\big) \Big).
\]
Recall the definition of the discrete energy functional
\[
I_\alpha^*(\mu_{\textbf{z}}) = -\Sigma^*(\mu_{\textbf{z}}) + 2 B_\alpha(\mu_{\textbf{z}}),
\]
and observe that
\[
|\Delta(\textbf{z})|^2 \,
S(\textbf{z})^{-N} \le \exp\Big( N^2 \big( \Sigma^*(\mu_{\textbf{z}}) 
- 2 B_\alpha(\mu_{\textbf{z}}) \big) \Big)
= \exp\big( -N^2 I_\alpha^*(\mu_{\textbf{z}}) \big).
\]
Let $\calA^N \subset \C^N$ be a collection of possible 
positions of the zeros of $P_{N,L}$, we now obtain a large 
deviation upper bound for $\calP (\calA^N)$ as follows
\begin{align*}
\calP\big(\calA^N\big) & = \int_{\calA^N} f_{N,L}(\textbf{z}) \, \diffA(\textbf{z})\\
& \le J_1 \exp\Big( -N^2 \inf_{\textbf{z} \in \calA^N} 
\big( I_\alpha^*(\mu_{\textbf{z}}) - I_\alpha(\mu_{\alpha}) \big) 
+ \Ordo(L^2 \log^2 L) \Big) 
 + \calP\big(E_1^N\big),
\end{align*}
where
\begin{align*}
J_1 & = \int_{\C^N \setminus E_1^N} S(\textbf{z})^{-1} \, 
\diffA(\textbf{z}) \le \exp( C L^6 / N^2) \int_{\C^N} 
S^\dagger(\textbf{z})^{-1 - N^{-2}} \, \diffA(\textbf{z})\\
& \le \exp(CL^6/N^2 +(1+ N^{-2})L^2 + CN\log N) = \exp(\Ordo(L^2 \log^2 L)).
\end{align*}
We now replace $I_\alpha^*$ by $I_\alpha$. 
Put $\mu_{\textbf{z}}^t = \mu_{\textbf{z}} * \sigma_t$, 
where $\sigma_t$ is the normalized Lebesgue measure on 
the circle $\T(0,t)$. Using 
\cite[Claims 4.7, 4.8]{GhoshNishry1}
we get for $t$ small
\[
I_\alpha^*(\mu_{\textbf{z}}) \ge I_\alpha(\mu_{\textbf{z}}^t) 
- C \big( \tfrac1N \log \tfrac1t + \tfrac{t}{\sqrt{\alpha}} + \tfrac1{L^2} \big).
\]
We put $t = L^{-\tau^\prime}$, with $\tau^\prime \ge 2$ 
fixed, so that the error term above is $\Ordo(L^{-2})$.

Putting all of this together we have shown
\begin{equation}\label{eq:large_dev_upp_bnd}
\calP\big(\calA^N\big) \le \exp\Big( -N^2 \inf_{\textbf{z} \in \calA^N} 
\big( I_\alpha(\mu_{\textbf{z}}^t) - I_\alpha(\mu_{\alpha}) \big) + 
\Ordo(L^2 \log^2 L) \Big) + \calP\big(E_1^N\big),
\end{equation}
where $\calP\big(E_1^N\big)$ is negligible with respect to the hole probability.

\subsubsection{Upper bound for linear statistics}
Fix a parameter $\tau \ge 1$, let $\delta = L^{-\tau}$ and define
\[
\G_\delta^- = \{ w \in \G \, : \, \diff(w, \G^c) \ge \delta \},
\]
to be the $\delta$-interior of $\G$.
Using Propositions \ref{prop:T_N_bound} and \ref{prop:lower_bnd_away_from_zeros} 
(with $B$ large), together with Rouch\'e's theorem, we find that outside a 
probabilistically negligible exceptional event $n_{P_N}(\G_\delta^-) \le n_L(\G)$. Moreover,
let $\varphi$ be a continuous function supported in $\D(0, B - 1)$, 
with modulus of continuity given by $\omega(\varphi; \cdot)$, 
then by \cite[Corollary~3.4]{GhoshNishry1}
\[
|n_{L}(\varphi) - n_{P_N}(\varphi)| \le L^4 \omega(\varphi; \delta),
\]
outside an exceptional event.

Our goal is to bound from above the probability of the event
\[
H_{L, \G, \epsilon} \coloneqq 
\Big\{ F_L \ne 0 \mbox{ in } \G, \quad 
\Big| n_L(\varphi) - L^2 \begingroup\textstyle\int\endgroup 
\varphi \, \diff \mu_\G^\C \Big| > \epsilon L^2 \Big\}.
\]
\begin{rem}
To simplify matters we will take $\epsilon > 0$ fixed. 
Checking the details of the proof below, one can see 
that $L^{-1} \log L = \ordo(\epsilon)$ is the actual requirement. 
Similarly, as in \cite[\S~7]{GhoshNishry1} it is possible 
to take $\varphi$ depending on $L$, but we will not pursue 
this here (the details are similar).
\end{rem}

Since $\varphi$ is compactly supported, for $\alpha$ 
sufficiently large we have by \eqref{eq:limit_measure_def} that
\[
L^2 \int
 \varphi \, \diff \mu_\G^\C = N \int \varphi \, \diff \mu_{\alpha, \G}.
\]
In addition, $n_{P_N}(\varphi) = N \int \varphi \, \diff \mu_{\textbf{z}}$, thus,
\[
| n_{P_N}(\varphi) - N \int \varphi \, \diff \mu_{\textbf{z}}^t | \le N \omega(\varphi, t).
\]
Put $\G^\prime = \G_{\delta+t}^-$, and recall $\delta = L^{-\tau}, t = L^{-\tau^\prime}$. 
Using \cite[Lemma 3.14, Claim 5.9]{GhoshNishry1}, we get
\begin{align*}
\Big| \int \varphi \, \diff \mu_{\alpha, \G} - \int \varphi \, \diff \mu_{\alpha, \G^\prime} \Big|
& \le \frac{1}{\sqrt{2\pi}} \sqrt{\dir(\varphi)} 
\sqrt{-\Sigma(\mu_{\alpha, \G} - \mu_{\alpha, \G^\prime})}\\
& \le \frac{1}{\sqrt{2\pi}} \sqrt{\dir(\varphi)} 
\sqrt{I(\mu_{\alpha, \G}) - I(\mu_{\alpha, \G^\prime})}.
\end{align*}
If $\tau, \tau^\prime$ are chosen sufficiently large 
(depending only on $\G$) then by Proposition \ref{prop:quant-stab} 
we have $I(\mu_{\alpha, \G}) - I(\mu_{\alpha, \G^\prime}) = \Ordo(L^{-4})$.
Collecting all these estimates, we find that on the event 
$H_{L, \G, \epsilon}$ (and discarding an exceptional event of 
negligible probability), we have, for $L$ sufficiently large,
\[
\mu_{\textbf{z}}^t(\G^\prime) = 0 \mbox{ and } \Big| \int \varphi \, 
\diff \mu_{\textbf{z}}^t - \int \varphi \, \diff \mu_{\alpha, \G^\prime} \Big| 
> \frac{\epsilon}{\alpha} - \omega(\varphi, t) - \frac{L^4}{N} \omega(\varphi, \delta) 
- C L^{-2} > \frac{\epsilon}{2 \alpha}.
\]
Using \cite[Lemma 3.14, Claim 5.9]{GhoshNishry1} 
and Proposition \ref{prop:quant-stab} once again, we get
\[
I(\mu_{\textbf{z}}^t) \ge I(\mu_{\alpha, \G^\prime}) 
+ \frac{c \epsilon^2}{\dir(\varphi) \alpha^2} \ge I(\mu_{\alpha, \G}) 
+ \frac{c^\prime \epsilon^2}{\dir(\varphi) \alpha^2}.
\]
Finally, by \eqref{eq:large_dev_upp_bnd} we conclude that
\[
\calP\big( H_{L, \G, \epsilon} \big) \le 
\exp\Big( -N^2 \big( I(\mu_{\alpha, \G}) - I(\mu_{\alpha}) \big) - 
\frac{c \epsilon^2}{\dir(\varphi)} L^4 + \Ordo(L^2 \log^2 L) \Big),
\]
which combined with Lemma~\ref{lem:cost_of_hole_event} 
proves the large deviation upper bound for linear statistics.

\subsection{Convergence of the conditional empirical measures}
The proofs here are essentially the same as the ones in 
\cite[\S~7.4]{GhoshNishry1}. We provide few details below, 
and refer the interested reader to that paper for the rest.

From the above results it is possible to deduce that
\[
\Big|\calE \big[ n_{F_L}(\varphi) | H_{L,\G} \big] 
- L^2 \int \varphi \, \diff \mu_\G^\C \Big| \le C_\varphi L \log^2 L,
\]
where $C_\varphi$ depends on the test function $\varphi$.
In order to prove the vague convergence in distribution 
of the conditional empirical measures we have to show that the random variables
$
L^{-2} n_{F_L}(\psi) |_{H_{L,\G}}
$
converge in distribution to $\int \psi \, \diff \mu_\G^\C$ for 
every continuous test function with compact support $\psi$ 
(see e.g. \cite[Chapter 4]{Kallenberg}). 
This is achieved using 
an approximation of $\psi$ in $\sup$-norm by a smooth test function 
$\varphi$ which is compactly supported in a slightly larger set, and 
using the large deviation upper bound. 

This completes our outline of the proof of Theorem~\ref{thm:main-GEF2}.\qed

\section{The inverse problem, disk-like domains and examples}
\label{s:examples}

\subsection{A sufficient condition for extremality}
In this section, we discuss some cases in which one can say
more about the measure $\mu_{\alpha,\G}$ than 
provided by the general classification encountered in
Theorem~\ref{thm:s-qdom}. 
In order to show that a measure is indeed an 
extremal measure for the minimization of $I_\alpha$
over the class $\calM_\G$, we will verify 
that the variational inequalities
for $I_\alpha$ found
in \cite{GhoshNishry1} are met. 
We summarize this variational principle in a proposition.

\begin{prop}\label{prop:var-ineq}
Let $\mathcal{M}$ be a non-empty closed convex set, 
whose elements are compactly supported 
probability measures on $\C$ with 
finite logarithmic energy.
The minimization problem
\[
\text{minimize }\;\;I_{\alpha}(\mu)\qquad
\text{subject to }\;\;\mu\in\mathcal{M}
\]
admits a unique solution $\mu_0=\mu_0(\calM)$. Moreover, $\mu_0$ is 
characterized by the property that 
for all $\nu\in \mathcal{M}$,
we have
\[
B_\alpha(\nu)-\int U^\nu\,\diff\mu_0\ge B_\alpha(\mu_0)
-\int U^{\mu_0}\,\diff\mu_0.
\]
with equality if and only if $\nu=\mu_0$.
\end{prop}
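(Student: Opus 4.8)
The statement is a standard convex-analytic characterization of the minimizer, so the plan is to (i) establish existence and uniqueness of $\mu_0$ from lower semicontinuity and strict convexity of $I_\alpha$ on $\calM$, and (ii) derive the variational inequality by a first-order perturbation argument along line segments $\mu_t = (1-t)\mu_0 + t\nu$, exploiting that $B_\alpha$ is convex (a supremum of affine functionals in $\mu$) while $-\Sigma$ is strictly convex.

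\emph{Existence and uniqueness.} First I would record that $\mu \mapsto -\Sigma(\mu)$ is strictly convex on the set of compactly supported probability measures of finite logarithmic energy (this is classical, see \cite[Ch.~I]{SaffTotik}), and that $\mu \mapsto B_\alpha(\mu) = \sup_{z}(U^\mu(z) - \tfrac{|z|^2}{2\alpha})$ is convex as a supremum of the affine maps $\mu \mapsto U^\mu(z) - \tfrac{|z|^2}{2\alpha}$. Hence $I_\alpha$ is strictly convex on $\calM$. For existence, I would take a minimizing sequence $\mu_n \in \calM$; since $\calM$ is closed and its elements are compactly supported (so tightness is available, possibly after noting the energy bound forces no escape of mass to infinity) one extracts a weak-$*$ limit $\mu_0 \in \calM$, and lower semicontinuity of $-\Sigma$ (via the principle of descent) together with lower semicontinuity of $B_\alpha$ (a supremum of weak-$*$ continuous-after-truncation functionals, or directly since $U^\mu(z)$ is u.s.c.\ in the relevant sense) gives $I_\alpha(\mu_0) \le \liminf I_\alpha(\mu_n)$, so $\mu_0$ minimizes. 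Uniqueness is immediate from strict convexity.

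\emph{The variational inequality.} Fix $\nu \in \calM$ and set $\mu_t = (1-t)\mu_0 + t\nu \in \calM$ for $t \in [0,1]$ by convexity. Then $g(t) \coloneqq I_\alpha(\mu_t) \ge g(0)$, so $g'(0^+) \ge 0$ whenever this one-sided derivative exists. I would compute it term by term. For the energy, bilinearity gives $-\Sigma(\mu_t) = -\Sigma(\mu_0) + 2t\big(\Sigma(\mu_0) - \Sigma(\mu_0,\nu)\big) + O(t^2)$ where $\Sigma(\mu_0,\nu) = \int U^{\mu_0}\,\diff\nu$, using the notation from the excerpt; so its derivative at $0^+$ is $2\big(\Sigma(\mu_0) - \Sigma(\mu_0,\nu)\big) = 2\big(\int U^{\mu_0}\,\diff\nu - \int U^{\mu_0}\,\diff\mu_0\big)$ after using symmetry $\Sigma(\mu_0,\nu)=\Sigma(\nu,\mu_0)$. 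For the $B_\alpha$ term, convexity of $B_\alpha$ along the segment gives $B_\alpha(\mu_t) \le (1-t)B_\alpha(\mu_0) + tB_\alpha(\nu)$, hence $\limsup_{t\to 0^+} \tfrac{B_\alpha(\mu_t)-B_\alpha(\mu_0)}{t} \le B_\alpha(\nu) - B_\alpha(\mu_0)$. Combining,
\[
0 \le g'(0^+) \le 2\Big(\int U^{\mu_0}\diff\nu - \int U^{\mu_0}\diff\mu_0\Big) + 2\big(B_\alpha(\nu) - B_\alpha(\mu_0)\big),
\]
wait --- more carefully, since $I_\alpha = -\Sigma + 2B_\alpha$, dividing by $2$ yields exactly
\[
B_\alpha(\nu) - \int U^\nu\,\diff\mu_0 \ge B_\alpha(\mu_0) - \int U^{\mu_0}\,\diff\mu_0,
\]
after rewriting $\int U^{\mu_0}\,\diff\nu = \int U^\nu\,\diff\mu_0$ by symmetry of the mutual energy. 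This is the claimed inequality. For the equality case: if equality holds for some $\nu \ne \mu_0$, then $g'(0^+) = 0$, and combined with convexity of $g$ and strict convexity of $-\Sigma$ one deduces $\mu_0 = \nu$, a contradiction; conversely $\nu=\mu_0$ trivially gives equality. Conversely, to see this inequality \emph{characterizes} $\mu_0$ among minimizers, I would note that if $\mu_1 \in \calM$ satisfies the stated inequality for all $\nu$, testing against $\nu = \mu_0$ and adding the symmetric inequality for $\mu_0$ tested against $\mu_1$ gives a relation forcing $I_\alpha(\mu_1) \le I_\alpha(\mu_0)$, hence $\mu_1 = \mu_0$.

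\emph{Main obstacle.} The genuinely delicate point is the justification of differentiating $-\Sigma$ along the segment and handling the $B_\alpha$ term's one-sided derivative without assuming the supremum defining $B_\alpha$ is attained at a single point — but the one-sided convexity estimate $B_\alpha(\mu_t) \le (1-t)B_\alpha(\mu_0)+tB_\alpha(\nu)$ sidesteps the need for an envelope theorem entirely, which is why I would route the argument through convexity rather than through explicit differentiation of the sup. A secondary technical care point is ensuring the mutual energy $\int U^{\mu_0}\,\diff\nu$ is finite and that Fubini applies, which follows from both measures having finite logarithmic energy and compact support; I would invoke \cite[Ch.~I]{SaffTotik} for this. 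I expect this is essentially the argument already recorded in \cite{GhoshNishry1}, so the write-up can be brief, citing that reference for the routine potential-theoretic estimates.
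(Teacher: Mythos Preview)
Your proposal is correct and follows essentially the same convexity-based route as the paper, which cites \cite{GhoshNishry1} for existence and the forward direction and only spells out the converse. One small difference worth noting: for the $\Leftarrow$ direction the paper does not test symmetrically and add, but instead writes directly
\[
I_\alpha(\nu)=2\Big(B_\alpha(\nu)-\int U^{\mu_0}\,\diff\nu\Big)+2\int U^{\mu_0}\,\diff\nu-\Sigma(\nu)
\ge I_\alpha(\mu_0)-\Sigma(\mu_0-\nu)>I_\alpha(\mu_0)
\]
for $\nu\ne\mu_0$, using positive-definiteness of $-\Sigma$ on zero-mass signed measures; this is slightly cleaner than your symmetric-testing sketch since it does not presuppose the forward direction, and it yields $I_\alpha(\mu_1)\le I_\alpha(\nu)$ for all $\nu$ in one line. (Also, in your expansion of $-\Sigma(\mu_t)$ the identification $\Sigma(\mu_0)-\Sigma(\mu_0,\nu)=\int U^{\mu_0}\diff\nu-\int U^{\mu_0}\diff\mu_0$ has the sign reversed, though the final inequality you arrive at is correct.)
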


The proof of existence of minimizers and of the $\Rightarrow$ direction 
is supplied in \cite{GhoshNishry1}.

\begin{proof}[Proof of $\Leftarrow$]
Assume that $\mu_0$ meets the variational inequality
for all $\nu\in\calM$. We may assume that $\calM$ contains 
other measures than $\mu_0$, or the result follows directly. 
Then we find that for any admissible $\nu\ne \mu_0$,
we have
\begin{multline*}
I_\alpha(\nu)=2B_\alpha(\nu)-\Sigma(\nu)=2B_{\alpha}(\nu)-2\int U^{\mu_0}\,\diff\nu+
2\int U^{\mu_0}\,\diff\nu-\Sigma(\nu)
\\
\ge 2B_{\alpha}(\mu_0)-2\Sigma(\mu_0) +2\int U^{\mu_0}\,\diff\nu-\Sigma(\nu) 
=I_\alpha(\mu_0)-\Sigma(\mu_0-\nu)>I_\alpha(\mu_0)
\end{multline*}
where we use the positivity of $-\Sigma(\eta)=\dir(U^\eta)$ 
for a signed measure $\eta$ with total mass zero,
so $\mu_0$ is the minimum. 
\end{proof}

Using these variational inequalities for the 
extremal problem, we can find sufficient conditions
for extremality on the hole event, in a form which is convenient to
approach the inverse problem (Problem~\ref{prob:inverse}).

\begin{lem}\label{lem:verif}
Let $\nu$ be a measure supported in $\G$, 
and suppose that $\Omega_\nu$ is a subharmonic quadrature 
domain with respect to $\nu$, containing $\G$. Let $\mu$ be the measure
$$ 
\diff\mu=\frac{1}{\alpha}\,\diff\mathrm{Bal}\big(\nu,\G^c\big) + 
\frac{1}{\pi\alpha}\chi_{\D(0,\sqrt{\alpha})\setminus\Omega_\nu}\diffA.
$$
Assume moreover that $B_\alpha(\mu)=c_\alpha$ and that 
$$
\mathrm{supp}(\nu)\subset\calI=
\{z\in \G:U^\mu(z)=\tfrac{1}{\alpha}|z|^2+c_\alpha\}.
$$
Then $\mu$ is the minimizer of $I_\alpha$ over 
$\calM_\G$, i.e.\ $\mu=\mu_{\alpha,\G}$.
\end{lem}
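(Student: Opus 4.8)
The plan is to verify the variational inequality of Proposition~\ref{prop:var-ineq} with $\calM=\calM_\G$. First I would record the elementary structural facts: since $\nu$ is supported in $\G$ and $\Omega_\nu\supset\G$, the balayage $\mathrm{Bal}(\nu,\G^c)$ is a positive measure on $\partial\G$ of the same total mass as $\nu$, so $\mu$ is a positive measure; and from $|\D(0,\sqrt\alpha)|=\pi\alpha$ together with the fact that subharmonic quadrature domains satisfy $|\Omega_\nu|=\pi\nu(\C)$ (take $u\equiv\pm1$ in the defining inequality), one checks that $\mu(\C)=\tfrac1\alpha\nu(\C)+\tfrac1{\pi\alpha}(\pi\alpha-\pi\nu(\C))=1$, so $\mu\in\calM_\G$ (it gives no mass to $\G$ because its singular part lives on $\partial\G$ and $\Omega_\nu\supset\G$ kills the continuous part there).

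Next I would compute the potential $U^\mu$. Away from the hole and the coincidence set we expect $U^\mu(z)-\tfrac1{2\alpha}|z|^2$ to equal the constant $c_\alpha$, as in Proposition~\ref{prop:struct}; the hypothesis $B_\alpha(\mu)=c_\alpha$ is exactly the statement that $U^\mu(z)-\tfrac1{2\alpha}|z|^2\le c_\alpha$ everywhere with the constant value $c_\alpha$ attained, and the hypothesis $\mathrm{supp}(\nu)\subset\calI$ says the contact happens precisely on the support of the measure generating the singular part. The crux is then to verify, for an arbitrary competitor $\nu'\in\calM_\G$, the inequality
\[
B_\alpha(\nu')-\int U^{\nu'}\,\diff\mu\ \ge\ B_\alpha(\mu)-\int U^{\mu}\,\diff\mu,
\]
with equality iff $\nu'=\mu$. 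I would rewrite the right-hand side using $B_\alpha(\mu)=c_\alpha$ and $\int U^\mu\,\diff\mu=\int(\tfrac1{2\alpha}|z|^2+c_\alpha)\,\diff\mu$ — here using that $U^\mu=\tfrac1{2\alpha}|z|^2+c_\alpha$ on $\mathrm{supp}(\mu)$, which follows on the singular part from $\mathrm{supp}(\nu)\subset\calI$ combined with the balayage identity $U^{\mu^\s}=U^{\frac1\alpha\nu}$ on $\G^c\supset\partial\G$, and on the continuous part from the quadrature-domain description of $\Omega_\nu$ via Theorem~\ref{thm:q-dom}. For the left-hand side I would use $B_\alpha(\nu')=\sup_z(U^{\nu'}(z)-\tfrac1{2\alpha}|z|^2)\ge U^{\nu'}(z)-\tfrac1{2\alpha}|z|^2$ at any chosen point, integrate this pointwise bound against $\mu$ over $\C$, and exploit that $\int(\tfrac1{2\alpha}|z|^2)\,\diff\mu$ is a fixed constant while $\int U^{\nu'}\,\diff\mu=\int U^\mu\,\diff\nu'$ by Fubini (symmetry of the logarithmic kernel). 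Combining, the desired inequality reduces to $\int U^\mu\,\diff\nu'\le\int(\tfrac1{2\alpha}|z|^2+c_\alpha)\,\diff\nu'$, i.e.\ to $U^\mu\le\tfrac1{2\alpha}|z|^2+c_\alpha$ integrated against the probability measure $\nu'$ — which is precisely $B_\alpha(\mu)=c_\alpha$.

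The main obstacle, and the step I would spend the most care on, is the \emph{equality case}: showing $\nu'=\mu$ whenever equality holds. Equality forces $U^\mu=\tfrac1{2\alpha}|z|^2+c_\alpha$ $\nu'$-a.e., so $\mathrm{supp}(\nu')\subset\calI\cup(\C\setminus\Omega_\nu)\cup\{z:U^\mu(z)-\tfrac1{2\alpha}|z|^2=c_\alpha\}$ — i.e.\ $\nu'$ is supported in the full coincidence set $\calS=\{U^\mu=\psi_\alpha\}$, which does not meet $\G$. On $\calS$ one has $\Delta U^\mu\le\Delta\psi_\alpha=\tfrac1\alpha$ in the sense of measures (Grishin's lemma, as invoked in the proof of Theorem~\ref{thm:basic-obst}), so $\nu'\le\mu$ there on the continuous part; combined with both being probability measures and both living on $\calS$, a standard argument (as in Proposition~\ref{prop:contact-at-mass} / Lemma~\ref{lem:balance}, comparing energies under balayage) forces $\nu'=\mu$. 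Alternatively, and perhaps more cleanly, I would simply invoke the strict inequality already built into Proposition~\ref{prop:var-ineq}: once the (non-strict) variational inequality is verified for all $\nu'\in\calM_\G$, the $\Leftarrow$ direction of that proposition yields $I_\alpha(\nu')>I_\alpha(\mu)$ for $\nu'\neq\mu$ directly from positivity of $-\Sigma(\mu-\nu')=\dir(U^{\mu-\nu'})$, so $\mu$ is \emph{the} minimizer, i.e.\ $\mu=\mu_{\alpha,\G}$. This last route sidesteps the delicate equality analysis entirely and is the one I would adopt.
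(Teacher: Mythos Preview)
Your overall strategy—verifying the variational inequality of Proposition~\ref{prop:var-ineq}—is the right one, and this is exactly what the paper does. But your execution contains a genuine gap at the crucial step.

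You assert that $U^\mu(z)=\tfrac{1}{2\alpha}|z|^2+c_\alpha$ on all of $\mathrm{supp}(\mu)$, in particular on $\partial\G=\mathrm{supp}(\mu^\s)$. This is \emph{false}. The balayage identity you cite, $U^{\mu^\s}=U^{\frac1\alpha\nu}$ on $\G^c$, concerns the potential of $\mu^\s$, not the value of $U^\mu$ on $\partial\G$; and the hypothesis $\mathrm{supp}(\nu)\subset\calI$ places the coincidence inside $\G$, not on its boundary. In the disk-like example $\G=\D$, $\nu=\e\,\delta_0$, $\Omega_\nu=\D(0,\sqrt{\e})$, one computes $U^\mu(z)=c_\alpha$ for $|z|=1$, strictly below $\tfrac{1}{2\alpha}+c_\alpha$. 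Consequently your identity $\int U^\mu\,\diff\mu=\int(\tfrac{1}{2\alpha}|z|^2+c_\alpha)\,\diff\mu$ fails, and with it your bound: integrating $B_\alpha(\nu')\ge U^{\nu'}(z)-\tfrac{1}{2\alpha}|z|^2$ against $\mu$ gives only $B_\alpha(\nu')-\int U^{\nu'}\,\diff\mu\ge -\int\tfrac{|z|^2}{2\alpha}\,\diff\mu$, which is \emph{strictly smaller} than the true value $B_\alpha(\mu)-\int U^\mu\,\diff\mu$. The inequality you need does not follow.

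The missing idea is to exploit that every competitor $\nu'\in\calM_\G$ has $U^{\nu'}$ \emph{harmonic in} $\G$. This lets you replace $\int_{\partial\G}U^{\nu'}\,\diff\mu^\s=\tfrac1\alpha\int_{\partial\G}U^{\nu'}\,\diff\mathrm{Bal}(\nu,\G^c)$ by $\tfrac1\alpha\int U^{\nu'}\,\diff\nu$, an integral over $\mathrm{supp}(\nu)\subset\calI$ where the pointwise bound $U^{\nu'}\le\tfrac{1}{2\alpha}|z|^2+B_\alpha(\nu')$ is saturated for $\nu'=\mu$ by hypothesis. This is exactly the paper's move: one obtains $\int U^{\nu'}\,\diff\mu\le B_\alpha(\nu')+E$ for an $\nu'$-independent constant $E$ (built from $\int\tfrac{|z|^2}{2\alpha}\,\diff\mu^\c$ and $\tfrac1\alpha\int\tfrac{|z|^2}{2\alpha}\,\diff\nu$), with equality when $\nu'=\mu$. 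Your final remark on uniqueness via the $\Leftarrow$ direction of Proposition~\ref{prop:var-ineq} is correct and is also how the paper concludes.
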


\begin{proof}
Let $\eta$ be any competing measure in $\calM_1(G)$.
We split $\mu=\mu^{\s}+\mu^{\c}$ according to the Lebesgue decomposition, 
with $\mathrm{supp}(\mu^\s)\subset\partial\G$, and note that
\begin{multline*}
\int U^\eta\diff\mu= \int_{\D(0,\sqrt{\alpha})\setminus\Omega_\nu}U^\eta(z)
\frac{\diffA(z)}{\pi\alpha}+\frac{1}{\alpha}
\int_{\partial \G}U^\eta\diff\mathrm{Bal}(\nu,\G^c)\\
\le \Big(1-\frac{|\Omega_\nu|}{\alpha}\Big)B_\alpha(\eta)
+\int_{\D(0,\sqrt{\alpha})\setminus
\Omega_\nu}\frac{|z|^2}{2\alpha}\frac{\diffA(z)}{\pi\alpha}
+\frac{1}{\alpha}\int_{\calI} U^\eta\diff\nu,
\end{multline*}
where we use the harmonicity of $U^\eta$ in 
$\G$ to pass from integration against the balayage measure
to integration against $\nu$.
By adding and subtracting a quantity 
independent of the competing measure $\eta$, we may further rewrite
$$
\frac{1}{\alpha}\int_{\calI} U^\eta\,\diff\nu\le 
\frac{|\nu|}{\alpha}B_\alpha(\eta)
+\frac{1}{\alpha}\int_{\calI}\frac{|z|^2}{2\alpha}\,\diff\nu
=\frac{|\Omega_\nu|}{\alpha}B_\alpha(\eta)
+\frac{1}{\alpha}\int_{\calI}\frac{|z|^2}{2\alpha}\,\diff\nu,
$$
so it follows that
$$
\int U^\eta\,\diff\mu\le \Big(1-\frac{|\Omega_\nu|}
{\alpha}\Big) B_\alpha(\eta) + 
\frac{|\Omega_\nu|}{\alpha}B_\alpha(\eta)+E=B_\alpha(\eta)+E,
$$
where $E$ is the $\eta$-independent quantity
$$
E=\frac{1}{\alpha}\int_{\calI}\frac{|z|^2}{2\alpha}
\diff\nu +\int_{\D(0,\sqrt{\alpha})
\setminus\Omega_\nu}\frac{|z|^2}{2\alpha}\,\frac{\diffA(z)}{\pi\alpha}.
$$
Similarly when we replace $\eta$ by $\mu$, 
we have equality in each of the above steps, and 
find that 
$$
\int U^\mu\,\diff\mu=B_\alpha(\mu)+E.
$$
Summarizing what we have established, we have
$$
B_\alpha(\eta)-\int U^\eta\,\diff\mu\ge -E
$$
while
$$
B_\alpha(\mu)-\int U^\mu\,\diff\mu=-E
$$
so it follows that for any probability measure $\eta$
with $\eta(\G)=0$, it holds that
\[
B_\alpha(\eta)-\int U^\eta\,\diff\mu\ge B_\alpha(\mu)-\int U^\mu\,\diff\mu,
\]
which completes the proof (by Proposition~\ref{prop:var-ineq}).
\end{proof}

\subsection{Disk-like domains}\label{s:a-circ}
Recall that a bounded simply connected 
domain $\G$ is said to be disk-like 
with radius $r$ and center $z_0\in \G$ 
if the conformal mapping $\varphi=\varphi_{z_0}:\G\to\D$, 
normalized by $\varphi(z_0)=0$ 
and $\varphi'(z_0)>0$, meets the bound
\begin{equation}\label{eq:dist-local-sect}
|\varphi(z)|\ge \frac{|z-z_0|}{r}
\e^{-\frac{|z-z_0|^2}{2\e r^2}},\qquad z\in \overline{\G},
\end{equation}
where $r$ is given by $r=|\varphi'(z_0)|^{-1}$.
In fact, for a disk-like domain $\G$, the point $z_0$ 
is a local minimizer of $z\mapsto\varphi_z'(z)$, which is to say that
$z_0$ is a \emph{conformal center} of $\G$. The number $r$ is the
\emph{inner conformal radius} with respect to $(\G,z_0)$.
For more details on these notions, we refer to \cite[\S~6.3]{PolyaSzego}.

The potential of the Balayage measure
$\mathrm{Bal}(\delta_w,\G^c)$ is given by
\[
U^{\mathrm{Bal}(\delta_w,\G^c)}(z)=\log|z-w| - g_{\G}(z,w)
=\log\frac{|z-w|}{|\phi_w(z)|},
\]
where $g_{\G}$ denotes the Green function
for $\G$ with pole at $w$, and $\phi_w$ is a conformal mapping
of $\G$ onto $\D$ with $\phi_w(w)=0$. 

\begin{prop}\label{prop:a-circ}
Let $\G$ be an disk-like domain of radius $r$ and 
center $z_0$, and let $\alpha$ be large enough for the 
disk $\D(z_0,\sqrt{\e}r)$ to be contained in the disk 
$\D(0,\sqrt{\alpha})$. Then the minimizer 
$\mu_{\alpha,\G}$ of $I_\alpha$ over $\calM_{\G}$ is given by
\[
\diff\mu_{\alpha,\G}=\frac{\e r^2}{\alpha}\,\diff\omega_{z_0,\G}+
\frac{1}{\pi\alpha}\chi_{\D(0,\sqrt{\alpha})\setminus
\D(z_0,\sqrt{\e}r)}\,\diffA,
\]
where $\omega_{z_0,\calG}$ denotes harmonic measure from the 
point $z_0$ in $\G$. Conversely, let $\G\subset\D$
denote a Jordan domain with piecewise smooth boundary without cusps whose
forbidden region is a disk. Then $\G$ is disk-like.
\end{prop}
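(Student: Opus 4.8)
The plan is to prove Proposition~\ref{prop:a-circ} in two directions using Lemma~\ref{lem:verif} and a converse argument via uniqueness of the extremal measure.

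\textbf{The direct implication.} Suppose $\G$ is disk-like with center $z_0$ and radius $r$. After translating we may assume $z_0=0$. The plan is to apply Lemma~\ref{lem:verif} with $\nu=\e r^2\,\delta_{0}$ and $\Omega_\nu=\D(0,\sqrt{\e}\,r)$; note that the disk $\D(0,\sqrt{\e}\,r)$ is the subharmonic quadrature domain for a point mass of total mass $\pi\cdot\e r^2/\pi=\e r^2$, since $\pi\cdot(\text{radius})^2=\pi\e r^2$, and the quadrature identity is just the sub-mean value property. Moreover disk-likeness gives $\G\subset\D(0,\sqrt{\e}\,r)$, so $\Omega_\nu$ does contain $\G$. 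The candidate measure is then exactly $\mu=\tfrac{\e r^2}{\alpha}\mathrm{Bal}(\delta_0,\G^c)+\tfrac{1}{\pi\alpha}\chi_{\D(0,\sqrt{\alpha})\setminus\D(0,\sqrt\e r)}\diffA$, and $\mathrm{Bal}(\delta_0,\G^c)=\omega_{0,\G}$ as recalled before Theorem~\ref{thm:s-qdom}. To invoke Lemma~\ref{lem:verif} I must check the two hypotheses: (i) $B_\alpha(\mu)=c_\alpha$, and (ii) $\mathrm{supp}(\nu)=\{0\}\subset\calI=\{z\in\G:U^\mu(z)=\tfrac{|z|^2}{2\alpha}+c_\alpha\}$ — wait, with the normalization in Lemma~\ref{lem:verif} the set is $\{z:U^\mu(z)=\tfrac1\alpha|z|^2+c_\alpha\}$; I will use whichever normalization is consistent with $\psi_\alpha=\tfrac{1}{2\alpha}|z|^2$ as elsewhere, so $\calI=\{z\in\overline\G:U^\mu(z)-\tfrac{|z|^2}{2\alpha}=B_\alpha(\mu)\}$. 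Using the explicit balayage potential $U^{\mathrm{Bal}(\delta_0,\G^c)}(z)=\log\frac{|z|}{|\varphi(z)|}$ for $z\in\G$ and the explicit potential of the annular uniform charge (which on $\D(0,\sqrt\e r)$ equals $\tfrac{1}{2\alpha}|z|^2$ plus an explicit constant minus $\tfrac{1}{\alpha}U^{\frac{1}{\pi}\chi_{\D(0,\sqrt\e r)}}$ — computed as in Proposition~\ref{prop:Vr}), one gets for $z\in\G$
\[
U^\mu(z)-\tfrac{|z|^2}{2\alpha}=\tfrac{\e r^2}{\alpha}\log\tfrac{|z|}{|\varphi(z)|}-\tfrac{\e r^2}{2\alpha}\cdot\tfrac{|z|^2}{\e r^2}+(\text{const})=\tfrac{\e r^2}{\alpha}\Big(\log\tfrac{|z|}{|\varphi(z)|}-\tfrac{|z|^2}{2\e r^2}\Big)+(\text{const}).
\]
The disk-likeness inequality \eqref{eq:dist-local-sect}, namely $|\varphi(z)|\ge\tfrac{|z|}{r}\e^{-|z|^2/(2\e r^2)}$, is exactly equivalent to $\log\tfrac{|z|}{|\varphi(z)|}-\tfrac{|z|^2}{2\e r^2}\le\log r$, with equality at $z=0$ (where both sides are $\log r$ after the removable-singularity evaluation $|z|/|\varphi(z)|\to r$). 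Hence $U^\mu-\tfrac{|z|^2}{2\alpha}$ attains its maximum over $\overline\G$ precisely at $z=0$, which simultaneously pins down the constant, shows $\calI\cap\G=\{0\}$ (so (ii) holds), and — combined with the fact that outside $\D(0,\sqrt\e r)$ the quantity $U^\mu-\tfrac{|z|^2}{2\alpha}$ is constant on the quadrature domain boundary and dips below by the sub-mean-value computations of Proposition~\ref{prop:struct}/\ref{prop:Vr} — shows the global sup $B_\alpha(\mu)$ equals that common value, giving (i), $B_\alpha(\mu)=c_\alpha$. (One checks the value of the constant is the standard $c_\alpha=\tfrac12(\log\alpha-1)$ by comparing with the unconstrained case far from $\G$, exactly as in Proposition~\ref{prop:struct}.) Lemma~\ref{lem:verif} then yields $\mu=\mu_{\alpha,\G}$, and reading off the continuous part shows the forbidden region is $\Omega=\D(0,\sqrt\e r)$.

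\textbf{The converse.} Suppose now $\G\subset\D$ has piecewise smooth boundary without cusps and its forbidden region $\Omega$ is a disk, say $\D(w_0,R)$. By Theorem~\ref{thm:s-qdom}, $\Omega=\Omega_\nu$ is the subharmonic quadrature domain of the measure $\nu$ appearing there; but a disk is the quadrature domain of a single point mass located at its center, and by uniqueness of the measure associated to a given classical quadrature domain (or directly: equate quadrature identities against $\Re(z-w_0)^k$) we conclude $\nu=m\,\delta_{w_0}$ with $m=\pi R^2/\pi=R^2$; consistency of masses with $\tfrac{1}{\pi}|\Omega|$ gives $m=R^2$, i.e. $\nu=R^2\delta_{w_0}$. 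Then $\mu_{\alpha,\G}=\tfrac{R^2}{\alpha}\mathrm{Bal}(\delta_{w_0},\G^c)+\tfrac{1}{\pi\alpha}\chi_{\D(0,\sqrt\alpha)\setminus\D(w_0,R)}\diffA$. Now run the maximum-principle analysis of the direct part in reverse: since $w_0\in\calI$ and $\calI\cap\G$ is the coincidence set, the function $z\mapsto\log\tfrac{|z-w_0|}{|\varphi_{w_0}(z)|}-\tfrac{|z-w_0|^2}{2\e r^2}$ — where now $r:=|\varphi_{w_0}'(w_0)|^{-1}$ is the inner conformal radius and $\varphi_{w_0}$ the Riemann map of $\G$ normalized at $w_0$ — has an interior maximum at $w_0$; matching the value $B_\alpha(\mu)$ on $\partial\Omega=\T(w_0,R)$ (where by construction $U^\mu-\tfrac{|z|^2}{2\alpha}=B_\alpha(\mu)$) forces $R=\sqrt\e\,r$ and forces the inequality $\log\tfrac{|z-w_0|}{|\varphi_{w_0}(z)|}-\tfrac{|z-w_0|^2}{2\e r^2}\le\log r$ on all of $\overline\G$, which is precisely \eqref{eq:almost-circ}. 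Hence $\G$ is disk-like with center $w_0$ and radius $r$, and the forbidden disk indeed has radius $\sqrt\e\,r$, matching Theorem~\ref{thm:a-circ}.

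\textbf{The main obstacle.} The routine parts are the explicit potential computations for the uniform annular charge (identical to those in Proposition~\ref{prop:Vr} and Proposition~\ref{prop:struct}) and the balayage potential formula \eqref{eq:bal-pot-nu}. The genuine content — and the step I expect to require the most care — is the \emph{equivalence} between the analytic statement ``$U^\mu-\tfrac{1}{2\alpha}|z|^2$ attains its maximum over $\overline\G$ exactly at the center and nowhere else, and the global sup is attained there too'' and the geometric disk-likeness bound \eqref{eq:almost-circ}; in particular, checking that the interior coincidence set reduces to the single point $\{w_0\}$ (rather than, a priori, a larger analytic set allowed by Sakai's theorem) is what makes $\nu$ a point mass in the converse, and conversely, verifying hypothesis (ii) of Lemma~\ref{lem:verif} in the direct direction. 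Both hinge on the sharp case-of-equality analysis of \eqref{eq:almost-circ} at $z=z_0$ via the removable singularity of $|z-z_0|/|\varphi(z)|$, together with the global estimates of Proposition~\ref{prop:struct} to control the behaviour on $\D(0,\sqrt\alpha)\setminus\overline\G$ and to identify the constant as $c_\alpha$.
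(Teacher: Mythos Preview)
Your direct implication is correct and follows the paper's approach essentially verbatim: apply Lemma~\ref{lem:verif} with $\nu=\e r^2\delta_{z_0}$, compute the relative potential $U^\mu-\tfrac{|z|^2}{2\alpha}$ on $\G$ using the balayage formula \eqref{eq:bal-pot-nu}, and observe that the inequality $U^\mu-\tfrac{|z|^2}{2\alpha}\le c_\alpha$ on $\G$ is literally the disk-likeness bound \eqref{eq:dist-local-sect}, with equality at $z_0$. On $\G^c$ the paper writes the relative potential as $c_\alpha+\tfrac1\alpha\big(U^{\e r^2\delta_{z_0}}-U^{\pi^{-1}\chi_{\D(z_0,\sqrt\e r)}}\big)$, which is $\le c_\alpha$ with equality precisely off $\D(z_0,\sqrt\e r)$; this is cleaner than invoking Proposition~\ref{prop:struct}, but your version works.

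Your converse, however, has a genuine gap. You write that ``by uniqueness of the measure associated to a given classical quadrature domain (or directly: equate quadrature identities against $\Re(z-w_0)^k$) we conclude $\nu=R^2\delta_{w_0}$.'' This is false as stated: a subharmonic quadrature domain does \emph{not} determine its measure uniquely. For instance, the disk $\D(w_0,R)$ is equally the quadrature domain for the uniform measure of mass $R^2$ on any small circle centered at $w_0$, since such a measure has the same potential as $R^2\delta_{w_0}$ outside that circle. Testing against harmonic polynomials only pins down the holomorphic moments of $\nu$, which is far from determining $\nu$. The point is that Theorem~\ref{thm:s-qdom} gives $\nu$ only \emph{regular support} (analytic cross-cuts and countably many points), not that $\nu$ is finitely atomic, and the uniqueness you invoke fails in this generality.

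The paper's converse argument is different and actually uses the regular-support structure. One observes that $U^{\mu^\s_{\alpha,\G}}=U^{(\pi\alpha)^{-1}\chi_{\D(0,R)}}$ on the annulus $\D(0,\sqrt\alpha)\setminus\D(0,R)$, so the function $\int\log|z-w|\,\diff\nu(w)-R^2\log|z|$ vanishes there. Because $\nu$ has regular support in $\overline\G\subset\D(0,R)$ (a loop-free union of arcs and points, hence not disconnecting the disk $\D(0,R)$), the set $\D(0,R)\setminus(\mathrm{supp}(\nu)\cup\{0\})$ is connected, and harmonic continuation propagates the identity into the interior. Taking the distributional Laplacian then forces $\nu=R^2\delta_0$. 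Once $\nu$ is known to be a single point mass, your reverse computation of the relative potential goes through and yields the disk-likeness inequality; the identification $R=\sqrt\e\,r$ comes from evaluating the removable singularity of $\log\big(|z|/|\varphi(z)|\big)$ at the origin.
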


\begin{rem} Curiously, it is not immediately clear that $\G$ could not be disk-like 
with respect to several different pairs $(r_j,z_j)$.
However, it follows from Proposition~\ref{prop:a-circ} that it is so,
since the minimizer of $I_\alpha(\mu)$ over $\calM_\G$ is unique.
\end{rem}

\begin{rem}\label{rem:p-a-circ}
In addition, the minimal value of the functional satisfies
\[
I_\alpha(\mu_{\alpha,\G})-I_\alpha(\mu_{\alpha})=
\frac{1}{\pi\alpha}\int_{\D(z_0,\sqrt{\e}r)}
\frac{|z-z_0|^2}{2\alpha}\diffA(z)=\frac{\e^2r^4}{4\alpha^2},
\]
where $\mu_\alpha$ denotes the unconstrained equilibrium measure 
$\mu_\alpha=(\pi\alpha)^{-1}\chi_{\D(0,\sqrt{\alpha})}\diffA$.
In view of Theorem~\ref{thm:main-GEF2} this justifies the remark
following Theorem~\ref{thm:a-circ} concerning hole probabilities.
\end{rem}
\begin{proof}
The first claim of the theorem is that whenever 
$\alpha$ is large enough so that 
$\D(z_0,r\sqrt{\e})\subset\D(0,\sqrt{\alpha})$, the 
extremal measure for $I_\alpha$ over $\calM_\G$ is the measure
\begin{equation}\label{eq:measure-a-circ-claim}
\diff\mu_0=\frac{\e r^2}{\alpha}\omega_{z_0,\G}
+\frac{1}{\pi\alpha}\chi_{\D(0,\sqrt{\alpha})
\setminus\D(z_0,r\sqrt{\e})}\,\diffA.
\end{equation}
Here, it is important to notice that $\G\subset\D(z_0,\sqrt{\e}r)$. Indeed,
it this is not the case then there exists a point $z\in\G \cap\T(z_0,\sqrt{\e}r)$.
As a consequence, $|z-z_0|^2=\e r^2$, so that
\[
|\varphi(z)|\ge\frac{|z-z_0|}{r}\e^{-\frac{|z-z_0|^2}{2\e r^2}}=1,
\]
which says that $z\in\partial\G$, which is a contradiction.

We will apply Lemma~\ref{lem:verif}, and hence we begin 
to examine the relative potential $R_{\mu_0}$, where for a measure
$\mu$ the relative potential $R_\mu$ is given by
\begin{equation}\label{eq:rel-pot}
R_{\mu}(z)=U^{\mu}(z)-\tfrac{1}{2\alpha}|z|^2.
\end{equation}
We have, with $c_\alpha=\tfrac12(\log\alpha-1)$, that
\begin{multline*}
U^{\mu_{0}}(z)-\frac{|z|^2}{2\alpha}
=c_\alpha+U^{\mu_{0}^{\s}}(z)
-U^{\frac{1}{\pi\alpha}\chi_{\D(z_0,\sqrt{\e}r)}}(z)
\\
=c_\alpha+\frac{\e r^2}{\alpha}\log\frac{|z-z_0|}{|\varphi(z)|}
-\frac{\e r^2}{\alpha}\log r-\frac{|z-z_0|^2}{2\alpha}.
\end{multline*}
At the point $z_0$, the value of the relative potential 
equals $c_\alpha$, as is seen by using the fact that
$\varphi'(z_0)=\tfrac{1}{r}$.

We claim that the distortion bound \eqref{eq:dist-local-sect}
says that the relative potential reaches its maximum on the entire 
set $\{z_0\}\cup\mathrm{supp}(\mu_{0}^{\c})$. 
On the complement $\G^c$ we may write the relative potential as
\[
U^\mu(z)-\frac{|z|^2}{2\alpha}=c_\alpha
+\frac{1}{\alpha}\Big(U^{\e r^2\delta_{z_0}}(z)
-U^{\frac{1}{\pi}\chi_{\D(z_0,\sqrt{\e} r)}}(z)\Big)
\]
which is bounded above by $c_\alpha$.
Moreover, since the point mass $\e r^2\delta_{z_0}$ and the area measure
$\pi^{-1}\chi_{\D(z_0,\sqrt{\e}r)}\diffA$ have the 
same potential outside 
$\D(z_0,\sqrt{\e}r)$,
we have equality on the set 
$\C\setminus\D(z_0,\sqrt{\e}r)$. 
As a consequence of the above, it is enough 
to study $R_{\mu_{0}}(z)$ on $\G$. Subtracting 
$c_\alpha$, the condition that $R_{\mu_{0}}(z)$ 
attains its maximum on $z_0$ reads
\[
\frac{\e r^2}{\alpha}\log\frac{|z-z_0|}{|\varphi(z)|}
-\frac{\e r^2}{\alpha}\log r-\frac{|z|^2}{2\alpha}\le 0,
\qquad z\in\G
\]
or, equivalently
\[
\lvert \varphi(z)\rvert\ge \frac{|z-z_0|}{r}
\e^{-\frac{|z-z_0|^2}{2\e r^2}},\qquad z\in \G
\]
which is precisely the assumed disk-likeness condition.

We next turn to the converse statement.
From the obstacle problem characterization of the equilibrium 
measure, 
it is clear that $U^{\mu_{\alpha,\G}}$ equals 
$\tfrac{|z|^2}{2\alpha}+c_\alpha$ on the support 
of the continuous part of the measure.
The piecewise smoothness assumption implies, via 
Theorem~\ref{thm:s-qdom}, that the singular measure is 
the balayage to $\partial \G$ of a measure $\nu$ on $\G$.
The assumption that the forbidden region is a disk, 
say $\D(0,R)$, now shows that
\[
U^{\mu_{\alpha,\G}^\s}(z)=U^{\frac{1}{\pi\alpha}\chi_{\D(0,R)}}(z),
\qquad z\in \D(0,\sqrt{\alpha})\setminus\D(0,R).
\]
From this it follows that the atomic measure is a 
single point mass. 
Indeed, on the annulus $\D(0,\sqrt{\alpha})\setminus\D(0,R)$ we have
\[
R_{\mu_{\alpha,\G}}(z)=\frac{1}{\alpha}\int\log|z-w|\,\diff\nu
-\frac{R^2}{\alpha}\log|z|=0.
\]
But since $\nu$ has regular support 
(a loop-free finite union of analytic curves and countably many points), the set 
$D(R,\nu)\coloneqq \D(0,R)\setminus\mathrm{supp}(\nu)\cup\{0\}$ is connected and open.
But then 
\begin{equation}\label{eq:harm-cont}
\int\log|z-w|\,\diff\nu
=R^2\log|z|,\qquad z\in D(R,\eta)
\end{equation}
by harmonic continuation.
But then the two functions in \eqref{eq:harm-cont} agree as distributions, so 
taking the Laplacian of both sides we see that this means that 
$\nu=R^2\delta_0$.
As a consequence of this, the measure 
$\mu_{\alpha,\G}$ takes the form
\[
\mu_{\alpha,\G}=\frac{R^2}{\alpha}\diff\omega_{0,\G}+\frac{1}{\pi\alpha}
\chi_{\D(0,\sqrt{\alpha})\setminus\D(0,R)}\diffA.
\]
We can then compute the logarithmic potential of 
$\mu_{\alpha,\G}$ in terms of the Riemann mapping $\varphi$
of $\G$, which takes the origin to the origin with positive derivative. 
Expressing the fact that $U^{\mu_{\alpha,\G}}(z)
-\tfrac{|z|^2}{2\alpha}\le c_\alpha$ in terms the conformal
mapping yields that $\G$ is disk-like.  
\end{proof}

\begin{proof}[Proof of Theorem~\ref{thm:a-circ}]
This is now immediate in view of Theorem~\ref{thm:main-GEF}
and Proposition~\ref{prop:a-circ}.
\end{proof}

Examples of domains $\G$ which are disk-like 
include the 
square, ellipses up to a critical eccentricity (see \cite{mathematica-Neumann}), 
and a certain eye shaped domain. 
The latter is worth a special mention, as it is an 
extremal domain among disk-like holes. 
With $r=1$ and $z_0=0$ we set 
\[
\varphi(z)=z\e^{-\frac{z^2}{2\e}}.
\]
The conformal mapping $f\colon\D\to \G$ is the 
inverse $f=\varphi^{-1}$, and is given by
\[
f(w)=-\imag\sqrt{\e}W\big(-\tfrac{z^2}{\e}\big),\qquad w\in\D
\]
where $W$ is the principal branch of the Lambert $W$-function.
The function $f$ is a conformal mapping, 
which maps $\D$ onto an elongated eye shaped domain which 
touches $\partial\D(0,\sqrt{\e})$ at its \emph{corners}
at $\{-\sqrt{\e},\sqrt{\e}\}$. The relative potential 
takes the extremal value $c_\alpha$ on the entire line
segment $[-\sqrt{\e},\sqrt{\e}]$. Hence this domain is 
degenerate in the sense of \eqref{eq:non-deg}.

\subsection{A family of Neumann ovals}
We next discuss a family of holes whose 
forbidden regions are \emph{Neumann ovals}. 
For $t>1$, we let $\G_t$ denote the disjoint 
union of two unit disks centered at $-t$
and $t$. For $0<t<1$ we continue this 
family of domains by letting $\G_t$
be the unique subharmonic quadrature domain 
with nodes $\{-t,t\}$
and a unit mass at each one of them.
More generally, the symmetric Neumann oval $\Omega_{r,t}$
with respect to
$r(\delta_t+\delta_{-t})$ is the Jordan domain
whose boundary is given by the equation 
\[
(x^2+y^2)^2-2r^2(x^2+y^2)-2t^2(x^2-y^2)=0,\qquad (x,y)\in\R^2,
\]
see for instance \cite{Shapiro}.
Denote by $\mu_t$ the minimizer of $I_\alpha$ over $\calM_{\G_t}$.
For $t>\sqrt{\e}$ and $\alpha$ large enough (with respect to $t$), 
it is evident that each component 
of the hole expels a circular forbidden region of 
radius $\sqrt{\e}$, concentric with the hole component. 
At $t=\sqrt{\e}$, the forbidden regions touch, and their union
equals the degenerate Neumann oval with mass $2\sqrt{\e}$ 
and nodes at $\{-\sqrt{\e},\sqrt{\e}\}$. 
As $t$ decreases from this critical point, we expect a 
nontrivial family of solutions. 

\begin{example}
There exist functions $r=r(t),\lambda=\lambda(t)$ such that 
$\mu_{t}=\mu_{t,r(t),\lambda(t)}$, where 
\[
\mu_{t,r,\lambda}=\frac{r}{\alpha}
\mathrm{Bal}(\delta_{-\lambda}+\delta_{\lambda},G_t^c)
+\frac{1}{\pi\alpha}\chi_{\D(0,\sqrt{\alpha})\setminus\Omega_{r,\lambda}}\diffA.
\]
Moreover, the function 
$\lambda(t)$ 
is decreasing in $t$ and there exists a number 
$t_0>0$ such that $\G_t$ is disk-like for $t\le t_0$, and $\lambda(t)=0$
for such $t$. 
\end{example}

Since the article is already too long, we will not provide a 
proof of this statement. 
A numerical verification may be done with the help of 
computer algebra software and Lemma~\ref{lem:verif},
see \cite{mathematica-Neumann}. 
Indeed, in view of Lemma~\ref{lem:verif} we 
need only ensure that one may choose parameters $r=r(t)$
and $\lambda=\lambda(t)$ such that the relative potential 
\[
R_{\mu_{t,r,\lambda}}=U^{\mu_{t,r,\lambda}}
-U^{\mu_\alpha}
\]
attains its maximal value at $\lambda$, where we recall that 
$\mu_\alpha=\tfrac{1}{\pi\alpha}\chi_{\D(0,\sqrt{\alpha})}\diffA$.
For illustrations of the Neumann oval family 
discussed above, see Figure~\ref{fig:potential-oval}.
The hole is in both instances the disjoint 
union of two disks of the same radius. In the
former the forbidden regions barely touch,
while in the second figure the forbidden 
regions have merged to form a Neumann oval.

\begin{figure}[t!]
\vspace{12pt}
\begin{subfigure}[t]{.44\textwidth}
  \centering
  \includegraphics[width=1\linewidth]{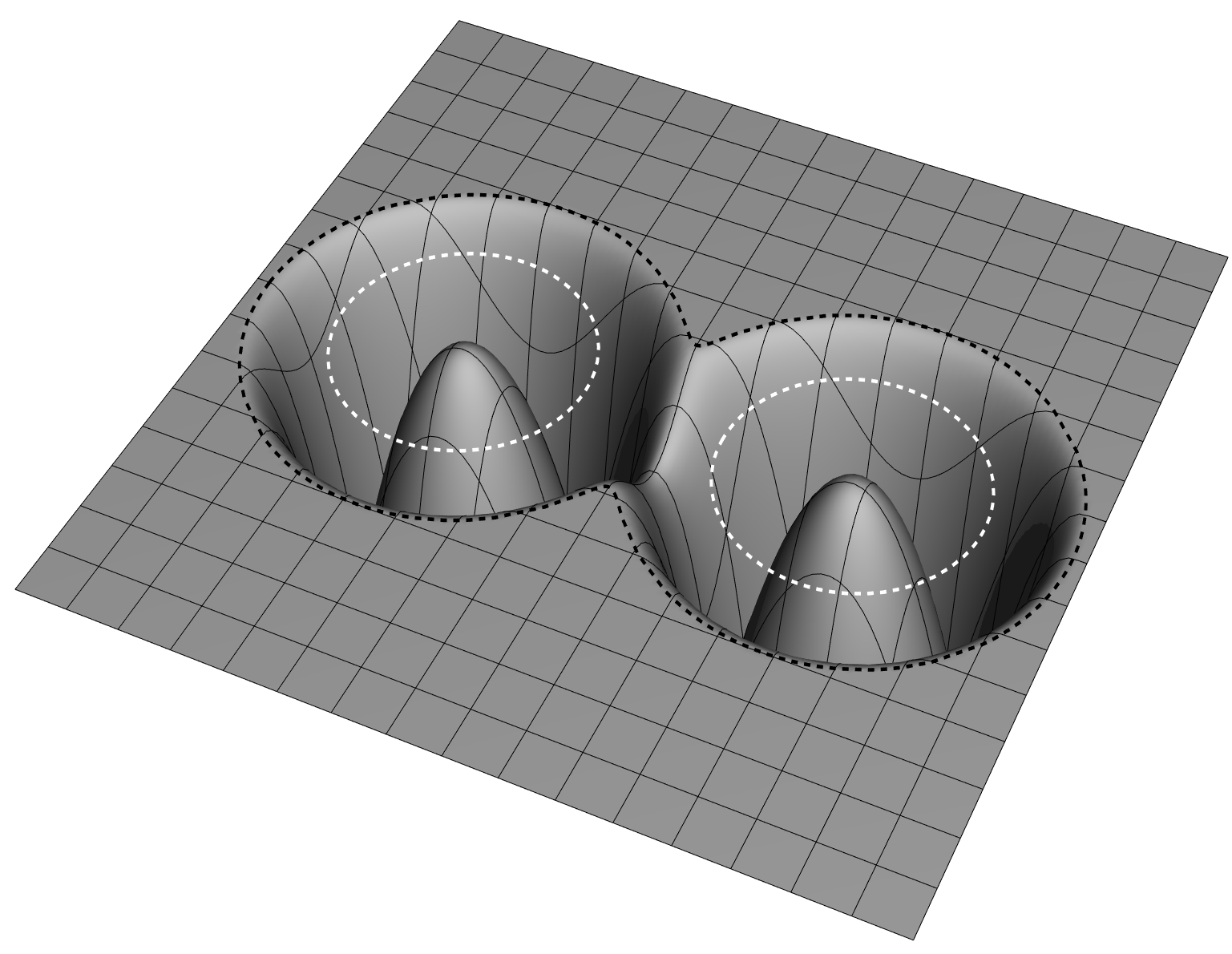}
  \subcaption{A hole formed by two separated disks (white) with forbidden region 
  (black) given by a two-point quadrature domain.}
\end{subfigure}
\hspace{25pt}
\begin{subfigure}[t]{.42\textwidth}
  \includegraphics[width=\linewidth]{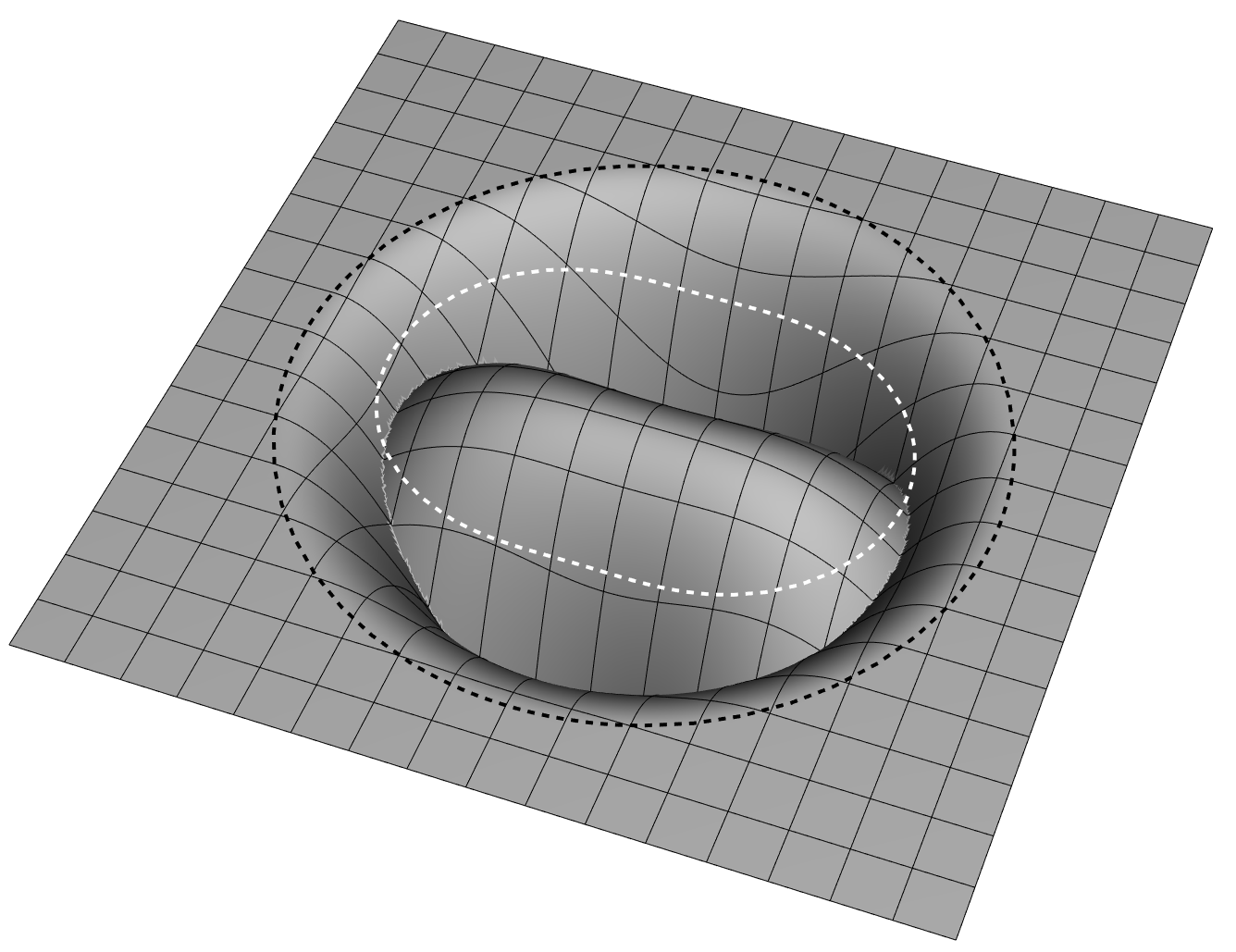}
  \subcaption{A disk-like region (white) formed by merging overlapping disks,
  enclosed by a circular forbidden region (black).}
\end{subfigure}
\caption{} 
\label{fig:potential-oval}
\end{figure}

\appendix

\section{Approximation of measures with Fekete points}
\label{app:Fekete}
We extend the scope of Theorems 1 and 2 by G{\"o}tz and Saff 
from \cite{SaffGotz} so that they will apply in our setting. 
These proofs were obtained together with S. Ghosh.

\subsection{Definitions}
and meets the conditions of Theorem~\ref{thm:main-GEF}, and put
$$
\Lambda = \D(0,\sqrt{\alpha})\setminus\G,
$$
where $\alpha > \alpha_0$ (as defined in Proposition~\ref{prop:Vr}).

We denote by $Q$ a H{\"o}lder continuous weight 
function on $\C$ with H{\"o}lder exponent 
$\gamma^\prime > 0$, and let $\mu_{Q,\Lambda}$ denote the 
unique minimizer of the functional
\[
J_{Q}(\mu) \coloneqq -\Sigma(\mu)+2\int Q(z)\diff\mu(z)
\]
among all probability measures $\mu$ supported on $\Lambda$. 
The measure $\mu_{Q,\Lambda}$ is the  
\emph{equilibrium measure} for the weight $Q$ on the set $\Lambda$.
The potential $U^{\mu_{Q,\Lambda}}$ is known to be
H{\"o}lder continuous with some exponent $\gamma^{\prime\prime} > 0$. 
Without loss of generality we may assume that the H{\"o}lder exponent 
of both $Q$ and $U^{\mu_{Q,\Lambda}}$ is at 
least $ \gamma$ for some constant $ \gamma > 0$.

It is known (e.g. \cite[Theorem I.3.1]{SaffTotik}) that $\mu_{Q,\Lambda}$ is 
uniquely characterized by the condition
\[
\begin{cases}
U^{\mu_{Q,\Lambda}}(z)=Q(z)+C(Q,\Lambda)&\text{on }\;\;
\mathrm{supp}(\mu_{Q,\Lambda})\\
U^{\mu_{Q,\Lambda}}(z)\le Q(z)+C(Q,\Lambda)&\text{on }\;\;\C,
\end{cases}
\]
where $C(Q,\Lambda)$ is a constant.

Given $N$ points $\textbf{z} = (z_1,\ldots,z_N) \in \C^N$ let
\[
\mu_{\textbf{z}} = \frac1N \sum_{j=1}^N \delta_{z_j},
\]
denote their empirical probability measure. 
Assuming these points are distinct, their \emph{discrete logarithmic energy} is given by
\[
-\Sigma^*(\mu_{\textbf{z}}) = \int_{\C^2\setminus\{w_1=w_2\}}
\log \frac1{|w_1-w_2|} \, \diff \mu_{\textbf{z}}(w_1) \diff  \mu_{\textbf{z}}(w_2) 
= - \frac1{N^2}\sum_{1\le i\ne j\le N}\log |z_i-z_j| .
\]
We define a \emph{Fekete configuration} of points $\calF_N$
with respect to the weight (or external field) $Q$ and confined 
to the set $\Lambda$ as a minimizer $\calF_N = (z_1,\ldots,z_n) \subset \Lambda^N$
of the discrete weighted energy functional
\[
J_{Q}^*(\mu_{\textbf{z}}) \coloneqq -\Sigma^*(\mu_{\textbf{z}}) 
+ 2\int Q(w) \diff\mu_{\textbf{z}}(w)
= -\frac1{N^2}\sum_{1\le i\ne j\le N}\log |z_i-z_j| + \frac2{N} \sum_{j=1}^N Q(z_j).
\]
By \cite[Theorem III.1.2]{SaffTotik} it is known that
\[
\calF_N \subset \Lambda^* \coloneqq \{ w \in \Lambda : U^{\mu_{Q,\Lambda}}(w)=Q(w)+C(Q,\Lambda) \}.
\]
Abusing notation, we will variously refer to the set $\calF_N$ by $\textbf{z}$ and $(z_1,\ldots,z_n)$.
\begin{rem}
Our use of weighted Fekete points is somewhat non--standard 
since the external field $Q$ and the number of points $N$ 
(and clearly $\Lambda$) both depend on the parameter $\alpha$. 
This slightly complicates the proofs of the following results.
\end{rem}

\subsection{Separation of Fekete points and approximation of energy and potential}
Using the same strategy of proof as \cite{SaffGotz} we prove the following results.

\begin{prop}[Separation of Fekete points]\label{prop:Ghosh-Gotz-Saff-sep}
Let $\Lambda, Q,  \gamma, \mu_{Q,\Lambda}, \calF_N$ be defined as above. Then,
\[
\inf_{z^\prime \ne z^{\prime\prime}\in\calF_N}|z^\prime 
- z^{\prime\prime}|\ge \frac12 A_1 N^{-1/ \gamma},
\]
where
\[
A_1 = \exp(-\lVert U^{\mu_{Q,\Lambda}}\rVert_{C^{0, \gamma}}).
\]
\end{prop}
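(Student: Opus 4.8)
The strategy follows the classical argument of Götz and Saff \cite{SaffGotz} for separation of weighted Fekete points, adapted to our setting where the external field $Q$, the number of points $N$, and the confining set $\Lambda$ all depend on the parameter $\alpha$. First I would recall the fundamental variational characterization: by \cite[Theorem III.1.3]{SaffTotik}, the Fekete configuration $\calF_N=(z_1,\dots,z_N)$ satisfies, for each fixed index $k$,
\[
\sum_{j\ne k}\log\frac{1}{|z_k-z_j|} + (N-1)\, Q(z_k)\le \sum_{j\ne k}\log\frac{1}{|z-z_j|} + (N-1)\,Q(z),\qquad z\in\Lambda,
\]
i.e.\ each point $z_k$ minimizes the ``one-point'' weighted energy in the field generated by the remaining $N-1$ points. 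Equivalently, writing $\nu_k=\frac{1}{N-1}\sum_{j\ne k}\delta_{z_j}$, the point $z_k$ minimizes $z\mapsto U^{\nu_k}(z)\cdot(-1)+Q(z)$ ... more precisely $z_k$ is a maximizer of $U^{\nu_k}(z)-Q(z)$ over $\Lambda$. This is the only structural input needed.

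\textbf{Key steps.} Fix two nearest neighbours $z'=z_k$ and $z''=z_\ell$ in $\calF_N$ and let $d=|z'-z''|$; the goal is a lower bound $d\ge \tfrac12 A_1 N^{-1/\gamma}$. The first step is to apply the variational inequality at $z'=z_k$ against the competitor point $z''$ (or rather against a carefully chosen point; in the unweighted case one takes the competitor to be $z''$ itself after removing its contribution): comparing the one-point energy at $z_k$ with that at $z_\ell$ yields, after the term $\log\frac{1}{|z_k-z_\ell|}$ is isolated,
\[
\log\frac{1}{d}\le \sum_{j\ne k,\ell}\log\frac{|z_\ell-z_j|}{|z_k-z_j|} + (N-1)\big(Q(z_\ell)-Q(z_k)\big).
\]
The second step estimates the sum on the right: since each $|z_k-z_j|\ge d$ by the nearest-neighbour property and the diameter of $\Lambda$ is at most $2\sqrt{\alpha}$, each logarithmic term is bounded by $\log(2\sqrt{\alpha}/d)$, but this crude bound loses the sharp $N^{-1/\gamma}$ rate, so one instead passes through the equilibrium potential: by the reverse triangle inequality and the fact that $\calF_N\subset\Lambda^*$ where $U^{\mu_{Q,\Lambda}}=Q+C(Q,\Lambda)$, one rewrites the discrete sum $\frac{1}{N-1}\sum_{j\ne k,\ell}\log|z-z_j|$ as a discrete approximation of $U^{\mu_{Q,\Lambda}}(z)$ and controls the discrepancy. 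The third and decisive step is to invoke Hölder continuity of $U^{\mu_{Q,\Lambda}}$ with exponent $\gamma$ and norm $\lVert U^{\mu_{Q,\Lambda}}\rVert_{C^{0,\gamma}}$: the combination of the discrete-energy comparison with the Hölder bound produces an inequality of the form $\log\frac{1}{d}\le \log\frac{C}{d} - \gamma^{-1}\log(1/d) + \lVert U^{\mu_{Q,\Lambda}}\rVert_{C^{0,\gamma}}\cdot(\text{something})$, which upon rearranging and solving for $d$ gives $d\ge \tfrac12\exp(-\lVert U^{\mu_{Q,\Lambda}}\rVert_{C^{0,\gamma}})N^{-1/\gamma}$. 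I would present the Hölder estimate on the external field in the normalized form $|Q(z_\ell)-Q(z_k)|\le \lVert Q\rVert_{C^{0,\gamma}}d^\gamma$ and similarly for $U^{\mu_{Q,\Lambda}}$, and track constants carefully so that only the norm of the equilibrium potential (not of $Q$ itself) appears in $A_1$ — this is possible because on $\Lambda^*$ the two potentials differ by the constant $C(Q,\Lambda)$, which drops out of differences.

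\textbf{Main obstacle.} The routine part is the algebraic manipulation of the one-point variational inequality. The genuine difficulty — and the place where the $\alpha$-dependence of $(Q,N,\Lambda)$ must be handled with care — is ensuring that the implicit constants in the discrete-to-continuous comparison do \emph{not} degrade as $\alpha\to\infty$, so that $A_1$ depends only on $\lVert U^{\mu_{Q,\Lambda}}\rVert_{C^{0,\gamma}}$ and not on $\alpha$ or on $\mathrm{diam}(\Lambda)$. In the Götz--Saff argument this is achieved by absorbing the diameter into the Hölder bound at the scale $d\sim N^{-1/\gamma}$, using that for small $d$ one has $d^\gamma\cdot N = \Ordo(1)$ at exactly the critical separation; the bookkeeping must be arranged so that the surviving constant is precisely the stated $A_1$. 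A secondary technical point is justifying that the competitor point used in the variational inequality remains inside $\Lambda$ — since we only compare $z_k$ against the already-present Fekete point $z_\ell\in\Lambda$, this is automatic, which is why the argument goes through without any regularity assumption on $\partial\Lambda$ beyond what is needed for the equilibrium potential to be Hölder.
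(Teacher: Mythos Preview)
Your proposal has a genuine gap at its very first step. The one-point variational inequality you invoke says that $z_k$ maximizes $z\mapsto \sum_{j\ne k}\log|z-z_j|-(N-1)Q(z)$ over $\Lambda$. You then claim that ``comparing the one-point energy at $z_k$ with that at $z_\ell$'' yields
\[
\log\frac{1}{d}\le \sum_{j\ne k,\ell}\log\frac{|z_\ell-z_j|}{|z_k-z_j|} + (N-1)\big(Q(z_\ell)-Q(z_k)\big).
\]
But this does not follow: when you plug the competitor $z=z_\ell$ into the one-point functional, the term $\log|z_\ell-z_\ell|=-\infty$ appears on the right (from $j=\ell$), so the comparison is vacuous. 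Your parenthetical ``after removing its contribution'' is exactly the missing idea, and you never supply it. The subsequent ``decisive step'' is then described only schematically (``an inequality of the form $\log\tfrac{1}{d}\le\log\tfrac{C}{d}-\gamma^{-1}\log(1/d)+\dots$'') and does not correspond to any identifiable manipulation; in particular it is unclear where the factor $\gamma^{-1}\log(1/d)$ could possibly come from. You also propose to ``rewrite the discrete sum as a discrete approximation of $U^{\mu_{Q,\Lambda}}$ and control the discrepancy'', but that discrepancy estimate is essentially the content of the \emph{next} theorem in the appendix and cannot be assumed here.

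The paper's argument avoids all of this by working with the Lagrange interpolating polynomials $P_j(z)=\prod_{k\ne j}(z-z_k)/(z_j-z_k)$. The one-point Fekete inequality, rewritten as $U^{\nu}(w)-U^{\nu}(z_j)\le Q(w)-Q(z_j)$ with $\nu=\tfrac{1}{N}\sum_{k\ne j}\delta_{z_k}$, is first upgraded via the identity $Q=U^{\mu_{Q,\Lambda}}-C(Q,\Lambda)$ on $\Lambda^*$ and then extended to all of $\C$ by the \emph{principle of domination}. This gives the clean pointwise bound $|P_j(w)|\le\exp\big(N(U^{\mu_{Q,\Lambda}}(w)-U^{\mu_{Q,\Lambda}}(z_j))\big)$, and H\"older continuity now yields $|P_j|\le A_1^{-1}$ on the disk $\D(z_j,N^{-1/\gamma})$. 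A Cauchy estimate then bounds $|P_j'|\le 2A_1^{-1}N^{1/\gamma}$ on the half-disk, and since $P_j(z_j)=1$ while $P_j(z_k)=0$, the mean-value inequality forces $|z_j-z_k|\ge\tfrac12 A_1 N^{-1/\gamma}$. The two ingredients your sketch is missing are precisely the principle of domination (which converts the inequality from $\Lambda$ to all of $\C$ without any discrete-to-continuous comparison) and the Cauchy-estimate/interpolation trick (which extracts separation from a sup-norm bound on $P_j$).
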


\begin{thm}[Approximation of energy and potential]\label{thm:Ghosh-Gotz-Saff}
Let $\Lambda, Q,  \gamma, \mu_{Q,\Lambda}, \calF_N$ be defined as above, and put
\begin{align*}
E_1(N, \gamma,\Lambda) & = \frac1N \big[ \lVert U^{\mu_{Q,\Lambda}}\rVert_{C^{0, \gamma}} 
+ 2 \log 2 \sqrt{\alpha}+ \gamma^{-1}\log N \big],\\
E_2(N, \gamma,\Lambda) & = \frac2N \big[ 4 \lVert U^{\mu_{Q,\Lambda}}\rVert_{C^{0, \gamma}} 
+ \log 2 \sqrt{\alpha}+
(2+3 \gamma^{-1})\log N 
+D(Q,\Lambda) \big],
\end{align*}
where
\[
D(Q,\Lambda)=\sup_{z\in\Lambda}|U^{\mu_{Q,\Lambda}}(z)|.
\]
Then,
\[
-\Sigma^*(\mu_{\textbf{z}}) \le -\Sigma(\mu_{Q,\Lambda}) + E_1(N, \gamma,\Lambda).
\]
In addition, for $N \ge 11$, it holds that for all $w \in \C$
\[
U^{\mu_{\calF_N}}(w)\le U^{\mu_{Q,\Lambda}}(w)+E_2(N, \gamma,\Lambda).
\]
Moreover, if $\tau \ge 1 + 1/\gamma$ is fixed, then, 
whenever $\mathrm{d}(w,\calF_N)\ge N^{-\tau}$ we have
\[
U^{\mu_{\calF_N}}(w)\ge U^{\mu_{Q,\Lambda}}(w) 
- E_2(N, \gamma,\Lambda) - \frac{(\tau - 1 / \gamma) \log N}{N}.
\]
\end{thm}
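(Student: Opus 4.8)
**Proof proposal for Theorem~\ref{thm:Ghosh-Gotz-Saff} (separation, energy and potential bounds for weighted Fekete configurations).**

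The plan is to adapt the Götz–Saff argument from \cite{SaffGotz} to our $\alpha$-dependent setting, carrying the dependence on the parameters $N$, $\gamma$ and $\Lambda=\D(0,\sqrt\alpha)\setminus\G$ explicitly through every estimate. Throughout I write $\mathbf{z}=\calF_N=(z_1,\dots,z_N)$ for a minimizing Fekete configuration, $Q$ for the external field, and $\mu_Q=\mu_{Q,\Lambda}$ for the associated equilibrium measure, recalling that $\mathbf{z}\subset\Lambda^*$, the set where $U^{\mu_Q}=Q+C(Q,\Lambda)$. The three assertions will be handled in order: (i) separation, (ii) the upper bound $-\Sigma^*(\mu_{\mathbf z})\le-\Sigma(\mu_Q)+E_1$ together with the pointwise upper bound $U^{\mu_{\calF_N}}\le U^{\mu_Q}+E_2$, and (iii) the pointwise lower bound away from the configuration.

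First I would prove the separation bound (Proposition~\ref{prop:Ghosh-Gotz-Saff-sep}, which I assume) and then re-use it. The argument there is the standard one: if two Fekete points $z',z''$ were too close, replacing one of them by a nearby point decreases $J_Q^*$, using the Hölder continuity of $U^{\mu_Q}$ with exponent $\gamma$ to control how much the external-field term and the interaction with the remaining $N-2$ points can change; the Hölder norm $\lVert U^{\mu_Q}\rVert_{C^{0,\gamma}}$ enters through the constant $A_1=\exp(-\lVert U^{\mu_Q}\rVert_{C^{0,\gamma}})$, yielding the gap $\gtrsim A_1 N^{-1/\gamma}$. For the energy upper bound, I would integrate the discrete energy functional against $\mu_Q$: by Fekete-minimality, $J_Q^*(\mu_{\mathbf z})\le J_Q^*(\text{any configuration})$, and comparing with the continuous equilibrium energy $J_Q(\mu_Q)$ via a smoothing/discretization of $\mu_Q$ gives $-\Sigma^*(\mu_{\mathbf z})+2\int Q\,d\mu_{\mathbf z}\le -\Sigma(\mu_Q)+2\int Q\,d\mu_Q+o(1)$; since the Fekete points lie on $\Lambda^*$ where $\int Q\,d\mu_{\mathbf z}=\int U^{\mu_Q}d\mu_{\mathbf z}-C(Q,\Lambda)$ and similarly for $\mu_Q$, the $Q$-terms nearly cancel. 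Controlling the error requires bounding the self-interaction that is excluded in $\Sigma^*$: on a disk $\D(z_j,\rho)$ with $\rho$ comparable to the separation scale $N^{-1/\gamma}$, the logarithmic kernel contributes $O(N^{-1}(\lVert U^{\mu_Q}\rVert_{C^{0,\gamma}}+\log 2\sqrt\alpha+\gamma^{-1}\log N))$, which is exactly $E_1$. The pointwise upper bound $U^{\mu_{\calF_N}}(w)\le U^{\mu_Q}(w)+E_2$ then follows by writing $U^{\mu_{\calF_N}}$ as an average of $\log|w-z_j|$, splitting into points close to and far from $w$, using the separation to bound the close contribution and the energy estimate plus the Hölder regularity and $D(Q,\Lambda)=\sup_\Lambda|U^{\mu_Q}|$ to bound the far contribution; the constant $E_2$ collects these, with the $\log 2\sqrt\alpha$ accounting for the diameter of $\Lambda$.

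The final, and I expect hardest, part is the lower bound: for $\mathrm{d}(w,\calF_N)\ge N^{-\tau}$ with $\tau\ge 1+1/\gamma$,
\[
U^{\mu_{\calF_N}}(w)\ge U^{\mu_Q}(w)-E_2(N,\gamma,\Lambda)-\frac{(\tau-1/\gamma)\log N}{N}.
\]
Here the strategy is to compare $U^{\mu_{\calF_N}}$ against $U^{\mu_Q}$ using the subharmonicity of the difference away from $\calF_N$, or more directly: $U^{\mu_{\calF_N}}(w)=\frac1N\sum_j\log|w-z_j|$, and each term with $|w-z_j|\ge N^{-\tau}$ is bounded below by $-\tau\log N$; for terms where $z_j$ is far one uses that $\mu_{\calF_N}$ approximates $\mu_Q$ (quantitatively, via the already-established upper bound and a weak-$*$ argument) so that the logarithmic potential cannot drop far below $U^{\mu_Q}(w)$. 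The delicate point is uniformity: the bound must hold for \emph{all} $w$ at distance $\ge N^{-\tau}$, so one cannot simply invoke pointwise convergence. I would instead argue by a maximum-principle/minimum-principle comparison: the function $w\mapsto U^{\mu_{\calF_N}}(w)-U^{\mu_Q}(w)$ is subharmonic off $\calF_N$ and harmonic off $\calF_N\cup\mathrm{supp}(\mu_Q)$; combine this with the upper bound on a suitable set and the explicit near-singularity behaviour near each $z_j$ (controlled by separation, which keeps the $\log$ singularities from overlapping) to get a lower bound, then track how the exponent $\tau$ and the separation exponent $1/\gamma$ combine to produce the extra term $(\tau-1/\gamma)N^{-1}\log N$. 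The main obstacle is making this comparison rigorous with constants that do not secretly depend on $\alpha$ beyond the stated $D(Q,\Lambda)$ and $\lVert U^{\mu_Q}\rVert_{C^{0,\gamma}}$ and $\log 2\sqrt\alpha$ factors; this is where careful bookkeeping, following \cite{SaffGotz} but re-examining each universal constant, is required.
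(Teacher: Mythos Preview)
Your outline captures the Götz--Saff flavour but misses the concrete mechanism that drives the paper's proof, and several of your proposed steps would not close as stated.

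The paper's argument does \emph{not} proceed by discretizing $\mu_{Q,\Lambda}$ or by splitting $U^{\mu_{\calF_N}}(w)$ into near/far contributions.  Instead, everything is organised around the scalar quantity
\[
\epsilon_N=\int\big(U^{\mu_{Q,\Lambda}}-U^{\mu_{\textbf z}}\big)\,\diff\mu_{Q,\Lambda},
\]
and the proof consists in establishing tight two-sided bounds on $\epsilon_N$.  The starting point is the pointwise Fekete inequality obtained by moving a single point $z_j\mapsto w$ (your ``replace one point'' idea, but used as an \emph{inequality} rather than to derive separation): summing over $j$ gives
\[
U^{\mu_{Q,\Lambda}}(w)-U^{\mu_{\textbf z}}(w)\ge -\Sigma^*(\mu_{\textbf z})+\int U^{\mu_{Q,\Lambda}}\,\diff\mu_{\textbf z}-\tfrac{\log 2\sqrt\alpha}{N},\qquad w\in\Lambda.
\]
One then smooths $\mu_{\textbf z}$ (not $\mu_{Q,\Lambda}$) by convolution with $\sigma_r$, $r=N^{-1/\gamma}$, and uses the positivity $-\Sigma(\mu_{\textbf z}^r-\mu_{Q,\Lambda})\ge 0$ to convert this into $U^{\mu_{Q,\Lambda}}(w)-U^{\mu_{\textbf z}}(w)\ge \epsilon_N-O(N^{-1}\log N)$, extended to all of $\C$ by the principle of domination.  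The \emph{upper} bound on $\epsilon_N$ (and hence the energy bound $E_1$) comes from integrating this inequality against the unweighted equilibrium measure of $\Lambda$, whose potential is constant on $\Lambda$ so that $\int(U^{\mu_{Q,\Lambda}}-U^{\mu_{\textbf z}})\,\diff\nu_1=0$.

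For the pointwise bounds, the paper introduces the leave-one-out potentials $h_j(w)=-N^{-1}\sum_{k\ne j}\log|w-z_k|$ and proves the two-sided estimate
\[
\big|h_j(z_j)+U^{\mu_{Q,\Lambda}}(z_j)-\epsilon_N\big|\le e(N,Q,\Lambda),
\]
the upper half by integrating the single-point Fekete inequality against $\mu_{Q,\Lambda}$ and the lower half by superharmonicity of $h_j$ on a circle $|w-z_j|=N^{-1/\gamma}$.  From this and the gradient bound $|\nabla h_j|\le (\tfrac14 A_1 N^{-1/\gamma})^{-1}$ furnished by the separation, one controls $|U^{\mu_{Q,\Lambda}}-U^{\mu_{\textbf z}}-\epsilon_N|$ on the boundary of a union $\calD(\eta)$ of small disks of radius $\eta=N^{-\beta}\cdot\tfrac14 A_1 N^{-1/\gamma}$.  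The \emph{lower} bound on $\epsilon_N$ then follows by integrating against the unweighted equilibrium measure of $\calD(\eta)$ (same trick: its potential is constant on $\calD(\eta)$), and the two pointwise bounds on $U^{\mu_{\calF_N}}$ follow by the maximum principle for the subharmonic function $U^{\mu_{Q,\Lambda}}-U^{\mu_{\textbf z}}$ on $\calD(\eta)^c$, with $\beta=1$ and $\beta=\tau-1/\gamma$ respectively.  Your sketch lacks $\epsilon_N$, the $h_j$'s, and the two equilibrium-measure integrations; without these devices the uniformity you correctly flag as the hard point does not come for free.
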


\begin{proof}[Proof of Proposition~\ref{prop:Ghosh-Gotz-Saff-sep}]
We begin by forming the interpolating polynomials
\[
P_j(z)=\prod_{k\ne j}\frac{z-z_k}{z_j-z_k},\qquad z\in\C,\;\;1\le j\le N.
\]
That $\calF_N=(z_1,\ldots,z_N)$ is energy-minimal is equivalent to
\begin{equation}\label{eq:Fekete-comp}
-\sum_{j\ne k}\log |z_j-z_k|+2 N \sum_{j=1}^N Q(z_j)\le 
-\sum_{j\ne k}\log |w_j-w_k|+2 N \sum_{j=1}^N Q(w_j),
\end{equation}
whenever $(w_1, \dots, w_N) \in \Lambda^N$. In 
particular, for any $z\in\Lambda$
we may apply \eqref{eq:Fekete-comp} with 
\[
\begin{cases}
w_k=z_k,&k\ne j\\
w_j=w
\end{cases}
\]
for any fixed $j$ to obtain
\begin{equation}\label{eq:bound-Log-Pj}
\sum_{k:k\ne j}\log|w-z_k|-\sum_{k:k\ne j}
\log |z_j-z_k|\le N(Q(w)-Q(z_j)),\qquad 1\le j\le N.
\end{equation}

Let $w \in \mathrm{supp}(\mu_{Q,\Lambda})$. 
Since $Q(w)=U^{\mu_{Q,\Lambda}}(w) - C(Q,\Lambda)$, 
we find by \eqref{eq:bound-Log-Pj} that
\[
\sum_{k:k\ne j}\log|w - z_k|-\sum_{k:k\ne j}
\log |z_j-z_k|\le N(U^{\mu_{Q,\Lambda}}(w)-
U^{\mu_{Q,\Lambda}}(z_j)),\qquad 1\le j\le N.
\]
If we denote by $\nu$ the sub-probability 
measure $\nu=\tfrac{1}{N}\sum_{k \ne j}\delta_{z_j}$
we may divide the previous equation by $N$ and rewrite it as
\[
U^\nu(w)-U^\nu(z_j)\le U^{\mu_{Q,\Lambda}}(w)
-U^{\mu_{Q,\Lambda}}(z_j),\qquad w \in \mathrm{supp}(\mu_{Q,\Lambda}).
\]
The principle of domination \cite[Theorem II.3.2]{SaffTotik}
implies that the above inequality holds for all $w \in \C$, 
and consequently
we obtain
\[
|P_j(w)|\le \exp\big(N(U^{\mu_{Q,\Lambda}}(w)
-U^{\mu_{Q,\Lambda}}(z_j))\big).
\]
This says that $|P_j(z)|\le A_1^{-1} 
= \exp(\lVert U^{\mu_{Q,\Lambda}}\rVert_{C^{0, \gamma}})$ whenever 
$|z-z_j|\le N^{-1/ \gamma}$.

Now, if $|z-z_j|\le \tfrac{1}{2} N^{-1/ \gamma}$ 
we have by the standard Cauchy estimate that
\[
|P_j'(z)|\le \int_{\T\big(z,\frac{1}{2}N^{-1/ \gamma}\big)}
\frac{|P_j(\zeta)|}{|\zeta-z|^2}\,\diff\sigma(\zeta)
\le 2 A_1^{-1} N^{1/ \gamma}.
\]
If for two distinct points $z_j$ and $z_k$ in $\calF_N$ 
we were to have $|z_j-z_k|\le \tfrac{1}{2} N^{-1/  \gamma}$,
then 
\[
1=|P_j(z_j)-P_j(z_k)|\le \int_{[z_j,z_k]}|P_j'(z)|\,
\diff|z|\le 2 A_1^{-1} N^{1/ \gamma} |z_j-z_k|.
\]
But this shows that $|z_j-z_k|\ge \tfrac12 A_1  N^{-1/\gamma}$, 
which completes the first step.
\end{proof}

\begin{proof}[Proof of Theorem~\ref{thm:Ghosh-Gotz-Saff}]
The proof is split into three steps. In the first step we 
obtain the upper bound for the discrete logarithmic energy.
In the second step, we obtain a lower bound for the 
difference of the potentials. We conclude in the third step
with a corresponding upper bound.
\\
\\
\noindent {\sc Step 1.}\; Dividing \eqref{eq:bound-Log-Pj} 
by $N^2$ and summing over $1\le j\le N$, we find that for $w \in \Lambda$
\[
-\Sigma^*(\mu_{\textbf{z}})+\int Q\,\diff \mu_{\textbf{z}} 
\le
- (1 - \tfrac1N) U^{\mu_{\textbf{z}}}(w) + Q(w)
\le 
-U^{\mu_{\textbf{z}}}(w) + Q(w)+\frac{\log 2 \sqrt{\alpha}}{N},
\]
which gives after rearranging terms
\begin{equation}\label{eq:fekete-bound-sum}
-U^{\mu_{\textbf{z}}}(w) + U^{\mu_{Q,\Lambda}}(w) \ge
-\Sigma^*(\mu_{\textbf{z}})+\int U^{\mu_{Q,\Lambda}}\,
\diff\mu_{\textbf{z}}-\frac{\log 2\sqrt{\alpha}}{N},\quad w \in \Lambda.
\end{equation}
We would like to replace the discrete energy with a continuous one.
We introduce the regularized measure 
$\mu_{\textbf{z}}^r=\mu_{\textbf{z}}*\sigma_r$, where $\sigma$ is the normalized 
arc length measure on $\T(0,r)$, and $r > 0$ is a small 
parameter which remains to be chosen. A simple estimate shows that
\begin{multline}\label{eq:fekete-r-est}
\Big|\int U^{\mu_{Q,\Lambda}}\diff(\mu_{\textbf{z}}^r-\mu_{\textbf{z}})\Big|
\le\frac{1}{N}\sum_{j=1}^N\int_{\T(z_j,r)}
|U^{\mu_{Q,\Lambda}}(t)-U^{\mu_{Q,\Lambda}}(z_j)|\diff\sigma_r (t)
\\
\le \lVert U^{\mu_{Q,\Lambda}}\rVert_{C^{0, \gamma}} r^{ \gamma} \eqqcolon  C_U^\gamma r^{ \gamma}.
\end{multline}
We next observe that by subharmonicity of the logarithm, it holds that
\[
-\Sigma^*(\mu_{\textbf{z}})\ge -\Sigma(\mu_{\textbf{z}}^r)-\frac{|\log r|}{N}.
\]
For a signed measure $\nu$ of vanishing total mass, 
we have (e.g. \cite[Lemma I.1.8]{SaffTotik})
\[
  -\Sigma(\nu)=\int |\nabla U^{\nu}|^2\diffA(z),
\]
where the right-hand side is evidently positive. 
Applying this to $\nu=\mu_{\textbf{z}}^r-\mu_{Q,\Lambda}$
we find that
\[
-\Sigma(\mu_{\textbf{z}}^r)\ge -2\int U^{\mu_{Q,\Lambda}}
\,\diff\mu_{\textbf{z}}^r + \Sigma(\mu_{Q,\Lambda}).
\]
The first term on the right may be bounded, using \eqref{eq:fekete-r-est}:
\[
-\int U^{\mu_{Q,\Lambda}}\,\diff\mu_{\textbf{z}}^r\ge 
-\int U^{\mu_{Q,\Lambda}}\,\diff\mu_{\textbf{z}}-
 C_U^\gamma r^{ \gamma}.
\]
Adding the above bounds yields
\begin{equation}\label{eq:fekete-sigma-disc-cont-lower}
-\Sigma^*(\mu_{\textbf{z}})+\int U^{\mu_{Q,\Lambda}}\,\diff\mu_{\textbf{z}}^r\ge
-\int U^{\mu_{Q,\Lambda}}\,\diff\mu_{\textbf{z}}
+\Sigma(\mu_{Q,\Lambda})-\frac{|\log r|}{N}- C_U^\gamma r^{ \gamma}.
\end{equation}
Returning to \eqref{eq:fekete-bound-sum}, 
invoking \eqref{eq:fekete-sigma-disc-cont-lower} 
and choosing $r=N^{-1/ \gamma}$, it follows that
\begin{equation}\label{eq:fekete-pre-lower-epsilon}
U^{\mu_{Q,\Lambda}}(w)-U^{\mu_{\textbf{z}}}(w)\ge 
\epsilon_N- \gamma^{-1}\frac{\log N}{N}
-\frac{ C_U^\gamma}{N}-\frac{\log 2 \sqrt{\alpha}}{N},
\end{equation}
where
\[
\epsilon_N=\Sigma(\mu_{Q,\Lambda}
)-\int U^{\mu_{Q,\Lambda}}\,\diff\mu_{\textbf{z}}=
\int \big(U^{\mu_{Q,\Lambda}}
-U^{\mu_{\textbf{z}}}\big)\diff\mu_{Q,\Lambda}.
\]
The inequality \eqref{eq:fekete-pre-lower-epsilon} 
initially holds only on $\Lambda$, but extends to an 
inequality on $\C$ by the principle
of domination, since $\Sigma(\mu_{Q,\Lambda})$ is finite.

Now let $\nu_1$ be the (unweighted) equilibrium measure 
of the set $\Lambda$, and note that $U^{\nu_1}$ is equal to 
some constant value quasi-everywhere in $\Lambda$, and that 
it exceeds this value everywhere in $\C$. If we integrate 
$U^{\mu_{Q,\Lambda}}(w)-U^{\mu_{\textbf{z}}}(w)$ against 
$\nu_1$, then by \eqref{eq:fekete-pre-lower-epsilon} we have
\begin{equation}\label{eq:lower-upp-epsilon-N}
\epsilon_N \le 
 \gamma^{-1}\frac{\log N}{N}+\frac{ C_U^\gamma}{N}
 +\frac{\log 2 \sqrt{\alpha}}{N}.
\end{equation}
Similarly, if we integrate 
\eqref{eq:fekete-bound-sum} against $\nu_1$, then we get
\[
-\Sigma^*(\mu_{\textbf{z}}) \le -\int U^{\mu_{Q,\Lambda}}\,
\diff\mu_{\textbf{z}} + \frac{\log 2\sqrt{\alpha}}{N},
\]
which together with the definition of $\epsilon_N$ and 
\eqref{eq:lower-upp-epsilon-N} gives the 
required upper bound
for the energy $-\Sigma^*(\mu_{\textbf{z}})$.\\
\\
\noindent {\sc Step 2.}\; In this step, we obtain a lower bound
for $\epsilon_N$. We start with
a preliminary bound. Define the functions
\[
h_j(w)=-\frac{1}{N}\sum_{k:k\ne j}\log|w-z_k|,\qquad 1\le j\le N.
\]
Also recall that
\[
D(Q,\Lambda)=\sup_{z\in\Lambda}|Q(z)+C(Q,\Lambda)|
=\sup_{z\in\Lambda}|U^{\mu_{Q,\Lambda}}(z)|.
\]
For the functions $h_j$, we show below that
\begin{equation}\label{eq:Lemma4}
|h_j(z_j)+U^{\mu_{Q,\Lambda}}(z_j)-\epsilon_N|\le e(N,Q,\Lambda)
\end{equation}
where 
\[
e(N,Q,\Lambda)=2  \gamma^{-1} \frac{\log N}{N}
+\frac{\log 2 \sqrt{\alpha}}{N} + \frac{2  C_U^\gamma}{N} +
\frac{D(Q,\Lambda)}{N}.
\]

For $w\in\Lambda$ and $z_j\in\calF_N$, 
arguing in the same way as in \eqref{eq:bound-Log-Pj},
we have by extremality of $\calF_N$ that
\[
h_j(z_j)+U^{\mu_{Q,\Lambda}}(z_j)\le h_j(w)+U^{\mu_{Q,\Lambda}}(w),
\]
or equivalently that
\[
h_j(z_j)\le -U^{\mu_{Q,\Lambda}}(z_j)+h_j(w)+U^{\mu_{Q,\Lambda}}(w).
\]
Integrating this against $\mu_{Q,\Lambda}$ we find that
\begin{multline*}
h_j(z_j)\le -U^{\mu_{Q,\Lambda}}(z_j)-\frac{1}{N}
\sum_{k:k\ne j}U^{\mu_{Q,\Lambda}}(z_k)+\Sigma(\mu_{Q,\Lambda})
\\ 
\le -U^{\mu_{Q,\Lambda}}(z_j)+\epsilon_N+\frac{U^{\mu_{Q,\Lambda}}(z_j)}{N}\le 
-U^{\mu_{Q,\Lambda}}(z_j)+\epsilon_N+\frac{D(Q,\Lambda)}{N}.
\end{multline*}
This completes the upper bound of \eqref{eq:Lemma4}.

To obtain the lower bound, let $w$ satisfy $|w-z_j|=N^{-1/ \gamma}$,
and note that by \eqref{eq:fekete-pre-lower-epsilon} it holds that
\[
h_j(w)=-U^{\mu_{\textbf{z}}}(w)+\frac{\log|w-z_j|}{N}
\ge -U^{\mu_{Q,\Lambda}}(z) + \epsilon_N 
-2 \gamma^{-1} \frac{\log N}{N}
-\frac{ C_U^\gamma}{N}-\frac{\log 2 \sqrt{\alpha}}{N}.
\]
This together
with the H{\"o}lder continuity of $U^{\mu_{Q,\Lambda}}$ yields
(recalling the definition of $ C_U^\gamma$)
\[
h_j(w) \ge -U^{\mu_{Q,\Lambda}}(z_j) -\frac{ C_U^\gamma}{N} + \epsilon_N 
-2 \gamma^{-1}\frac{\log N}{N} -\frac{ C_U^\gamma}{N}
-\frac{\log 2 \sqrt{\alpha}}{N}.
\]
Since $h$ is superharmonic, the same bound for $h_j$ holds at the point $w=z_j$,
which completes the proof of \eqref{eq:Lemma4}.

We return to bound the quantity $\epsilon_N$ from below, which was the 
main purpose of the current step. The claim is that
\begin{equation}\label{eq:claim1-fekete}
\epsilon_N \ge -\frac{f(N,Q,\Lambda)}{N},
\end{equation}
where
\[
f(N,Q,\Lambda)
=4  C_U^\gamma + \log 2 \sqrt{\alpha}+
(2+3 \gamma^{-1})\log N 
+D(Q,\Lambda).
\]
To see why this holds, put $\delta_1 = \tfrac14 A_1 N^{-1/ \gamma}$, 
where $A_1 = \exp(- C_U^\gamma) \le 1$ is the constant from 
Proposition~\ref{thm:Ghosh-Gotz-Saff}, and using this proposition observe that
\[
  \sup_{|w-z_j|\le \delta_1}|\nabla h_j(w)|\le 
\sup_{|w-z_j|\le \delta_1}\frac{1}{N}\sum_{k:k\ne j}
\frac{1}{|w-z_k|}\le (\delta_1)^{-1}.
\] 
Put $\eta = N^{-\beta} \delta_1$, with $\beta \ge 1$. 
By the H{\"o}lder continuity of $U^{\mu_{Q,\Lambda}}$,
and the gradient bound for $h_j$, 
it follows that for $w$ such that 
$|w-z_j|=\eta$ we have
\[
|U^{\mu_{Q,\Lambda}}(w)-U^{\mu_{\textbf{z}}}(w)-\epsilon_N|
\le |U^{\mu_{Q,\Lambda}}(z_j)+h_j(z_j)-\epsilon_N| + 
\frac{ C_U^\gamma}{N} + \eta (\delta_1)^{-1} + \frac{|\log \eta|}{N}.
\]
Put $\calD(\eta) = \calD(\eta; \beta)=\cup_j\D(z_j,\eta)$.
Invoking the bound \eqref{eq:Lemma4}, we find 
that for $w\in \partial \calD(\eta)$ it holds that
\begin{multline}\label{eq:pre-11}
|U^{\mu_{Q,\Lambda}}(w)-U^{\mu_{\textbf{z}}}(w)-\epsilon_N|\le
\frac{ C_U^\gamma}{N} + \eta (\delta_1)^{-1} 
+ \frac{|\log \eta|}{N}+e(N,Q,\Lambda) \\
\le \frac{2 \beta \log N}N + \frac1N \big[ 3 \gamma^{-1} \log N 
+ 4  C_U^\gamma + D(Q,\Lambda) + \log 2 \sqrt{\alpha} \big]
\eqqcolon \frac{2 \beta \log N}N + e^\prime(N,Q,\Lambda),
\end{multline}
assuming e.g.\ that $N \ge 11$.

Let $\nu_2$ be the (unweighted) equilibrium 
measure of the set $\calD(\eta)$. Note that $U^{\nu_2}$ has a 
constant value (everywhere) inside $\calD(\eta)$, 
and that it exceeds this value outside it. 
Integrating $U^{\mu_{Q,\Lambda}}(w)-U^{\mu_{\textbf{z}}}(w)$ 
against $\nu_2$, we see that $\epsilon_N$ is bounded from below by
\[
\int \big[ U^{\mu_{Q,\Lambda}}(w)-U^{\mu_{\textbf{z}}}(w)\big] 
\, \diff \nu_2(w) - \frac{2 \beta \log N}N - e^\prime(N,Q,\Lambda) 
\ge - \frac{2 \beta \log N}N - e^\prime(N,Q,\Lambda).
\]
Taking $\beta = 1$ we obtain 
\eqref{eq:claim1-fekete}, and this completes Step 2.\\
\\
Now that the bound \eqref{eq:claim1-fekete} has 
been established, the lower bound for the difference
$U^{\mu_{Q,\Lambda}}(w)-U^{\mu_{\textbf{z}}}(w)$
follows from \eqref{eq:fekete-pre-lower-epsilon}, and reads
\begin{equation}\label{eq:Fekete-lower bound}
U^{\mu_{Q,\Lambda}}(w)-U^{\mu_{\textbf{z}}}(w)\ge 
\epsilon_N- \gamma^{-1}\frac{\log N}{N}-\frac{ C_U^\gamma}{N}
-\frac{\log 2 \sqrt{\alpha}}{N}\ge - \frac2N f(N,Q,\Lambda).
\end{equation}
\\
\noindent {\sc Step 3.}\; Now we take 
$\beta = \tau - 1 / \gamma \ge 1$. We complete the 
proof of Theorem~\ref{thm:Ghosh-Gotz-Saff}
by obtaining an upper bound for 
$U^{\mu_{Q,\Lambda}}(w)-U^{\mu_{\textbf{z}}}(w)$.
This is readily derived from \eqref{eq:pre-11}. 
This initially holds only on $\partial \calD(\eta)$. However,
on $\calD(\eta)^c$, the function 
$U^{\mu_{Q,\Lambda}}-U^{\mu_{\textbf{z}}}$ 
is subharmonic and vanishes at infinity.
Applying the maximum principle, and using 
\eqref{eq:lower-upp-epsilon-N}, we find that for all $w\in\C$
\[
U^{\mu_{Q,\Lambda}}(w)-U^{\mu_{\textbf{z}}}(w)\le \epsilon_N +
\frac{2 \beta \log N}N + e^\prime(N,Q,\Lambda) \le 
\frac{2 \beta \log N}N + \frac2N f(N,Q,\Lambda),
\]
which completes the proof.
\end{proof}

\bigskip

\noindent {\bf Acknowledgements.}\; We would like to express our 
sincere gratitude to Subhro Ghosh for valuable discussions in the early 
stages of this project and for allowing us to reproduce the joint
work on Fekete points in the appendix,
to Fedor Nazarov for suggesting a version of the variational
principle underlying the proof of Proposition~\ref{prop:a-circ}, 
and to John Andersson for suggestions on how to approach matters of
stability in thin obstacle problems.
We have enjoyed stimulating discussions with Kari Astala, Yan Fyodorov, H{\aa}kan Hedenmalm,
David Jerison, Avner Kiro, Nikolai Makarov, 
Sylvia Serfaty, Henrik Shahgholian, Mikhail Sodin, Pierpaolo Vivo, Oren Yakir and Ofer Zeitouni,
for which we wish to thank them. We thank Mikhail Sodin and Oren Yakir for helpful suggestions that led to an improved presentation of the results.

\medskip

The research of the first-named author was funded by ISF Grant 1903/18 and ERC Advanced
Grant 692616.
The second author was funded by KAW Foundation Grant 2017.0389,
with travel support from ERC Advanced Grant 692616.

\end{document}